\title{Higher zigzag algebras}
\author{Joseph Grant\\University of East Anglia, Norwich, UK\\\texttt{j.grant@uea.ac.uk}}
\date{\vspace{-2em}}
\newcolumntype{L}{>{$}c<{$}} 
\newcommand{\add}{\operatorname{-add}\nolimits}
\newcommand{\arr}[1]{\stackrel{#1}{\to}}
\newcommand{\Aut}{\operatorname{Aut}\nolimits}
\newcommand{\A}{\mathbb{A}}
\newcommand{\B}{\mathcal{B}}
\newcommand{\Braid}{\operatorname{Br}\nolimits}
\newcommand{\Br}{G}
\newcommand{\Coh}{\operatorname{Coh}\nolimits}
\newcommand{\cone}{\operatorname{cone}\nolimits}
\newcommand{\C}{\mathbb{C}}
\newcommand{\da}{\text{-}}
\newcommand{\DbG}{\operatorname{D^b_G}\nolimits}
\newcommand{\Db}{\operatorname{D^b}\nolimits}
\newcommand{\dpic}{\DPic}
\newcommand{\DPic}{\operatorname{DPic}\nolimits}
\newcommand{\DZPic}{\operatorname{D^\Z Pic}\nolimits}
\newcommand{\End}{\operatorname{End}\nolimits}
\newcommand{\en}{\text{en}}
\newcommand{\ev}{\operatorname{ev}}
\newcommand{\Ext}{\operatorname{Ext}\nolimits}
\newcommand{\F}{\mathbb{F}}
\newcommand{\gen}[1]{\langle#1\rangle}
\newcommand{\gldim}{\operatorname{gldim}\nolimits}
\newcommand{\gr}{\operatorname{gr}\nolimits}
\newcommand{\grmodgr}{\operatorname{-grmod-}\nolimits}
\newcommand{\grmod}{\operatorname{-grmod}\nolimits}
\newcommand{\grprojgr}{\operatorname{-grproj-}\nolimits}
\newcommand{\grsh}[1]{\left\{#1\right\}}
\newcommand{\Hom}{\operatorname{Hom}\nolimits}
\newcommand{\id}{\operatorname{id}\nolimits}
\newcommand{\into}{\hookrightarrow}
\newcommand{\lin}{\operatorname{lin}\nolimits}
\newcommand{\mMod}{\operatorname{-mod}\nolimits}
\newcommand{\modgr}{\operatorname{grmod-}\nolimits}
\newcommand{\N}{\mathbb{N}}
\newcommand{\onto}{\twoheadrightarrow}
\newcommand{\op}{{\operatorname{op}\nolimits}}
\newcommand{\pr}{\operatorname{par}\nolimits}
\newcommand{\rad}{\operatorname{rad}\nolimits}
\newcommand{\rb}{{\overline{\operatorname{B}}}}
\newcommand{\STriv}{\operatorname{STriv}\nolimits}
\newcommand{\st}{\;\left|\right.\;}
\newcommand{\Sym}{\operatorname{Sym}\nolimits}
\newcommand{\Tens}{\operatorname{Tens}\nolimits}
\newcommand{\Trivdpo}{\Triv_{d+1}}
\newcommand{\Triv}{\operatorname{Triv}\nolimits}
\newcommand{\trpic}{\operatorname{TrPic}\nolimits}
\newcommand{\ve}{\varepsilon}
\newcommand{\Z}{\mathbb{Z}}
\renewcommand{\L}{{\mathcal L}}
\newtheorem{theorem}{Theorem}[section]
\newtheorem{corollary}[theorem]{Corollary}
\newtheorem{lemma}[theorem]{Lemma}
\newtheorem{proposition}[theorem]{Proposition}
\newtheorem{definition-proposition}[theorem]{Definition-Proposition}
\newtheorem*{thma}{Theorem A}
\newtheorem*{thmb}{Theorem B}
\newtheorem*{thmc}{Theorem C}
\newtheorem*{thmd}{Theorem D}
\theoremstyle{definition}
\newtheorem{definition}[theorem]{Definition}
\newtheorem{remark}[theorem]{Remark}
\newtheorem{example}[theorem]{Example}
\begin{document}

\maketitle

\begin{abstract}
Given a Koszul algebra of finite global dimension we define its higher zigzag algeba as a twisted trivial extension of the Koszul dual.  If our original algebra is the path algebra of a tree-type quiver, this construction recovers the zigzag algebras of Huerfano-Khovanov.  We study examples of higher zigzag algebras coming from Iyama's type A higher representation finite algebras, give their presentations by quivers and relations, and describe relations between spherical twists acting on their derived categories.  We connect this to the McKay correspondence in higher dimensions: if $G$ is a finite abelian subgroup of $SL_{d+1}$ then these relations occur between spherical twists for $G$-equivariant sheaves on affine $(d+1)$-space.

2010 Mathematics Subject Classification: 16. Associative rings and algebras; 18. Category theory, homological algebra; 14. Algebraic geometry.
\end{abstract}

\tableofcontents

\setlength{\parindent}{0pt}
\setlength{\parskip}{1em plus 0.5ex minus 0.2ex}


\section{Introduction}
\subsection{Motivation}

Braid group actions on derived categories has been a popular and important topic in modern mathematics \cite{st, rz, ks, hk}. 
Algebraically, this can be seen as a 1-dimensional theory in the following sense: the basic examples of such actions are controlled by certain ``zigzag algebras'' which can be constructed from quivers, i.e., from hereditary algebras, which are algebras of global dimension $\leq1$ \cite{iwwa,ta}.  These much-studied algebras are finite-dimensional symmetric algebras of finite representation type (loc. cit.).  They are quadratic dual to the finite type preprojective algebras, which can also be constructed directly from the quivers \cite{gp, bgl, bbk}.  Examples of these algebras first appeared in the modular representation theory of finite groups, as certain zigzag algebras are Morita equivalent to blocks of finite groups with cyclic defect (see \cite{alp}), but they have since been found in many areas of mathematics.

There are well-developed tools for studying hereditary algebras, the most important of which is Auslander-Reiten theory \cite{ars}.  This theory is useful for all Artin algebras but works particularly well in the hereditary setting.  Iyama found a way to extend many desirable properties of the Auslander-Reiten theory of hereditary algebras to certain algebras of higher global dimension, now known as $d$-hereditary algebras \cite{iya-higher-ar, hio}.  The importance of his theory is signalled by the fact that one of the key ideas appeared independently in another of the most important areas of modern mathematics: the categorification of cluster algebras \cite{bmrrt, k-cacc}.  Since its initial development, this higher Auslander-Reiten theory has gathered much attention and is a very active area of research \cite{iya-ct, io-napr, hi-frac, mizu, jas, jor}, including the study of higher preprojective algebras of $d$-hereditary algebras \cite{io-stab, air}.

At this point, a natural question arises: can we combine higher Auslander-Reiten theory and categorical braid group actions?  More precisely: is there a theory, analogous to the now-classical braid group actions, where hereditary algebras are replaced by $d$-hereditary algebras?  And is this theory controlled by derived categories of explicit finite-dimensional algebras?  We claim the answer to both questions is yes, and this paper is the starting point for this theory.

The first task is to construct the algebras, and the first guess for how to do this is to take quadratic duals of higher preprojective algebras.  This strategy does work, though we find it useful to instead give a direct construction of these algebras, which we call ``higher zigzag algebras''.  These generalize the classical zigzag algebras controlling the usual braid group actions.  In fact, we get quite a pretty theory which is applicable more widely than the $d$-hereditary situation and which, perhaps surprisingly, includes exterior algebras as special cases.  Things are even nicer in the higher-dimensional ``type $A$'' setting originally described by Iyama \cite{iya-ct}, which is our main focus in this article.

Categorical braid group actions are strongly connected to algebraic geometry and symplectic geometry.  In the foundational paper of Seidel and Thomas \cite{st}, their triangulated categories of interest were derived categories of coherent sheaves, and the construction of the symmetries themselves was motivated by Kontsevich's homological mirror symmetry conjecture \cite{kon-icm}.  The idea is as follows: by considering the action of symplectic automorphisms of a compact symplectic manifold, one obtains symmetries of a Fukaya category associated to that manifold.  If the manifold has a mirror partner variety then one hopes that our Fukaya category is equivalent to the derived category of coherent sheaves on the mirror variety, thereby giving an action of the symplectic automorphisms on the derived category of coherent sheaves.  Given a Lagrangian sphere in a symplectic manifold, one has a nice symplectic automorphism called a Dehn twist, and so one searches for symmetries of derived categories having similar properties to these automorphisms constructed from Lagrangian spheres.  These symmetries are called spherical twists.  When the Lagrangian spheres have good intersection properties the Dehn twists satisfy braid relations \cite{sei-lag2}, so it seems reasonable to look for braid group actions via spherical twists.  Seidel and Thomas found various such actions.  The simplest example occurs in derived categories of equivariant coherent sheaves over complex affine $2$-space with the action of a cyclic group.

One might wonder: is a higher-dimensional theory of braid group actions just an algebraic curiosity, or does it describe something which occurs naturally in other parts of maths?  We show that our generalized braid groups do arise in algebraic geometry: they appear as symmetries of categories of equivariant sheaves on higher-dimensional affine spaces.  We note that our groups arising from $2$-hereditary algebras also appear in symplectic geometry: these relations arise in descriptions of Fukaya categories appearing in unpublished work of Casals, Evans, and Keating \cite{cek}.

\subsection{An example}

The name ``zigzag algebras'' was introduced by Huerfano and Khovanov \cite{hk}.  One can justify the name as follows.  Their construction starts from a simple graph, such as the $A_3$ Dynkin graph.  Then one ``doubles'' this graph to get the following quiver:
\[
\xymatrix{
1 \ar@/^/[r]^{\alpha} & 2\ar@/^/[l]^{\beta}\ar@/^/[r]^{\alpha} & 3\ar@/^/[l]^{\beta}}
\]
The associated zigzag algebra is the path algebra of this quiver modulo the relations $\alpha^2=\beta^2=0$ and $\alpha\beta=\beta\alpha$ at the central vertex.  These relations ensure that the algebra is finite-dimensional: in fact, this algebra has a basis given by the lazy paths at the vertices, the arrows, and one length two path which starts and ends at each vertex.  These length two paths can be seen to ``zig-zag'' away from, then back to, the vertex.

In our example, the radical series of the indecomposable projective modules are as follows:
\[
{\begin{matrix}1\\2\\1\end{matrix}}\;\;\;\;\;\;\;\; {\begin{matrix}2\\1\;\;3\\2\end{matrix}}\;\;\;\;\;\;\;\; {\begin{matrix}3\\2\\3\end{matrix}}
\]
Consider the projective associated to vertex 2.  There are two ways to ``travel down'' the radical series:
\[
\xymatrix @=8pt{
&2\ar[dl]_{\text{zig}} & &&&&&& &2\ar[dr]^{\text{zig}} & \\
1\ar[dr]_{\text{zag}} &&3 &&&\text{ or } &&&1 &&3\ar[dl]^{\text{zag}} \\
&2& &&&&&& &2&
}
\]
These correspond to the following choices of length two paths in the quiver:
\[
\xymatrix{
1 \ar@/^/[r]^{\text{zig}} & 2\ar@/^/[l]^{\text{zag}}\ar@/^/[r]^{} & 3\ar@/^/[l]^{} &\text{ or } & 1 \ar@/^/[r]^{} & 2\ar@/^/[l]^{}\ar@/^/[r]^{\text{zag}} & 3\ar@/^/[l]^{\text{zig}}
}
\]

This algebra is symmetric, so its indecomposable projectives are Calabi-Yau objects, and from their radical series we see that all three have endomorphism algebra $\F[x]/(x^2)$.  In other words, they are spherical objects \cite{st}.  So we have associated symmetries $F_1$, $F_2$, and $F_3$ of the bounded derived category of modules over the zigzag algebra.  As well as satisfying the relations of the braid group on $4$ strands, such as the Reidemeister III relation $F_1F_2F_1\cong F_2F_1F_2$, the positive lift of the longest element of the symmetric group $F_3F_2F_3F_1F_2F_3$ has a particularly nice description \cite{rz,g-lifts} which is connected to the periodicity of projective resolutions for this algebra \cite{bbk,gra1}.  Geometrically, one obtains this algebra as the self-extension algebra of certain equivariant sheaves on affine $2$-space, which gives an algebraic explanation for the existence of braid relations on the associated geometric derived category \cite{st}.

The aim of this article is to introduce higher dimensional generalizations of zigzag algebras.  We now describe an illustrative example.  Consider the following quiver:
\[
\xymatrix @R=15pt @C=6pt {
&&&4 \ar[dr]^{\beta} \\
&&3 \ar[ur]^{\alpha}\ar[dr]^{\beta} && 7\ar[ll]^{\gamma}\ar[dr]^{\beta} \\
&2 \ar[ur]^{\alpha}\ar[dr]^{\beta} && 6\ar[ll]^{\gamma}\ar[ur]^{\alpha}\ar[dr]^{\beta} && 9\ar[ll]^{\gamma}\ar[dr]^{\beta}  \\
1 \ar[ur]^{\alpha} && 5\ar[ll]^{\gamma}\ar[ur]^{\alpha} && 8\ar[ll]^{\gamma}\ar[ur]^{\alpha} && 0\ar[ll]^{\gamma}
}
\]

We quotient its path algebra by the relations $\alpha^2=\beta^2=\gamma^2=0$ everywhere, and by the relations $\alpha\beta=\beta\alpha$, $\beta\gamma=\gamma\beta$, and $\gamma\alpha=\alpha\gamma$ whose source and target match the interior arrows.  Consider the projective module associated to vertex $6$.  There are now various ways to travel down its radical series.  For example, the routes
\[
\xymatrix @=8pt{
&6\ar@{..}[dl]\ar[d]\ar@{..}[dr]&  &&&&&&  &6\ar@{..}[dl]\ar@{..}[d]\ar[dr]& \\
3\ar@{..}[d]\ar@{..}[dr] &5\ar[dl]\ar@{..}[dr] &9\ar@{..}[dl]\ar@{..}[d]  &&&\text{ and } &&& 3\ar@{..}[d]\ar@{..}[dr] &5\ar@{..}[dl]\ar@{..}[dr] &9\ar[dl]\ar@{..}[d]  \\
2\ar[dr] &7\ar@{..}[d] &8\ar@{..}[dl]  &&&&&& 2\ar@{..}[dr] &7\ar[d] &8\ar@{..}[dl]  \\
&6& &&&&&& &6&
}
\]
correspond to the following length three cycles:
\[
\xymatrix @R=15pt @C=6pt {
&&&4 \ar@{..}[dr]^{} &&&   &&&& &&&4 \ar@{..}[dr]^{} &&&   \\
&&3 \ar@{..}[ur]^{}\ar@{..}[dr]^{} && 7\ar@{..}[ll]^{}\ar@{..}[dr]^{} && &&\text{ and } && &&3 \ar@{..}[ur]^{}\ar@{..}[dr]^{} && 7\ar@{..}[ll]^{}\ar[dr]^{} &&\\
&2 \ar@{..}[ur]^{}\ar[dr]^{} && 6\ar[ll]^{}\ar@{..}[ur]^{}\ar@{..}[dr]^{} && 9\ar@{..}[ll]^{}\ar@{..}[dr]^{}  &   &&&&  &2 \ar@{..}[ur]^{}\ar@{..}[dr]^{} && 6\ar@{..}[ll]^{}\ar[ur]^{}\ar@{..}[dr]^{} && 9\ar[ll]^{}\ar@{..}[dr]^{}  &   \\
1 \ar@{..}[ur]^{} && 5\ar@{..}[ll]^{}\ar[ur]^{} && 8\ar@{..}[ll]^{}\ar@{..}[ur]^{} && 0\ar@{..}[ll]^{}  &&&&  1 \ar@{..}[ur]^{} && 5\ar@{..}[ll]^{}\ar@{..}[ur]^{} && 8\ar@{..}[ll]^{}\ar@{..}[ur]^{} && 0\ar@{..}[ll]^{}   
}
\]

We could call this algebra a ``zigzagzog algebra''.  However, this naming convention will be difficult to continue as our dimension increases, so instead we call it a higher zigzag algebra.  

This algebra is also symmetric, and again the indecomposable projectives are spherical, so we obtain symmetries of the derived category $F_1$, $F_2$, $\ldots$, $F_9$, $F_0$.  We get braid group relations between these functors, such as $F_1F_2F_1\cong F_2F_1F_2$, but we also get other relations such as
\[ F_1F_2F_3F_1\cong F_2F_3F_1F_2\cong F_3F_1F_2F_3 \]
which can be seen as coming from a braid group on four strands \cite{ser,gm}.  These generators and relations give a group containing classical braid groups in various different ways which, in general, appears to be new.  It also has an interesting element
\[ F_0 F_8F_9F_0 F_5F_6F_7F_8F_9F_0 F_1F_2F_3F_4F_5F_6F_7F_8F_9F_0   \]
constructed from subtriangles of our large triangle which, at least in terms of its action on the derived category, plays a role analogous to that of the positive lift of the longest element of a symmetric group to an Artin braid group.

Analogously to the classical case, one can consider an action of the finite abelian group $G=C_5\times C_5$ on affine $3$-space and look at equivariant sheaves constructed from the skyscraper sheaf at the fixed point.  By taking a suitable self-extension algebra, one recovers this higher zigzag algebra.  Therefore, this group action can be carried across to the derived category of $G$-equivariant coherent sheaves on affine $3$-space.  This example can be generalized to abelian subgroups of the special linear group acting on affine $(d+1)$-space, giving new examples of relations between symmetries of derived categories from algebraic geometry.

\subsection{Summary of results}

Zigzag algebras can be defined in two ways: either using generators and relations, or as trivial extensions of path algebras of quivers modulo their radical squared.  We use the second of these: given a Koszul algebra of finite global dimension, we define its higher zigzag algebra as a twisted trivial extension of the Koszul dual of our starting algebra.  These twisted trivial extensions were first studied in connection with higher preprojective algebras \cite{gi}.  Using such an algebraic definition has the disadvantage that our higher zigzag algebras do not automatically come with a nice presentation as a quotient of the path algebra of a quiver by an admissible ideal.  However, it seems to give an interesting way to construct new algebras from old algebras, and we are able to give nice presentations in some important classes of examples.

The twist in our definition, which involves introducing minus signs when multiplying certain elements, has disadvantages.  The first is that anticommutativity is usually more difficult to deal with than commutativity.  The second is that, in general, our definition will not match Huerfano and Khovanov's: in the global dimension $1$ case, our higher zigzag algebras are what they call skew zigzag algebras.  The advantage, as Huerfano and Khovanov explain \cite[Section 7]{hk}, is that with these minus signs our algebras are more closely related to the preprojective algebras and to the McKay correspondence.  

When we have a construction which starts with a simple graph we may expect the Dynkin graphs to play a special role, and they do in the case of zigzag algebras: the algebras constructed from Dynkin graphs are precisely those of finite representation type, meaning they have finitely many isoclasses of indecomposable representations.  The most well-studied are the type $A$ examples.

To ask for an algebra to be of finite representation type is quite restrictive.  A weaker condition has emerged in recent years, coming from both higher Auslander-Reiten theory \cite{iya-higher-ar} and the categorification of cluster algebras \cite{bmrrt}: this condition is the existence of what has come to be known as a cluster tilting object.  These objects satisfy an ext-vanishing condition and are replacements for the module obtained in the finite type case by taking the direct sum of one copy of each indecomposable module.  Iyama found an inductive construction of algebras with cluster tilting modules, based on linearly oriented type $A$ quivers \cite{iya-ct}.  This construction gives examples of all finite global dimensions.  These algebras are known as type $A$ higher representation finite algebras, and we take them as our input to produce a class of higher zigzag algebras which we call type $A$ higher zigzag algebras.  This class includes the illustrative example given above.  

Our type $A$ higher zigzag algebras have already appeared in the literature.  In the global dimension $2$ case, they are endomorphism algebras of the ``hicas'' studied by Miemietz and Turner \cite{mt}.  In the general case, they have appeared in Guo and Luo's study of $n$-cubic pyramid algebras \cite{gl}.

One can see immediately from a presentation due to Iyama that the type $A$ higher representation finite algebras are quadratic.  We use a version of PBW theory for quivers to check that they are in fact Koszul.  Then we study their higher zigzag algebras.  We are able to show that these algebras are symmetric, and give nice presentations for this class of examples by using known presentations of the type $A$ higher preprojective algebras.

\begin{thma}[{Proposition \ref{prop:azzsym} and Theorem \ref{thm:azzpres}}]
The higher type $A$ zigzag algebras are symmetric, and have a presentation as the path algebra of a quiver modulo zero relations and commutativity relations.
\end{thma}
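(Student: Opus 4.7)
The plan is to exploit the definition of the higher zigzag algebra $Z$ as the twisted trivial extension of the Koszul dual $A^!$ of a type $A$ higher representation finite algebra $A$. The work splits into three parts: making $A^!$ explicit as a quotient of a path algebra, describing the arrows and relations of the trivial extension to read off a presentation of $Z$, and verifying that the Frobenius form on $Z$ is symmetric.

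For the presentation of $A^!$, I would start from Iyama's explicit quadratic presentation of $A$. The paper has already shown that $A$ is Koszul, so quadratic duality produces $A^!$ by taking the orthogonal complement of the space of relations inside the degree-two component of the path algebra of the opposite quiver. For Iyama's type $A$ quiver, whose shape is a $d$-simplex with linearly oriented edges, this orthogonal complement works out to commutativity relations at interior vertices, matching the shape of the $\alpha,\beta$ (and $\gamma$) arrows visible in the illustrative example.

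For the presentation of $Z$, the quiver is obtained from that of $A^!$ by adding one new arrow for each basis vector in the top of the radical of each indecomposable projective of $A^!$; geometrically these are the ``closing'' arrows that turn each small simplex into a $(d{+}1)$-cycle. The defining relations then come in three flavours: the quadratic relations already present in $A^!$; zero relations expressing the vanishing of $\alpha^2$, $\beta^2$, $\gamma^2$, and of any path that would leave the top-degree part of $A^!$; and commutativity relations saying that the various length-$(d{+}1)$ cycles through each vertex agree. A careful rescaling of the new arrows absorbs the signs coming from the twist, yielding genuine commutativity.

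For the symmetric property, the canonical Frobenius form on the twisted trivial extension $A^! \oplus DA^!$ pairs $a \in A^!$ with $D\phi \in DA^!$ via $\phi(a)$; associativity is automatic, and symmetry reduces to a condition on the Nakayama automorphism of $A^!$ restricted to the top degree. The twist in the definition is set up precisely to make this condition hold, and I would verify it in the type $A$ case by exhibiting an explicit top-degree basis. The main obstacle will be the sign bookkeeping: checking that the twist exactly cancels the Koszul signs on $A^!$, both for producing honest commutativity relations and for the symmetry of the bilinear form. I expect an inductive argument on the dimension $d$, mirroring Iyama's inductive construction of type $A$ higher representation finite algebras, will let the general case follow from a careful base case.
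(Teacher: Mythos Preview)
Your plan is in the right spirit but diverges from the paper in both parts, and in each case there is a genuine gap.

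\textbf{Symmetry.} Your account is vague and slightly off target. The Nakayama automorphism in play is not that of $A^!$ but of the twisted trivial extension itself, which by Proposition~\ref{prop:twisttrivnak} is $(a,f)\mapsto(\zeta^d(a),f\circ\zeta^{-d})$. So symmetry holds exactly when the sign-twist $\zeta^d$ is \emph{inner} on $\Lambda^!$. The paper does not check this by exhibiting a top-degree basis or by induction on $d$; instead it observes that the quiver $Q^+$ of $(\Lambda^d_s)^!$ is bipartite. A parity function $\pr_0(y)=(-1)^{\sum_{j\geq0}y_{2j}}$ separates the vertices into two colours, and every arrow $y\to y+\ve_j$ with $1\leq j\leq d$ flips the colour (Lemma~\ref{lem:parity}). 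Then Lemma~\ref{lem:bip} applies: on a bipartite quiver $\zeta^d$ is conjugation by $u=e_X-e_Y$, hence inner, so $\STriv(\Lambda^!)\cong\Triv(\Lambda^!)$ and the latter is automatically symmetric. Your sketch never isolates innerness of $\zeta^d$ as the issue, and the proposed induction on $d$ is not needed.

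\textbf{Presentation.} The paper does \emph{not} extract the presentation of $Z$ directly from the trivial extension construction as you propose. Instead it takes a detour through the higher preprojective algebra: by Theorem~\ref{thm:pidualtoz} there is a surjection $\phi:\Pi^!\twoheadrightarrow Z$, and a presentation of $\Pi^!$ is already available (Proposition~\ref{prop:describepi}, Corollary~\ref{cor:pidualrelns}). The real work is then a dimension count (Section~3.4): one computes $\dim_\F \Pi^!e_y$ and $\dim_\F Ze_y=\dim_\F\Lambda^!e_y+\dim_\F e_y\Lambda^!$ combinatorially, shows they agree (Proposition~\ref{prop:dimprojsequal}), and concludes $\phi$ is an isomorphism. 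Your direct approach --- adding closing arrows and writing down zero and commutativity relations --- would produce a surjection onto $Z$, but you have not said how you would prove your relations are \emph{complete}; that is exactly where the paper's dimension argument enters. Without it you have only an upper bound on $Z$, not a presentation.

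\textbf{Signs.} The conversion from anticommutativity (in the quadratic dual) to commutativity is handled by explicit rescalings $f_{i,y}=\pr_i(y)\beta_{i,y}$ using the same parity functions (Propositions~\ref{prop:lambdadualcomm} and~\ref{prop:pidualcomm}), not by induction on $d$. The case of the extra arrow $f_0$ requires a separate weight function $w(y)$. Your ``careful rescaling'' is the right idea, but the paper's parity machinery is what makes it actually go through.
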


The type $A$ zigzag algebras have received much attention due to the existence of an action of the braid group on their derived categories.  Their indecomposable projective modules are spherical objects, with the associated simple module only appearing in the head and the socle, and so for each vertex there exists a symmetry of the derived category known as a spherical twist.  These symmetries satisfy braid relations \cite{st,rz,hk}, and the positive lift of the longest element of the symmetric group acts as a particularly simple symmetry \cite{rz}.  We will tell a similar story for the type $A$ higher zigzag algebras.  One first defines a group from the quiver of the higher zigzag algebra.  This definition is by generators and relations, and specializes in the global dimension $1$ case to Artin's presentation of the braid group.  In the global dimension $2$ case, these relations have appeared in the study of presentations of braid groups coming from quiver mutation \cite{gm}.  Next one shows that these relations are satisfied by the spherical twists on the derived category of our algebras.  This is done by reducing to a check in a symmetric Nakayama algebra, just as the Reidemester 3 relation between spherical twists can be reduced to a check in the symmetric Nakayama algebra which is the zigzag algebra of type $A_2$.

\begin{thmb}[Theorem \ref{thm:groupaction}]
For all $d,s\geq1$ we have a group $G^d_s$ with $\binom{d+s-1}{d}$ generators which acts on the derived category of the type $A$ higher zigzag algebra $Z^d_s$.  When $d=1$ this specializes to the braid group action on the type $A$ zigzag algebra.
\end{thmb}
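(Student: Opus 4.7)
The plan proceeds in three stages: construct the spherical twists acting on $\Db(Z^d_s\mMod)$, define the abstract group $G^d_s$ by a presentation read off from the quiver of $Z^d_s$, and verify that the defining relations of $G^d_s$ hold among the twists by reducing each check to a calculation in a symmetric Nakayama algebra.

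\emph{Step 1: Spherical twists.} By Theorem A the algebra $Z^d_s$ is symmetric, so each indecomposable projective $P_i$ is Calabi--Yau in $\Db(Z^d_s\mMod)$. Using the explicit quiver-with-relations presentation of Theorem A, one reads off the radical series of each $P_i$: the simple top $S_i$ reappears only in the socle, and $\End_{Z^d_s}(P_i)\cong\F[x]/(x^{d+1})$ with generator in the appropriate internal degree. Combined with the symmetric structure this identifies each $P_i$ as a spherical object, yielding one spherical twist $F_i$ per vertex. The vertex count is precisely $\binom{d+s-1}{d}$, matching the number of generators promised.

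\emph{Step 2: The group $G^d_s$.} I define $G^d_s$ by generators $g_i$ indexed by the vertices of the quiver of $Z^d_s$, subject to two families of relations. Two vertices not lying in a common top-dimensional simplex contribute a commutation relation $g_ig_j=g_jg_i$. Each $d$-simplex $\{i_0,\ldots,i_d\}$ of the quiver contributes a cyclic-sliding relation
\[ g_{i_0}g_{i_1}\cdots g_{i_d}=g_{i_1}g_{i_2}\cdots g_{i_d}g_{i_0}, \]
equivalently, all $d{+}1$ cyclic rotations of the word agree. For $d=1$ these are exactly the commutation and braid relations of Artin's presentation of the braid group on $s{+}1$ strands; for $d=2$ they match the Serre-type relations of \cite{gm}.

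\emph{Step 3: Verifying the relations.} The commutations follow from the standard Ext-vanishing criterion: when the vertices $i$ and $j$ are not adjacent in any top-dimensional simplex, the presentation of Theorem A forces $\Hom_{\Db(Z^d_s\mMod)}(P_i,P_j[k])=0$ for all $k$, and the twists $F_i, F_j$ commute. The simplex relations are the substantive content. For each $d$-simplex $S=\{i_0,\ldots,i_d\}$ set $e_S=\sum_k e_{i_k}$; the corner algebra $e_SZ^d_se_S$ is a symmetric Nakayama algebra of Loewy length $d{+}2$, as can be read off directly from the presentation in Theorem A. The triangulated subcategory $\langle P_{i_0},\ldots,P_{i_d}\rangle$ is preserved by each $F_{i_k}$, and on this subcategory the twists agree with the spherical twists of the Nakayama algebra. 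The cyclic-sliding identity is then checked in this small concrete algebra by an explicit cone computation, generalising the classical reduction of the Reidemeister III relation to a calculation inside the zigzag algebra of type $A_2$.

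The main obstacle is the reduction in Step 3: one must show that $F_{i_k}$ genuinely preserves the subcategory $\langle P_{i_0},\ldots,P_{i_d}\rangle$, and that no stray contributions from projectives outside $S$ disturb the cone calculation. This is exactly where the Koszul hypothesis and the twisted-trivial-extension structure enter, as together they control the shape of the triangles defining the twists. Once the cyclic-sliding identity is established in the Nakayama corner, it lifts back to $\Db(Z^d_s\mMod)$, yielding a group homomorphism from $G^d_s$ into the autoequivalence group. The $d=1$ specialisation is then automatic: the presentation becomes Artin's, the Nakayama corners are zigzag algebras of $A_2$, and the resulting action is the classical braid group action of Huerfano--Khovanov and Seidel--Thomas.
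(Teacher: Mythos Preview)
Your overall shape is right --- reduce to a Nakayama algebra and verify there --- but there are genuine gaps.

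First, a factual slip: the endomorphism algebra of each indecomposable projective is $\F[x]/(x^2)$ with $x$ in degree $d+1$, not $\F[x]/(x^{d+1})$. The simple appearing only in the head and socle means the endomorphism algebra is two-dimensional, which is what sphericity requires. This is Corollary~\ref{cor:sphericalprojs} in the paper, deduced from Proposition~\ref{prop:nottower} and Lemma~\ref{lem:idemptriv}, not read off directly from the presentation.

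Second, your group is not the paper's group, and as written it does not specialise to the braid group when $d=1$. Your cyclic-sliding relation $g_{i_0}g_{i_1}\cdots g_{i_d}=g_{i_1}\cdots g_{i_d}g_{i_0}$ has $d{+}1$ letters on each side; for $d=1$ this reads $g_{i_0}g_{i_1}=g_{i_1}g_{i_0}$, which is commutation, not a braid relation. The correct relation (Section~\ref{ss:defgroups}) has $d{+}2$ letters: $g_{i_0}g_{i_1}\cdots g_{i_d}g_{i_0}=g_{i_1}\cdots g_{i_d}g_{i_0}g_{i_1}$. Moreover, the paper imposes such a relation for \emph{every} ordered subsequence $w_1,\ldots,w_\ell$ of a $(d{+}1)$-cycle, not only the full cycle; in particular Reidemeister~III relations between any two generators sharing a cycle are built in. Without these, your presentation is too coarse.

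Third, the reduction in Step~3 does not go through as described. Knowing that the twists preserve the subcategory $\langle P_{i_0},\ldots,P_{i_d}\rangle$ and agree there with Nakayama twists does \emph{not} yield an isomorphism of functors on all of $\Db(Z^d_s\mMod)$: the functors are given by tensoring with bimodule complexes, and one needs those complexes themselves to be isomorphic. This is exactly what the lifting theorem (Theorem~\ref{thm:lifting}, from \cite{g-lifts}) provides: any relation among spherical twists holding for the endomorphism algebra $E=\End_A(\bigoplus_k P_{w_k})^{\op}$ lifts to a relation among the corresponding twists on $\Db(A)$. The paper identifies $E$ with $N_\ell$ (Lemmas~\ref{lem:endomcycle} and \ref{lem:nakendom}), checks the relations there via an explicit bimodule-complex computation (Corollary~\ref{cor:nakrelns}), and then invokes the lifting theorem. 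Your sentence ``it lifts back'' is precisely where this nontrivial ingredient is needed.
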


Next, we show that these groups contain elements which play the role of the positive lifts of the longest element of the symmetric group, in the following specific sense: their action on the derived category of the higher zigzag algebra is just a shift and a twist by an algebra automorphism.  The construction of these elements is modelled on a well-known construction for reflection groups: they are defined inductively as compositions of Coxeter-like elements.  We note that, unlike the classical case, imposing the relation that our generators square to the identity does not produce finite groups in general, and we do not know of a length function with respect to which these elements are longest.

\begin{thmc}[Theorem \ref{thm:shiftandtwist}]
There is an element of $G^d_s$, defined as a product of $\binom{d+s}{d+1}$ generators, which acts on the derived category of $Z^d_s$ as a shift by $s$ and a twist by an algebra automorphism.
\end{thmc}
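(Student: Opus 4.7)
The plan is to mimic the Rouquier-Zimmermann proof for the classical zigzag algebra, building the desired element inductively from Coxeter-like blocks indexed by a nested family of sub-triangles in the quiver of $Z^d_s$. For $1\leq k\leq s$ define $c^d_k$ to be the ordered product of spherical twists at the $\binom{d+k-1}{d}$ vertices forming the natural $k$-th sub-triangle; in the illustrative example with $d=2$, $s=4$ these are $c^2_1 = F_0$, $c^2_2 = F_8 F_9 F_0$, $c^2_3 = F_5 F_6 F_7 F_8 F_9 F_0$, and $c^2_4 = F_1 F_2 \cdots F_9 F_0$. Set $w^d_s := c^d_1 c^d_2 \cdots c^d_s$; the hockey-stick identity
\[
\sum_{k=1}^{s} \binom{d+k-1}{d} = \binom{d+s}{d+1}
\]
shows that $w^d_s$ is a product of the correct number of generators.

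Since $Z^d_s$ is symmetric (Theorem A), an autoequivalence of $\Db(Z^d_s)$ that sends every indecomposable projective $P_i$ to $P_{\sigma(i)}[s]$ for a uniform shift $s$ and a vertex permutation $\sigma$ is isomorphic to the composition of $[s]$ with the twist by the algebra automorphism induced by $\sigma$, provided $\sigma$ lifts to such an automorphism. It therefore suffices to verify two things: that $w^d_s(P_i)\cong P_{\sigma(i)}[s]$ for a single permutation $\sigma$ independent of $i$, and that this $\sigma$ arises from a quiver automorphism respecting the presentation from Theorem A --- a natural candidate is the visible rotational symmetry of the triangular quiver.

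I would prove the action formula by induction on $s$, with a direct computation on the one-vertex algebra $Z^d_1$ as the base case. The heart of the inductive step is a local lemma: each Coxeter element $c^d_k$ shifts by exactly one every projective it moves nontrivially, pushing it one step outward toward the boundary of the $k$-th sub-triangle. The proof of this lemma reduces to a computation inside a symmetric Nakayama subalgebra, since a composition of spherical twists along any directed path in the quiver of $Z^d_s$ is controlled by the full subquiver on that path (with the relevant hom-spaces between the corresponding projectives one-dimensional); this is precisely the reduction used in the proof of Theorem B.

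The main obstacle is disentangling the interactions between the $c^d_k$ for different values of $k$, as each Coxeter element acts nontrivially on projectives outside its own sub-triangle. The right bookkeeping device is an integer-valued position function on indecomposable projectives that tracks how many sub-triangles each has been pushed through; the inductive claim becomes that $c^d_k$ increments this function by one on every projective it moves and preserves it otherwise. Once this is established by the Nakayama subalgebra reduction, the cumulative shift after applying $c^d_1 \cdots c^d_s$ is exactly $s$ on every indecomposable projective, and the induced vertex bijection is read off as the expected quiver symmetry, completing the proof.
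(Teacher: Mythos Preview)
Your proposed local lemma --- that each Coxeter block $c^d_k$ shifts every projective it touches by exactly one --- is false, and this is the heart of the problem. Already for $d=1$ and the classical $A_s$ zigzag algebra, the paper's Proposition~\ref{prop:acyclicbim} identifies the bimodule complex for $c_s$ with $\cone(Q_A(\Lambda_s)\to A)$, where $Q_A(\Lambda_s)$ is a linear complex of length equal to the top degree of $\Lambda_s=\F\vec{A}_s$, namely $s-1$. Tensoring with an indecomposable projective gives a genuine multi-term complex, not a shifted projective. Concretely, for $s=2$ one computes $F_2(P_1)\cong\cone(P_2\to P_1)$, which has nonzero homology in two degrees. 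So the bookkeeping you propose, tracking a single integer ``position'' per projective, cannot work: after applying $c^d_k$ the objects are no longer projectives at all, and there is nothing for the next $c^d_{k+1}$ to push.

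Your Nakayama reduction also does not apply here. The reduction in Theorem~B works because the endomorphism algebra of the projectives around a single $(d+1)$-cycle is $N_{d+1}$ (Lemma~\ref{lem:endomcycle}). But the vertices appearing in $c^d_k$ form an entire sub-triangle, and the endomorphism algebra of that sum of projectives is $Z^d_k$ itself (Proposition~\ref{prop:nottower}), not a Nakayama algebra. So the computation you would need to do is exactly as hard as the original problem for a smaller $s$ --- which is indeed how the paper's induction is structured, but not via your mechanism.

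The paper's actual argument is of a quite different flavour. It never computes the action on individual projectives. Instead it identifies the bimodule complex $C_s$ for the outermost Coxeter block with $\cone(Q_A(\Lambda_s)\to A)$ via Proposition~\ref{prop:acyclicbim}, uses the inductive hypothesis together with the lifting theorem and almost-Koszul periodicity (Theorem~\ref{thm:zigzagperiodic}) to identify $(W^r_{s-1})^*$ with a complex built from $Q_A(\Pi_{s-1})$, and then glues these using the short exact sequence $\Pi_{s-1}\hookrightarrow\Pi_s\twoheadrightarrow\Lambda_s$ of Lemma~\ref{lem:ses-lambda-pi}. The cancellation happens at the level of Koszul-type bimodule complexes, not object by object.
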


Finally, we study categories of $G$-equivariant coherent sheaves on affine $(d+1)$-space, where $G\cong C_{n_1+1}\times C_{n_2+1}\times\cdots\times C_{n_d+1}$ is a finite abelian subgroup of the special linear group.  We use the well-known equivalence of the category of such sheaves with modules over a skew-group algebra, and show that these skew group algebras are Koszul dual to higher zigzag algebras.  This allows us to prove the following:
\begin{thmd}[Theorem \ref{thm:actiononskew}]
If $\min\{n_1,n_2,\ldots,n_d\}\geq s+1$ then we have a group action
\[ G^d_s\to \Aut\DbG(\Sym(\C^{d+1})\# G\mMod) \]
where the generators of $G^d_s$ act by spherical twists.
\end{thmd}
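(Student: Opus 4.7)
The plan is to combine Theorem B with the Koszul duality between the type $A$ higher zigzag algebra $Z^d_s$ and the skew group algebra $\Sym(\C^{d+1})\#G$, whose existence is the key input promised in the paragraph preceding the theorem.

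First, I would set up the Koszul duality in detail. Starting from the presentation of $Z^d_s$ provided by Theorem A (path algebra of a quiver modulo zero and commutativity relations), one computes the quadratic dual and compares it with the relevant part of $\Sym(\C^{d+1})\#G$ decomposed according to the isotypical components of $G$ acting on $\Sym(\C^{d+1})$. The inequality $\min\{n_1,\ldots,n_d\}\geq s+1$ is exactly what is needed here: it ensures that the characters of $G$ arising from monomials of total degree at most $s$ in $\C^{d+1}$ are pairwise distinct, so that the vertex set of the type $A$ zigzag quiver embeds faithfully into the McKay quiver of $G\subset SL_{d+1}$ without identifications.

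Second, I would transport the action. The standard Morita-type equivalence identifies $\DbG(\Coh\C^{d+1})$ with $\Db(\Sym(\C^{d+1})\#G\mMod)$, and derived Koszul duality then matches the indecomposable projectives of $Z^d_s$ with a distinguished collection of objects in $\DbG(\Sym(\C^{d+1})\#G\mMod)$, supported at the origin, whose graded $\Ext$-algebras agree with those of the projectives by construction. In particular each such object is a spherical object in the Seidel-Thomas sense, and its spherical twist is an autoequivalence of the equivariant derived category.

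Third, one assembles these spherical twists into a group action. By Theorem B the twists on the algebraic side satisfy the defining relations of $G^d_s$, and since Koszul duality intertwines the twist operations the corresponding twists on the geometric side satisfy the same relations. This yields the desired homomorphism
\[ G^d_s\to\Aut\DbG(\Sym(\C^{d+1})\#G\mMod) \]
with generators acting as spherical twists.

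The main obstacle is the first step: carrying out the character-and-degree bookkeeping to check that Iyama's inductive construction of the type $A$ higher representation finite algebra produces exactly the Koszul dual of the relevant piece of $\Sym(\C^{d+1})\#G$, precisely in the regime $\min\{n_i\}\geq s+1$. Once this Koszul-dual pairing is established, transporting the $G^d_s$-action along the resulting equivalence and recognising the transported generators as Seidel-Thomas twists are comparatively formal consequences of the preservation of endomorphism and extension algebras under Koszul duality.
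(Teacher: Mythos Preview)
Your outline has a genuine gap: there is no direct Koszul duality between $Z^d_s$ and $B=\Sym(\C^{d+1})\#G$, and you cannot transport the action along a functor that does not exist.  The Koszul dual of $Z^d_s$ is the finite-dimensional algebra $\Pi^d_s$, not $B$ (which is infinite-dimensional).  Knowing that the total $\Ext$-algebra of your chosen spherical objects in $\Db(B)$ is isomorphic to $Z^d_s$ is not by itself enough to deduce that the spherical twists in $\Db(B)$ satisfy the relations of $G^d_s$: spherical twists are defined using the whole ambient category, not just the $\Ext$-algebra of the sphericals, so ``same $\Ext$-algebra $\Rightarrow$ same relations'' requires justification.

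The paper bridges this gap with an intermediate algebra.  One first proves (Proposition~\ref{prop:zigskew} and Corollary~\ref{cor:zigskew}) that $A=E(\overline V)\#G$ is itself a higher zigzag algebra, namely $A\cong Z_{d+1}(\Sym(V)\#G)$, and that $A^!\cong B$.  Thus Koszul duality is between the \emph{large} algebras $A$ and $B$.  One then shows (Lemma~\ref{lem:typeaend}) that $eAe\cong Z^d_s$ for a suitable idempotent $e$; this is where the inequality on $\min\{n_i\}$ enters.  Now the lifting theorem (Theorem~\ref{thm:lifting}) applies to the projective $A$-module $Ae$ with endomorphism algebra $Z^d_s$, and lifts the relations of Theorem~B from $\Db(Z^d_s)$ to relations among spherical twists at the projectives $Ae_v$ in $\Db(A\grmod)$.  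Only then does Koszul duality, which is an honest equivalence $\Db(A\grmod)\simeq\Db(B\grmod)$, carry these relations to $\Db(B\grmod)$, sending the projectives $Ae_v$ to (projective resolutions of) the simple $B$-modules $T\otimes W$; finally one forgets the grading.  Your proposal effectively collapses the two steps ``lift along $Z^d_s=eAe\subset A$'' and ``Koszul dualize $A\leftrightarrow B$'' into one, but it is the first of these, via the lifting theorem, that actually does the work of transferring relations to the larger category.
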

Thus we get an action of $G^d_s$ on the derived category of $G$-equivariant coherent sheaves $\DbG(\C^{d+1})$.


\section{Definitions and examples}\label{sec:defsandegs}

Zero is a natural number.

We fix an underlying algebraically closed field $\F$ and, for a vector space $V$, we denote the dual space $\Hom_\F(V,\F)$ by $V^*$.

Modules are by default finitely generated left modules.  Given an algebra $\Lambda$, we can construct its opposite algebra $\Lambda^\op$ and its enveloping algebra $\Lambda^\en=\Lambda\otimes_\F\Lambda^\op$.  By $\Lambda\da\Lambda$ bimodule, we mean left $\Lambda^\en$-module.

If $f$ and $g$ are arrows in a quiver, we denote the composite path $\arr{f}\arr{g}$ by $fg$.  
But our functions act on the left, so the composite function $\arr{f}\arr{g}$ is written $gf$.

\subsection{Definitions and basic facts}\label{subsec:defs}

Given any algebra $\Lambda$ and a $\Lambda\da\Lambda$-bimodule $M$, there is a well-known way to construct another algebra called a \emph{trivial extension of $\Lambda$ by $M$}, denoted $\Lambda\ltimes M$.  Its underlying vector space is $\Lambda\oplus M$, and its multiplication is given by $(a,m)(b,n)=(ab,mb+an)$.  
One also talks about \emph{``the'' trivial extension algebra} of $\Lambda$, which is $\Triv(\Lambda)=\Lambda\ltimes\Lambda^*$.

If $\Lambda$ and $\Gamma$ are algebras and $\varphi:\Gamma\to\Lambda$ is an algebra morphism, then from any left $\Lambda$-module $M$ we can construct a left $\Gamma$-module $\tensor[_\varphi]{M}{}$ with the same underlying vector space as $M$ and left action twisted by $\varphi$ as follows: $\gamma\cdot m=\varphi(\gamma)m$.  In the same way, we can define twisted right modules and twisted bimodules.  
Note that if $\varphi$ is an inner automorphism, so $\varphi(a)=uau^{-1}$ for some invertible $u\in\Lambda$, then the map $a\mapsto ua$ is a bimodule isomorphism $\Lambda\arr\sim\tensor[_{\varphi}]{\Lambda}{}$.

\begin{definition}
For $\varphi\in\Aut(\Lambda)$, the \emph{twisted trivial extension} of $\Lambda$ by $\varphi$ is $\Triv_\varphi(\Lambda)=\Lambda\ltimes {\tensor[_\varphi]{{\Lambda^*}}{}}$.
\end{definition}

Recall that an algebra is called \emph{Frobenius} if the left regular module is isomorphic to the dual of the right regular module.  If an algebra $A$ is Frobenius then there exists an automorphism $\alpha$ of $A$, called the \emph{Nakayama automorphism}, such that $A^*\cong A_\alpha$ as $A\da A$-bimodules.
The Nakayama automorphism is well-defined up to inner automorphisms.  If the identity is a Nakayama automorphism, we say that $A$ is a \emph{symmetric} algebra.
Note that some authors  
call $\alpha^{-1}$ the Nakayama automorphism and use the equivalent isomorphism $A^*\cong\tensor[_{\alpha^{-1}}]{A}{}$ of $A\da A$-bimodules.

\begin{proposition}\label{prop:twisttrivnak}
Twisted trivial extensions of finite-dimensional algebras are Frobenius algebras with Nakayama automorphism $\alpha\left((a,f)\right)=(\varphi(a),f\circ\varphi^{-1})$.
\end{proposition}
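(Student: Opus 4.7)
The plan is to produce an explicit nondegenerate associative bilinear form on $A = \Triv_\varphi(\Lambda)$, which establishes the Frobenius property, and then verify directly that $\alpha$ is the associated Nakayama automorphism. Define the linear functional $\lambda \colon A \to \F$ by $\lambda((a,f)) = f(1)$, exactly as for the untwisted trivial extension, and set $\langle x, y\rangle = \lambda(xy)$.

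First I would unwind the multiplication: $(a,f)(b,g) = (ab,\, fb + \varphi(a) g)$, where $\varphi(a)g$ denotes the usual left action of $\Lambda$ on $\Lambda^*$ and $fb$ the usual right action. A short calculation gives
\[
\langle (a,f), (b,g) \rangle \;=\; (fb)(1) + (\varphi(a)g)(1) \;=\; f(b) + g(\varphi(a)).
\]
To see nondegeneracy, suppose this vanishes for all $(b,g)$. Taking $b=0$ forces $g(\varphi(a)) = 0$ for every $g \in \Lambda^*$, so $\varphi(a) = 0$, and since $\varphi \in \Aut(\Lambda)$ we get $a = 0$. The remaining condition $f(b) = 0$ for all $b$ then forces $f = 0$. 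Associativity of the form is automatic from the fact that $\lambda$ arises via multiplication in $A$. Hence $A$ is Frobenius.

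Next I would check that the prescribed map $\alpha((a,f)) = (\varphi(a), f\circ\varphi^{-1})$ is an algebra automorphism. Bijectivity is immediate since $\varphi$ is an automorphism. For multiplicativity, apply $\alpha$ to the product $(a,f)(b,g)$ and compare with $\alpha((a,f))\alpha((b,g))$. The first components obviously agree. For the second components, the key identities are
\[
(fb)\circ\varphi^{-1} = (f\circ\varphi^{-1})\,\varphi(b), \qquad (\varphi(a) g)\circ\varphi^{-1} = \varphi(\varphi(a))\,(g\circ\varphi^{-1}),
\]
each verified by evaluating at an arbitrary $x \in \Lambda$ and using $\varphi^{-1}(xy) = \varphi^{-1}(x)\varphi^{-1}(y)$.

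Finally, I would verify the Nakayama identity $\lambda(xy) = \lambda\bigl(y\,\alpha(x)\bigr)$, from which $A^* \cong A_\alpha$ as bimodules follows in the standard way. The left-hand side has already been computed as $f(b) + g(\varphi(a))$. For the right-hand side, expand
\[
(b,g)\,\alpha((a,f)) = (b,g)(\varphi(a), f\circ\varphi^{-1}) = \bigl(b\varphi(a),\; g\varphi(a) + \varphi(b)(f\circ\varphi^{-1})\bigr),
\]
apply $\lambda$, and use that $(g\varphi(a))(1) = g(\varphi(a))$ and $\bigl(\varphi(b)(f\circ\varphi^{-1})\bigr)(1) = (f\circ\varphi^{-1})(\varphi(b)) = f(b)$. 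The two sides match.

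None of this is hard; the only real thing to keep honest is the bookkeeping of the twist, in particular that $\varphi$ appears in the left action on $\Lambda^*$ while $\varphi^{-1}$ appears in $\alpha$, and that the directions work out for both the multiplicativity of $\alpha$ and the Nakayama identity. That is where a careless sign or a flipped $\varphi$ would bite, and it is the one place in the argument worth writing out symbol by symbol.
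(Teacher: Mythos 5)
Your proof is correct and takes essentially the same approach as the paper: construct the bilinear form $((a,f),(b,g)) = f(b) + g(\varphi(a))$, verify nondegeneracy and associativity, and check the Nakayama identity $(x,y) = (y,\alpha(x))$. You also explicitly verify that $\alpha$ is an algebra automorphism, which the paper leaves implicit; that is a worthwhile added check given the asymmetry introduced by the twist.
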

\begin{proof}
This is a simple generalization of the usual proof that the trivial extension is symmetric.  It is sufficient to construct a nondegenerate associative bilinear form $(-,-):\Triv_\varphi(\Lambda)\times\Triv_\varphi(\Lambda)\to\F$ satisfying $(x,y)=(y,\alpha(x))$ (see, for example, \cite[Section 2.2]{kock} or \cite[Section IV.3]{sy}).  Our form is defined by $((a,f),(b,g))=f(b)+g(\varphi(a))$.  One checks that it is nondegenerate on $(a,0)$ and $(0,f)$ separately, then verifies that it is associative and that it is symmetric under twisting by the given Nakayama automorphism.
\end{proof}

Now let $\Lambda=\bigoplus_{i\geq0}\Lambda_i$ be a graded $\F$-algebra which is generated in degree $1$, i.e., for all $n\geq2$, the image of the tensor product $\Lambda_1\otimes_\F\cdots\otimes_F\Lambda_1$ of $n$ copies of $\Lambda_1$ under repeated application of the multiplication map is precisely $\Lambda_n$.  We also assume that $\Lambda_0=S$ is a semisimple $\F$-algebra.
$\Lambda$ is called \emph{Koszul} if $S$ has a linear projective resolution, i.e., a projective resolution with graded (degree $0$) maps where the $i$th resolving projective module is generated in degree $i$.  Recall that Koszul algebras are quadratic, so they have a quadratic dual $\Lambda^!$ which has the same semisimple base ring, the dual space of generators, and the orthogonal space of relations.

For any graded algebra $\Lambda$
we have an automorphism $\zeta\in\Aut(\Lambda)$ defined by $\zeta(a)=(-1)^ia$ for $a\in\Lambda_i$ a homogeneous element of $\Lambda$.  For a graded module $M=\bigoplus_{i\in\Z}M$, we write $M\grsh{1}$ for the module shifted ``upwards'', so $(M\grsh{i})_j=M_{i+j}$.  The following definition was introduced in \cite[Section 5]{gi}:
\begin{definition}
The \emph{$(d+1)$-trivial extension} of a finite-dimensional graded algebra $\Lambda$, denoted $\Trivdpo(\Lambda)$, is the trivial extension of $\Lambda$ by $\tensor[_{\zeta^d}]{\Lambda^*\grsh{-d-1}}{}$.
\end{definition}
So, if we forget the grading, the \emph{$(d+1)$-trivial extension} of $\Lambda$ is just the twisted trivial extension of $\Lambda$ by $\zeta^d$.

Explicitly, $\Trivdpo(\Lambda)$ is the graded vector space $\Lambda\oplus\Lambda^*\grsh{-d-1}$ with multiplication given by 
\[(a,f)(b,g)=(ab,fb+(-1)^{di}ag)\]
for $a\in\Lambda_i$.

We note that $(d+1)$-trivial extensions are similar to (though not the same as) the graded-symmetric algebras considered by Reyes, Rogalski, and Zhang \cite{rrz}.

We are most interested in the case where $d$ is the (finite) global dimension of $\Lambda$.  When $d$ is understood, we will sometimes write $\STriv(\Lambda)$ instead of $\Trivdpo(\Lambda)$.  The ``S'' stands for ``super''.

We prepare a useful lemma for use later.  Its proof is immediate.
\begin{lemma}\label{lem:idemptriv}
Let $\Lambda$ be a finite-dimensional $k$-algebra and let $e=e^2\in\Lambda$ be an idempotent.  Then we have algebra isomorphisms
\[ \Triv(e\Lambda e)=e\Triv(\Lambda)e \;\;\;\text{ and }\;\;\; \STriv(e\Lambda e)=e\STriv(\Lambda)e. \]
\end{lemma}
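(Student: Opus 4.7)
The plan is to verify both isomorphisms by direct calculation using the Peirce decomposition of $\Lambda$ with respect to $e$. Since $\Lambda$ embeds into $\Triv(\Lambda)$ as the degree-zero part, the idempotent $e$ gives an idempotent $(e,0)\in\Triv(\Lambda)$, and the defining product $(a,f)(b,g)=(ab,fb+ag)$ immediately yields
\[ (e,0)(a,f)(e,0)=(eae,efe), \]
so as a vector space $e\Triv(\Lambda)e=e\Lambda e\oplus e\Lambda^*e$, where $e\Lambda^*e$ denotes the image under the $\Lambda$-bimodule action on $\Lambda^*$.

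The main (and essentially only) content is the identification $e\Lambda^*e\cong(e\Lambda e)^*$ as $e\Lambda e$-bimodules. This comes from the Peirce decomposition
\[ \Lambda=e\Lambda e\;\oplus\;e\Lambda(1-e)\;\oplus\;(1-e)\Lambda e\;\oplus\;(1-e)\Lambda(1-e), \]
whose dual decomposes $\Lambda^*$ into four pieces; the summand cut out by the projectors $f\mapsto efe$ (using $(afb)(x)=f(bxa)$) is exactly the subspace of linear functionals supported on $e\Lambda e$, i.e.\ naturally $(e\Lambda e)^*$. The bimodule action then restricts to the natural $e\Lambda e$-bimodule action on $(e\Lambda e)^*$. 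Under this identification, computing
\[ (eae,efe)(ebe,ege)=(eabe,\,efebe+eaege) \]
matches the trivial extension product in $\Triv(e\Lambda e)$, proving the first isomorphism.

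For the twisted version, the same argument applies verbatim. The crucial observation is that $e\in\Lambda_0$, so $\zeta^d(e)=e$ and $e$ is a homogeneous idempotent in $\STriv(\Lambda)=\Lambda\oplus\tensor[_{\zeta^d}]{\Lambda^*}{}\grsh{-d-1}$. Cutting by $e$ preserves the grading, so the sign $(-1)^{di}$ appearing in the twisted multiplication $(a,f)(b,g)=(ab,fb+(-1)^{di}ag)$ for $a\in\Lambda_i$ is inherited unchanged by $e\STriv(\Lambda)e$, and the restriction of $\zeta^d$ to $e\Lambda e$ is the degree automorphism there. I expect the only step worth pausing over is the bimodule identification $e\Lambda^*e\cong(e\Lambda e)^*$; everything else is bookkeeping, which is why the author records the proof as immediate.
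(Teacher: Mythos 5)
Your proof is correct and is the natural direct verification that the paper elides by declaring the lemma's proof "immediate." The key identification $e\Lambda^*e\cong(e\Lambda e)^*$ via the Peirce decomposition is exactly the right observation, the bimodule structure transports correctly under the restriction map $efe\mapsto f|_{e\Lambda e}$, and your remark that $e\in\Lambda_0$ (so that $\zeta^d(e)=e$ and cutting by $e$ commutes with the twisting automorphism) is precisely what makes the $\STriv$ case follow with no extra work.
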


Let $\gldim\Lambda$ denote the global dimension of $\Lambda$.
We 
now give our main definition.
\begin{definition}\label{def:zigzag}
Let $\Lambda$ be a Koszul algebra with $\gldim\Lambda\leq d<\infty$.
The \emph{$(d+1)$-zigzag algebra} of $\Lambda$ is $Z_{d+1}(\Lambda)=\Trivdpo(\Lambda^!)$.
\end{definition}

We usually consider the case where $\gldim\Lambda= d$.  In this case,
as $d$ is determined by $\Lambda$, we
can talk about the \emph{higher zigzag algebra}, or simply \emph{zigzag algebra}, of $\Lambda$, and denote this $Z(\Lambda)$.

Given any connected simple graph $G$ 
(so $G$ has no loops or multiple edges) Huerfano and Khovanov defined the \emph{zigzag algebra} $A(G)$ of $G$, and showed that it was isomorphic to $\Triv((\F Q)^!)$, where $Q$ is a quiver obtained by taking any orientation of $G$ \cite[Proposition 9]{hk}.  In general, $A(G)\ncong Z(\F Q)$, so our definition differs from that of Huerfano and Khovanov: see Example \ref{eg:affinea2} below.
But the following result shows that the algebras are isomorphic when $G$ is a tree.

\begin{lemma}\label{lem:bip}
If $\Lambda=kQ/I$ is a Koszul algebra graded by path length and the underlying graph of $Q$ is bipartite, then $Z_{d+1}(\Lambda)\cong\Triv(\Lambda^!)$ as ungraded algebras.
\end{lemma}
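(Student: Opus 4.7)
The plan is to reduce the isomorphism to showing that the sign automorphism $\zeta\in\Aut(\Lambda^!)$ is inner. The two algebras $Z_{d+1}(\Lambda)=\Trivdpo(\Lambda^!)$ and $\Triv(\Lambda^!)$ have the same underlying vector space $\Lambda^!\oplus(\Lambda^!)^*$ and differ only in that the multiplication formula for the former twists the left action on $(\Lambda^!)^*$ by $\zeta^d$. If $d$ is even then $\zeta^d=\id$ and there is nothing to prove, so I will assume $d$ is odd and hence $\zeta^d=\zeta$.

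Using the bipartition $Q_0=V_+\sqcup V_-$, I define
\[
u=\sum_{i\in V_+}e_i-\sum_{i\in V_-}e_i\in\Lambda^!,
\]
which is invertible with $u^{-1}=u$ because $u^2=1$. The Koszul dual $\Lambda^!$ has the same vertices as $\Lambda$, and its degree one part is spanned by generators dual to the arrows of $Q$; the underlying graph, and hence the bipartition, is unchanged. For any degree one generator $\alpha$ of $\Lambda^!$ the source and target lie in opposite parts of $V_\pm$, and a direct calculation with the idempotents gives $u\alpha u^{-1}=-\alpha=\zeta(\alpha)$. Since conjugation by $u$ and $\zeta$ are algebra automorphisms of $\Lambda^!$ that also agree in degree zero (both acting trivially on idempotents), agreement on these generators forces them to coincide on all of $\Lambda^!$. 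Thus $\zeta$ is inner.

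Finally, to turn this into the desired algebra isomorphism, define
\[
\Psi:\Trivdpo(\Lambda^!)\to\Triv(\Lambda^!),\qquad (a,f)\mapsto(a,u\cdot f),
\]
where $u\cdot f$ denotes the untwisted left $\Lambda^!$-action on the dual. Compatibility of $\Psi$ with the two multiplications reduces, after cancelling the common term $ufb$, to the identity $u\zeta(a)=au$ in $\Lambda^!$, equivalently $\zeta(a)=u^{-1}au$, which holds since $\zeta(a)=uau^{-1}$ and $u=u^{-1}$. Since left multiplication by $u$ is invertible on $(\Lambda^!)^*$, the map $\Psi$ is bijective and hence an algebra isomorphism. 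The substantive obstacle is the identification of conjugation by $u$ with $\zeta$ on degree one generators, and this is exactly where the bipartite assumption is essential.
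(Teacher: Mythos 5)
Your proof is correct and follows essentially the same approach as the paper: observe that for odd $d$ the automorphism $\zeta^d=\zeta$ of $\Lambda^!$ is inner via conjugation by $u=\sum_{V_+}e_i-\sum_{V_-}e_i$, and hence the twisted and untwisted trivial extensions coincide. You are slightly more careful than the paper in two respects — you explicitly note that $\Lambda^!$ has the same vertex set and bipartition as $\Lambda$ (the paper's proof phrases the inner-automorphism claim in terms of $\Lambda$ rather than $\Lambda^!$), and you write out the isomorphism $\Psi$ and check multiplicativity directly, whereas the paper relies on the earlier remark that an inner automorphism $\varphi(a)=uau^{-1}$ induces the bimodule isomorphism $a\mapsto ua:\Lambda\arr\sim{}_\varphi\Lambda$ — but the substance is identical.
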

\begin{proof}
If $d$ is even then the statement is clear.  So suppose $d$ is odd and the quiver is bipartite with vertex sets $X$ and $Y$.  Let $e_X=\sum_{x\in X}e_x$ and $e_Y=\sum_{y\in Y}e_y$, and let $u=e_X-e_Y$, which is a unit because $u^2=1$.  Then the automorphism $\zeta^d$, which twists $\Lambda$ by adding a $(-1)$ sign to odd degree elements, is an inner automorphism: $\zeta^d(a)=uau$.  Thus $\tensor[_{\zeta^d}]{{\Lambda^*}}{}\cong\Lambda^*$ and so the $(d+1)$-trivial extension is isomorphic to the usual trivial extension.
\end{proof}

We say that a graded Frobenius algebra $A$ is of \emph{Gorenstein parameter $\ell$} if $A^*\cong A\grsh{\ell}$ as left $A$-modules.
\begin{proposition}\label{prop:zzgp}
Higher zigzag algebras are Frobenius of Gorenstein parameter $d+1$ with Nakayama automorphism which squares to the identity.
\end{proposition}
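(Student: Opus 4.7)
The plan is to deduce all three assertions from the explicit form of the twisted trivial extension.  Since $\Lambda$ is Koszul with $\gldim\Lambda\leq d$, the quadratic dual $\Lambda^!$ is finite-dimensional and concentrated in degrees $0,1,\ldots,\gldim\Lambda\leq d$ (a standard consequence of Koszul duality).  By definition, $Z(\Lambda)=\Trivdpo(\Lambda^!)$ is the twisted trivial extension of $\Lambda^!$ by $\varphi=\zeta^d$, so Proposition \ref{prop:twisttrivnak} immediately gives that $Z(\Lambda)$ is Frobenius with Nakayama automorphism
\[ \alpha\bigl((a,f)\bigr) = \bigl(\zeta^d(a),\,f\circ\zeta^{-d}\bigr). \]

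For the claim that $\alpha^2=\id$, the key observation is that $\zeta^2=\id$ on the nose: for a homogeneous element $a\in\Lambda^!_i$ we have $\zeta^2(a)=(-1)^{2i}a=a$.  In particular $\zeta^{\pm d}$ has order dividing $2$, so applying the formula for $\alpha$ twice gives $(\zeta^{2d}(a),f\circ\zeta^{-2d})=(a,f)$, as required.

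For the Gorenstein parameter, I would upgrade the proof of Proposition \ref{prop:twisttrivnak} to the graded setting by checking that the non-degenerate bilinear form
\[ \bigl((a,f),(b,g)\bigr)=f(b)+g(\varphi(a)) \]
used there is graded of total degree $d+1$.  Indeed, under the grading $Z(\Lambda)=\Lambda^!\oplus(\Lambda^!)^*\grsh{-d-1}$, a functional $f$ sitting in the shifted piece in degree $d+1-i$ corresponds to an element of $(\Lambda^!_i)^*$, so $f(b)$ is nonzero precisely when $b\in\Lambda^!_i$, i.e. when the two arguments sit in degrees summing to $d+1$; the same is true of $g(\varphi(a))$ since $\varphi$ is graded.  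Non-degeneracy of this graded pairing then provides the graded left-module isomorphism $Z(\Lambda)^*\cong Z(\Lambda)\grsh{d+1}$ required for Gorenstein parameter $d+1$.

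The steps are essentially bookkeeping once one has the previous proposition in hand; the only subtlety to take care of is the sign conventions — specifically, making sure that the degree of the pairing is tracked through the $\grsh{-d-1}$ shift correctly, and that twisting by $\zeta^{\pm d}$ preserves the grading so that $\alpha$ is a genuine graded automorphism, which it is since $\zeta$ acts on each homogeneous component by a scalar.
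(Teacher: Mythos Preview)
Your proof is correct and follows essentially the same approach as the paper: apply Proposition \ref{prop:twisttrivnak} to the finite-dimensional algebra $\Lambda^!$, observe that $\zeta^2=\id$ so the Nakayama automorphism squares to the identity, and then read off the Gorenstein parameter from the grading shift $\grsh{-d-1}$ in the definition of $\Trivdpo$. The only cosmetic difference is that for the Gorenstein parameter the paper writes down the graded vector space isomorphism $Z(\Lambda)^*=(\Lambda^!\oplus\Lambda^{!*}\grsh{-d-1})^*\cong \Lambda^{!*}\oplus\Lambda^{!**}\grsh{d+1}\cong Z(\Lambda)\grsh{d+1}$ directly, whereas you verify the same thing by checking the bilinear form is homogeneous of degree $d+1$; these are two phrasings of the same computation.
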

\begin{proof}
As $\Lambda$ has finite global dimension, $\Lambda^!$ is a finite-dimensional algebra \cite[Section 2.8]{bgs}, so we can apply Proposition \ref{prop:twisttrivnak}.  As $\varphi=\zeta^d$ squares to the identity, so does the Nakayama automorphism of $Z(\Lambda)$.  The Gorenstein parameter can be seen using the following graded vector space isomorphisms:
\[Z(\Lambda)^*=(\Lambda^!\oplus\Lambda^{!*}\grsh{-d-1})^*\cong \Lambda^{!*}\oplus\Lambda^{!**}\grsh{d+1}\cong Z(\Lambda)\grsh{d+1}.\]  
\end{proof}

Given a left $A$-module $M$, we can define a right $A$-module $M^\vee=\Hom_A(M,A)$.  This can be extended to the graded setting: if $M=\bigoplus_{i\in\Z}M_i$, then let $(M^\vee)_i=\Hom_{A\grmod}(M,A\grsh{i})$.  The following statement is useful when working with graded symmetric algebras.  As its proof is so short, we include it here.
\begin{proposition}\label{prop:grsymduals}
A graded algebra $A$ is symmetric of Gorenstein parameter $\ell$ if and only if there is a natural isomorphism of functors
\[ (?)^\vee \cong (?)^*\grsh{-\ell}:A\grmod\arr\sim \modgr A.\]
\end{proposition}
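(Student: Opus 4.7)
The plan is to transfer between the two contravariant functors via the bimodule $A^*\grsh{-\ell}$, exploiting the graded form of the standard Hom-tensor adjunction $\Hom_{A\grmod}(M,A^*\grsh{j})\cong(M^*)_j$. The unifying observation is that ``symmetric of Gorenstein parameter $\ell$'' is precisely the existence of a graded $A\da A$-bimodule isomorphism $A\cong A^*\grsh{-\ell}$, so both directions amount to passing this bimodule isomorphism through $\Hom_A(M,-)$.

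For the forward direction, starting from such a bimodule isomorphism $A\arr{\sim} A^*\grsh{-\ell}$ I would apply $\Hom_{A\grmod}(M,-\grsh{i})$ degree by degree, producing
\[
(M^\vee)_i=\Hom_{A\grmod}(M,A\grsh{i})\cong\Hom_{A\grmod}(M,A^*\grsh{i-\ell})\cong(M^*)_{i-\ell}=(M^*\grsh{-\ell})_i,
\]
where the middle isomorphism is the graded adjunction. Naturality in $M$ is inherited from functoriality of $\Hom$, and the fact that $A\cong A^*\grsh{-\ell}$ is an isomorphism of \emph{bimodules} is exactly what ensures each step respects the right $A$-module structure, so that the composite is a natural isomorphism of functors $A\grmod\to\modgr A$.

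For the converse, I evaluate the assumed natural isomorphism $\eta:(?)^\vee\arr{\sim}(?)^*\grsh{-\ell}$ at $M=A$. The tautological identification $A\arr{\sim} A^\vee$ sending $c\in A$ to the left $A$-linear map $\rho_c:x\mapsto xc$ is a graded right $A$-module isomorphism, so composing with $\eta_A$ yields $A\cong A^*\grsh{-\ell}$ as graded right $A$-modules, which already establishes Frobenius of Gorenstein parameter $\ell$. To upgrade this to a bimodule isomorphism (and hence conclude symmetry), I apply naturality of $\eta$ to the left $A$-module endomorphisms $\rho_a:A\to A$ for each $a\in A$; a short check shows that $(\rho_a)^\vee$ and $(\rho_a)^*\grsh{-\ell}$ both correspond, under the identifications above, to left multiplication by $a$. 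The resulting commutative square therefore exhibits $\eta_A$ as intertwining the left $A$-actions, so it is a graded bimodule isomorphism.

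The real mathematical input is no more than the graded Hom-tensor adjunction together with a Yoneda-style observation; the principal obstacle is bookkeeping for grading conventions on $(-)^*$, the shift $\grsh{-\ell}$, and the induced left/right $A$-actions on the various dual spaces, all of which must be pinned down before the computations above make unambiguous sense.
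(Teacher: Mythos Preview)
Your proof is correct and follows essentially the same route as the paper: the forward direction is the graded tensor--hom adjunction $\Hom_A(M,A^*)\cong M^*$ applied degreewise after substituting $A\cong A^*\grsh{-\ell}$, and the converse is obtained by evaluating the natural isomorphism at $M=A$. Your treatment of the converse is in fact more careful than the paper's one-line ``$A^\vee\cong A$ gives the result'': you spell out how naturality with respect to the right-multiplication maps $\rho_a$ upgrades the right-module isomorphism $\eta_A$ to a bimodule isomorphism, which is exactly what is needed to conclude that $A$ is symmetric (and not merely Frobenius) of Gorenstein parameter $\ell$.
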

\begin{proof}
If the functors are isomorphic then $A^\vee\cong A$ gives the result.  If $A^*\cong A\grsh{\ell}$ then we use graded tensor-hom adjunctions: 
\[ M^\vee\cong \bigoplus \Hom_{A\grmod}(M,A^*\grsh{-\ell}\grsh{i})\cong \bigoplus \Hom_{\F\grmod}(A\otimes_AM,\F\grsh{i})\grsh{-\ell}=M^*\grsh{-\ell}. \]
\end{proof}
One could extend the above result to graded Frobenius algebras by twisting by the Nakayama automorphism.

Higher zigzag algebras were studied in \cite{gi} in connection with higher preprojective algebras.  For a graded algebra $\Lambda$ of global dimension $d<\infty$, its $(d+1)$-preprojective algebra $\Pi_{d+1}(\Lambda)$ is defined as the tensor algebra of the graded $\Lambda\da\Lambda$-bimodule $\Ext^d_{\Lambda^\en}(\Lambda,\Lambda^\en)\grsh{-d}$ \cite{io-napr,gi}.  The construction is particularly nice for $d$-hereditary algebras, which are finite-dimensional algebras of global dimension $d$ such that the image of the regular module under integral powers of the $d$-shifted Serre functor has homology concentrated in degrees which are multiples of $d$ \cite{hio}.  In particular, $d$-hereditary algebras have the property that $\Ext^i_{\Lambda^\en}(\Lambda,\Lambda^\en)=0$ for $i\neq0,d$.

\begin{theorem}[{\cite[Theorem 5.2]{gi}}]\label{thm:pidualtoz}
Let $\Lambda$ be a  
Koszul algebra of global dimension $d$ 
and let $\Pi$ denote its $(d+1)$-preprojective algebra.  Then 
$\Pi$ is a quadratic algebra and 
there is a 
morphism $\phi:\Pi^!\to Z(\Lambda)$ of graded algebras which is an isomorphism in degrees $0$ and $1$.
Moreover, if $\Ext^i_{\Lambda^\en}(\Lambda,\Lambda^\en)=0$ for $1\leq i\leq d-1$, then $\phi$ is surjectuve.
\end{theorem}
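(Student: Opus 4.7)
The plan is to carry out a Koszul-duality computation, treating $\Pi$ and $Z(\Lambda)$ as approximate quadratic duals of one another. The first task is to identify the Ext bimodule $E := \Ext^d_{\Lambda^\en}(\Lambda, \Lambda^\en)\grsh{-d}$ concretely: apply $\Hom_{\Lambda^\en}(-, \Lambda^\en)$ to the Koszul bimodule resolution of $\Lambda$, whose $i$-th term is $\Lambda \otimes_S (\Lambda^!_i)^* \otimes_S \Lambda$ for the semisimple $S = \Lambda_0$. Since $\gldim\Lambda = d$, the bimodule $E$ appears as the top cokernel, and unwinding the grading shows that its degree-$0$ $S$-bimodule part is identified with $\Lambda^!_d$. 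Consequently, the tensor algebra $\Pi = T_\Lambda(E)$ is generated over $S$ in degree $1$ by $\Pi_1 \cong \Lambda_1 \oplus \Lambda^!_d$, and its relations split into the Koszul relations of $\Lambda$ inside $\Lambda_1 \otimes_S \Lambda_1$ together with mixed quadratic relations encoding the $\Lambda$-bimodule structure on $E$. Both families are quadratic, so $\Pi$ is a quadratic algebra.

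To construct $\phi$, quadratic duality gives $\Pi^!_1 \cong \Lambda^!_1 \oplus (\Lambda^!_d)^*$, which matches $Z(\Lambda)_1 = \Lambda^!_1 \oplus (\Lambda^!_d)^*$ canonically. Define $\phi$ on $\Pi^!_1$ by this identification and extend multiplicatively. One then checks that the orthogonal relations of $\Pi^!$ hold in $Z(\Lambda)$: the pure $\Lambda^!_1$-piece is automatic since $\Lambda^! \subseteq Z(\Lambda)$ is a subalgebra, while the mixed piece compares the $\Lambda$-action on $E$ with the $\Lambda^!$-action on $(\Lambda^!)^*$ appearing in the trivial-extension multiplication; these agree via the same adjunction used to identify $E$, together with the sign twist by $\zeta^d$ built into the definition of $\Trivdpo(\Lambda^!)$. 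Thus $\phi$ is a well-defined graded algebra map, and an isomorphism in degrees $0$ and $1$ by construction.

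For surjectivity, note that the image of $\phi$ is the subalgebra of $Z(\Lambda)$ generated by $Z(\Lambda)_1$. The Koszul summand $\Lambda^! \subset Z(\Lambda)$ is automatically in the image since $\Lambda^!$ is generated in degree $1$. It remains to show that the dual summand $(\Lambda^!)^*\grsh{-d-1}$ is generated from $(\Lambda^!_d)^*$ by the $\Lambda^!_1$-action, equivalently that each left-multiplication map $\Lambda^!_1 \otimes (\Lambda^!_k)^* \to (\Lambda^!_{k-1})^*$ is surjective for $1 \leq k \leq d$. Dually, this asks that no nonzero element of $\Lambda^!_{k-1}$ is right-annihilated by all of $\Lambda^!_1$. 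I expect the hardest step to be the Koszul-duality translation of these socle-type conditions, but unwinding the bimodule resolution from the first step identifies their failure precisely with a non-vanishing of $\Ext^{d-k+1}_{\Lambda^\en}(\Lambda, \Lambda^\en)$; so the hypothesis $\Ext^i(\Lambda, \Lambda^\en) = 0$ for $1 \leq i \leq d-1$ is exactly what is needed.
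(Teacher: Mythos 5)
The paper does not actually prove this theorem; it is imported wholesale from \cite[Theorem 5.2]{gi}, so there is no ``paper's own proof'' to compare against here. Evaluating your outline on its own terms: the overall strategy (identify $E = \Ext^d_{\Lambda^\en}(\Lambda,\Lambda^\en)\grsh{-d}$ via the Koszul bimodule resolution, present $\Pi = T_\Lambda(E)$ by generators $\Lambda_1 \oplus \Lambda^!_d$, dualize, and compare with $\Trivdpo(\Lambda^!)$) is exactly the right shape, and the identification $\Pi_1 \cong \Lambda_1 \oplus \Lambda^!_d$ (with respect to the appropriate total grading, a switch you leave implicit) is correct. But the claim that ``$\Pi$ is a quadratic algebra'' is the load-bearing point of the whole statement, and you dispose of it in one sentence. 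The content here is that the $\Lambda$-bimodule $E$, presented as a cokernel $P_{d-1}^\vee \to P_d^\vee$, has bimodule relations generated by the image of $\Lambda^!_{d-1}$, which lands in $\Lambda_1 \otimes_S \Lambda^!_d \oplus \Lambda^!_d \otimes_S \Lambda_1$ precisely because the Koszul differential is linear; this is what makes the mixed relations quadratic in the total grading. Without making this precise, the assertion that ``its relations split into\ldots'' is exactly the thing one needs to prove.

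The surjectivity argument has a genuine indexing error and a missing endpoint. Writing the Koszul complex $\Lambda \otimes_S \Lambda^!_\bullet \otimes_S \Lambda$ computing $\Ext^\bullet_{\Lambda^\en}(\Lambda,\Lambda^\en)$ and extracting the internal degree $-m$ component, one finds that the \emph{two-sided} socle of $\Lambda^!$ in degree $m$ is $\Ext^m_{\Lambda^\en}(\Lambda,\Lambda^\en)$ in that internal degree, not $\Ext^{d-m}$. So the socle-vanishing in degrees $m = 1, \ldots, d-1$ comes from $\Ext^1 = \cdots = \Ext^{d-1} = 0$, matching the hypothesis; but the step hitting $(\Lambda^!_0)^*$ (the socle of $Z(\Lambda)$ in degree $d+1$) requires the two-sided socle of $\Lambda^!$ in degree $0$ to vanish, which is governed by $\Ext^0$, not by any hypothesis of the theorem — it needs a separate observation (e.g., hitting $(\Lambda^!_0)^*$ directly as $\Lambda^!_d \cdot (\Lambda^!_d)^*$ using the perfect pairing on $\Lambda^!$ coming from the trivial-extension structure). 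Moreover, you phrase the condition as a \emph{right}-annihilator condition; the $\Ext$-vanishing only yields the weaker two-sided statement, which fortunately is all that is needed since the target $(\Lambda^!_{k-1})^*$ can be hit from either side of the $\Lambda^!$-action on $(\Lambda^!)^*$. As written, the indexing ($\Ext^{d-k+1}$ instead of $\Ext^{k-1}$), the one-sided/two-sided mismatch, and the unaddressed $k=1$ endpoint leave a genuine gap in the surjectivity part.
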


The previous theorem gives some justification for the Koszul condition in Definition \ref{def:zigzag}.

\subsection{Some examples}\label{subsec:egs}

For our first example we take $\Lambda=\F$, so $\gldim\Lambda=0$.  Then $\Lambda=\Lambda^!$, and $Z_{d+1}(\F)\cong\F[x]/(x^2)$, with $x$ in degree $d+1$.  This is the main example where it can be useful to consider $d\neq\gldim\Lambda$.

Next, let $Q$ be a quiver, 
so $\Lambda=\F Q$ is a hereditary algebra with 
$\gldim\Lambda\leq1$.  Then $\Lambda$ is Koszul with respect to its path length grading.
Thus we can construct $2$-zigzag algebras of quivers.

If the underlying graph $G$ of $Q$ is simple bipartite then we have already seen by Lemma \ref{lem:bip} that $Z_2(\Lambda)$ is isomorphic to the zigzag algebra $A(G)$ of $G$ as defined by Huerfano and Khovanov.  In particular, $Z_2(\F)\cong A(\bullet)=\F[x]/(x^2)$, with $x$ in degree $2$.

In general, suppose $Q$ has vertex set $Q_0$ and arrow set $Q_1$.  If $\alpha\in Q_1$, we write $s(\alpha)$ and $t(\alpha)$ for its source and target, respectively.  Let $\overline{Q}$ denote the doubled quiver of $Q$, which has arrow set $\{\alpha,\alpha^*\st\alpha\in Q_1\}$ where $s(\alpha^*)=t(\alpha)$ and $t(\alpha^*)=s(\alpha)$.  Then, writing $x_i=e_i^*$, one easily checks the following result:
\begin{proposition}
$Z_2(\F Q)$ has basis indexed by $Q_0\cup \overline{Q}_1\cup \{x_i\st i\in Q_0\}$ where the only nonzero multiplications of positively graded basis vectors are $\alpha\alpha^*=x_{s(\alpha)}$ and $\alpha^*\alpha=-x_{t(\alpha)}$.
\end{proposition}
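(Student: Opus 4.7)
The plan is to unwind the twisted trivial extension $\Triv_2(\Lambda^!)$ and read off the multiplication. First I would identify $\Lambda^!=(\F Q)^!$ explicitly: since $\F Q$ is hereditary with no quadratic relations, its Koszul dual has the full space of quadratic expressions as relations, so $\Lambda^!$ is concentrated in degrees $0$ and $1$, with basis $\{e_i\}_{i\in Q_0}$ in degree $0$ and $\{\alpha^*\}_{\alpha\in Q_1}$ in degree $1$, and all length-two products zero. Dually, $(\Lambda^!)^*$ has basis $\{e_i^*\}\cup\{(\alpha^*)^*\}$, which after the shift $\grsh{-2}$ sit in degrees $2$ and $3$ of $Z_2(\F Q)=\Lambda^!\oplus\tensor[_\zeta]{(\Lambda^!)^*\grsh{-2}}{}$. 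Setting $x_i=e_i^*$ and identifying $(\alpha^*)^*$ with the arrow $\alpha\in\overline{Q}_1$, justified by a one-line computation of the idempotent actions showing that $(\alpha^*)^*$ sits at the bimodule position $(s(\alpha),t(\alpha))$, already accounts for the claimed basis.

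Next I would compute the products. Products within $\Lambda^!$ add nothing new in positive degree, and products of two bimodule elements are zero in a trivial extension. The only interesting case is a product of an algebra element and a bimodule element, governed by $(a,f)(b,g)=(ab,fb+(-1)^{i}ag)$ with $d=1$ and $a\in\Lambda^!_i$. For $\alpha\cdot\alpha^*=(0,(\alpha^*)^*)(\alpha^*,0)$ no sign appears, and the right bimodule action $(f\cdot b)(\mu)=f(b\mu)$ forces $\mu=e_{s(\alpha)}$, producing $x_{s(\alpha)}$. For $\alpha^*\cdot\alpha=(\alpha^*,0)(0,(\alpha^*)^*)$ the sign is $(-1)^{1\cdot 1}=-1$ because $\alpha^*$ has degree $1$, and the left action $(a\cdot f)(\mu)=f(\mu a)$ forces $\mu=e_{t(\alpha)}$, producing $-x_{t(\alpha)}$. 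For distinct arrows $\alpha\neq\beta$, the analogous evaluation $(\alpha^*)^*(\beta^*)=0$ kills the cross-products, so no other positively-graded products are nonzero.

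The main obstacle is purely bookkeeping: staying consistent about the direction of $\alpha^*$ in $\overline{Q}$ (from $t(\alpha)$ to $s(\alpha)$) versus the direction of the identified element $\alpha=(\alpha^*)^*$ (from $s(\alpha)$ to $t(\alpha)$), and tracking where the $\zeta$-twist contributes the sign $(-1)^{d\deg a}$. Once these conventions are pinned down, every individual check reduces to a routine dual-basis evaluation.
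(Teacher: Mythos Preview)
Your approach is exactly what the paper has in mind: it merely says ``writing $x_i=e_i^*$, one easily checks the following result'', and your proposal is precisely that easy check carried out in detail. The identification of $(\alpha^*)^*$ with the arrow $\alpha$ in $\overline{Q}$ and the dual-basis evaluations for $\alpha\alpha^*$ and $\alpha^*\alpha$ are correct.

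One bookkeeping slip to fix: the elements $(\alpha^*)^*$ and $e_i^*$ sit in degrees $1$ and $2$ of $Z_2(\F Q)$, not $2$ and $3$. With the paper's conventions $(M^*)_i=(M_{-i})^*$ and $(M\grsh{n})_j=M_{n+j}$, the algebra $\Lambda^!$ lives in degrees $0,1$, so $(\Lambda^!)^*$ lives in degrees $-1,0$, and after the shift $\grsh{-2}$ these land in degrees $1,2$. This is consistent with $\alpha$ and $\alpha^*$ both being arrows of degree $1$ in $\overline{Q}$ and with the product $\alpha\alpha^*=x_{s(\alpha)}$ sitting in degree $2$. Your actual product computations are unaffected by this slip, since the sign $(-1)^{d\cdot i}$ uses the degree $i$ of $a$ inside $\Lambda^!$, which you handled correctly.
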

Therefore, for path algebras of simple quivers, our definition specializes to certain examples of \emph{skew-zigzag algebras} \cite[Section 4.6]{hk}; see also \cite{c-skew}.

The following is an example where $\Triv_2(\F Q)\ncong\Triv(\F Q)$.  This illustrates the phenomenon stated in \cite[Section 4.6]{hk} that a cycle in the underlying graph of the quiver means that the choice of signs is important.
\begin{example}\label{eg:affinea2}
Let $Q$ be the following quiver of affine Dynkin type $\tilde{A_2}$:
\[\xymatrix{
 &2\ar[rd]^{\alpha_2} &\\
1\ar[ur]^{\alpha_1} &&3\ar[ll]^{\alpha_3}
}\]
Then $A=Z_2(\F Q)$ and $B=\Triv((\F Q)^!)$ both have basis given by 
$e_i,\alpha_i,\alpha^*_i,x_i$ for $i=1,2,3$: 
the difference is that in $B$ we have $\alpha_i^*\alpha_i=x_{i+1}$.  If we had a graded algebra isomorphism $A\arr\sim B$ it would have to permute the idempotents $e_i$, and their images would determine the images of the arrows up to scalars.  By symmetry, we can assume that $e_i$ is sent to $e_i$.  Then if $\alpha_i$ is sent to $\lambda_i\alpha_i$ and $\alpha_i^*$ is sent to $\mu_i\alpha_i^*$, the relation $\alpha_i\alpha_i^*=-\alpha_{i-1}^*\alpha_{i-1}$ in $A$ shows that $\lambda_i\mu_i=-\lambda_{i-1}\mu_{i-1}$.  So $\lambda_1\mu_1=-\lambda_2\mu_2=\lambda_3\mu_3=-\lambda_1\mu_1$, and so a graded algebra isomorphism cannot exist unless $\operatorname{char}\F=2$.

Note that $Z_2(\F Q)^!$ is the classical preprojective algebra of type $\tilde{A_2}$, and the fact that the isomorphism class of this algebra depends on the signs used in the definition is well-known.
\end{example}

\begin{example}\label{eg:ausa3bip}
Next consider the Auslander algebra $\Lambda$ of the type $A_3$ quiver with a unique source.  We have $\Lambda\cong \F Q/I$ where $Q$ is the quiver
\[\xymatrix @=15pt {
 1\ar[dr]\ar@{--}[rr] && 4\ar[dr]\\
 &2\ar[rd]\ar[ru]\ar@{--}[rr] &&5\\
3\ar[ur]\ar@{--}[rr] &&6\ar[ur]
}\]
and the zero and commutativity relations are indicated by dashed lines.  Then $\Lambda$ has global dimension $2$ but is not $2$-hereditary: for example, $\nu_2^3(\Lambda e_4)$ has homology in two adjacent degrees.  We do have $\Ext^1_{\Lambda^\en}(\Lambda,\Lambda^\en)=0$, and thus the algebra morphism $\phi:\Pi^!\to Z_3(\Lambda)$ is surjective.  It is not however injective.

Note that, by Lemma \ref{lem:idemptriv}, $Z_3(\Lambda)$ is isomorphic to the opposite endomorphism algebra of a projective module for the type $A$ higher zigzag algebra $Z^2_4$ defined below.
\end{example}

We now consider a different flavour of examples.
\begin{example}\label{eg:exterior}
Let $\Lambda=\F[y_1,\ldots,y_d]$ be the polynomial algebra in $d$ generators.  Then $\Lambda$ is Koszul, and its Koszul dual $\Lambda^!$ is the exterior algebra in $d$ generators $E_d=\F\gen{x_1,\ldots,x_d}/(x_ix_j+x_jx_i,x_i^2)$, where $x_i=y_i^*$.

We claim that $Z_{d+1}(\Lambda)$ is the exterior algebra in $d+1$ generators.  We define an algebra map $\F\gen{x_1,\ldots,x_d}\to Z_d(\Lambda)$ by $x_i\mapsto (x_i,0)$ for $1\leq i\leq d$ and $x_{d+1}\mapsto (0,(x_1\ldots x_d)^*)$.  It is easy to check that the relations of $E_{d+1}$ are satisfied, so we get a map $E_{d+1}\to Z_{d+1}(\Lambda)$.  As the generator $x_1\ldots x_{d+1}$ of the socle of $E_{d+1}$ is sent to 
$\pm(0,1^*)\neq0$, the map is injective, so as the dimensions agree the map is an isomorphism.

We note that, using methods of \cite{gi}, one can check that $\Pi_{d+1}(\Lambda)\cong\F[y_1,\ldots,y_{d+1}]$.  So we have $Z_{d+1}(\Lambda)\cong\Pi_{d+1}(\Lambda)^!$.
\end{example}

\subsection{PBW theory for quivers}\label{ss:pbw}

In his original paper on Koszul algebras \cite{pri}, Priddy showed that algebras which admit (some generalization of) a PBW basis are Koszul.  A modern treatment can be found in Chapter 4 of the book of Loday and Vallette \cite{lv}, and also in Chapter 4 of the book of Polishchuk and Positselski \cite{pp}.  The treatments of this theory usually assume that that the algebra is connected, i.e., its degree $0$ part is just a field, so are not applicable to algebras constructed from quivers with more than one vertex.  
We will show that the theory of PBW bases makes sense over a semisimple base ring $S$ and, once we have the correct statements, the proofs immediately carry over to this setting.  Roughly, this means that additional vertices don't cause additional problems.

Let $V = V_1 \oplus V_2 \oplus \cdots \oplus V_n$ be an $S\da S$-bimodule
and let $\L$ denote the set of lists $L=(i_1,i_2,\ldots,i_\ell)$ of integers between $1$ and $n$.  Then $\L$ is a monoid under concatenation of lists, with identity element the empty list $\emptyset$.  We write
\[ V_{(i_1,i_2,\ldots,i_\ell)}=V_{i_1}\otimes_S V_{i_2} \otimes_S \cdots \otimes_S V_{i_\ell}. \]
In particular, $V_\emptyset=S$.  Then our algebra $\Tens_S(V)$ is graded by the monoid $\L$ and, because
\[ V^\ell=\bigoplus_{L=(i_1,i_2,\ldots,i_\ell)}V_L, \]
this $\L$-grading refines the tensor grading (which, in this situation, is usually called the \emph{weight} grading).

Let $<$ be any total order on $\L$ which refines the partial order given by length of lists and which satisfies the following property: 
\[ \text{if $L_1<L_2$ and $L_3<L_4$ then $L_1L_3<L_2L_4$.} \]
Suppose $\Lambda$ is a quotient of $\Tens_S(V)$ by a quadratic ideal, so $\Lambda$ inherits a grading from $\Tens_S(V)$.  Define
\[ \Tens_S(V)_{\leq L}=\bigoplus_{L'\leq L}V_{L'} \]
and let $\Lambda_{\leq L}$ denote the image of $\Tens_S(V)_{\leq L}$ under the canonical surjection $\pi:\Tens_S(V)\onto\Lambda$.  Define $\Lambda_{<L}$ similarly.  Finally, define 
\[ \gr_L\Lambda = \Lambda_{\leq L}/\Lambda_{< L}, \]
a quotient of $S\da S$-bimodules.  
From here we can define a new $S$-algebra called the \emph{associated graded algebra}.  Its underlying $S\da S$-bimodule is
\[ \gr\Lambda = \bigoplus_{L\in\L}\gr_L\Lambda. \]
As an ungraded $S\da S$-bimodule this is isomorphic to $\Lambda$, and thus is independent of the order $<$ on $\L$.  The multiplication, which depends strongly on $<$, is defined as follows.
The product of homogeneous elements $x\in\gr_{L_x}\Lambda$ and $y\in\gr_{L_y}\Lambda$ is defined by taking lifts $x'\in\Lambda_{\leq L_x}$ and $y'\in\Lambda_{\leq L_y}$ (that is, $\pi(x')+\Lambda_{<L_x}=x$ and $\pi(y')+\Lambda_{<L_y}=y$),
and then setting 
\[ xy=x'y'+\Lambda_{<L_xL_y}. \]
This is well-defined and the associated graded algebra is $\L$-graded.  As the $\L$-grading refines the weight grading, the associated graded algebra is also weight graded.

The following result is key:
\begin{proposition}\label{prop:assoc-grad-koszul}
If $\gr\Lambda$ is Koszul with respect to its weight grading, then $\Lambda$ is also Koszul.
\end{proposition}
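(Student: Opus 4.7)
The plan is to recast Koszulity as the vanishing of $\operatorname{Tor}^\Lambda_i(S,S)_j$ for $i\neq j$ (with $j$ the weight grading), and to transfer this vanishing from $\gr\Lambda$ to $\Lambda$ by a spectral-sequence argument coming from the $\L$-filtration of the bar complex. The essential input is that $\Lambda$ and $\gr\Lambda$ coincide as weight-graded $S\da S$-bimodules and differ only in how multiplication twists within each weight, so the only real task is to compare homological invariants across this comparison of multiplications.

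Concretely, I would take the reduced bar complex $\bar\Lambda^{\otimes \bullet}$ (tensor products over $S$, with $\bar\Lambda=\Lambda/S$), whose homology is $\operatorname{Tor}^\Lambda_\bullet(S,S)$. The $\L$-filtration on $\Lambda$ descends to $\bar\Lambda$, and the multiplicativity axiom on the total order ($L_1<L_2,\; L_3<L_4 \Rightarrow L_1L_3 < L_2L_4$) ensures both the iterated tensor products and the bar differential preserve this filtration. By the very definition of the associated graded multiplication, the associated graded complex is nothing other than the bar complex of $\gr\Lambda$. Hence one obtains a spectral sequence with $E^1$-page $\operatorname{Tor}^{\gr\Lambda}_\bullet(S,S)$ converging to $\operatorname{Tor}^\Lambda_\bullet(S,S)$, and the weight grading, being refined by the $\L$-filtration and preserved by the bar differential, survives on every page.

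Now Koszulity of $\gr\Lambda$ forces $E^1_{i,j}=0$ whenever $i\neq j$, so the abutment $\operatorname{Tor}^\Lambda_i(S,S)_j$ vanishes for $i\neq j$ as well, which is the Koszul property for $\Lambda$. The main point to verify carefully is the identification of the associated graded of the filtered bar complex with the bar complex of $\gr\Lambda$, which relies precisely on the multiplicativity axiom on the order; convergence of the spectral sequence is automatic since within each fixed weight the $\L$-filtration has only finitely many nontrivial steps. The semisimple base $S$ introduces no new subtlety, as everything is $S$-bilinear and the argument parallels the connected case in Priddy, Loday--Vallette, and Polishchuk--Positselski essentially verbatim once the $S\da S$-bimodule bookkeeping is set up.
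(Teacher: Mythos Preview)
Your proposal is correct and is precisely the approach the paper takes: the paper's proof simply cites the spectral sequence argument of Priddy (Section 5) as presented in Loday--Vallette, Proposition 4.2.3, noting that it works unchanged over a semisimple base ring $S$. You have spelled out that argument in detail---filtering the reduced bar complex by the $\L$-order, identifying the associated graded with the bar complex of $\gr\Lambda$, and reading off the Tor vanishing from the $E^1$-page---which is exactly what those references do.
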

\begin{proof}
The spectral sequence argument of \cite[Section 5]{pri}, as described in \cite[Proposition 4.2.3]{lv}, works over the semisimple base ring $S$ with no changes.
\end{proof}

Now let $Q$ be a quiver and let $Q_i$ denote the paths of length $i$.  Suppose $\Lambda$ is a quotient of $\F Q$ by a homogeneous ideal $I\subseteq \F Q_{\ge2}$.
Let $\B=\bigsqcup_{i\geq0}\B_i$ be a basis of $\Lambda$ consisting of paths, so $\B_0=Q_0$, $\B_1=Q_1$, and $\B_i\subseteq Q_i$.  Let $<$ be a total order on $Q_1$, which we extend lexicographically to a total order on each $\B_i$ with $i>0$, and then to $\B_+=\bigcup_{i>0}\B_i$ by refining the degree order.  

The following definition is adapted from \cite[5.1]{pri}.
\begin{definition}\label{def:pbw-priddy}
We say that $(\B,<)$ is a \emph{PBW basis} of $\Lambda$ if:
\begin{itemize}
\item whenever $p$ and $q$ are paths in $\B$ then either $pq$ is also in $\B$ or $pq\in\Lambda$ is a linear combination of basis elements $r$ with $r<pq$, and
\item
 for each $i\geq3$ and each path $\alpha_1\alpha_2\ldots\alpha_i\in Q_i$, we have 
$ \alpha_1\alpha_2\ldots\alpha_i\in \B $ if and only if, for each $1\leq j\leq i-1$, we have 
$ \alpha_1\alpha_2\ldots\alpha_j\in \B $ and $ \alpha_{j+1}\alpha_{j+2}\ldots\alpha_i\in \B$.
\end{itemize}
\end{definition}

One can define PBW bases in a different way which matches more closely the treatments in \cite{lv} and \cite{pp}.  The difference is our starting point: either we start with a basis and ask whether it has the desired properties, or we start with a spanning set with the desired properties and ask whether it is a basis.
\begin{proposition}\label{prop:pbw-equiv}
Given a total order $<$ on $Q_1$, extended lexicographically to $Q_2$, define $\B_2$ to be the set of paths in $Q_2$ which cannot be written, modulo $I$, as a linear combination of lower degree paths.  Define $\B_i$ to be the set of paths $\alpha_1\alpha_2\ldots\alpha_i\in Q_i$ such that each subpath $\alpha_i\alpha_{i+1}$ of length $2$ is in $\B_2$.  Then the following are equivalent:
\vspace{-1em}\begin{enumerate}[(i)]
\item
$\B$ is a basis of $\Lambda$;
\item $\B$ is a PBW basis of $\Lambda$;
\item $\B_3$ is linearly independent in $\Lambda_3$.  
\end{enumerate}
\vspace{-1em}Moreover, every PBW basis of $\Lambda$ is constructed in this way.
\end{proposition}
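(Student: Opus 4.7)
My plan is to establish the cycle (i)$\Rightarrow$(iii)$\Rightarrow$(ii)$\Rightarrow$(i) via a Bergman Diamond Lemma--style rewriting argument, then handle the moreover statement separately. The implications (ii)$\Rightarrow$(i) (a PBW basis is in particular a basis) and (i)$\Rightarrow$(iii) ($\B_3 \subseteq \B$) are immediate, so the real content lies in (iii)$\Rightarrow$(ii).

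For (iii)$\Rightarrow$(ii), I will first show that $\B$ spans $\Lambda$. By its very definition, $\B_2$ is a basis of $\Lambda_2$: it spans because any $p \in Q_2 \setminus \B_2$ is by construction a linear combination modulo $I$ of lex-smaller paths, so iterating exhausts $Q_2$; it is linearly independent because in a hypothetical relation the lex-largest $\B_2$-term would have to equal a combination of smaller paths modulo $I$, contradicting its membership in $\B_2$. The same rewrite rules, applied at any length-$2$ substring of a longer path, produce sums of lex-smaller paths in $Q_i$. Since $<$ is well-founded on each $Q_i$, iterated reduction terminates in an element of $\operatorname{span}_\F(\B_i)$, so $\B_i$ spans $\Lambda_i$ for every $i$.

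The crux is linear independence of $\B$, for which I will invoke the Diamond Lemma. In this purely quadratic reduction system the only overlap ambiguities occur at length-$3$ words $\alpha\beta\gamma$ with both $\alpha\beta, \beta\gamma \notin \B_2$: one may reduce the left or the right two-letter substring first, producing two elements of $\operatorname{span}_\F(\B_3)$ whose images in $\Lambda_3$ agree. Hypothesis (iii) states that $\B_3$ is linearly independent in $\Lambda_3$, so this common image pins down the element of $\operatorname{span}_\F(\B_3)$ uniquely and the two reductions must coincide. The Diamond Lemma then gives linear independence of $\B$ in all degrees. The two clauses of Definition \ref{def:pbw-priddy} follow immediately: the first is the reduction rule itself (a product of basis elements either stays in $\B$ or rewrites to lex-smaller $\B$-combinations), and the second is built into the combinatorial definition of $\B_i$ for $i \geq 3$.

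For the moreover statement, suppose $(\B',<)$ is any PBW basis. Applying the first PBW condition to pairs $\alpha,\beta \in Q_1 = \B'_1$ together with the length-grading of $\Lambda$ forces $\B'_2$ to coincide with the constructed $\B_2$: if $\alpha\beta \in \B'_2$ it cannot lie in the span of strictly smaller elements of $\B'$ (hence not of strictly smaller paths modulo $I$), while if $\alpha\beta \notin \B'_2$ the first condition expresses it as a linear combination of length-$2$ basis elements strictly smaller than it. A straightforward induction on length using the second PBW condition then identifies $\B'_i$ with the subpath characterization for $i \geq 3$. The main technical obstacle I anticipate is cleanly running the Diamond Lemma: checking termination (which comes down to well-foundedness of the lex extension of $<$ on each $Q_i$) and confirming that length-$3$ overlaps really are the only obstructions to confluence in a quadratic reduction system, so that (iii) genuinely suffices for global confluence.
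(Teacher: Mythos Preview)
Your proposal is correct and follows the same route as the paper: the paper's proof asserts that $\B$ spans and satisfies the two PBW axioms by construction, then defers the implication (iii)$\Rightarrow$(i) to \cite[Theorem~4.2.8]{lv} and \cite[Theorem~4.2.1]{pp}, and your Diamond Lemma rewriting argument is exactly what those references carry out. One point worth making explicit is that the Diamond Lemma applied to your degree-$2$ reduction system yields a basis of $\F Q/(I_2)$ rather than of $\F Q/I$, so you are tacitly using the standing quadratic hypothesis on $I$ from earlier in the section to identify the two.
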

\begin{proof}
Note that, by construction, $\B=\bigsqcup_{i\geq0}\B_i$ spans $\Lambda$ and also satisfies the two conditions of Definition \ref{def:pbw-priddy}.  It is sufficient to check that $\B_3$ is linearly independent, as in \cite[Theorem 4.2.8]{lv} or \cite[Theorem 4.2.1]{pp}.  It is easy to see that if we start with a PBW basis as in Definition \ref{def:pbw-priddy} then the above construction recovers $\B$ from $\B_2$. 
\end{proof}

\begin{proposition}\label{prop:quad-kosz}
If $I$ is generated by paths of length $2$, then $\Lambda=\F Q/I$ is Koszul.
\end{proposition}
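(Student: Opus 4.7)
The proposition asserts that quadratic monomial path algebras are Koszul. The PBW framework we have just developed suggests a route through Proposition~\ref{prop:assoc-grad-koszul}, but in the monomial case the associated graded $\gr\Lambda$ coincides with $\Lambda$ itself for any order refining path length, so that proposition offers no reduction. I would therefore argue directly by constructing an explicit linear projective resolution of each simple left $\Lambda$-module $S_i$.

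Let $R \subseteq Q_2$ be a set of length $2$ paths generating $I$ and for each $n \geq 0$ define
\[ R^{(n)} = \{ \alpha_1 \alpha_2 \cdots \alpha_n \in Q_n : \alpha_j \alpha_{j+1} \in R \text{ for all } 1 \leq j < n \}, \]
with $R^{(0)} = Q_0$ and $R^{(1)} = Q_1$. For a vertex $i \in Q_0$ I would form the sequence of graded projective left $\Lambda$-modules
\[ \cdots \to P_n \to \cdots \to P_1 \to P_0 \to S_i \to 0, \qquad P_n = \bigoplus_{\substack{p \in R^{(n)} \\ t(p) = i}} \Lambda e_{s(p)}\grsh{-n}, \]
with differential $d_n$ sending the generator indexed by $p = \alpha_1\cdots\alpha_n$ to the generator indexed by $\alpha_2\cdots\alpha_n \in R^{(n-1)}$ acted on from the right by $\alpha_1$.

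Three things need checking. First, the sequence is a complex: composing two successive differentials yields a factor $\alpha_1 \alpha_2 \in R$, which vanishes in $\Lambda$. Second, each $P_n$ is generated in degree $n$, so the resolution is linear, giving the Koszul condition once exactness is established. Third, and most substantively, the complex is exact. Exactness is a direct combinatorial argument. Working in the natural path basis of $\Lambda$ (those paths in $Q$ avoiding any element of $R$ as a length $2$ subpath), one shows that cycles in $d_n$ arise precisely from contributions of the form $\beta \cdot 1_p$ for an arrow $\beta$ with $\beta\alpha_1 \in R$. The resulting paths $\beta p$ lie in $R^{(n+1)}$ and label lifts of these contributions to $P_{n+1}$.

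The main obstacle is this exactness verification, whose combinatorial content is essentially the classical Anick resolution specialised to the quadratic monomial case. The key technical reason it works cleanly is that, $I$ being monomial, the product of a basis path and an arrow is either again a basis path or zero, with no admixture of other basis paths via a nontrivial relation; this allows the kernel of $d_n$ and the image of $d_{n+1}$ to be matched up term by term in the path basis.
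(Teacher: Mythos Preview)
Your approach is correct and genuinely different from the paper's. The paper's own proof is a one-line citation: it observes that the argument of \cite[Theorem 4.3.6]{lv} (quadratic monomial algebras over a field are Koszul) goes through unchanged over a semisimple base ring, with Fr\"oberg \cite{fro} as an earlier reference. You instead build the linear resolution explicitly via the sets $R^{(n)}$ of ``maximally overlapping'' relation paths, which is essentially the Anick resolution specialised to the quadratic monomial case. The payoff of your route is self-containment: one sees directly why no higher syzygies appear outside the expected degree, and the argument does not depend on importing an external theorem. The paper's route is shorter precisely because this result is classical and the semisimple-base generalisation is routine.

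Two minor comments. First, your description of the differential (``the generator indexed by $\alpha_2\cdots\alpha_n$ acted on from the right by $\alpha_1$'') is slightly garbled: what you mean is that the map $\Lambda e_{s(\alpha_1)}\to\Lambda e_{s(\alpha_2)}$ is right multiplication by $\alpha_1$, so $1_p\mapsto \alpha_1$ in the summand indexed by $\alpha_2\cdots\alpha_n$. Second, your exactness sketch is correct but terse; the key step you allude to is that for fixed $q\in R^{(n-1)}$ the nonzero products $\gamma\alpha_1^{(p)}$ (with $q^{(p)}=q$) are pairwise distinct basis paths, forcing each surviving coefficient to come from a $\gamma$ whose last arrow $\beta$ satisfies $\beta\alpha_1^{(p)}\in R$, whence $\beta p\in R^{(n+1)}$ provides the lift. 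This is exactly the ``no admixture'' point you make at the end, and it does go through.
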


\begin{proof}
The proof of \cite[Theorem 4.3.6]{lv} generalizes to semisimple base rings (though this was known much earlier, e.g., \cite{fro}). 
\end{proof}

We can now state Priddy's theorem for quadratic quotients of path algebras of quivers.

\begin{theorem}\label{thm:pbwkos}
If $\Lambda$ has a PBW basis, then it is Koszul.
\end{theorem}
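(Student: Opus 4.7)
The plan is to reduce to the monomial case by passing to the associated graded algebra. By Proposition \ref{prop:assoc-grad-koszul} it suffices to show that $\gr\Lambda$, taken with respect to the total order induced by the PBW structure, is Koszul, and Proposition \ref{prop:quad-kosz} will then apply provided $\gr\Lambda$ is presented as a quotient of $\F Q$ by an ideal generated by paths of length $2$.

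My first task is to identify $\gr\Lambda$ explicitly. Each weight-graded piece $\gr_p\Lambda$, where $p$ is a path, is at most one-dimensional: the first bullet of Definition \ref{def:pbw-priddy} says that for $p \notin \B$ one can rewrite $p$ modulo $I$ as a combination of strictly smaller paths, so the class of $p$ already lies in $\Lambda_{<p}$ and vanishes in $\gr_p\Lambda$; while for $p \in \B$ the class of $p$ spans $\gr_p\Lambda$. As an $S$-$S$-bimodule $\gr\Lambda$ therefore has basis $\B$. The multiplication rule on these basis vectors is the obvious monomial one: lifting $p,q \in \B$ to themselves in $\Lambda$, the class of $pq$ in $\gr_{pq}\Lambda$ equals $pq$ when $pq \in \B$ and is zero otherwise.

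The second bullet of Definition \ref{def:pbw-priddy} then says that $\B$ consists of precisely those paths whose every length-$2$ subpath lies in $\B_2$, but this is also exactly the basis of the monomial algebra $\F Q/\langle Q_2\setminus\B_2\rangle$. Matching both bases and multiplication rules yields $\gr\Lambda \cong \F Q/\langle Q_2\setminus\B_2\rangle$ as weight-graded algebras, so Proposition \ref{prop:quad-kosz} gives that $\gr\Lambda$ is Koszul, and Proposition \ref{prop:assoc-grad-koszul} finishes the argument. The main obstacle is the bookkeeping around the order filtration: one has to verify that each quadratic relation of $\Lambda$ which ``disappears'' in $\gr\Lambda$ is paired with exactly the expected monomial relation of $\gr\Lambda$, and that no higher-degree relations appear unexpectedly. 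This pairing is forced by choosing $\B_2$ to consist of the leading monomials of the quadratic relations, and this is where the equivalence in Proposition \ref{prop:pbw-equiv} is doing the real work; once the bookkeeping is set up, Priddy's original spectral sequence argument lifts verbatim to the semisimple base ring $S$.
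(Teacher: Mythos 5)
Your proposal is correct and takes essentially the same approach as the paper's proof: pass to the associated graded algebra $\gr\Lambda$ with respect to the order, identify it as the monomial quadratic algebra $\F Q/\langle Q_2\setminus\B_2\rangle$, and then invoke Propositions \ref{prop:quad-kosz} and \ref{prop:assoc-grad-koszul}. The paper states the identification more tersely (just observing that $\alpha\beta=0$ in $\gr\Lambda$ whenever $\alpha\beta\notin\B_2$), whereas you spell out the basis-matching and dimension bookkeeping that justify it; otherwise the two arguments coincide.
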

\begin{proof}
Suppose that $Q_1=\{\alpha_1,\ldots,\alpha_n\}$ with $\alpha_i<\alpha_{i+1}$.  Let $S$ be the semisimple $\F$-algebra with $\F$-basis $Q_0$ and let $V_i=\F\alpha_i$.  Then $\Lambda=\Tens_S(V_1\oplus\cdots\oplus V_n)/I$ and the total order on $Q_1$ induces a total order on $\L$, so we can consider the associated graded algebra $\gr\Lambda$.  For $\alpha,\beta\in Q_1$, if $\alpha\beta\notin\B_2$ we have $\alpha\beta=0$ in $\gr\Lambda$, by construction.
Thus, by Proposition \ref{prop:quad-kosz}, $\gr\Lambda$ is Koszul, and so the result follows by Proposition \ref{prop:assoc-grad-koszul}.
\end{proof}

\begin{example}
Let $Q$ be the quiver
\[ \xymatrix  @=10pt{
&&3\ar[dr]^\gamma&\\
&2\ar[ur]^\beta\ar[dr]_\delta &&5\\
1\ar[ur]^\alpha &&4\ar[ur]_\varepsilon & 
}\]
and let $I=(\alpha\delta, \beta\gamma-\delta\varepsilon)$.  Let $\Lambda=\F Q/I$.  One can easily check that $\Lambda$ is Koszul.  We will give two different orderings on the arrows of $Q$ and will show that one can be refined to a PBW basis while the other cannot.  This can be seen as a finite dimensional analogue of the example $\F\gen{x,y}/(x^2-xy)$ given in the Remark in Section 4.1 of \cite{pp}.

First, we order the arrows of $Q$ alphabetically:
\[ \alpha < \beta < \gamma < \delta < \varepsilon. \]
Then we must have $\B_2=\{\alpha\beta,\beta\gamma\}$.  Thus, if we try to extend $\B_2$ to a PBW basis as in Proposition \ref{prop:pbw-equiv}, we obtain $\B_3=\{\alpha\beta\gamma\}$.  But all paths of length $3$ are zero in $\Lambda$, so $\B_3$ is not linearly independent.

Next, we order the arrows of $Q$ as follows:
\[ \alpha < \delta < \varepsilon < \beta < \gamma. \]
Then $\B_2=\{\alpha\beta,\delta\varepsilon\}$.  There is no path of length $3$ with both length $2$ subpaths in $\B_2$, so $\B_3=\emptyset$ is linearly independent and thus $\B$ is a PBW basis.

It is instructive to examine the associated graded algebras with respect to both gradings.  In the first case, the associated graded algebra is isomorphic to $\F Q/(\alpha\beta\gamma, \alpha\delta, \delta\varepsilon)$.  This is not quadratic and so is certainly not Koszul.  In the second case, the associated graded algebra is isomorphic to $\F Q/(\alpha\delta, \beta\gamma)$, which is a quotient of $\F Q$ by a quadratic monomial ideal and thus Koszul.
\end{example}


\section{Presentations of type $A$ higher zigzag algebras}

In this section we will give a presentation, by quiver and relations, of the main examples of higher zigzag algebras in this paper: the type $A$ higher zigzag algebras.  To do this we use known presentations of type $A$ higher preprojective algebras.  As we take quadratic duals of algebras with commutativity relations, we obtain algebras with anticommutativity relations.  This is annoying: we would prefer to replace $\alpha\beta$ with $\beta\alpha$ and not $-\beta\alpha$.  We show that, in the higher type $A$ case, we can do this. 

The plan is as follows.  We first revise Iyama's type $A$ higher representation finite algebras and their preprojective algebras.  These will be our starting algebras $\Lambda$.  Next we study the quadratic duals of the higher representation finite and preprojective algebras, and show that their anticommutativity relations can be replaced by commutativity relations.  We are also able to show that the type $A$ higher zigzag algebras are symmetric.  Finally we show that the quadratic duals of type $A$ higher preprojective and the type $A$ higher zigzag algebras have the same dimensions, and so the surjective map between them must be an isomorphism.

While this paper was being prepared, the author discovered that these algebras had already been studied by Guo and Luo \cite{gl}.  Thanks to Gabriele Bocca for pointing out this reference.  Guo and Luo define algebras $\tilde{\Lambda}^{\bar{g}}(d)$, which correspond to the algebras $Z_d^s$ here, using generators and relations, and they show that they are given by some twisted trivial extension of algebras $\Lambda(d)$ which they call $n$-cubic pyramid algebras.  Their algebras $\Lambda(d)$ correspond to the algebras $(\Lambda^d_s)^!$ here.  See also the related papers \cite{zlz} and \cite{guo}, which also study twisted trivial extensions, skew group algebras, and McKay quivers.

\subsection{Presentations}\label{subsec:quivers}

Type $A$ higher zigzag algebras are defined below as higher zigzag algebras (Definition \ref{def:zza}).  For each pair of positive integers $s$ and $d$, there is a type $A$ higher zigzag algebra, denoted $Z^d_s$.
Here we will state the theorem to be proved in this section, which gives a presentation of these algebras.

\begin{theorem}\label{thm:azzpres}
The graded algebra $Z^d_s$ has a presentation
\[ Z^d_s\cong \F Q^d_s/I^d_s \]
where $Q^d_s$ is a quiver with vertex set $Q_0$ consisting of the following integral vectors:
\[ y=(y_0,y_1,\ldots,y_d)\in\N^{d+1} \;\; \text{ such that } \;\; \sum_{i=0}^dy_i=s-1.\]

If $s=1$ then $Q_0$ consists of a single vertex, and we add a single loop $x$ in degree $d+1$ to obtain our quiver $Q$.  Then $I^d_1$ is the ideal $(x^2)$ in $\F Q$.

For $1\leq i\leq d$, let 
\[ \ve_i=(0,\ldots,0,-1,1,0,\ldots,0)\in\Z^{d+1} \] 
and let $\ve_0=(1,0,\ldots,0,-1)$, so $(y+\ve_i)_i=y_i+1$.
If $s\geq2$,
let the arrow set $Q_1$ of $Q^d_s$ be the following: for all $0\leq i\leq d$, whenever $y$ and $y+\ve_i$ are both in $Q_0$, we have an arrow $f_{i,y}:y\to y+\ve_i$ in degree $1$.

If $s=2$ then $Q$ consists of $d+1$ vertices arranged in an oriented cycle.  Let $I^d_2$ be the ideal of paths of length $d+2$.

If $s\geq3$ then $I^s_d$ is the ideal generated by all paths $f_{i,y}f_{i,y+\ve_i}$, $0\leq i\leq d$, and by all commutativity relations $f_{i,y}f_{j,y+\ve_i}=f_{j,y}f_{i,y+\ve_j}$ starting at a vertex $y$ whenever both $y+\ve_i$ and $y+\ve_j$ exist, for all $0\leq i,j\leq d$.
\end{theorem}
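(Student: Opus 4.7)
The plan is to leverage Theorem~\ref{thm:pidualtoz}. Since $\Lambda^d_s$ is Koszul (via the PBW methods of Section~\ref{ss:pbw}) and $d$-hereditary, in particular $\Ext^i_{\Lambda^\en}(\Lambda,\Lambda^\en)=0$ for $1\leq i\leq d-1$, that theorem produces a surjective morphism of graded algebras
\[ \phi:(\Pi^d_s)^!\onto Z^d_s, \]
where $\Pi^d_s$ is the $(d+1)$-preprojective algebra of $\Lambda^d_s$ and is quadratic. It therefore suffices to identify $(\Pi^d_s)^!$ with $\F Q^d_s/I^d_s$ and then show the two sides of $\phi$ have the same $\F$-dimension, whereupon $\phi$ is forced to be an isomorphism.

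The first step is to invoke the known presentation of the type $A$ higher preprojective algebra $\Pi^d_s$, recalled earlier in this section. Its vertex set agrees with $Q_0$, its arrows $f^*_{i,y}:y+\ve_i\to y$ run opposite to those of $Q^d_s$, and its quadratic relations are commutativity relations $f^*_{i,y+\ve_j}f^*_{j,y}=f^*_{j,y+\ve_i}f^*_{i,y}$. The Koszul dual $(\Pi^d_s)^!$ is then the quotient of $\F Q^d_s$ by the orthogonal complement: zero relations $f_{i,y}f_{i,y+\ve_i}=0$ and \emph{anti}commutativity relations $f_{i,y}f_{j,y+\ve_i}=-f_{j,y}f_{i,y+\ve_j}$. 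The crucial combinatorial step is to eliminate these signs by an arrow rescaling $f_{i,y}\mapsto\eta_{i,y}f_{i,y}$ with $\eta_{i,y}\in\{\pm 1\}$ satisfying
\[ \eta_{i,y}\eta_{j,y+\ve_i}=-\eta_{j,y}\eta_{i,y+\ve_j} \]
for all $0\leq i<j\leq d$ and all admissible $y$. I expect this sign cocycle to be the main technical obstacle. A natural candidate is $\eta_{i,y}=(-1)^{y_0+y_1+\cdots+y_{i-1}}$, which handles the $0<i<j\leq d$ faces directly; the wrap-around faces involving $\ve_0=(1,0,\ldots,0,-1)$ require a separate case-by-case check using the constraint $\sum y_i=s-1$. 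The degenerate cases $s=1$ and $s=2$ (respectively a single loop and a cycle) can be treated by direct verification.

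Once the rescaling is in place, $(\Pi^d_s)^!$ is identified with $\F Q^d_s/I^d_s$ and $\phi$ becomes a surjection $\F Q^d_s/I^d_s\onto Z^d_s$. To conclude, I would compute dimensions on both sides. On the right, $Z^d_s=\STriv((\Lambda^d_s)^!)=(\Lambda^d_s)^!\oplus((\Lambda^d_s)^!)^*$ as a graded vector space by Proposition~\ref{prop:zzgp}, so $\dim Z^d_s=2\dim(\Lambda^d_s)^!$, and the latter is read off from its Koszul presentation. On the left, a basis for $\F Q^d_s/I^d_s$ is given by normal-form monomial paths: commutativity lets one sort the arrows of any path by their ``colour'' $i$, and the zero relations ensure each colour appears at most once in each position of the sorted word, so the paths at each vertex are enumerated by explicit binomial expressions. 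Matching the counts vertex by vertex with $\dim Z^d_s$, together with surjectivity of $\phi$, forces $\phi$ to be an isomorphism and delivers the presentation.
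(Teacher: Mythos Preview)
Your overall architecture is exactly that of the paper: establish Koszulity of $\Lambda^d_s$, invoke Theorem~\ref{thm:pidualtoz} to get a surjection $\phi:(\Pi^d_s)^!\onto Z^d_s$, identify $(\Pi^d_s)^!$ with $\F Q^d_s/I^d_s$ via an arrow rescaling, and then match dimensions vertex by vertex using $\dim Z^d_se_y=\dim(\Lambda^d_s)^!e_y+\dim e_y(\Lambda^d_s)^!$. The small cases $s=1,2$ are handled separately, just as you suggest.

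There is, however, a concrete error in your rescaling. Your candidate $\eta_{i,y}=(-1)^{y_0+\cdots+y_{i-1}}$ does \emph{not} handle the $0<i<j\leq d$ faces: for $i<j$ one has $\eta_{j,y+\ve_i}=\eta_{j,y}$ (both of positions $i-1,i$ lie in $\{0,\ldots,j-1\}$, so the net change is zero) and likewise $\eta_{i,y+\ve_j}=\eta_{i,y}$ (neither of positions $j-1,j$ lies in $\{0,\ldots,i-1\}$), so your cocycle condition reduces to $\eta_{i,y}\eta_{j,y}=-\eta_{i,y}\eta_{j,y}$, which fails. The paper's rescaling (Propositions~\ref{prop:lambdadualcomm} and~\ref{prop:pidualcomm}) is more subtle: for $i\geq 1$ one sets $\eta_{i,y}=\pr_i(y)=(-1)^{\sum_{k\geq 0}y_{i+2k}}$, a \emph{stride-two} sum starting at index $i$, and the $i=0$ arrows need a genuinely different function $w(y)$ built from pairing adjacent coordinates. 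So the sign problem really is the crux, and your specific proposal does not solve it.

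Your dimension sketch also underestimates the combinatorics. One cannot simply ``sort by colour'': the commutativity relation $e_yf_if_j=e_yf_jf_i$ is only available when both orderings are paths, and this fails precisely when $j=i+1$ and $y_{j-1}=0$ (Lemma~\ref{lem:basiccomm}). The paper first proves that any path repeating a colour vanishes (Lemmas~\ref{lem:fi2lambda} and~\ref{lem:fi2pi}; the latter genuinely needs $s\geq 3$), and then shows that the projective at a vertex $y=(0^{n_1}\!\star^{m_1}\!\cdots)$ decomposes into ``cube'' pieces contributing factors $2^{m_i-1}$ and ``line'' pieces contributing factors $n_i+2$ (Proposition~\ref{prop:dimldualproj} and its $\Pi^!$-analogue). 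The resulting formulas are not binomial coefficients, but they do satisfy the identity $\dim\Pi^!e_y=\dim\Lambda^!e_y+\dim e_y\Lambda^!$ (Proposition~\ref{prop:dimprojsequal}), which is exactly what forces $\phi$ to be an isomorphism.
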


Note that the number of vertices of the quiver $Q^d_s$ is the binomial coefficient $\binom{d+s-1}{d}$.

Let $e_y$ denote the primitive idempotent at the vertex $y$.  We sometimes write $f_i$ to mean $\sum_{y\in Q_0}f_{i,y}$, so $e_yf_i=f_{i,y}$.

To save space, we use some shorthands when we draw these quivers: we sometimes write a vertex $(y_0,y_1,\ldots,y_d)$ as $y_0y_1\ldots y_d$ and, if the starting vertex $y$ is understood, we sometimes write $f_i$ instead of $f_{i,y}$. But when writing proofs we will try to reserve $f_i$ for the sum of arrows described above.

\begin{example}\label{eg:zz24}
$Z^2_4$ has the following quiver $Q^2_4$:
\[
\xymatrix @R=15pt @C=6pt {
&&&030 \ar[dr]^{f_2} \\
&&120 \ar[ur]^{f_1}\ar[dr]^{f_2} && 021\ar[ll]_{f_0}\ar[dr]^{f_2} \\
&210 \ar[ur]^{f_1}\ar[dr]^{f_2} && 111\ar[ll]_{f_0}\ar[ur]^{f_1}\ar[dr]^{f_2} && 012\ar[ll]_{f_0}\ar[dr]^{f_2}  \\
300 \ar[ur]^{f_1} && 201\ar[ll]_{f_0}\ar[ur]^{f_1} && 102\ar[ll]_{f_0}\ar[ur]^{f_1} && 003\ar[ll]_{f_0}
}
\]
Its ideal $I^2_4$ of relations is generated by:
\begin{itemize}
\item the nine zero relations ``$f_i^2=0$'': for example, we have $e_yf_1f_1=0$ for $y=300$, $210$, and $201$;
\item the nine commutativity relations ``$f_if_j=f_jf_i$'': for example, we have $e_yf_1f_2=e_yf_2f_1$ for $y=210$, $120$, and $111$.
\end{itemize}
Note that the path $e_{300}f_1f_2$ is nonzero in $Z^d_4$: there are no ``zero relations at the boundary''.
\end{example}

\subsection{Type $A$ $d$-representation finite algebras and 
preprojective algebras}\label{ss:typea}

A finite-dimensional algebra $\Lambda$ is called \emph{$d$-representation finite}, for $d\in\N$, if its global dimension is at most $d$ and it has a $d$-cluster tilting module $M$ \cite{iya-ct,io-stab}.  Given such an algebra, we can construct a new algebra $\End_\Lambda(M)^\op$, known as its \emph{higher Auslander algebra}.
As $d$-cluster tilting modules for algebras of global dimension $d$ are unique up to multiplicity of their indecomposable direct summands \cite[Theorem 1.6]{iya-ct}, higher Auslander algebras are unique up to Morita equivalence.
In \cite{iya-ct}, Iyama constructed collections of $d$-representation finite algebras whose higher Auslander algebras are $(d+1)$-representation finite.  We will now outline his construction.

In the simplest case $d=1$, a $1$-representation finite algebra is just a representation finite hereditary algebra: a $1$-cluster tilting module is given by taking the direct sum of one copy of each indecomposable module.  We know that path algebras of quivers are hereditary, and by Gabriel's theorem the path algebra is representation finite when the underlying unoriented graph of the quiver is Dynkin.  We start with the path algebra of the linearly oriented type $A_s$ quiver
\[ 1\to 2\to 3\to \cdots \to s \]
and call this $\Lambda^1_s$.  Then we define its higher analogues recursively:
\[ \Lambda^d_s=\End_{\Lambda^{d-1}_s}(M^{d-1}_s)^\op\]
where $M^{d-1}_s$ is a $(d-1)$-cluster tilting module for $\Lambda^{d-1}_s$.
The algebras $\Lambda^d_s$ are called the $d$-representation finite algebras of type $A$.
These are the algebras we use to define our type $A$ higher zigzag algebras.

Iyama showed how to construct the quiver and relations of a higher Auslander algebra from the original algebra and its cluster tiltling module, together with knowledge of the higher Auslander-Reiten theory \cite[Section 6]{iya-ct}.  We will need the following special case, which is also used in \cite[Section 5]{io-napr}.

\begin{theorem}[Iyama]\label{thm:drf-quiv}
Fix $s,d\geq1$ and let $Q_\Lambda$ be the quiver with vertices $(d+1)$-tuples of non-negative integers
\[ x=(x_1,x_2, \ldots, x_{d+1}), \; \; \sum_{i=1}^{d+1} x_i= s-1 \]
and arrows of the form
\[ \alpha_{i,x}: (\ldots, x_i,x_{i+1},\ldots) \to (\ldots, x_i-1,x_{i+1}+1,\ldots) \]
for $1\leq i\leq d$ starting at each vertex $x$ where $x_i\geq1$.  
Let 
\[ \alpha_i=\sum_{x\text{ s.t. }x_i\geq1}\alpha_{i,x} \]
be the sum of all arrows in direction $i$.  Then we have an algebra isomorphism
\[ \Lambda^d_s\cong \F Q_\Lambda/(\alpha_i\alpha_j-\alpha_j\alpha_i, \;1\leq i,j\leq d). \]
\end{theorem}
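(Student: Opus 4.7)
The plan is to prove this by induction on $d$, leveraging the recursive definition $\Lambda^d_s = \End_{\Lambda^{d-1}_s}(M^{d-1}_s)^\op$, where $M^{d-1}_s$ is the chosen $(d-1)$-cluster tilting module.

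For the base case $d=1$, the algebra $\Lambda^1_s$ is the path algebra of the linearly oriented $A_s$ quiver. The proposed presentation has vertices indexed by pairs $(x_1,x_2)\in\N^2$ with $x_1+x_2=s-1$ (giving $s$ vertices) and a single family of arrows $\alpha_{1,x}$ forming a linear chain, with no commutativity relations to impose since there is only one arrow-direction. The bijection $(x_1,x_2)\mapsto x_2+1$ identifies this with the standard linearly-oriented $A_s$ quiver.

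For the inductive step, assume the result for $d-1$. First I would describe the indecomposable summands of $M^{d-1}_s$ combinatorially: by Iyama's higher Auslander--Reiten theory in type $A$, these are in bijection with $(d+1)$-tuples $x=(x_1,\ldots,x_{d+1})\in\N^{d+1}$ summing to $s-1$, yielding the proposed vertex set. Next, by standard results on higher Auslander algebras, the arrows of the quiver of $\End_{\Lambda^{d-1}_s}(M^{d-1}_s)^\op$ correspond to irreducible morphisms in the additive closure $\add M^{d-1}_s$, while the defining relations are generated by the $d$-almost split sequences. I would identify these irreducible morphisms with the proposed arrows $\alpha_{i,x}$ (for $1\leq i\leq d$, indexed by which coordinate decreases) and interpret each $d$-almost split sequence as enforcing one commutativity relation $\alpha_i\alpha_j=\alpha_j\alpha_i$.

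The main obstacle will be showing that these commutativity relations account for \emph{all} relations, not merely some of them. Although the general theory guarantees that the ideal of relations is generated by $d$-almost split sequences, the sharp statement requires an explicit description of these sequences: each one starting from $x$ should form a ``cube-like'' diagram whose outer edges are the arrows $\alpha_{i,x}$ for those $i$ with $x_i\geq 1$, and whose commutativity yields precisely the listed relations. A clean way to conclude is a dimension count: verify that the presented algebra $\F Q_\Lambda/(\alpha_i\alpha_j-\alpha_j\alpha_i)$ has $\F$-dimension equal to $\dim_\F\End_{\Lambda^{d-1}_s}(M^{d-1}_s)$, so that the natural surjection onto $\Lambda^d_s$ is forced to be an isomorphism. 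The dimension of the presented algebra can be computed by noting that the commutativity relations give it the structure of a quotient of a symmetric-algebra-like combinatorial object indexed by multisets of arrow directions, whose count can be compared directly against a known formula for $\dim_\F\End_{\Lambda^{d-1}_s}(M^{d-1}_s)$.
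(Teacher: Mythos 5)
The paper does not prove this theorem; it is stated as a result of Iyama and cited directly from \cite{iya-ct} (Section 6), with the special case also used in \cite{io-napr} (Section 5). There is therefore no internal proof in the paper to compare your argument against.

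As a reconstruction, your inductive outline is morally the right approach and follows the shape of Iyama's original argument: use $\Lambda^d_s=\End_{\Lambda^{d-1}_s}(M^{d-1}_s)^\op$, classify the indecomposable summands of $M^{d-1}_s$, read off the quiver from the irreducible maps in $\add M^{d-1}_s$, and generate the relations from higher almost split sequences. However, as written the proposal defers essentially all of its substance to ``standard results on higher Auslander algebras,'' and that deferred content is precisely where the work lies. In particular: the combinatorial classification of the indecomposable summands of $M^{d-1}_s$ by $(d+1)$-tuples, the identification of the irreducible morphisms with the $\alpha_i$, and the explicit mesh/cube structure of the $(d-1)$-almost split sequences (you wrote ``$d$-almost split sequences''; since $M^{d-1}_s$ is a $(d-1)$-cluster tilting module over $\Lambda^{d-1}_s$, the relevant degree should be $d-1$) are each nontrivial claims that are asserted rather than proved. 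The closing dimension count is also only gestured at; the paper itself remarks at the end of Section 3 that the formula $\dim_\F\Lambda^d_s=\binom{2s+d-2}{d}$ is observed experimentally but not proved there, so there is no ready-made formula to compare against without additional work. To make the argument self-contained you would need either to supply the explicit mesh structure of the $(d-1)$-almost split sequences in $\add M^{d-1}_s$ and verify that they generate the relation ideal of the endomorphism algebra, or to carry out an independent computation of both dimensions; absent that, the proposal is a plausible plan but not a proof.
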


Using this presentation, we can show that the 
$d$-representation finite algebras of type $A$ are Koszul.
\begin{proposition}\label{prop:anrf-kos}
For all $d,s\geq1$, the algebra $\Lambda^d_s$ is Koszul.
\end{proposition}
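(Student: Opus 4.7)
The plan is to apply the PBW theorem for quivers, Theorem~\ref{thm:pbwkos}, to Iyama's presentation of $\Lambda^d_s$ from Theorem~\ref{thm:drf-quiv}. Put a total order on the arrows of $Q_\Lambda$ by declaring $\alpha_{i,x} < \alpha_{j,y}$ whenever $i < j$, refined arbitrarily within each fixed direction $i$. With this order, the commutativity relation $\alpha_i\alpha_j - \alpha_j\alpha_i$ for $i<j$ has leading monomial $\alpha_j\alpha_i$, so the candidate set $\B_2$ produced by Proposition~\ref{prop:pbw-equiv} consists of the length-$2$ sorted paths $\alpha_{i_1,x}\alpha_{i_2,y}$ with $i_1 \leq i_2$, and $\B_k$ consists of the length-$k$ sorted paths $\alpha_{i_1}\alpha_{i_2}\cdots\alpha_{i_k}$ with $i_1 \leq \cdots \leq i_k$ that exist in $Q_\Lambda$.

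By Proposition~\ref{prop:pbw-equiv}, it suffices to verify that $\B_3$ is linearly independent in $(\Lambda^d_s)_3$. The sorted paths certainly span: the only length-$3$ overlap ambiguity is a path $\alpha_k\alpha_j\alpha_i$ with $i<j<k$, and whichever of $\alpha_k\alpha_j$ or $\alpha_j\alpha_i$ is reduced first, both sequences of three reductions lead to the common normal form $\alpha_i\alpha_j\alpha_k$. Thus every length-$3$ path is congruent modulo $I$ to a unique sorted length-$3$ path, and distinct sorted paths have distinct source-target pairs (since the multiplicity vector $(k_1,\ldots,k_d)$ of each direction used is determined by source and target), so they live in distinct summands $e_y\Lambda^d_s e_x$ of $\Lambda^d_s$. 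Linear independence therefore reduces to showing that each individual sorted path is nonzero in $\Lambda^d_s$; equivalently, the associated graded algebra with respect to our order is precisely the quadratic monomial algebra $\F Q_\Lambda/(\alpha_j\alpha_i : i<j)$.

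Once this equality is established, Proposition~\ref{prop:quad-kosz} gives that the associated graded is Koszul, and Proposition~\ref{prop:assoc-grad-koszul} transfers this back to $\Lambda^d_s$. The main obstacle is the final nonvanishing step: ensuring there are no hidden relations in $\Lambda^d_s$ beyond commutativity. A natural way to address this is to use Iyama's realization of $\Lambda^d_s$ as the opposite endomorphism algebra of a $(d-1)$-cluster tilting module and induct on $d$, since the faithful action of $\Lambda^d_s$ on this module allows any sorted path to be identified with a nonzero composition of irreducible morphisms between indecomposable summands.
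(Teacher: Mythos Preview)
Your overall strategy---exhibit a PBW basis and invoke Theorem~\ref{thm:pbwkos}---is exactly what the paper does, but your choice of ordering is the wrong one, and this creates a genuine gap.

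With your order ($\alpha_{i,x}<\alpha_{j,y}$ for $i<j$), it is \emph{not} true that $\B_2$ consists of all sorted paths $\alpha_{i_1}\alpha_{i_2}$ with $i_1\le i_2$. The issue is the boundary zero relations hidden in Iyama's presentation: at a vertex $x$ with $x_i\ge 1$ and $x_{i+1}=0$, the path $e_x\alpha_i\alpha_{i+1}$ exists in $Q_\Lambda$ but $e_x\alpha_{i+1}\alpha_i$ does not, so the relation $e_x(\alpha_i\alpha_{i+1}-\alpha_{i+1}\alpha_i)$ degenerates to the monomial relation $e_x\alpha_i\alpha_{i+1}=0$. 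This is a sorted path that is zero in $\Lambda^d_s$, so it is excluded from $\B_2$ by Proposition~\ref{prop:pbw-equiv}. Consequently your description of $\B_2$, of $\B_3$, and of the associated graded as $\F Q_\Lambda/(\alpha_j\alpha_i:i<j)$ are all incorrect. Your final paragraph compounds this: the claim that ``any sorted path'' is nonzero in $\Lambda^d_s$ is false for exactly these boundary paths, so the proposed inductive argument via the cluster tilting module cannot succeed as stated.

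The paper avoids this by reversing the order: it declares $e_x\alpha_{i+1}<e_x\alpha_i$, so that $\B_2$ consists of \emph{decreasing} paths $e_x\alpha_i\alpha_j$ with $i\ge j$. The point is then the one-line check that if such a decreasing path exists at $x$ then $x_i\ge 1$ and $x_j\ge 1$, whence the companion path $e_x\alpha_j\alpha_i$ also exists; thus no relation ever degenerates to kill a $\B_2$ element, and the boundary zero relations $e_x\alpha_i\alpha_{i+1}$ are already increasing and hence outside $\B_2$. With this order the diamond-lemma check for $\B_3$ is immediate and no appeal to cluster tilting is needed. The fix to your argument is simply to reverse your order.
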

\begin{proof}
By Theorem \ref{thm:pbwkos}, it suffices to show that $\Lambda=\Lambda^d_s$ has a PBW basis.  
We order the arrows of $Q_\Lambda$ in such a way that $e_x\alpha_{i+1} < e_x\alpha_i$.  Following Proposition \ref{prop:pbw-equiv}, we obtain the set $\B_2$ of all nonzero paths $e_x\alpha_i\alpha_j$ with $i\geq j$.  This forces $\B_3$ to be the set of all paths $e_x\alpha_i\alpha_j\alpha_k$ with $i\geq j\geq k$ such that $e_x\alpha_i\alpha_j$ and $e_{x+f_i}\alpha_j\alpha_k$ are nonzero.

We need to check that $\B_3$ is linearly independent.  But if $e_x\alpha_i\alpha_j$ is nonzero with $i\geq j$ then we have $x_i\geq1$ and $x_j\geq1$, so $e_x\alpha_j\alpha_i$ is also nonzero.  So the intersection of the ideal
\[(\alpha_i\alpha_j-\alpha_j\alpha_i, \;1\leq i,j\leq d)=(e_x\alpha_i\alpha_j-e_x\alpha_j\alpha_i, \;1\leq i<j\leq d, \;x\in Q_0)\]
from Theorem \ref{thm:drf-quiv} with the span of $\B_3$ is zero and thus $\B_3$ is linearly independent.  Therefore the associated spanning set
\[ \B = \{ e_x\alpha_{n}^{d_n}\ldots \alpha_{2}^{d_2}\alpha_{1}^{d_1} \st x\in Q_0, \; d_i\geq0\}. \]
is a PBW basis of $\Lambda$. 
\end{proof}

We therefore make the following definition:
\begin{definition}\label{def:zza}
The \emph{type $A^d_s$ higher zigzag algebra} is $Z^d_s=Z_{d+1}(\Lambda^d_s)$.
\end{definition}

Next we recall the presentation of the $(d+1)$-preprojective algebra of $\Lambda^d_s$, which we will denote $\Pi^d_s$.
\begin{proposition}[{\cite[Definition 5.1 and Proposition 5.48]{io-napr}} and {\cite[Theorem 5.12]{gi}}]\label{prop:describepi}
Fix $s,d\geq1$ and let $\overline{Q_\Lambda}$ be the quiver with vertices $d+1$-tuples of non-negative integers
\[ x=(x_1,x_2, \ldots, x_{d+1}), \; \; \sum_{i=1}^{d+1} x_i= s-1 \]
and arrows of the form
\[ \alpha_{i,x}: (\ldots, x_i,x_{i+1},\ldots) \to (\ldots, x_i-1,x_{i+1}+1,\ldots) \]
for $1\leq i\leq d$ starting at each vertex $x$ where $x_i\geq1$ and
\[ \alpha_{d+1,x}: (x_1,\ldots, x_{d+1}) \to (x_1+1, \ldots,x_{d+1}-1) \]
starting at each vertex $x$ where $x_{d+1}\geq1$.  
For $1\leq i\leq d+1$, let 
\[ \alpha_i=\sum_{x\text{ s.t. }x_i\geq1}\alpha_{i,x} \]
be the sum of all arrows in direction $i$.  Then we have an algebra isomorphism
\[ \Pi^d_s\cong \F \overline{Q_\Lambda}/(\alpha_i\alpha_j-\alpha_j\alpha_i, \;1\leq i,j\leq d+1). \]
\end{proposition}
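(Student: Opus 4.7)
This proposition is essentially a compilation of results from \cite{io-napr} and \cite{gi}, so my plan is to identify the appropriate pieces and combine them, rather than reproving the statement from scratch.

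For the quiver, I would use the general principle that the $(d+1)$-preprojective algebra $\Pi^d_s = T_\Lambda(\Ext^d_{\Lambda^\en}(\Lambda,\Lambda^\en)\grsh{-d})$ of a $d$-hereditary algebra $\Lambda$ is obtained from the quiver of $\Lambda$ by adjoining arrows corresponding to the action of the inverse Nakayama permutation on vertices. Since $\Lambda^d_s$ is $d$-hereditary, $\Ext^i_{\Lambda^\en}(\Lambda,\Lambda^\en)=0$ for $0<i<d$, and the bimodule above is finitely generated projective on each side. Applying the explicit computation of the Nakayama permutation for $\Lambda^d_s$ given by Iyama-Oppermann in \cite[Definition 5.1 and Proposition 5.48]{io-napr}, one finds that each vertex $x=(x_1,\ldots,x_{d+1})$ with $x_{d+1}\geq 1$ receives a new outgoing arrow $\alpha_{d+1,x}$ to $(x_1+1,\ldots,x_{d+1}-1)$, recovering the quiver $\overline{Q_\Lambda}$.

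For the relations, I would invoke \cite[Theorem 5.12]{gi}, which asserts that for a Koszul algebra of global dimension $d$ (satisfying the $d$-hereditary vanishing of $\Ext$-bimodules), the $(d+1)$-preprojective algebra is itself quadratic, with relations explicitly described in terms of the Koszul dual $\Lambda^!$. The Koszulity needed here is provided by Proposition \ref{prop:anrf-kos}. Applying the general formula to our situation, the relations obtained split into two types: the existing commutativity relations of $\Lambda^d_s$ recorded in Theorem \ref{thm:drf-quiv}, and new commutativity relations $\alpha_{d+1}\alpha_i = \alpha_i\alpha_{d+1}$ for $1\leq i\leq d$ arising from the identification of the top graded piece of $\Lambda^!$ with the bimodule of new arrows.

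The main obstacle, already handled in the cited references but worth emphasizing, is verifying that no superfluous relations appear beyond these commutativity relations. Concretely, one must unpack the relation expressions produced by \cite[Theorem 5.12]{gi} in terms of the specific quadratic dual $(\Lambda^d_s)^!$ and check that they reduce precisely to full commutativity between $\alpha_{d+1}$ and each $\alpha_i$. A robust way to carry out this check would be to verify a PBW basis for the right-hand side algebra (analogous to the one produced in Proposition \ref{prop:anrf-kos}, now ordered so that $\alpha_{d+1}$ is the smallest generator), so that a dimension comparison with $\Pi^d_s$ shows the natural surjection must be an isomorphism.
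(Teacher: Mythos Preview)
Your proposal is essentially correct and matches the paper's approach: the paper gives no proof of this proposition beyond the citations in its header, treating it as a known result assembled from \cite[Definition 5.1 and Proposition 5.48]{io-napr} and \cite[Theorem 5.12]{gi}, and your outline simply unpacks how those pieces fit together. The only minor caveat is that \cite[Theorem 5.12]{gi} is already the specific type $A$ presentation rather than a general quadratic-relations statement you then specialise, so your last paragraph about ruling out superfluous relations via a PBW check is unnecessary here (though not wrong as an independent verification).
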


\begin{example}
Let $d=2$ and $s=3$.  Then $\Lambda^2_3$ is the quotient of the path algebra of the quiver
\[
\xymatrix @R=15pt @C=6pt {
&&020 \ar[dr]^{\alpha_2} \\
&110 \ar[ur]^{\alpha_{1}}\ar[dr]_{\alpha_2} && 011\ar[dr]^{\alpha_2} \\
200 \ar[ur]^{\alpha_{1}} && 101\ar[ur]_{\alpha_{1}} && 002
}
\]
by the relations
$\alpha_{1,200}\alpha_{2,110}=0$, $\alpha_{1,101}\alpha_{2,011}=0$, and $\alpha_{1,110}\alpha_{2,020}=\alpha_{2,110}\alpha_{1,101}$.

Similarly, $\Pi^2_3$ is a quotient of the path algebra of the folllowing quiver:
\[
\xymatrix @R=15pt @C=6pt {
&&020 \ar[dr]^{\alpha_2} \\
&110 \ar[ur]^{\alpha_{1}}\ar[dr]_{\alpha_2} && 011\ar[ll]_{\alpha_3}\ar[dr]^{\alpha_2} \\
200 \ar[ur]^{\alpha_{1}} && 101\ar[ll]^{\alpha_3}\ar[ur]_{\alpha_{1}} && 002\ar[ll]^{\alpha_3} 
}
\]
by the relations which set 
\[\alpha_{1,200}\alpha_{2,110}=\alpha_{1,101}\alpha_{2,011}=\alpha_{2,020}\alpha_{3,011}=\alpha_{2,110}\alpha_{3,101}=\alpha_{3,002}\alpha_{1,101}=\alpha_{3,011}\alpha_{1,110}=0\]
and impose the commutativity relations
\[\alpha_{1,110}\alpha_{2,020}=\alpha_{2,110}\alpha_{1,101};\;\;\alpha_{2,011}\alpha_{3,002}=\alpha_{3,011}\alpha_{2,110};\;\;\alpha_{3,101}\alpha_{1,200}=\alpha_{1,101}\alpha_{3,011}.\]
\end{example}

The higher zig-zag algebras are defined using the quadratic duals of $\Lambda=\Lambda^d_s$.  Using the previous results, we can write down a presentation of $\Lambda^!$ by quiver and relations immediately.  The quiver $Q^*_\Lambda$ has the same vertices as $Q_\Lambda$ 
and arrows $\alpha_{i,x}^*: (\ldots, x_i,x_{i+1},\ldots) \to (\ldots, x_i+1,x_{i+1}-1,\ldots)$ at each vertex $x$ where $x_{i+1}\geq1$.  
The relations are as follows: we always have $\alpha_{i}^*\alpha_{i}^*=0$, and we have the relation $e_x(\alpha_{i}^*\alpha_{j}^*+\alpha_{j}^*\alpha_{i}^*)=0$ starting at a vertex $x$ whenever both $x_{i+1}\geq1$ and $x_{j+1}\geq1$.

For convenience, we will relabel our quiver.  We relabel vertices by
\[ y = (y_0,y_1,\ldots,y_d)=(x_{d+1},x_{d},\ldots,x_1),\]
so $y_i=x_{d+1-i}$, and arrows by
\[ \beta_{i,y}=e_y\alpha_{d+1-i}^*
: (\ldots, y_{i-1},y_{i},\ldots) \to (\ldots, y_{i-1}-1,y_{i}+1,\ldots), \]
so $\beta_{i,y}:y\to y+\ve_i$.
So, for $1\leq i\leq d$, we have an arrow $\beta_{i,y}$ starting at $y$ whenever $y_{i-1}\geq1$.  Following our usual convention, we write $\beta_i=\sum_{y}\beta_{i,y}$.

Then we describe the quadratic dual of $\Lambda=\Lambda^d_s$ as follows:
\begin{corollary}\label{cor:lambdadualrelns}
Let $Q^+$ denote the above quiver with vertices $y$ and arrows $\beta_{i,y}$ with $1\leq i\leq d$.  Then
\[ \Lambda^!\cong \F Q^+/(\beta_i\beta_i; e_y(\beta_i\beta_j+\beta_j\beta_i) \text{ for } y_{i-1},y_{j-1}\geq1). \]
\end{corollary}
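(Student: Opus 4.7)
My plan is to apply the standard quadratic-dual construction to the presentation of $\Lambda = \Lambda^d_s$ given in Theorem \ref{thm:drf-quiv}, and then perform the relabeling $y_i = x_{d+1-i}$, $\beta_{j,y} = e_y\alpha^*_{d+1-j}$ already set up in the paragraph preceding the Corollary.

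From Theorem \ref{thm:drf-quiv}, $\Lambda$ is the quadratic algebra $T_S V/\langle R\rangle$ over $S = \F Q_0$, with $V$ spanned by the arrows $\alpha_{i,x}$ and $R \subset V\otimes_S V$ spanned by the commutator components $e_x(\alpha_i\alpha_j - \alpha_j\alpha_i)$. The quadratic dual is $\Lambda^! = T_S V^*/\langle R^\perp\rangle$ for the natural pairing, and $V^*$ has basis the dual arrows $\alpha^*_{i,x}$, which naturally run in the reverse direction and, when relabeled by source vertex, become the arrows $\alpha^*_{i,x}: x\to x+\delta_i-\delta_{i+1}$ (defined for $x_{i+1}\geq 1$) described before the Corollary. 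Computing $R^\perp$ vertex by vertex on length-two paths is the standard ``exterior vs.\ polynomial'' calculation: at each starting vertex $x$ with $x_{i+1}, x_{j+1}\geq 1$ and $i\ne j$, the commutator $\alpha_i\alpha_j - \alpha_j\alpha_i$ spans a one-dimensional subspace whose orthogonal in the span of the two available length-two paths is spanned by the anticommutator $\alpha^*_i\alpha^*_j + \alpha^*_j\alpha^*_i$; at each $x$ where $\alpha^*_i\alpha^*_i$ is a well-defined path (but the corresponding ``self-commutator'' in $R$ is trivial), the square $\alpha^*_i\alpha^*_i$ itself lies in $R^\perp$. This reproduces exactly the presentation of $\Lambda^!$ stated in the paragraph before the Corollary.

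It then remains to translate the statement into the $y$-coordinates, writing $j = d+1-i$ and $l = d+1-k$. The defining condition $x_{i+1}\geq 1$ becomes $y_{j-1}\geq 1$ since $x_{i+1} = y_{d-i} = y_{j-1}$; the squared relations $\alpha^*_i\alpha^*_i$ become $\beta_j\beta_j$; and the anticommutators $e_x(\alpha^*_i\alpha^*_k + \alpha^*_k\alpha^*_i)$ become $e_y(\beta_j\beta_l + \beta_l\beta_j)$ with the required condition $y_{j-1}, y_{l-1}\geq 1$. This is exactly the presentation claimed in the Corollary.

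There is no serious obstacle. The only step with real content is the elementary computation of $R^\perp$, which is the familiar fact that the quadratic dual of commutators is anticommutators and squares. The rest is definition-chasing combined with carefully tracking how the vertex-existence condition ``$x_{i+1}\geq 1$'' transforms under the coupled reindexing of vertices and arrows, and everything can be written out briefly.
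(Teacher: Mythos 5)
Your proposal is correct and follows essentially the same route as the paper: the paper treats the corollary as immediate from the paragraph just before it, which records exactly the two steps you carry out---dualizing the presentation of Theorem \ref{thm:drf-quiv} (commutators become anticommutators plus squares, computed bidegree by bidegree) and then performing the $x\leftrightarrow y$, $\alpha^*\leftrightarrow\beta$ relabeling. Your tracking of the existence condition $x_{i+1}\geq 1\Leftrightarrow y_{j-1}\geq 1$ is also the point the paper handles in that preliminary paragraph, so there is no gap.
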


By also considering the arrows $\beta_0,y=e_y\alpha^*_{d+1}$, we can also describe the quadratic dual of $\Pi=\Pi^d_s$:
\begin{corollary}\label{cor:pidualrelns}
Let $Q'$ denote the above quiver with vertices $y$ and arrows $\beta_{i,y}$ with $0\leq i\leq d$.  Then
\[ \Pi^!\cong \F Q'/(\beta_i\beta_i; e_y(\beta_i\beta_j+\beta_j\beta_i) \text{ for } y_{i-1},y_{j-1}\geq1). \]
\end{corollary}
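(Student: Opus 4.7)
My plan is to adapt the proof of Corollary \ref{cor:lambdadualrelns} with minimal changes. By Proposition \ref{prop:describepi}, the algebra $\Pi = \Pi^d_s$ is presented as a quotient of $\F \overline{Q_\Lambda}$ by commutativity relations $\alpha_i \alpha_j - \alpha_j \alpha_i$ for all $1 \leq i, j \leq d+1$. The only difference with the presentation of $\Lambda = \Lambda^d_s$ used in Corollary \ref{cor:lambdadualrelns} is the presence of the additional arrow family $\alpha_{d+1}$ and the commutativity relations it satisfies with the other $\alpha_j$. I therefore expect $\Pi^!$ to have the same shape as $\Lambda^!$, with one extra direction of arrows $\beta_0 = \alpha^*_{d+1}$ contributing the same flavour of zero and anticommutativity relations as the others.

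First I will decompose the quadratic relation space $R \subseteq V \otimes_S V$ of $\Pi$ according to source and target vertices. At each such pair the element $e_x(\alpha_i \alpha_j - \alpha_j \alpha_i)$ either vanishes (when $i = j$), spans a $1$-dimensional subspace of a $2$-dimensional space of composable paths (when $i \neq j$ and both orderings are composable), or spans the whole $1$-dimensional space of paths (when $i \neq j$ and only one ordering is composable). Computing the orthogonal complement $R^\perp$ in each case yields: no relation when only one ordering exists, so the surviving dual path is a basis element of $\Pi^!_2$; an anticommutativity relation $\alpha^*_i \alpha^*_j + \alpha^*_j \alpha^*_i = 0$ when both orderings exist; and a zero relation $\alpha^*_i \alpha^*_i = 0$ when $i = j$ and the squared path exists in $\overline{Q_\Lambda}$. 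This is precisely the analysis behind Corollary \ref{cor:lambdadualrelns}, now applied with the enlarged cyclic range $1 \leq i, j \leq d+1$ rather than $1 \leq i, j \leq d$.

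Finally I will translate into the $\beta$-notation of the statement. Extending the convention $\beta_{i, y} = e_y \alpha^*_{d+1-i}$ to include $i = 0$, where indices are read modulo $d+1$, the arrow $\beta_{i, y}$ exists if and only if $y_{i-1} \geq 1$ (again with indices modulo $d+1$); in particular the new arrow $\beta_{0, y}$ appears precisely when $y_d \geq 1$, corresponding to $\alpha^*_{d+1}$ at a vertex with $x_1 \geq 1$. The zero and anticommutativity relations then take the claimed form. I do not anticipate a substantial obstacle here: the whole argument is formally identical to the one proving Corollary \ref{cor:lambdadualrelns}, and the only additional work is careful bookkeeping of the cyclic indexing that appears once the arrow family $\alpha_{d+1}$ is included.
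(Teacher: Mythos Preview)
Your proposal is correct and follows exactly the paper's approach: the paper treats this corollary as an immediate consequence of the quadratic dual computation already carried out for $\Lambda^!$ before Corollary~\ref{cor:lambdadualrelns}, simply remarking that one also includes the extra arrow family $\beta_{0,y}=e_y\alpha_{d+1}^*$ coming from the additional generator $\alpha_{d+1}$ of $\Pi$. Your detailed analysis of the relation space $R$ vertex-pair by vertex-pair, and your bookkeeping of the cyclic indexing for $\beta_0$, is precisely the content the paper leaves implicit.
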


\subsection{Commutativity and anticommutativity}\label{ss:commant}

We want to change the anticommutativity relations in $\Lambda^!$ and $\Pi^!$ to commutativity relations.  Fix $d,s\geq1$.  The following notation will be useful.  
Let $y=(y_0,y_1,\ldots, y_d)$ be a vertex of $Q$ and define $y_k=0$ for $k>d$.  Then we define:
\[ \pr_i(y)=(-1)^z,\;\;\text{ where }z=\sum_{j\geq0}y_{i+2j}.\]
Note that we do \emph{not} reduce indices mod $d+1$, so this sum is finite. 
It measures the parity of the sum of every other co-ordinate from a given starting index.

The following easily proved technical lemma will be useful.  Recall that
$$\ve_i=(0,\ldots,0,-1,1,0,\ldots,0)\in\Z^{d+1},$$
 so $(y+\ve_i)_i=y_i+1$.
\begin{lemma}\label{lem:parity}
Let $0\leq i\leq d$ and $1\leq j\leq d$.  Then
\[ \pr_i(y+\ve_j)=
\begin{cases}
    (-1)\pr_i(y) & \text{if } i\leq j; \\
    \phantom{(-1)}\pr_i(y) & \text{if } i>j
\end{cases}
\]
and
\[\pr_i(y+\ve_0)= (-1)^{d+i+1}\pr_i(y).\]
\end{lemma}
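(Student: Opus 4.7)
The plan is a direct, case-by-case verification from the definition. Writing $z(y) = \sum_{k \geq 0} y_{i+2k}$ so that $\pr_i(y) = (-1)^{z(y)}$, both claimed identities reduce to computing the parity of $z(y + \ve_\bullet) - z(y)$. This in turn amounts to tracking whether each of the (at most two) positions at which $\ve_\bullet$ is nonzero lies in the index set $S_i = \{i, i+2, i+4, \ldots\}$ over which we are summing.

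For the first part, fix $1 \leq j \leq d$, so $\ve_j$ adjusts coordinates $j-1$ and $j$ by $-1$ and $+1$ respectively. If $i > j$, then both $j-1$ and $j$ lie strictly below $i$, so neither is in $S_i$; the sum $z$ is unchanged, and we obtain $\pr_i(y+\ve_j)=\pr_i(y)$. If $i \leq j$, then among the two consecutive integers $j-1, j$ exactly one has the same parity as $i$, and that one is $\geq i$, so exactly one index contributes to $z$ and the net change is $\pm 1$; the parity flips, giving $\pr_i(y+\ve_j) = -\pr_i(y)$.

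For the second part, $\ve_0$ adjusts coordinates $0$ and $d$ by $+1$ and $-1$ respectively. Position $0$ lies in $S_i$ iff $i = 0$, and position $d$ lies in $S_i$ iff $d \equiv i \pmod 2$ (the inequality $d \geq i$ is automatic). Recording the net change $[i=0] - [d \equiv i \pmod 2]$ modulo $2$ and reading off $(-1)$ to that power yields the claimed exponent $d+i+1$. The one delicate case is when both modified positions of $\ve_0$ belong to $S_i$ simultaneously and their contributions partially cancel; this is the main place where a sign could be mistracked, and it is important that one does \emph{not} try to collapse the $\ve_0$ case into the $\ve_j$ case via a cyclic shift, since the definition of $\pr_i$ explicitly refuses to reduce indices modulo $d+1$.
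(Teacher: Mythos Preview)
Your approach is exactly what the paper intends (it simply calls the lemma ``easily proved'' and gives no argument), and your verification of the first formula is correct. The second formula is also correctly verified for $i\geq 1$, which is the only range in which the paper actually uses it (in Proposition~\ref{prop:pidualcomm}).

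There is, however, a genuine gap at $i=0$: in fact the second formula as stated is \emph{false} there. You correctly write the net change in $z$ as $[i=0]-[d\equiv i\pmod 2]$, but then assert without checking that its parity matches $d+i+1$. For $i\geq 1$ this holds, since $[d\equiv i]=1$ exactly when $d+i$ is even, i.e.\ when $d+i+1$ is odd. For $i=0$ the extra term $[i=0]=1$ destroys the match. Concretely, take $d=2$: then $\pr_0(y)=(-1)^{y_0+y_2}$ and $\pr_0(y+\ve_0)=(-1)^{(y_0+1)+(y_2-1)}=\pr_0(y)$, whereas the claimed factor is $(-1)^{d+0+1}=-1$. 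The correct exponent for $i=0$ is $d$ rather than $d+1$. You even flagged the situation where both modified positions lie in $S_i$ as the delicate case --- that is precisely $i=0$ with $d$ even --- but then did not carry out the check; and the case $i=0$ with $d$ odd fails as well. Since the paper never invokes the second formula with $i=0$, nothing downstream is harmed, but your proof as written claims more than is true.
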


Our first use of the parity maps is as follows:
\begin{proposition}\label{prop:azzsym}
$Z^d_s\cong\Triv((\Lambda^d_s)^!)$, and thus $Z^d_s$ is a symmetric algebra.
\end{proposition}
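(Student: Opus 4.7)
The plan is to show that the degree-parity twist $\zeta^d$ is an inner automorphism of $\Lambda^!:=(\Lambda^d_s)^!$, since the ``untwisting'' argument from Lemma \ref{lem:bip} then carries over verbatim: if $\zeta^d(a)=uau^{-1}$ in $\Lambda^!$ then $f\mapsto uf$ is a bimodule isomorphism $\Lambda^{!*}\arr\sim\tensor[_{\zeta^d}]{{\Lambda^{!*}}}{}$, which extends to an algebra isomorphism $\Triv(\Lambda^!)\arr\sim\Trivdpo(\Lambda^!)=Z^d_s$.  Symmetry then follows immediately from Proposition \ref{prop:twisttrivnak} applied to the untwisted trivial extension (where the Nakayama automorphism is the identity).

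When $d$ is even, $\zeta^d=\id$ and there is nothing to prove.  When $d$ is odd, $\zeta^d=\zeta$ simply negates every arrow of $\Lambda^!$, so I would try to implement $\zeta$ by conjugation by an element $u=\sum_{y\in Q_0}\sigma(y)\,e_y$ with $\sigma(y)\in\{\pm 1\}$.  Such a $u$ automatically satisfies $u^2=1$ (so $u^{-1}=u$), and conjugation by $u$ negates the arrow $\beta_{i,y}\colon y\to y+\ve_i$ precisely when $\sigma(y)\sigma(y+\ve_i)=-1$.  Using the presentation of $\Lambda^!$ recalled in Corollary \ref{cor:lambdadualrelns}, the relevant arrows are the $\beta_{i,y}$ with $1\le i\le d$, so what is needed is a $\{\pm 1\}$-labelling of the vertices that flips sign across each $\ve_j$ with $1\le j\le d$.

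This labelling is supplied by the parity function $\pr_0$ introduced just before Lemma \ref{lem:parity}: applied at $i=0$, the lemma asserts $\pr_0(y+\ve_j)=-\pr_0(y)$ for every $1\le j\le d$, which is exactly the flipping property we want.  Setting $u=\sum_y \pr_0(y)\,e_y\in\Lambda^!$, the identities $u^2=1$ and $u\beta_{i,y}u=-\beta_{i,y}$ both follow at once.  Because $\Lambda^!$ is Koszul, it is generated in degrees $\le 1$ over its semisimple base ring $S$ (which is fixed pointwise by both $\zeta$ and conjugation by $u$), and so agreement on degree-one generators upgrades to the algebra identity $uau^{-1}=\zeta(a)$ for all $a\in\Lambda^!$.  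This proves that $\zeta^d$ is inner, and the proposition follows.

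The main obstacle in this approach is guessing the right sign function $\sigma$: conceptually one needs a 2-colouring of the underlying graph of the quiver of $\Lambda^!$, and the coordinate description of the vertices offers several plausible candidates.  The parity tabulation in Lemma \ref{lem:parity} short-circuits the search by recording precisely how each $\pr_i$ transforms under each $\ve_j$, and the choice $\pr_0$ stands out as the unique $\pr_i$ that is flipped by every $\ve_j$ with $j\ge 1$; the extra ``wraparound'' index $j=0$ (which would cause trouble) is irrelevant here because $\Lambda^!$, unlike $\Pi^!$, has no $\beta_0$.
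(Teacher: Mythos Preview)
Your proof is correct and is essentially the same as the paper's. The paper phrases it slightly more tersely: it observes that $\pr_0(y+\ve_j)=-\pr_0(y)$ for all $1\le j\le d$ shows the underlying graph of $Q^+$ is bipartite, and then invokes Lemma~\ref{lem:bip} directly; your version unpacks the proof of that lemma in this specific case by writing down the unit $u=\sum_y\pr_0(y)e_y$ explicitly, but the underlying idea and the key use of $\pr_0$ via Lemma~\ref{lem:parity} are identical.
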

\begin{proof}
We partition the vertices $y$ of the quiver $Q^+$ of $\Lambda^d_s$ using the sign of $\pr_0(y)$.  If an arrow of $Q$ has source $y$ then its target is $y+\varepsilon_j$ for some $1\leq j\leq d$.  So this is a bipartite partition because 
$\pr_0(y+\ve_j)=(-1)\pr_0(y)$ for all $j$.  So the result follows by Lemma \ref{lem:bip}.
\end{proof}

Now we show that $\Lambda^!$ can be defined using commutativity relations.
\begin{proposition}\label{prop:lambdadualcomm}
Let $f_{i,y}=\pr_i(y)\beta_{i,y}\in\F Q^+$.  Then
\[ \Lambda^!\cong \F Q^+/(f_if_i; e_y(f_if_j-f_jf_i) \text{ for } y_{i-1},y_{j-1}\geq1). \]
\end{proposition}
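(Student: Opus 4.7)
The plan is to compare the ideals directly inside $\F Q^+$. By Corollary \ref{cor:lambdadualrelns}, $\Lambda^!$ is $\F Q^+$ modulo an ideal $J_\beta$ generated by the squared paths $\beta_i\beta_i$ and the anticommutativity relations $e_y(\beta_i\beta_j + \beta_j\beta_i)$ for $y_{i-1},y_{j-1}\geq 1$. Because each $\pr_i(y)$ is a nonzero scalar in $\{\pm 1\}$, the arrows $f_{i,y}=\pr_i(y)\beta_{i,y}$ span exactly the same one-dimensional arrow spaces as the $\beta_{i,y}$. It therefore suffices to show that the ideal $J_f$ generated by the proposed relations in the $f$'s coincides with $J_\beta$; the quotient presentations will then agree.

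First I would dispose of the squared relations. Rewriting $e_y f_i f_i$ in terms of $\beta$'s gives $\pr_i(y)\pr_i(y+\ve_i)\,e_y\beta_i\beta_i$, and Lemma \ref{lem:parity} (applied with $j=i\leq d$) yields $\pr_i(y+\ve_i)=-\pr_i(y)$, so $f_if_i = -\beta_i\beta_i$. Next, for the mixed relations with $1\leq i\neq j\leq d$ I would expand
\[ e_y(f_if_j - f_jf_i) = \pr_i(y)\pr_j(y+\ve_i)\,e_y\beta_i\beta_j \;-\; \pr_j(y)\pr_i(y+\ve_j)\,e_y\beta_j\beta_i. \]
The whole calculation then hinges on the sign identity $\pr_j(y)\pr_i(y+\ve_j) = -\pr_i(y)\pr_j(y+\ve_i)$. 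By the symmetry of $i$ and $j$ in this identity I may assume $i<j$, in which case Lemma \ref{lem:parity} gives $\pr_i(y+\ve_j)=-\pr_i(y)$ (since $i\leq j$) and $\pr_j(y+\ve_i)=\pr_j(y)$ (since $j>i$), and the identity is immediate.

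Once the sign identity is in hand, the mixed relation becomes the nonzero scalar multiple $\pr_i(y)\pr_j(y+\ve_i)\cdot e_y(\beta_i\beta_j+\beta_j\beta_i)$ of the corresponding anticommutativity relation. The index conditions $y_{i-1},y_{j-1}\geq 1$ are identical in both presentations (they merely ensure that both arrows $\beta_{i,y}$ and $\beta_{j,y}$ exist), so the generators of $J_f$ and $J_\beta$ correspond bijectively up to nonzero scalars, giving $J_f = J_\beta$ and hence the desired isomorphism. I do not anticipate any substantive obstacle here: the whole argument reduces to the one-line parity bookkeeping of Lemma \ref{lem:parity}, with the Koszul/quadratic-dual machinery having already done the heavy lifting in Corollary \ref{cor:lambdadualrelns}.
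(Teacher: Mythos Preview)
Your proof is correct and rests on exactly the same sign computation from Lemma~\ref{lem:parity} that the paper uses. The only cosmetic difference is framing: you show directly that the two ideals $J_f$ and $J_\beta$ coincide in $\F Q^+$ (so the quotients are literally equal), whereas the paper packages the same calculation as an algebra automorphism $\beta_{i,y}\mapsto \pr_i(y)\beta_{i,y}$ of $\F Q^+$ carrying one relation set to the other.
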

\begin{proof}
We define an algebra homomorphism from $\F Q^+$ to $\Lambda^!$ by $\beta_{i,y}\mapsto f_{i,y}=\pr_i(y)\beta_{i,y}$.  We will check that this map kills the ideal we quotient by in Corollary \ref{cor:lambdadualrelns}.  Without loss of generality, assume $i<j$.  Then 
\begin{align*}
e_y(\beta_i\beta_j+\beta_j\beta_i) = \;& \beta_{i,y}\beta_{j,y+\ve_i}+ \beta_{j,y}\beta_{i,y+\ve_j} \\
   \mapsto & \pr_i(y)\pr_j(y+\ve_i)\beta_{i,y}\beta_{j,y+\ve_i}+ \pr_j(y)\pr_i(y+\ve_j) \beta_{j,y}\beta_{i,y+\ve_j}  \\
   = & \pr_i(y)\pr_j(y)\left(\beta_{i,y}\beta_{j,y+\ve_i}- \beta_{j,y}\beta_{i,y+\ve_j} \right) =0 
\end{align*}
So the map $\F Q^+\to\Lambda^!$ induces an algebra endomorphism of $\Lambda^!$, which is clearly an automorphism.
\end{proof}

Finally we show that $\Pi^!$ can be defined using commutativity relations.
\begin{proposition}\label{prop:pidualcomm}
Let $Q=Q^d_s$ be the quiver with vertices as $Q^+$ and arrows $f_{i,y}=e_yf_i:y\to y+\ve_i$ when $y_{i-1}\geq1$, for $0\leq i\leq d$.  Then
\[ \Pi^!\cong \F Q/(f_if_i; e_y(f_if_j-f_jf_i) \text{ for } y_{i-1},y_{j-1}\geq1). \]
\end{proposition}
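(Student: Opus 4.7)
My plan is to mimic Proposition~\ref{prop:lambdadualcomm}: I would construct an involutive algebra automorphism $\phi$ of the free path algebra $\F Q$ which rescales each arrow by a sign $\sigma_{i,y}\in\{\pm1\}$, and verify that $\phi$ interchanges the anticommutativity ideal $J_{\text{anti}}$ of Corollary~\ref{cor:pidualrelns} with the commutativity ideal $J_{\text{comm}}$ in the statement.  Since $\phi$ is an involution, $\phi(J_{\text{anti}})=J_{\text{comm}}$ gives an induced isomorphism $\F Q/J_{\text{comm}}\arr\sim\F Q/J_{\text{anti}}=\Pi^!$.

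The required identity for each pair $i\neq j$ with both arrows starting at $y$ is
\[\sigma_{i,y}\sigma_{j,y+\ve_i}+\sigma_{j,y}\sigma_{i,y+\ve_j}=0.\]
For $1\leq i<j\leq d$ I take $\sigma_{i,y}=\pr_i(y)$ exactly as in the proof of Proposition~\ref{prop:lambdadualcomm}: the identity then follows from the first case of Lemma~\ref{lem:parity}.  The zero relations $f_i^2$ impose no additional constraint, since any sign choice preserves the ideal they generate.

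For the remaining pairs $(0,j)$ with $1\leq j\leq d$, I would combine both cases of Lemma~\ref{lem:parity} with $\sigma_j=\pr_j$ to reduce the required identity to the recursion
\[\sigma_{0,y+\ve_j}/\sigma_{0,y}=(-1)^{d+j}\qquad(1\leq j\leq d).\]
This is solvable: write $\sigma_{0,y}=(-1)^{h(y)}$ with $h(y)=\sum_{k=0}^d c_ky_k$ a $\Z$-linear function, so the recursion translates to $c_j-c_{j-1}\equiv d+j\pmod 2$ with $c_0=0$, which determines all the $c_k$ inductively modulo $2$.  The behaviour of $\sigma_0$ under $\ve_0$ is not constrained, since no pair $(0,0)$ appears in the anticommutativity relations; and because the modification only touches $\sigma_0$ it does not disturb the cases with $1\leq i,j\leq d$ already verified.

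The main obstacle is that $Q$ is not bipartite---the arrows $f_0$ close up into oriented cycles---so Lemma~\ref{lem:bip} cannot be invoked directly, as it was implicitly via Proposition~\ref{prop:azzsym} in the case of $\Lambda^!$.  The recursive choice of $\sigma_{0,y}$ is the substitute, and it is the step that uses the full strength of the second case of Lemma~\ref{lem:parity}; surjectivity of the induced map is immediate from the sign rescaling, and injectivity comes for free from the involution $\phi^2=\id$.
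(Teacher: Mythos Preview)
Your proposal is correct and follows essentially the same approach as the paper: define a sign-rescaling automorphism of the path algebra using $\sigma_{i,y}=\pr_i(y)$ for $1\leq i\leq d$ and a separately constructed sign $\sigma_{0,y}$ satisfying the recursion $\sigma_{0,y+\ve_j}=(-1)^{d+j}\sigma_{0,y}$, then check that anticommutativity generators are carried to commutativity generators. The only difference is cosmetic: the paper produces this function (denoted $w$) by the explicit formula $w(y)=\pr_0\big((y_d+y_{d-1},\,y_{d-2}+y_{d-3},\,\ldots)\big)$, whereas you solve the same recursion via a linear ansatz $h(y)=\sum c_ky_k$; both constructions feed into an identical verification of the $(0,j)$ relations using the second case of Lemma~\ref{lem:parity}.
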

\begin{proof}
First we show there exists a function $w:Q_0\to\{-1,1\}$ such that $w(y+\ve_i)=(-1)^{d+i}w(y)$ for $i>0$.  Let $n$ be the integer such that $n\leq (d+1)/2< n+1$ and let $x=(y_d+y_{d-1},y_{d-2}+y_{d-3},\ldots)\in\Z^n$.  Define $w(y)=\pr_0(x)$.  Then
\[ w(y+\ve_i)=
\begin{cases}
    \phantom{(-1)}w(y) & \text{if } i\equiv d\mod 2; \\
    {(-1)}w(y) & \text{if } i\equiv d+1\mod 2
\end{cases}
\]
and so $w(y+\ve_i)=(-1)^{d+i}w(y)$. 

Recall the presentation of $\Pi^!$ in Corollary \ref{cor:pidualrelns}.  
Define an algebra map $\F Q'\to \F Q$ by 
\[ \beta_{i,y}\mapsto
\begin{cases}
    {w(y)}f_{0,y} & \text{if } i=0; \\
    \pr_i(y)f_{i,y} & \text{if } i>0.
\end{cases}
\]
We argue as in Proposition \ref{prop:lambdadualcomm}, so we just need to check that $e_y(\beta_0\beta_i+\beta_i\beta_0)\mapsto0$.  We calculate:
\begin{align*}
e_y(\beta_0\beta_i+\beta_i\beta_0)
=&\beta_{0,y}\beta_{i,y+\ve_0}+ \beta_{i,y}\beta_{0,y+\ve_i}\\
 \mapsto &
{w(y)}
\pr_i(y+\ve_0)f_{0,y}f_{i,y+\ve_0}+ 
\pr_i(y)
{w(y+\ve_i)}
f_{i,y}f_{0,y+\ve_i}\\
= & 
w(y)\pr_i(y)
\left( (-1)^{d+i+1} f_{0,y}f_{i,y+\ve_0} + (-1)^{d+i}f_{i,y}f_{0,y+\ve_i} \right) =0.
\end{align*}
\end{proof}

\subsection{Dimensions of projective modules}

Fix $d,s\geq1$ and let $\Pi=\Pi^d_s$.
Recall that, by Theorem \ref{thm:pidualtoz}, there is a surjective map 
$ \phi:\Pi^!\to Z^d_s $
of graded algebras which is an isomorphism in degrees $0$ and $1$.
It was suggested in an early draft of \cite{gi} that this map is an isomorphism when $s\geq3$.
As the algebras $Z^d_s$ are finite-dimensional, one way to prove this would be to show that $\dim_\F (\Pi^d_s)^!=\dim_\F Z^d_s$.  Both algebras are basic, so the dimension of the algebra is the sum of the dimensions of the projective modules at each vertex.  Our first aim is to calculate these dimensions.

The first two lemmas are easy.
\begin{lemma}\label{lem:basiccomm}
Suppose that $f_if_j$ is a nonzero path in either $\Lambda^!$ or $\Pi^!$ starting at $y$.  
\vspace{-1em}\begin{enumerate}[(i)]
\item If $f_jf_i$ is also a nonzero path starting at $y$ then $e_yf_if_j=e_yf_jf_i$.
\item If $j\neq i+1$ then $f_jf_i$ is also a nonzero path starting at $y$, and therefore $e_yf_if_j=e_yf_jf_i$.
\end{enumerate}
\end{lemma}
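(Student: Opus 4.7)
The plan for part (i) is to invoke directly the presentations in Propositions \ref{prop:lambdadualcomm} and \ref{prop:pidualcomm}. If both $e_yf_if_j$ and $e_yf_jf_i$ are nonzero in the algebra, then in particular the arrows $f_{i,y}$ and $f_{j,y}$ both exist, which by the construction of the quiver means the relevant coordinates of $y$ are at least $1$ (namely $y_{i-1}\geq 1$ and $y_{j-1}\geq 1$, with $y_{-1}$ read as $y_d$ in the case of arrows indexed by $0$ appearing in $\Pi^!$). But this is exactly the hypothesis under which the commutativity relation $e_y(f_if_j - f_jf_i)=0$ is imposed in both presentations, so the equality follows.

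For part (ii), the plan is a small case analysis tracking which coordinates of $y$ are required to be positive for each arrow to exist. Nonvanishing of $e_yf_if_j$ gives existence of $f_{i,y}$ and $f_{j,y+\ve_i}$, and also forces $j\neq i$ (since $f_if_i=0$). The key observation is that, for $1\leq i,j\leq d$, the vector $\ve_i$ alters only the coordinates at positions $i-1$ and $i$. The coordinate of $y+\ve_i$ controlling existence of $f_{j,\cdot}$ is position $j-1$, and since $j\neq i$ and $j\neq i+1$ we have $(y+\ve_i)_{j-1}=y_{j-1}$, so $y_{j-1}\geq 1$ and hence $f_{j,y}$ exists. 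Similarly, to get existence of $f_{i,y+\ve_j}$, we need $(y+\ve_j)_{i-1}\geq 1$: when $i\neq j+1$ this equals $y_{i-1}$, which is $\geq 1$ from existence of $f_{i,y}$; when $i=j+1$ this equals $y_j+1\geq 1$. Either way $e_yf_jf_i$ is nonzero, and then part (i) yields the equality.

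The only wrinkle is handling $i=0$ or $j=0$ in the $\Pi^!$ case, where $\ve_0=(1,0,\ldots,0,-1)$ affects positions $0$ and $d$ rather than two consecutive coordinates. This is where I expect the bookkeeping to be slightly less pleasant. However, the constraint $j\neq i+1$ is tailored exactly to rule out the one scenario in which $\ve_i$ decreases the coordinate needed for $f_j$ to survive; going through the subcases ($i=0$ with $j\geq 2$, and $j=0$ with $i\leq d$) using Lemma-style coordinate tracking identical to the one above shows nothing goes wrong, and the argument concludes as before.
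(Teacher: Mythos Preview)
Your argument for part (i) is correct: the paper omits the proof (calling the lemma ``easy''), and what you wrote is precisely the intended verification. Your argument for part (ii) in the $\Lambda^!$ case, where $1\leq i,j\leq d$, is also correct.

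There is a gap in your handling of the $\Pi^!$ case when $j=0$. You assert that ``$j=0$ with $i\leq d$'' causes no trouble, but consider $i=d$, $j=0$. The existence of $f_{0,\,y+\ve_d}$ requires $(y+\ve_d)_d\geq 1$, and since $\ve_d$ \emph{increments} coordinate $d$ this reads $y_d+1\geq 1$, which is vacuous and gives no information about $y_d$. So you cannot conclude that $f_{0,y}$ exists. Concretely, for $d=2$, $s=3$, and $y=(1,1,0)$, the path $e_yf_2f_0$ is nonzero in $\Pi^!$ (it traverses $(1,1,0)\to(1,0,1)\to(2,0,0)$ and no relation in the presentation of Proposition~\ref{prop:pidualcomm} kills it), yet there is no arrow $f_0$ leaving $y$ since $y_2=0$. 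So the statement, read literally, fails here.

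The fix is to interpret the hypothesis $j\neq i+1$ modulo $d+1$ in the $\Pi^!$ setting, so that the pair $(i,j)=(d,0)$ is excluded. This cyclic reading is exactly what is needed for the lemma's application in the proof of Lemma~\ref{lem:fi2pi}, where the residual ``critical'' path after commuting is the cyclic word $f_if_{i+1}\cdots f_df_0\cdots f_i$. With that interpretation, the obstruction to deducing $y_{j-1}\geq 1$ from $(y+\ve_i)_{j-1}\geq 1$ arises precisely when $\ve_i$ increments position $j-1$, which is the single case $j\equiv i+1\pmod{d+1}$, and your coordinate-tracking argument then goes through cleanly. (Incidentally, your phrase ``$\ve_i$ decreases the coordinate needed for $f_j$'' has the sign backwards: the problem is when $\ve_i$ \emph{increases} that coordinate, making the hypothesis $(y+\ve_i)_{j-1}\geq 1$ too weak.)
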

\begin{lemma}\label{lem:commutechains}
Let $y$ be a vertex in either $\Lambda^!$ or $\Pi^!$ and suppose that  both $y_{i+1}$ and $y_{j+1}$ are nonzero, with $i<j$.  Then
$e_yf_if_{i+1}\ldots f_{j-1}f_j=e_yf_jf_if_{i+1}\ldots f_{j-1}$ 
in both $\Lambda^!$ and $\Pi^!$.  Also, in $\Pi^!$, 
$e_yf_jf_{j+1}\ldots f_df_0\ldots f_{i-1}f_i=e_yf_{j+1}\ldots f_df_0\ldots f_{i-1}f_if_j$.
\end{lemma}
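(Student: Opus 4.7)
The strategy for both identities is the same: iteratively apply the commutativity relations of Propositions \ref{prop:lambdadualcomm} and \ref{prop:pidualcomm} to move the displaced $f_j$ step-by-step from one end of the product to the other, one adjacent transposition at a time.

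For the first identity I would slide $f_j$ leftward through the chain $f_i f_{i+1}\cdots f_{j-1}$, performing $j-i$ swaps in total. After $m-1$ swaps the product has the shape $e_y\, f_i\cdots f_{j-m}\, f_j\, f_{j-m+1}\cdots f_{j-1}$, and the $m$th swap interchanges $f_{j-m}$ with the $f_j$ to its right at the intermediate vertex $v_m = y + \ve_i + \cdots + \ve_{j-m-1}$. Telescoping, $v_m - y = \delta_{j-m-1} - \delta_{i-1}$, where $\delta_k$ denotes the $k$th standard basis vector, so $v_m$ agrees with $y$ at every coordinate other than the two indicated. In particular $(v_m)_{j-1}=y_{j-1}$, and the relation $e_{v_m}(f_{j-m}f_j - f_jf_{j-m}) = 0$ is available provided the relevant coordinate of $y$ is nonzero, which follows from the hypothesis.

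The second identity lives in $\Pi^!$ because it uses the arrow $f_0$. The plan is analogous: move $f_j$ rightward through the cyclic sequence $f_{j+1},\ldots,f_d,f_0,\ldots,f_{i-1},f_i$. Writing the intermediate vertices as partial sums of $\ve$'s and using the cyclic telescoping identity $\ve_{j+1}+\cdots+\ve_d+\ve_0+\cdots+\ve_k = \delta_k - \delta_j$, each intermediate vertex again collapses to a small perturbation of $y$, so the applicability condition of each commutativity relation reduces to an arithmetic condition on a fixed coordinate of $y$. The technical heart of both proofs, and the only place where the hypothesis is really used, is the verification that at every intermediate step the two paths being equated are both nonzero paths (so the commutativity relation is genuinely available rather than a vacuous $0=0$); once this is in place, both identities follow by a short induction on the number of swaps performed.
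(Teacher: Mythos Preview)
The paper gives no proof of this lemma: it is declared, together with Lemma \ref{lem:basiccomm}, to be ``easy''. Your approach---sliding $f_j$ through the product one adjacent transposition at a time, computing the intermediate vertex by telescoping the $\ve_k$'s, and checking that the commutativity relation is available there---is precisely the natural elaboration of ``easy'' and is correct.

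One small streamlining: rather than appealing directly to Propositions \ref{prop:lambdadualcomm} and \ref{prop:pidualcomm}, it is cleaner to invoke the immediately preceding Lemma \ref{lem:basiccomm}. Its part (ii) already shows that for every swap except the single ``adjacent'' one (exchanging $f_j$ with $f_{j-1}$ in the first identity, or with $f_{j+1}$ in the second) the reversed path automatically exists once the original path does, so commutativity holds with no coordinate check at all. Only that one adjacent swap genuinely requires the hypothesis on $y$, and your telescoping computation $(v_m)_{j-1}=y_{j-1}$ handles exactly that case. This matches your own observation that ``the only place where the hypothesis is really used'' is at one critical step.
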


\begin{example}
Let $d=5$, $s=3$, and $y=(0,1,0,0,1,0)$.  Then both of the following pairs of paths commute in $\Lambda^!$ and $\Pi^!$:
\[
\xymatrix{
010010\ar[r]^{f_2}\ar[d]^{f_5} &001010\ar[d]^{f_5} \\
010001\ar[r]^{f_2} &001001
}
\hspace{4em}
\xymatrix{
010010\ar[r]^{f_2}\ar[d]^{f_5} &001010\ar[r]^{f_3} &000110\ar[r]^{f_4} &000020\ar[d]^{f_5} \\
010001\ar[r]^{f_2} &001001\ar[r]^{f_3} &000101\ar[r]^{f_4} &000011
}
\]
\end{example}

The next three lemmas are fundamental.
\begin{lemma}\label{lem:permutefi}
Let $f_{i_1}f_{i_2}\ldots f_{i_\ell}$ be a path in $\Lambda^!$ or $\Pi^!$ starting at the vertex $y$.  If $\sigma$ is a permutation in the symmetric group on $\{1,2,\ldots,\ell\}$ and $f_{i_{\sigma(1)}}f_{i_{\sigma(2)}}\ldots f_{i_{\sigma(\ell)}}$ is another path starting at $y$ then the two paths are equal.
\end{lemma}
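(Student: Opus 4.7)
I plan to proceed by induction on the path length $\ell$, the cases $\ell \leq 1$ being immediate. For the inductive step, let $j = i_{\sigma(1)}$ be the first label of $P_2$, so that $f_j$ is valid at $y$. If $i_1 = j$ then I cancel the common first letter and apply the inductive hypothesis to the two length-$(\ell-1)$ paths starting at $y + \ve_j$. Otherwise, let $k \geq 2$ be the smallest index with $i_k = j$; the rest of the proof rewrites $P_1$, using the presentations of Propositions \ref{prop:lambdadualcomm} and \ref{prop:pidualcomm}, so that it begins with $f_j$, after which induction finishes the argument.

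The heart of the matter is the following sublemma: for every $0 \leq m \leq k-1$, the arrow $f_j$ is valid at the intermediate vertex $v_m = y + \ve_{i_1} + \cdots + \ve_{i_m}$. I prove it by induction on $m$. Validity of $f_j$ at a vertex $v$ amounts to a single coordinate being $\geq 1$ (namely $v_{j-1}$, or $v_d$ if $j = 0$ in the $\Pi^!$ case), and a direct inspection of the definitions shows that the relevant coordinate of $\ve_i$ is nonnegative for every $i \neq j$. Since $i_m \neq j$ for all $m < k$, passing from $v_{m-1}$ to $v_m = v_{m-1} + \ve_{i_m}$ cannot decrease the coordinate in question below the value $\geq 1$ inherited from the previous step.

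Given the sublemma, I transport $f_{i_k}$ leftward one letter at a time. At stage $m$, with $m$ decreasing from $k-1$ down to $1$, I swap $f_{i_m}$ and $f_{i_k}$ at the vertex $v_{m-1}$ using Lemma \ref{lem:basiccomm}(i); the swapped ordering $f_{i_k} f_{i_m}$ is valid because $f_{i_k}$ is valid at $v_{m-1}$ by the sublemma, and the same nonnegativity observation shows that $f_{i_m}$ remains valid at $v_{m-1} + \ve_{i_k}$ (using $i_k = j \neq i_m$). After all $k-1$ swaps we obtain $P_1 = f_j \cdot P_1'$ in the algebra, where $P_1' = f_{i_1} \cdots f_{i_{k-1}} f_{i_{k+1}} \cdots f_{i_\ell}$ is itself a valid path from $y + \ve_j$. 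Setting $P_2' = f_{i_{\sigma(2)}} \cdots f_{i_{\sigma(\ell)}}$, the paths $P_1'$ and $P_2'$ are valid length-$(\ell-1)$ paths from $y + \ve_j$ whose label multisets agree, so the inductive hypothesis gives $P_1' = P_2'$ and hence $P_1 = P_2$. The main obstacle is the sublemma, but once the sign behaviour of the $\ve_i$ is understood the rest is routine bookkeeping that never invokes Lemma \ref{lem:commutechains}.
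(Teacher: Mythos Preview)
Your proof is correct and follows essentially the same inductive strategy as the paper: reduce to the case where the first letters differ, then transport the target letter to the front using the commutativity relations, and finish by induction. The only cosmetic difference is that the paper rewrites $P_2$ to begin with $f_{i_1}$ (invoking the inductive hypothesis on the tail and then doing a single swap at $y$), whereas you rewrite $P_1$ to begin with $f_j=f_{i_{\sigma(1)}}$ via explicit adjacent swaps justified by your sublemma; your version is in fact more carefully spelled out, since the paper's phrase ``use the hypothesis to rewrite $f_{i_{\sigma(2)}}\ldots f_{i_{\sigma(\ell)}}$ so that it starts with $f_{i_1}$'' tacitly relies on exactly the coordinate observation you isolate.
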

\begin{proof}
We use induction on $\ell$.  The base case $\ell=1$ is clear.  If $\sigma(1)=1$ then this follows immediately from the inductive hypothesis.  If not, use the hypothesis to rewrite $f_{i_{\sigma(2)}}\ldots f_{i_{\sigma(\ell)}}$ so that it starts with $f_{i_1}$.  
As we have paths starting at $y$ beginning both $f_{i_1}$ and $f_{i_{\sigma(1)}}$, we know that $y_{i_1-1}$ and $y_{i_{\sigma(1)}-1}$ are both nonzero, so $f_{i_{\sigma(1)}}f_{i_1}=f_{i_1}f_{i_{\sigma(1)}}$.  So we can move $f_{i_1}$ to the start of the expression and finish by using the inductive hypothesis.
\end{proof}
\begin{lemma}\label{lem:fi2lambda}
In $\Lambda^!$, any path which contains $f_i$ more than once is zero.
\end{lemma}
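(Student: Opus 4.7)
I will proceed by contradiction via a minimal counter-example. Suppose for contradiction that some nonzero $p = e_y f_{i_1} \cdots f_{i_\ell}$ in $\Lambda^!$ contains $f_i$ at least twice; among all such triples $(p, j_1, j_2)$ with $j_1 < j_2$ and $i_{j_1} = i_{j_2} = i$, choose one with $g = j_2 - j_1$ minimal. By minimality, no $f_i$ appears strictly between positions $j_1$ and $j_2$. If $g = 1$ then $p$ contains $f_i f_i = 0$ as a subpath, so $p = 0$, contradicting $p \neq 0$. So assume $g \geq 2$; the aim is to construct a valid rearrangement $p'$ of $p$ with the same multiset of arrows from the same vertex $y$. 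By Lemma \ref{lem:permutefi}, $p = p'$ in $\Lambda^!$, so if $p'$ has smaller gap it contradicts minimality, and if $p'$ contains $f_i f_i$ as a consecutive subword it forces $p = p' = 0$, again a contradiction.

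Let $z$ be the vertex immediately before position $j_1$, and let $k$ be the index of the arrow at position $j_1 + 1$. If $k \notin \{i-1, i+1\}$, Lemma \ref{lem:basiccomm}(ii) directly provides $e_z f_i f_k = e_z f_k f_i$, and swapping produces a rearrangement with smaller gap. If $k = i-1$, the applicability of the original $f_{i-1}$ at $z + \ve_i$ forces $z_{i-2} \geq 1$ (since $\ve_i$ does not affect coordinate $i-2$), so the swap is valid and the gap shrinks. If $k = i+1$ with $z_i \geq 1$, both arrows apply at $z$ and Lemma \ref{lem:basiccomm}(i) again gives the swap.

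The essential case is $k = i+1$ with $z_i = 0$. If $z_{i-1} \geq 2$, define $p' = q_1 \cdot f_i f_i \cdot q_m \cdot q_2$, where $q_m$ is the middle segment and $q_2$ the suffix; applying $f_i f_i$ at $z$ needs $z_{i-1} \geq 2$ (satisfied), and along the displaced $q_m$ every intermediate vertex is shifted by $+\ve_i$ relative to $p$, which decreases coordinate $i-1$ by $1$ and increases coordinate $i$ by $1$. Since $q_m$ contains no $f_i$, coordinate $i-1$ can only rise along $q_m$, so the shift is absorbed by $z_{i-1} \geq 2$. If $z_{i-1} = 1$, let $j$ be the smallest index with $z_j \geq 1$: since coordinate $i-1$ can only be recharged by an $f_{i-1}$, whose applicability forces an $f_{i-2}$ before it whenever $z_{i-2} = 0$, and so on, the assumption that the second $f_i$ fires forces $j \leq i-2$ and the middle $q_m$ to contain at least one of each of $f_{j+1}, f_{j+2}, \ldots, f_{i-1}$. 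I then take $p' = q_1 \cdot f_{j+1} f_{j+2} \cdots f_{i-1} \cdot f_i f_i \cdot q_m^{\star} \cdot q_2$, where $q_m^{\star}$ is $q_m$ with one copy of each $f_{j+1}, \ldots, f_{i-1}$ removed. The chain is valid at $z$ inductively: $f_{j+1}$ fires since $z_j \geq 1$, and each subsequent $f_{m+1}$ fires since the previous arrow raised coordinate $m$ to $z_m + 1 \geq 1$. The chain raises coordinate $i-1$ to $z_{i-1} + 1 = 2$, enabling $f_i f_i$. Validity of $q_m^{\star} \cdot q_2$ is checked by comparing intermediate vertices with those of $p$ and observing that the coordinate shifts are bounded and the end vertex matches.

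In either subcase, the rearrangement $p'$ contains $f_i f_i$, and hence $p = p' = 0$ via Lemma \ref{lem:permutefi} and the relation $f_i^2 = 0$, contradicting nonzero $p$. The hardest part is the last subcase $k = i+1$, $z_i = 0$, $z_{i-1} = 1$, in which no local adjacent swap is available; the required rearrangement is global, involving a chain of consecutive-index arrows whose length depends on how many coordinates of $z$ vanish before the first positive one. The key insight is that the very obstruction to local swapping (vanishing coordinates of $z$ near position $i$) forces precisely the chain of recharging arrows into the middle segment $q_m$ that is needed to make the global rearrangement valid.
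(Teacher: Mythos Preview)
Your minimal-gap strategy is the right one, and your handling of the cases $k\notin\{i+1\}$ is fine (the separate treatment of $k=i-1$ is in fact unnecessary, since Lemma~\ref{lem:basiccomm}(ii) already covers it). The problem is in your ``essential case'' $k=i+1$, $z_i=0$, $z_{i-1}=1$, where there are two genuine gaps.

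First, your index $j$ is defined as the \emph{smallest} index with $z_j\geq1$, but your chain argument only shows that $q_m$ contains $f_{m+1},\ldots,f_{i-1}$ where $m$ is the \emph{largest} index below $i-1$ with $z_m\geq1$. If $z$ has several nonzero coordinates below $i-1$, say $z_a\geq1$ and $z_{i-2}\geq1$ with $a<i-2$, then $j\leq a$ but the chain stops at $i-2$: there is no reason $q_m$ contains $f_{a+1},\ldots,f_{i-2}$, so your proposed $q_m^\star$ may not exist. Second, even with the corrected index, the sentence ``validity of $q_m^\star\cdot q_2$ is checked by comparing intermediate vertices\ldots'' is a hand-wave: you have not specified which copies of the $f_r$ are removed or in what order the remaining arrows occur, and it is not clear that the resulting sequence is a valid path from $z'+2\ve_i$.

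The paper avoids this entirely by a much simpler observation you are missing. Rather than analysing the vertex $z$, continue analysing the \emph{path}: having forced $\gamma_1=f_{i+1}$, look at $\gamma_2$. If $j_2\neq i+2$ then Lemma~\ref{lem:basiccomm}(ii) gives $\gamma_1\gamma_2=\gamma_2\gamma_1$, and then (since $j_2\neq i+1$) also $f_i\gamma_2=\gamma_2 f_i$, shrinking the gap. Otherwise $\gamma_2=f_{i+2}$. Iterating, either you shrink the gap at some stage, or the entire middle is $f_{i+1}f_{i+2}\cdots f_{i+n}$. In that final case, swap at the \emph{right} end: $\gamma_n f_i=f_{i+n}f_i=f_i f_{i+n}$ by Lemma~\ref{lem:basiccomm}(ii) (as $i\neq i+n+1$), and the gap shrinks. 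No global rearrangement, no analysis of $z_{i-1}$, and no $q_m^\star$ are needed.
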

\begin{proof}
Use induction on the length of the path.  From the relations, $f_i^2=0$.  For the inductive step, consider a path $f_i\gamma_1\gamma_2\ldots\gamma_nf_i$ where $\gamma_k=f_{j_k}$ with $j_k\neq i$ for all $k$.   
Let $y$ be the source of our path, so we know $y_{i-1}\geq0$.  Then either $y_{j_1-1}\geq1$, in which case $f_i\gamma_1\gamma_2\ldots\gamma_nf_i=\gamma_1f_i\gamma_2\ldots\gamma_nf_i$ by Lemma \ref{lem:basiccomm}, and we are finished by induction, or $j_1=i+1$.  Similarly, if $j_2\neq i+2$ then $\gamma_1\gamma_2=\gamma_2\gamma_1$ and so $f_i\gamma_1\gamma_2=\gamma_2f_i\gamma_1$ and we are finished by induction.  Therefore we may assume that $j_k=i+k$ for all $1\leq k\leq n$, and so $\gamma_nf_i=f_i\gamma_n$, and so the statement is true by induction.
\end{proof}
We have a similar result for $\Pi^!$.
\begin{lemma}\label{lem:fi2pi}
If $s\geq3$ then, in $\Pi^!$, any path which contains $f_i$ more than once is zero.
\end{lemma}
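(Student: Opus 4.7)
The plan is to adapt the induction-on-path-length proof of Lemma \ref{lem:fi2lambda} to the cyclic index set of $\Pi^!$. The base case $f_if_i=0$ is immediate from the relations in Proposition \ref{prop:pidualcomm}. For the inductive step, consider a path $f_i\gamma_1\gamma_2\ldots\gamma_nf_i$ starting at $y$ with $\gamma_k=f_{j_k}$ and each $j_k\neq i$. Exactly as in Lemma \ref{lem:fi2lambda}, we attempt to move $f_i$ forward past successive $\gamma_k$ using the commutation relations of $\Pi^!$; any successful commutation produces a shorter subpath with $f_i$ repeated, and induction closes the argument. If no such commutation succeeds, the same forcing argument gives $j_k\equiv i+k\pmod{d+1}$ for all $k$, and since the $j_k$ avoid $i$ we must have $n\leq d$.

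For $n<d$ the reasoning is identical to the $\Lambda^!$ case: the intermediate vertex at the end of $\gamma_1\ldots\gamma_n$ has $(i-1)$-th coordinate equal to $y_{i-1}-1$, since only $\ve_i$ affects this coordinate before the cycle has wrapped. So either $y_{i-1}\geq 2$ and the commutation $\gamma_nf_i=f_i\gamma_n$ is available, yielding a strictly shorter path with $f_i$ twice to which induction applies, or $y_{i-1}=1$ and the concluding $f_i$ of the path is already inapplicable at its source, in which case the path is zero for trivial reasons.

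The genuinely new case is $n=d$: the path is $f_if_{i+1}\cdots f_{i+d}f_i$ with indices cyclic modulo $d+1$, and its middle portion $f_if_{i+1}\cdots f_{i+d}$ traverses every arrow index once and returns to $y$, because $\sum_{k=0}^{d}\ve_k=0$. Here we apply the cyclic form of Lemma \ref{lem:commutechains} to rotate $f_i$ from the front of the cycle to its end, producing $f_if_{i+1}\cdots f_{i+d}f_i=f_{i+1}\cdots f_{i+d}\cdot f_i^2=0$. The main obstacle is that this rotation really requires the hypothesis $s\geq 3$: for $s=2$ the quiver reduces to a single cycle with no commutation relations available, and length-$(d+2)$ paths of this shape are nonzero. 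When $s\geq 3$ we have $\sum_ky_k\geq 2$, so either $y_{i-1}\geq 2$, in which case Lemma \ref{lem:permutefi} applied to the permutation $f_i,f_i,f_{i+1},\ldots,f_{i+d}$ directly supplies a valid path equal to the original and visibly containing $f_i^2$, or else some $m\neq i-1$ has $y_m\geq 1$, in which case the arrow $f_{m+1}$ can be placed in a preliminary position to replenish the coordinate needed to bring the two $f_i$'s adjacent in a valid permutation, once again reducing to $f_i^2=0$ via Lemma \ref{lem:permutefi}.
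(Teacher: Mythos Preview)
Your induction structure and the reduction to the critical case $n=d$ match the paper's proof. Your sub-case $y_{i-1}\geq 2$ is also fine: the permutation $f_if_if_{i+1}\cdots f_{i+d}$ is indeed a valid path at $y$, and Lemma~\ref{lem:permutefi} gives the result.

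The gap is in your second sub-case. You claim that if some $m\neq i-1$ has $y_m\geq 1$, then placing the single arrow $f_{m+1}$ in a preliminary position ``replenishes'' the $(i-1)$-coordinate and allows the two $f_i$'s to be adjacent in a valid permutation. This is false unless $m=i-2$: the arrow $f_{m+1}$ adds $\ve_{m+1}$, which only increases the $(i-1)$-coordinate when $m+1=i-1$. For a concrete failure, take $d=3$, $i=1$, $y=(1,0,1,0)$, $m=2$: after $f_3$ the $(i-1)$-coordinate is still $1$, so $f_1f_1$ is not a valid subpath and no permutation with a single preliminary $f_3$ works.

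There are two easy repairs. One is to move the entire chain $f_{m+1}f_{m+2}\cdots f_{i-1}$ to the front (this genuinely raises the $(i-1)$-coordinate to $y_{i-1}+1\geq 2$, and then $f_if_i$ fits). The other, which is what the paper does, is simpler: move just $f_{m+1}$ to the front using Lemma~\ref{lem:commutechains} (its hypothesis holds since both $y_{i-1}\geq1$ and $y_m\geq1$), and then observe that what remains after $f_{m+1}$ is a \emph{shorter} path still containing $f_i$ twice, so the inductive hypothesis finishes the job. The paper also handles the complementary sub-case ($y_{i-1}$ the only nonzero entry, so $y_{i-1}=s-1\geq 2$) by commuting the final $f_{i-1}f_i$ to $f_if_{i-1}$ and again invoking induction, rather than exhibiting an explicit permutation with $f_i^2$.
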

\begin{proof}
We argue as in Lemma \ref{lem:fi2lambda}.  The critical case is the path $f_if_{i+1}\ldots f_d f_0f_1\ldots f_i$, which we assume starts at $y$ and therefore finishes at $y+\ve_i$.  If $y_{i-1}$ is the only nonzero entry in $y$ then, as $s\geq3$, we have $(y+\ve_i)_{i-1}\geq1$ and $(y+\ve_i)_{i}\geq1$, so $f_{i-1}f_ie_{y+\ve_i}=f_if_{i-1}e_{y+\ve_i}$.
Therefore $f_if_{i+1}\ldots f_{i-2}f_{i-1}f_i=f_if_{i+1}\ldots f_{i-2}f_{i}f_{i-1}$, so we have a shorter path which starts and ends with $f_i$.  If instead the starting vertex $y$ has a nonzero entry $y_\ell$ with $\ell\neq i-1$ then, by Lemma \ref{lem:commutechains}, we can move $f_{\ell+1}$ to the start of the path and so again reduce to a shorter path which starts and ends with $f_i$.
\end{proof}

\begin{corollary}\label{cor:bounddimprojs}
Any path of length $>d$ in $\Lambda^!$, or  of length $>d+1$ in $\Pi^!$ with $s\geq3$, is zero.  In particular, every indecomposable projective $\Lambda^!$-module has dimension at most $2^d$ and, if $s\geq3$, every indecomposable projective $\Pi^!$-module has dimension at most $2^{d+1}$.
\end{corollary}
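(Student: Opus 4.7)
The proof is essentially a packaging of the preceding three lemmas, so I would present it as a short deduction rather than develop new machinery.

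First, for the length bound, I would invoke Lemma \ref{lem:fi2lambda} (resp.\ Lemma \ref{lem:fi2pi} under the hypothesis $s\geq 3$) to note that any nonzero path in $\Lambda^!$ (resp.\ $\Pi^!$) uses each of the arrow-sums $f_i$ at most once. Since $i$ ranges over $\{1,\ldots,d\}$ in $\Lambda^!$ and over $\{0,1,\ldots,d\}$ in $\Pi^!$, a nonzero path has at most $d$ (resp.\ $d+1$) factors, which is exactly the claim about path length.

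For the dimension bound on an indecomposable projective $P_y=\Lambda^! e_y$ (or $\Pi^! e_y$), my plan is to exhibit an explicit spanning set indexed by subsets of the available index set. Any element of $P_y$ is a linear combination of paths starting at $y$, and by the first step the multiset of indices $(i_1,\ldots,i_\ell)$ appearing in such a path is in fact a set, i.e.\ a subset of $\{1,\ldots,d\}$ (resp.\ $\{0,1,\ldots,d\}$). By Lemma \ref{lem:permutefi}, any two paths starting at $y$ with the same underlying set of indices represent the same element of the algebra. Hence choosing one representative for each subset gives a spanning set of size at most $2^d$ (resp.\ $2^{d+1}$), which yields the claimed upper bound on $\dim_\F P_y$.

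There is really no obstacle here: Lemmas \ref{lem:permutefi}, \ref{lem:fi2lambda}, and \ref{lem:fi2pi} do all the work, and the argument is just the observation that ``subsets of a $d$-element (or $(d{+}1)$-element) set are counted by $2^d$ (or $2^{d+1}$)''. The only point to be careful about is the role of the hypothesis $s\geq 3$ in the $\Pi^!$ case, which enters only through the applicability of Lemma \ref{lem:fi2pi}; the $\Lambda^!$ statement needs no such restriction because Lemma \ref{lem:fi2lambda} is unconditional in $s$.
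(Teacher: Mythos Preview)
Your proof is correct and follows essentially the same approach as the paper: invoke Lemmas \ref{lem:fi2lambda} and \ref{lem:fi2pi} to bound the length and ensure distinctness of the indices, then use Lemma \ref{lem:permutefi} to identify permutations so that the spanning set is indexed by subsets of $\{1,\ldots,d\}$ (resp.\ $\{0,1,\ldots,d\}$). The only difference is cosmetic packaging.
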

\begin{proof}
For a sequence $i=(i_1,\ldots,i_\ell)$, write $f_i=f_{i_1}f_{i_2}\ldots f_{i_\ell}$.  
By Lemma \ref{lem:fi2lambda} or Lemma \ref{lem:fi2pi}, the projective is generated as an $\F$-module by paths $f_i$ starting at $y$ where $i$ is a sequence of distinct elements from $\{1,\ldots,d\}$ or $\{0,1,\ldots,d\}$.  By Lemma \ref{lem:permutefi} we can identify permuations, so this $\F$-module has dimension at most $2^d$ or $2^{d+1}$ in the case of $\Lambda^!$ or $\Pi^!$, respectively.
\end{proof}

So we have an upper bound.  Now we want a precise number.

\begin{lemma}
The dimension of the projective modules $e_y\Lambda^!$, $\Lambda^!e_y$, $e_y\Pi^!$, and $\Pi^!e_y$ depends only on whether each co-ordinate $y_i$ of $y$ is zero or nonzero.
\end{lemma}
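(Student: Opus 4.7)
The plan is to exhibit an explicit combinatorial basis for each projective module, indexed by subsets of the arrow directions, and then characterise which subsets actually occur in terms of the zero/nonzero pattern of the coordinates of $y$.

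By Corollary \ref{cor:bounddimprojs} together with Lemmas \ref{lem:permutefi}, \ref{lem:fi2lambda}, and \ref{lem:fi2pi}, a spanning set for $e_y\Lambda^!$ is given by paths $e_yf_{i_1}f_{i_2}\cdots f_{i_\ell}$ with distinct $i_k\in\{1,\ldots,d\}$, and by Lemma \ref{lem:permutefi} two such nonzero paths that are permutations of each other are equal in $\Lambda^!$. Hence a basis is parametrised by those subsets $I\subseteq\{1,\ldots,d\}$ for which \emph{some} ordering gives a nonzero path starting at $y$, and analogously for $e_y\Pi^!$ with $I\subseteq\{0,1,\ldots,d\}$. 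So it suffices to show the collection of admissible $I$ depends only on the support pattern of $y$.

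I would think of the arrow $f_i$ as moving a ``token'' from position $i-1$ to position $i$ (with indices modulo $d+1$ in the $\Pi^!$ case, because of $\ve_0$). The key claim is then that $I$ is admissible if and only if for every maximal consecutive run $[a,b]\subseteq I$ (cyclic in the $\Pi^!$ case) one has $y_{a-1}\neq 0$. Necessity is clear: the only arrow in $I$ that could ever deposit a token at position $a-1$ would be $f_{a-1}$, but $a-1\notin I$ by maximality, so position $a-1$ remains at $y_{a-1}=0$ throughout and $f_a$ can never be applied. For sufficiency, process the runs one at a time and within each run apply the arrows in increasing index order; crucially, since $[a,b]$ is maximal we have $a-1\notin I$, so $a-1$ lies in a gap of $I$ and in particular cannot coincide with $b'$ for any other maximal run $[a',b']\subseteq I$, so position $a-1$ is untouched by the earlier runs and still holds $y_{a-1}\geq 1$ tokens when run $[a,b]$ starts; within the run each arrow deposits the token enabling the next. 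The special case $I=\{0,1,\ldots,d\}$ in $\Pi^!$ (a single all-encompassing cyclic run) is handled by starting with $f_{k+1}$ for any index $k$ with $y_k\geq1$, which exists whenever $s\geq 2$, and then applying the $f_i$ cyclically.

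Because the criterion ``$y_{a-1}\neq 0$'' only involves the zero/nonzero pattern of $y$, the count of admissible $I$, and hence $\dim_\F e_y\Lambda^!$ and $\dim_\F e_y\Pi^!$, depends only on this pattern. The statements for the right modules $\Lambda^!e_y$ and $\Pi^!e_y$ follow from the symmetric argument using paths ending at $y$: a time-reversal yields the analogous criterion that each maximal run $[a,b]\subseteq I$ must instead satisfy $y_b\neq 0$, a condition which again depends only on the support of $y$. The step I expect to require the most care is the non-interference argument in the cyclic $\Pi^!$ setting, where one has to exclude the apparent possibility that the endpoint $b_i$ of one run supplies the starting position $a_j-1$ of another run; this is ruled out precisely by the maximality hypothesis, since it would force $a_j-1\in I$.
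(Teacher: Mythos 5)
Your argument follows the same basic route as the paper: the cited Lemmas \ref{lem:permutefi}, \ref{lem:fi2lambda}, and \ref{lem:fi2pi} reduce a spanning set of $e_y\Lambda^!$ (resp.\ $e_y\Pi^!$) to equivalence classes of paths with distinct indices, and whether such a path exists is governed purely by which coordinates of $y$ are positive. Where you go further is in extracting an explicit admissibility criterion for a subset $I$ in terms of maximal (cyclic) runs $[a,b]$ and the condition $y_{a-1}\neq 0$; this is correct, and it essentially anticipates the $X/Y$ decomposition the paper carries out later in the proof of Proposition \ref{prop:dimldualproj}. The necessity and sufficiency directions both check out: the only arrow in $I$ that could feed position $a-1$ is $f_{a-1}$, excluded by maximality; and the disjointness of maximal runs ensures processing one does not interfere with the start of another, with the full cyclic run handled separately.

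The one genuine gap is the range $s<3$ for $\Pi^!$. Your argument invokes Lemma \ref{lem:fi2pi} (and Corollary \ref{cor:bounddimprojs}), both of which require $s\geq 3$; for $s=2$ the algebra $\Pi^!$ is in fact infinite-dimensional, so the finite combinatorial count simply does not apply. You half-acknowledge this with the aside ``which exists whenever $s\geq 2$'', but the case is not actually covered. The paper disposes of it in one line: when $s<3$ every coordinate of every vertex is $0$ or $1$, so any two vertices with the same zero/nonzero pattern agree as multisets of coordinates and the statement is immediate. You should add this observation, restricting the run analysis to $s\geq 3$ for $\Pi^!$. With that addition your proof is complete.
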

\begin{proof}
For $\Lambda^!$ and for $\Pi^!$ when $s\geq3$, this follows from Lemma \ref{lem:fi2lambda} and Lemma \ref{lem:fi2pi}.  When $s<3$ all co-ordinates must be $0$ or $1$, so this is trivially true.
\end{proof}

Therefore we introduce the following notation:
\[ y=(0^{n_1}\star^{m_1}0^{n_2}\star^{m_2}\ldots 0^{n_{\ell-1}}\star^{m_{\ell-1}}0^{n_{\ell}}):=(0,0,\ldots,0,\star,\star,\ldots,\star,0,\ldots) \]
where $y_0,y_1,\ldots,y_{n_1-1 }$
are $0$, then there are $m_1$ nonzero values each denoted by a $\star$, etc.  We allow $n_1=0$ and/or $n_\ell=0$, but all $m_i$ and, for $2\leq i \leq \ell-1$, all $n_i$ must be nonzero.

The idea of the next proposition is that sections of paths ending at a vertex $y$ come in two flavours: those which are parts of $n$-cubes, for some $n\geq2$, and those which are parts of lines.  We should treat them separately.  

\begin{proposition}\label{prop:dimldualproj}
The left projective $\Lambda^!$-module at vertex $y=(0^{n_1}\star^{m_1}0^{n_2}\star^{m_2}\ldots 0^{n_{\ell-1}}\star^{m_{\ell-1}}0^{n_{\ell}})$ has dimension
\[ \dim_\F\Lambda^!e_y=2^{\sum_{i\geq1}(m_i-1)}\cdot (n_1+1)\cdot\prod_{2\leq i\leq\ell-1}(n_i+2) \]
and the right projective $\Lambda^!$-module at vertex $y=(0^{n_1}\star^{m_1}0^{n_2}\star^{m_2}\ldots 0^{n_{\ell-1}}\star^{m_{\ell-1}}0^{n_{\ell}})$ has dimension
\[ \dim_\F e_y\Lambda^!=2^{\sum_{i\geq1}(m_i-1)}\cdot (n_\ell+1)\cdot\prod_{2\leq i\leq\ell-1}(n_i+2) .\]
\end{proposition}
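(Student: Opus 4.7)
The plan is to identify a basis of each projective module with certain subsets of $\{1,\ldots,d\}$, characterise which subsets occur, and then count them by a transfer-matrix argument that follows the run-decomposition of $y$. By Lemma \ref{lem:fi2lambda}, any nonzero path in $\Lambda^!$ uses each $f_i$ at most once and is thus determined up to re-ordering by a subset $I\subseteq\{1,\ldots,d\}$, and by Lemma \ref{lem:permutefi} any two orderings that both give valid paths coincide in $\Lambda^!$. Hence the basis of $\Lambda^!e_y$ is indexed by those $I$ for which some ordering of $\{f_i:i\in I\}$ yields a valid path from $y-\sum_{i\in I}\ve_i$ to $y$, and the basis of $e_y\Lambda^!$ by those yielding valid paths starting at $y$. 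Distinct $I$ produce paths with distinct starting (or ending) vertices since the map $I\mapsto\sum_{i\in I}\ve_i$ is injective, so the dimension equals the number of valid $I$.

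The core combinatorial step is to characterise the valid $I$. Decompose $I$ into its maximal intervals $[a_1,b_1],\ldots,[a_k,b_k]$, which are pairwise non-adjacent. The arrows in one maximal interval $[a,b]$ involve only coordinates $a-1,a,\ldots,b$, and their cumulative effect on the vertex is $+1$ on coordinate $b$ and $-1$ on coordinate $a-1$; hence the coordinate ranges of distinct maximal intervals are disjoint and feasibility reduces to the single-interval case. A single interval $[a,b]$ is feasible as part of a path ending at $y$ exactly when $y_b\geq1$: sufficiency is witnessed by the reverse sequence $f_b^{-1},f_{b-1}^{-1},\ldots,f_a^{-1}$, which keeps every intermediate coordinate non-negative, and necessity follows since no $f_i^{-1}$ with $i\in[a,b]$ can raise coordinate $b$ and $b+1\notin I$ by maximality, so $f_b^{-1}$ can never be performed when $y_b=0$. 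Symmetrically, for $e_y\Lambda^!$ the single-interval feasibility condition is $y_{a-1}\geq1$.

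Counting these subsets is a two-state transfer-matrix computation: let $h_i(s)$ count the valid partial configurations on $\{1,\ldots,i\}$ with characteristic function equal to $s\in\{0,1\}$ at position $i$, with the only forbidden transition from $i$ to $i+1$ being a run-end at position $i$ when $y_i=0$. This gives matrices $M_0=\bigl(\begin{smallmatrix}1&0\\1&1\end{smallmatrix}\bigr)$ when $y_i=0$ and $M_\star=\bigl(\begin{smallmatrix}1&1\\1&1\end{smallmatrix}\bigr)$ when $y_i\neq0$. Using $M_\star^2=2M_\star$ together with the identities $M_0^n(1,0)^T=(1,n)^T$, $M_\star(1,c)^T=(c+1)(1,1)^T$, and $M_0^n(1,1)^T=(1,n+1)^T$, iterating along the run pattern $0^{n_1}\star^{m_1}\cdots 0^{n_\ell}$ starting from $(1,0)^T$ produces the vector $2^{\sum(m_i-1)}(n_1+1)\prod_{i=2}^{\ell-1}(n_i+2)\cdot(1,n_\ell+1)^T$, whose first coordinate gives the claimed formula for $\dim_\F\Lambda^!e_y$ (assuming $s\geq 2$; the case $s=1$ has $\Lambda^!\cong\F$ and is trivial). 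The formula for $e_y\Lambda^!$ follows either from the parallel transfer matrix with start-of-run constraints or by the reflection symmetry $y\mapsto(y_d,\ldots,y_0)$, which identifies right projectives of $\Lambda^!$ at $y$ with left projectives at the reflected vertex and thereby swaps the roles of $n_1$ and $n_\ell$. The main obstacle is the feasibility characterisation in the second paragraph; once in hand, the matrix iteration is mechanical.
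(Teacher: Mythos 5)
Your proposal is correct, but it organizes the combinatorics differently from the paper, so let me compare.

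Both proofs begin identically: Lemmas \ref{lem:fi2lambda} and \ref{lem:permutefi} reduce the count to the number of subsets $I$ for which some ordering of $\{f_i : i\in I\}$ gives a valid quiver path ending (resp.\ starting) at $y$, and the injectivity of $I\mapsto\sum_{i\in I}\ve_i$ gives linear independence of the resulting classes. The difference is in how one characterizes and then counts the feasible $I$. The paper partitions the index set into $X=\{i : y_i\neq0 \text{ and } y_{i-1}\neq 0\}$ and its complement $Y$, proves that every valid $I$ factors as $I_X\sqcup I_Y$ with $I_X\subseteq X$ arbitrary and $I_Y$ a union of one descending chain per internal zero-block, and multiplies $2^{|X|}$ by $(n_1+1)\prod(n_i+2)$. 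You instead decompose $I$ itself into its maximal intervals and show feasibility is equivalent to $y_b\geq 1$ at the right endpoint $b$ of each interval, then count via a two-state transfer matrix with $M_0$ and $M_\star$. These two characterizations are equivalent (one can check directly that the paper's product decomposition enumerates exactly the subsets whose maximal intervals all end in a nonzero coordinate), but yours is stated as a single local condition on $I$ and the transfer matrix makes the $(n_1+1)\prod(n_i+2)\cdot2^{\sum(m_i-1)}$ formula emerge mechanically, without the paper's case split between the first zero-block and the interior ones. A small benefit of your phrasing: the restriction to $Z=\{1,\ldots,m\}$ that the paper imposes at the start is automatic in your setup, since a maximal interval with right endpoint $>m$ already fails the feasibility test. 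One caveat worth noting in both arguments: the claim that a quiver-feasible path is actually nonzero in $\Lambda^!$ (so that feasibility and nonvanishing coincide) is used but not independently re-derived; it is justified by the PBW structure established in Proposition \ref{prop:anrf-kos}, and neither your proposal nor the paper's proof restates this explicitly.
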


Before we give the proof it may be useful to give an example.
\begin{example}
Let $y$ be the vertex $(0110010)=(0^11^20^21^10^1)$ of the quiver $Q_2^6$.  We have the $2$-cube $f_1f_2=f_2f_1$ and the $3$-line $f_3f_4f_5$.  The basis of paths for $\Lambda^!e_y$ is:
\[ e_y, f_1, f_2, f_5, f_1f_2, f_1f_5, f_2f_5, f_4f_5, f_1f_2f_5, f_1f_4f_5, f_2f_4f_5, f_3f_4f_5, f_1f_2f_4f_5, f_1f_3f_4f_5, f_2f_3f_4f_5, f_1f_2f_3f_4f_5. \]
So the dimension of $\Lambda^!e_y$ is $16$.
If one draws the vertices and arrows this looks like $16=2^2\times (3+1)$, but really it is $16=(1+1)\times 2^1\times (3+1)$.  To see this, consider the vertex $y'=(00110010)=(0^21^20^21^10^1)$ of the quiver $Q_2^7$ obtained by prepending an extra zero to $y$.  For $y'$ we have an initial $2$-line and so we would get one more basis vector for each path in $\Lambda^!e_y$ which contains $f_1$.  Thus the dimension of $\Lambda^!e_{y'}$ is $24=(2+1)\times 2^1\times (3+1)$.
\end{example}

\begin{proof}[Proof of Proposition \ref{prop:dimldualproj}.]
We prove the statement about left modules; the statement about right modules is similar.  We need to count nonzero paths, up to equivalence, which end at the vertex $y$.  

Recall our convention that $f_i=\sum_y f_{i,y}$. 
Let $m=\sum_{i=0}^{\ell-1}(n_i+m_i)-1$, so $m=\max\{i\st y_i\geq1\}$, and let $Z=\{1,2,\ldots,m\}$.  Then every element of $\Lambda^!e_y$ is a linear combination of paths of the form $f_{i_1}f_{i_2}\cdots f_{i_k}e_y$ where $k\geq0$ and $\{i_1,i_2,\ldots,i_k\}\subseteq Z$.  So some subset of the set of such paths gives a basis of $\Lambda^!e_y$.  So, using Lemmas \ref{lem:permutefi} and \ref{lem:fi2lambda} and the relations in Proposition \ref{prop:lambdadualcomm}, we just need to count subsets $\{i_1,i_2,\ldots,i_k\}\subseteq Z$ such that 
$f_{\sigma(i_1)}f_{\sigma(i_2)}\cdots f_{\sigma(i_k)}e_y$ is nonzero for some permutation $\sigma\in S_k$.

Let
\[ X = \{ i\in Z \st y_i\neq0\text{ and }y_{i-1}\neq0 \} \]
and $Y=Z\backslash X$.  
We claim that every nonzero path in $\Lambda^!e_y$ can be written $qpe_y$ where $p=f_{i_1}f_{i_2}\cdots f_{i_r}$ and $q=f_{j_1}f_{j_2}\cdots f_{i_s}$ with $\{i_1,i_2,\ldots,i_r\}\subseteq X$ and $\{j_1,j_2,\ldots,j_s\}\subseteq Y$.  This follows because, if the path $qe_y$ starts at the vertex $y'$, then $y_i=y'_i$ for all $i\in X$, so $pqe_y$ and $qpe_y$ are both paths in $\Lambda^!$, so are equal by Lemma \ref{lem:permutefi}.  So the claim follows by induction.  Also, by definition of $X$ and $Y$, if $qe_y\neq0$ and $pe_y\neq0$ then $qpe_y\neq0$.  So we just need to count the subsets of $X$ and $Y$ such that there is an associated nonzero path $pe_y$ and $qe_y$, respectively.

By definition of $X$, there are $2^{\sum_{i\geq1}(m_i-1)}$ paths ending at $y$ which consist of arrows from $X$.  

For each $2\leq i\leq\ell-1$, let $h=n_i+\sum_{j=1}^{i-1}(n_j+m_j)$.  Then we have the $n+2$ paths $e_y$, $f_h$, $f_{h-1}f_h$, $\ldots$, $f_{h-n_i}\ldots f_{h-1}f_h$ which change the vertices in the $0^{n_i}$ part of $y$.  Also, we have the $n+1$ paths $e_y$, $f_{n_1}$, $f_{n_1-1}f_{n_1}$, $\ldots$, $f_{1}\ldots f_{n_1-1}f_{n_1}$ which change the vertices in the $0^{n_1}$ part of $y$.  So we have $(n_1+1)\cdot\prod_{2\leq i\leq\ell-1}(n_i+2)$ paths ending at $y$ which consist of arrows from $Y$.  
\end{proof}

\begin{proposition}
Let $s\geq3$ and $y=(0^{n_1}\star^{m_1}0^{n_2}\star^{m_2}\ldots 0^{n_{\ell-1}}\star^{m_{\ell-1}}0^{n_\ell})$. 
Then the right projective $\Pi^!$-module at the vertex $y$ has dimension 
\[ \dim_\F e_y\Pi^!=2^{\sum_{i\geq1}(m_i-1)}\cdot\left(n_1+n_\ell+2\right)\cdot\prod_{1\leq i\leq\ell-1}(n_i+2) .\]
\end{proposition}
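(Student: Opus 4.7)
I would follow the strategy of Proposition 3.18, adapted to the cyclic situation created by the extra arrow $f_0$. By Lemma 3.12 and Lemma 3.15, basis elements of $e_y\Pi^!$ correspond bijectively to subsets $S\subseteq\{0,1,\ldots,d\}$ for which some ordering of $\{f_i : i\in S\}$ gives a nonzero path starting at $y$; the endpoint of any such path is $y^{\text{end}}:=y+\sum_{i\in S}\varepsilon_i$.

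The first key step is the characterization: $S$ admits a nonzero path starting at $y$ if and only if every coordinate of $y^{\text{end}}$ is nonnegative. Necessity is clear. For sufficiency, I would induct on $|S|$, finding a valid first arrow whenever $S\neq\emptyset$. If $S=\{0,\ldots,d\}$ then $y^{\text{end}}=y$, and since $\sum_j y_j=s-1\ge 2$, some $y_j\ge 1$ and so $f_{j+1\bmod d+1}$ is valid. Otherwise $S$ has a \emph{cyclic chain start}: some $a\in S$ with $a-1\bmod d+1\notin S$. Then the identity
\[ y^{\text{end}}_{a-1\bmod d+1}=y_{a-1}+[a-1\in S]-[a\in S]=y_{a-1}-1 \]
combined with $y^{\text{end}}_{a-1}\ge 0$ forces $y_{a-1}\ge 1$, so $f_a$ is valid at $y$. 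Applying $f_a$ reduces to a strictly smaller case with the same endpoint, so induction completes the argument.

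Next I count the valid $S$ by cyclic block analysis. Viewing $y$ as a function on $\mathbb{Z}/(d+1)\mathbb{Z}$, partition the cycle into maximal alternating zero runs $Z_1,\ldots,Z_R$ and nonzero runs $N_1,\ldots,N_R$. The chain-start characterization translates into the following condition: for each $Z_r=[z_1,\ldots,z_k]$ immediately followed by the nonzero run $N_r$ with first element $n^{(r)}$, the binary sequence $([z_1\in S],\ldots,[z_k\in S],[n^{(r)}\in S])$ must be weakly decreasing. Such a sequence is determined by the position where it drops from $1$ to $0$, giving $|Z_r|+2$ choices. The remaining $|N_r|-1$ elements of each nonzero run are unconstrained and contribute $2^{|N_r|-1}$ further free choices.

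Finally, I identify the cyclic run lengths in terms of the block data. On the cycle the leading and trailing zero blocks merge (via the $f_0$ wrap) into a single zero run of length $n_1+n_\ell$, the middle zero blocks give runs of lengths $n_2,\ldots,n_{\ell-1}$, and the nonzero blocks give runs of lengths $m_1,\ldots,m_{\ell-1}$. Multiplying these factors yields the stated formula. The main obstacle is the sufficiency direction of the characterization step: the cyclic wrap-around must be tracked carefully, and the full-cycle case $S=\{0,\ldots,d\}$ requires the separate argument above using the mass count $\sum_j y_j=s-1\ge 2$ rather than the chain-start identity.
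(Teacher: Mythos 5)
Your proof is correct in substance and is considerably more detailed than the paper's, which simply invokes the earlier acyclic computation for $\dim_\F\Lambda^!e_y$ and says that the presence of the arrows $f_0$ upgrades the boundary factor $(n_1+1)$ to $(n_1+2)$, with a one-line remark on the $n_1=n_\ell=0$ case. Your explicit characterization --- that a subset $S\subseteq\{0,\dots,d\}$ contributes a nonzero path from $y$ if and only if every coordinate of $y^{\mathrm{end}}=y+\sum_{i\in S}\varepsilon_i$ is nonnegative --- is a clean reformulation that the paper's proof of the acyclic case uses only implicitly (via its $X$/$Y$ split), and your inductive proof of sufficiency (peeling off a ``cyclic chain start,'' with the mass bound $s-1\ge 2$ rescuing the full-cycle case $S=\{0,\dots,d\}$) is the right argument and relies on $s\ge 3$ exactly where it should. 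The cyclic run analysis then makes each factor transparent.

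Two points to clean up. First, when $n_1=n_\ell=0$ the leading and trailing blocks do not merge into a zero run of length $n_1+n_\ell=0$; rather the two outer \emph{nonzero} blocks $\star^{m_1}$ and $\star^{m_{\ell-1}}$ merge into a single nonzero run of length $m_1+m_{\ell-1}$, so your description of the cyclic run decomposition is briefly wrong in that degenerate case. The count still agrees because $(n_1+n_\ell+2)\cdot 2^{m_1-1}\cdot 2^{m_{\ell-1}-1}=2^{m_1+m_{\ell-1}-1}$ --- which is precisely the remark the paper makes --- but you should flag the case rather than silently absorb it. Second, your analysis actually produces $2^{\sum_{i\ge 1}(m_i-1)}\cdot(n_1+n_\ell+2)\cdot\prod_{2\le i\le\ell-1}(n_i+2)$, whereas the displayed formula in the proposition has the product beginning at $i=1$, contributing a spurious extra factor of $(n_1+2)$. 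That is a typo in the statement, as one can confirm from the subsequent proposition in the paper or by directly checking $\dim_\F e_{(1,1,0)}\Pi^!=6$ for $d=2$, $s=3$; you should not have asserted that your count ``yields the stated formula'' without noticing and resolving this mismatch.
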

\begin{proof}
We argue as in the proof of Proposition \ref{prop:dimldualproj}, but the existence of the arrows $f_0$ means we have a factor of $(n_1+2)$ instead of $(n_1+1)$.  Note that the case $n_1=n_\ell=0$ causes no problems because $2\cdot 2^{m_1-1}\cdot 2^{m_{\ell-1}-1}=2^{m_1+m_{\ell-1}-1}$.
\end{proof}

\begin{proposition}\label{prop:dimprojsequal}
If $s\geq3$, the left projective $\Pi^!$-module at vertex $y$ and the left projective $Z$-module at vertex $y$ have the same dimension as vector spaces over $\F$.
\end{proposition}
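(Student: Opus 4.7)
The plan is to compute both dimensions in closed form and verify they agree.

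For $\dim_\F Ze_y$: by Proposition \ref{prop:azzsym}, $Z^d_s\cong\Triv(\Lambda^!)$ as ungraded algebras, where $\Lambda=\Lambda^d_s$. Hence, as $\F$-vector spaces,
\[ Ze_y = \Lambda^!e_y \oplus (\Lambda^!)^*e_y. \]
Right multiplication by $e_y$ on $(\Lambda^!)^*$ is $\F$-linearly dual to left multiplication by $e_y$ on $\Lambda^!$, so $\dim_\F(\Lambda^!)^*e_y=\dim_\F e_y\Lambda^!$. Adding the two formulas of Proposition \ref{prop:dimldualproj} then yields
\[ \dim_\F Ze_y \;=\; 2^{\sum_{i\geq1}(m_i-1)}\cdot(n_1+n_\ell+2)\cdot\prod_{2\leq i\leq \ell-1}(n_i+2), \]
an expression that is manifestly symmetric in $n_1$ and $n_\ell$.

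For $\dim_\F \Pi^!e_y$: I would exploit the reflection symmetry of the quiver. The map $y=(y_0,\ldots,y_d)\mapsto y^*=(y_d,\ldots,y_0)$ on vertices, combined with the involution $f_{i}\leftrightarrow f_{(d+1-i)\bmod(d+1)}$ on arrows (with direction reversed), extends to an algebra antiautomorphism of $\Pi^!$ in the commutative presentation of Proposition \ref{prop:pidualcomm}. Indeed the quiver $Q$ is invariant under this involution (because $\ve_i\mapsto -\ve_{(d+1-i)\bmod(d+1)}$), and both the zero relations $f_i^2$ and the commutativity relations $f_if_j-f_jf_i$ are preserved. The antiautomorphism identifies the left module $\Pi^!e_y$ with the right module $e_{y^*}\Pi^!$ at the level of $\F$-vector spaces, giving $\dim_\F \Pi^!e_y = \dim_\F e_{y^*}\Pi^!$. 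The partition associated to $y^*$ is obtained from that of $y$ by swapping $n_1\leftrightarrow n_\ell$ (and reversing the middle), so applying the preceding proposition to $y^*$ produces the same symmetric expression as above.

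The main subtlety is verifying that the reflection really does define an algebra antiautomorphism of $\Pi^!$ in its commutative form, i.e., that it is compatible with the sign adjustments built into Proposition \ref{prop:pidualcomm}. If one prefers, this can be avoided by rerunning the counting argument of the preceding proposition for paths ending, rather than starting, at $y$: the combinatorics is formally symmetric and yields the same closed form directly, whereupon the claim follows.
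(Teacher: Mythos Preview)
Your argument is correct and follows essentially the same route as the paper: decompose $Ze_y$ as $\Lambda^!e_y\oplus(\Lambda^!)^*e_y$, identify $\dim_\F(\Lambda^!)^*e_y=\dim_\F e_y\Lambda^!$, add the two formulas from Proposition~\ref{prop:dimldualproj}, and match against the closed form for $\Pi^!$-projectives. The only difference is that the paper states the $\Pi^!$ formula for the \emph{right} projective $e_y\Pi^!$ and then, in the proof of Proposition~\ref{prop:dimprojsequal}, silently uses that the same expression gives $\dim_\F\Pi^!e_y$ (the formula is symmetric in $n_1$ and $n_\ell$, and the counting argument is formally symmetric under swapping left and right). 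You make this step explicit via the reflection antiautomorphism $y\mapsto y^*$, $f_i\mapsto f_{(d+1-i)\bmod(d+1)}$, which is a clean way to justify it; your fallback of rerunning the counting directly is exactly what the paper has in mind.
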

\begin{proof}
First note that
\[ \dim_\F Ze_y = \dim_\F \Lambda^!e_y + \dim_\F (\Lambda^!)^*e_y = \dim_\F \Lambda^! e_y + \dim_\F e_y\Lambda^!,\]
so
\begin{align*}
 \dim_\F Ze_y &= 2^{\sum_{i\geq1}(m_i-1)}\cdot\prod_{2\leq i\leq\ell-1}(n_i+2)\cdot \big((n_1+1)+(n_\ell+1)\big)\\
 &= 2^{\sum_{i\geq1}(m_i-1)}\cdot \left(n_1+n_\ell+2\right)\cdot\prod_{2\leq i\leq\ell-1}(n_i+2)
 \\ & 
 = \dim_\F \Pi^! e_y.
\end{align*}
\end{proof}
As a corollary, we obtain:
\begin{proof}[Proof of Theorem \ref{thm:azzpres}.]
If $s=1$ then $\Lambda=\F$, so clearly $Z^d_1=Z_{d+1}(\F)\cong \F[x]/(x^2)$ with $x$ in degree $d+1$.

If $s=2$ then $\Lambda^d_2$ is isomorphic to $\F\vec{A}_{d+1}/\rad^2\F\vec{A}_{d+1}$.  
As $\vec{A}_{d+1}$ is bipartite, $\STriv(\Lambda)\cong\Triv(\Lambda)$, 
which is known to be the Nakayama algebra on the quiver $Q$ with relations given by paths of length $d+2$: see Lemma \ref{lem:naktrivan}.

If $s\geq3$ then,
by Proposition \ref{prop:anrf-kos} and Theorem \ref{thm:pidualtoz},
there is a surjective algebra map $\phi:\Pi^!\onto Z^d_s$.
By Proposition \ref{prop:dimprojsequal} it must be an isomorphism.  So the result follows by Corollary \ref{cor:pidualrelns}.
\end{proof}

\begin{remark}
We finish this section by recording the dimensions of some small type $A$ higher zigzag algebras.
\[
\begin{tabular}{ L | L | L | L | L | L }
\dim_\F Z^d_s & s=1 & s=2 & s=3 & s=4 & s=5 \\
\hline
d=1 & 2 & 6 & 10 & 14 & 18 \\
d=2 & 2 & 12 & 30 & 56 & 90 \\
d=3 & 2 & 20 & 70 & 168 & 330 \\
d=4 & 2 & 30 & 140 & 420 & 990 
\end{tabular}
\]
Of course these are all even numbers, because $\dim_\F Z^d_s=2\dim_\F \Lambda^d_s$.  From the table, it appears that the dimension of the type $A$ $d$-representation finite algebra $\Lambda^d_s$ is given by the binomial coefficient $\binom{2s+d-2}{d}$.  We will not need this, so do not attempt to prove it here.
\end{remark}


\section{Higher type $A$ group actions on derived categories}

The classical type $A$ zigzag algebras $Z^1_s$, i.e., the $2$-zigzag algebras $Z_2(\F \vec{A}_{s})$ of path algebras of linearly oriented type $A$ quivers, control classical (type $A$) braid group actions on derived categories via spherical twists \cite{st,rz,hk,g-lifts}.  In this section we describe the corresponding theory for the higher type $A$ zigzag algebras.

\subsection{Endomorphism algebras of projectives}\label{subsec:endom}

The classical type $A$ zigzag algebras have a very nice self-similarity property: the endomorphism algebra of the direct sum of indecomposable projective modules associated to adjacent vertices is a smaller type $A$ zigzag algebra.  We want to show that an analogous property holds in the higher setting.

First we consider the $d$-representation finite algebras $\Lambda^d_s$.  We fix $d\geq1$ and $s\geq2$.
\begin{lemma}\label{lem:drf-quotient}
Let $\Lambda=\Lambda^d_s$ and $0\leq i\leq d$.  Let $e=\sum e_x$ be the sum of the idempotents associated to all vertices $x$ with $x_i=0$.  
Then 
$\Lambda/\Lambda e \Lambda\cong\Lambda^d_{s-1}$.
\end{lemma}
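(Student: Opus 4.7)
The plan is to use Iyama's presentation of $\Lambda = \Lambda^d_s$ from Theorem \ref{thm:drf-quiv} to construct an explicit algebra isomorphism with $\Lambda^d_{s-1}$. The essential observation is that the vertices $x = (x_1, \ldots, x_{d+1})$ of $Q_\Lambda$ with $x_i \geq 1$ (precisely those not killed when quotienting by $e$) are in natural bijection with the vertices of the quiver of $\Lambda^d_{s-1}$ (which are $(d+1)$-tuples of non-negative integers summing to $s-2$), via the shift that decreases the $i$-th coordinate by $1$. One then checks that the surviving arrows and the restricted commutativity relations match up correctly across the bijection.

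Concretely, I will define a map $\phi: \Lambda \to \Lambda^d_{s-1}$ on Iyama's generators: send $e_x$ to the idempotent at the shifted vertex when $x_i \geq 1$ and to $0$ otherwise, and send $\alpha_{j,x}$ to the corresponding arrow at the shifted source whenever both $x$ and $x + \epsilon_j$ have $i$-th coordinate $\geq 1$, and to $0$ otherwise. To see that $\phi$ respects the defining commutativity relations $\alpha_j\alpha_k - \alpha_k\alpha_j$ at each vertex $x$, I will perform a short case analysis according to whether $j$ or $k$ equals $i$ (or one less than $i$), the only cases in which the $i$-th coordinate can change along an arrow. In the generic case, the image of the relation is the analogous commutativity relation in $\Lambda^d_{s-1}$; in the boundary cases, some term of the relation is killed by $\phi$ and the check reduces to a matching zero relation on the other side. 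Since $\phi(e) = 0$, this factors through a surjection $\bar\phi: \Lambda/\Lambda e \Lambda \twoheadrightarrow \Lambda^d_{s-1}$.

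For the reverse direction, I construct $\psi: \Lambda^d_{s-1} \to \Lambda/\Lambda e \Lambda$ by sending a vertex $y$ to the class of the idempotent at the vertex obtained from $y$ by increasing its $i$-th coordinate by $1$ (which always has $i$-th coordinate $\geq 1$), and arrows to the classes of the corresponding arrows. Well-definedness of $\psi$ on the quiver is immediate since the shift never decreases the $i$-th coordinate, and well-definedness on relations holds because a commutativity relation at $y$ lifts to a commutativity relation at the shifted vertex in $\Lambda$. A direct comparison on generators shows that $\bar\phi$ and $\psi$ are mutually inverse, giving the desired isomorphism. The main obstacle is the careful bookkeeping in the case analysis for $\phi$: one must verify that whenever one of the paths $\alpha_{j,x}\alpha_{k,x+\epsilon_j}$ or $\alpha_{k,x}\alpha_{j,x+\epsilon_k}$ is killed by $\phi$, the relation is still preserved (either because the paired path is also killed, or because it becomes a zero relation that genuinely holds at the boundary of $\Lambda^d_{s-1}$).
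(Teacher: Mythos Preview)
Your approach is correct and is essentially the same idea as the paper's: both proofs rest on the bijection between vertices $x$ of $Q_\Lambda$ with $x_i\geq 1$ and vertices of the quiver of $\Lambda^d_{s-1}$, given by subtracting $1$ from the $i$th coordinate, together with the observation that the commutativity relations match up under this bijection.

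The packaging differs. The paper avoids your case analysis by applying the third isomorphism theorem at the path-algebra level: writing $H=\F Q^d_s$, $\Lambda=H/I^d_s$, and $J=HeH+I^d_s$, one has
\[
\Lambda/\Lambda e\Lambda \;\cong\; H/J \;\cong\; (H/HeH)\big/(J/HeH).
\]
Since quotienting a path algebra by a sum of vertex idempotents simply deletes those vertices (and all arrows touching them), $H/HeH\cong \F Q^d_{s-1}$ via your shift map, and one then checks directly that the image of $J/HeH$ is exactly $I^d_{s-1}$. This bypasses the boundary cases you handle by hand (where one of the two paths in a commutativity relation passes through a vertex with $x_i=0$), since at the path-algebra level there are no relations yet and the matching of ideals is a single observation rather than a case split. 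Your explicit two-sided construction is equally valid and perhaps more concrete; the paper's route is shorter.
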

\begin{proof}
Write $\Lambda^d_s=\F Q^d_s/I^d_s$, $H=\F Q^d_s$, and $J=HeH+I^d_s$.  Then $HeH$ and $I^d_s$ are both subideals of $J$, which is an ideal of $H$.  So, by the third isomorphism theorem, we have
\[ \frac{\Lambda}{\Lambda e \Lambda} = \left(\frac{H}{I^d_s}\right)\left/\right.\left(\frac{J}{I^d_s}\right) \cong \frac{H}{J} \cong 
\left(\frac{H}{HeH}\right)\left/\right.\left(\frac{J}{HeH}\right).
\]
We have an isomorphism $\varphi:H/HeH\to \F Q^d_{s-1}$ which sends $e_x+HeH$, with $x_i\geq1$, to $e_{x-(0,\ldots,0,1,0,\ldots,0)}$.  Then $\varphi(J/HeH)=I^d_{s-1}$, and so $\Lambda/\Lambda e \Lambda\cong\Lambda^d_{s-1}$.
\end{proof}

The following lemma will be useful.
\begin{lemma}[{\cite[Lemma 4.5.1]{g-lifts}}]\label{lem:quotidem}
Let $\Lambda$ be a quadratic algebra and let $e=e^2\in\Lambda$.  Then $\Lambda/\Lambda(1-e)\Lambda$ is also quadratic.  Moreover, if the algebra $e\Lambda e$ is generated in degree $1$ and is quadratic then we have an isomorphism
\[
e\Lambda e\cong \left( \frac{\Lambda^!}{\Lambda^!(1-e)\Lambda^!}\right)^!
\]
of graded algebras.
\end{lemma}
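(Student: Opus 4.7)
Write $\Lambda=T_S(V)/\langle R\rangle$ with $S=\Lambda_0$ semisimple, $V=\Lambda_1$, and $R\subseteq V\otimes_S V$ the quadratic relations. Since $e=e^2$ and $e$ must be homogeneous, $e\in S$, so $e$ is central in $S$ and the ideal $\Lambda(1-e)\Lambda$ is graded. For the first assertion, the plan is to compute $\bar\Lambda=\Lambda/\Lambda(1-e)\Lambda$ directly. Note that $T_S(V)/T_S(V)(1-e)T_S(V)\cong T_{eSe}(eVe)$ since any tensor of elements of $V$ that passes through a $(1-e)$-component is killed. Letting $\pi:V\otimes_S V\to eVe\otimes_{eSe}eVe$ be the natural projection, the third isomorphism theorem gives
\[\bar\Lambda\cong T_{eSe}(eVe)/\langle\pi(R)\rangle,\]
which is visibly quadratic.

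For the isomorphism, assume $e\Lambda e$ is generated in degree $1$ and quadratic, so $e\Lambda e\cong T_{eSe}(eVe)/\langle R'\rangle$ for some $R'\subseteq eVe\otimes_{eSe}eVe$. The key step is to identify $R'$. The generation hypothesis says that $eVe\otimes_{eSe}eVe\to e\Lambda_2 e$ is surjective, so $R'$ is the kernel of this map. But $\Lambda_2=V\otimes_S V/R$, and an element of $eVe\otimes_{eSe}eVe\subseteq V\otimes_S V$ vanishes in $\Lambda_2$ precisely when it lies in $R$. Hence
\[R' = R\cap(eVe\otimes_{eSe}eVe).\]

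Finally, apply the first part of the lemma to $\Lambda^!=T_S(V^*)/\langle R^\perp\rangle$ to obtain
\[\Lambda^!/\Lambda^!(1-e)\Lambda^! \cong T_{eSe}(eV^*e)/\langle \pi^*(R^\perp)\rangle,\]
where $\pi^*:V^*\otimes_S V^*\to eV^*e\otimes_{eSe}eV^*e$ is the projection. Taking the quadratic dual and using the canonical isomorphism $(eV^*e)^*\cong eVe$ of $eSe$-bimodules, the right-hand side of the claimed isomorphism becomes $T_{eSe}(eVe)/\langle\pi^*(R^\perp)^\perp\rangle$. The proof then reduces to identifying $\pi^*(R^\perp)^\perp$ with $R\cap(eVe\otimes_{eSe}eVe)$. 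For this, observe that for any $x\in eVe\otimes_{eSe}eVe$ and $y\in V^*\otimes_S V^*$ one has $\langle x,\pi^*(y)\rangle=\langle x,y\rangle$, because the components of $y$ that $\pi^*$ discards pair trivially with $x$. Thus $x\perp\pi^*(R^\perp)$ iff $x\perp R^\perp$, and by finite-dimensional linear algebra $(R^\perp)^\perp=R$ inside $V\otimes_S V$, giving $\pi^*(R^\perp)^\perp=R\cap(eVe\otimes_{eSe}eVe)=R'$.

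The main potential obstacle is step two: one has to know that the generation-in-degree-$1$ hypothesis is really needed (without it, the subalgebra $e\Lambda e$ may have extra degree-$2$ generators coming from length-$2$ paths through non-$e$ vertices), and the quadraticity hypothesis is needed so that $R'$ suffices. Once these are correctly stated and the surjectivity of $eVe\otimes_{eSe}eVe\to e\Lambda_2 e$ is in hand, the rest is routine linear algebra of orthogonal complements.
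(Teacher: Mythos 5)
The paper does not actually prove Lemma~\ref{lem:quotidem}; it cites it verbatim from \cite[Lemma~4.5.1]{g-lifts}, so there is no in-text argument to compare against. Judged on its own, your proof is correct and complete. The decisive step is the identification of the degree-$2$ relation space $R'$ of $e\Lambda e$ with $R\cap(eVe\otimes_{eSe}eVe)$: you correctly use the ``generated in degree $1$'' hypothesis to get surjectivity of $eVe\otimes_{eSe}eVe\to e\Lambda_2 e$, and ``quadratic'' to conclude that this kernel generates all relations. (One should check that the inclusion $eVe\otimes_{eSe}eVe\hookrightarrow V\otimes_S V$ really is injective, but this follows because $S$ is a finite product of copies of $\F$, so both sides decompose into blocks $e_iVe_j\otimes_\F e_jVe_k$ and the inclusion just selects those with $i,j,k$ in the support of $e$.) The orthogonality argument $\pi^*(R^\perp)^\perp = R\cap(eVe\otimes_{eSe}eVe)$ is also sound: the missing blocks of $V^*\otimes_S V^*$ are orthogonal to $eVe\otimes_{eSe}eVe$, so the pairing factors through $\pi^*$.

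It is worth pointing out that your argument is close in spirit to the paper's Lemma~\ref{lem:noloops}, which obtains the same presentation $e\Lambda e\cong T_{eSe}(eVe)/(R\cap eVeVe)$, but under the stronger hypothesis $e\Lambda(1-e)\Lambda e=0$ rather than the structural hypothesis that $e\Lambda e$ be degree-$1$-generated and quadratic. Your proof shows the latter (weaker, and a priori not obviously sufficient) hypothesis already forces the same presentation: a nontrivial point, since $e\Lambda(1-e)\Lambda e$ can be nonzero while $e\Lambda e$ is still degree-$1$-generated (e.g., $\F Q/(ab-cd)$ with $Q$ having arrows $a\colon 1\to2$, $b\colon 2\to1$, $c\colon 1\to3$, $d\colon 3\to1$ and $e=e_1+e_3$). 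Your closing paragraph correctly flags exactly this as the crux.
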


To apply Lemma \ref{lem:quotidem}, we will use the following result.
\begin{lemma}\label{lem:noloops}
Let $\Lambda=\Tens_S(V)/(R)$ be an algebra with $R\cap S=0$ and let $e$ be any idempotent.  Suppose that $e\Lambda(1-e)\Lambda e=0$.  Then
\[ e\Lambda e \cong \Tens_{eSe}(eVe)/(R\cap eVeVe). \]
In particular, if $\Lambda$ is generated in degree $1$ and quadratic, then so is $e\Lambda e$.
\end{lemma}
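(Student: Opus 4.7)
The plan is to factor the natural composition $\phi:T'\hookrightarrow T\twoheadrightarrow\Lambda$ through $e\Lambda e$, where $T=\Tens_S(V)$ and $T'=\Tens_{eSe}(eVe)$, and to identify both its image and its kernel. First I would check that $T'$ sits inside $eTe$ as a graded subring, which is immediate since $eVe\otimes_{eSe}eVe$ and $eVe\otimes_S eVe$ agree as subspaces of $V\otimes_S V$.

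The main reduction is the $eSe$-bimodule decomposition $eTe=T'\oplus M$, where $M$ is spanned by those pure tensors $ev_1\otimes\cdots\otimes v_n e$ with at least one internal idempotent equal to $1-e$. Splitting such a pure tensor at its first internal $(1-e)$ exhibits $M$ as a subspace of $eT(1-e)Te$, so the hypothesis $e\Lambda(1-e)\Lambda e=0$, lifted to the inclusion $eT(1-e)Te\subseteq(R)$, forces $M\subseteq(R)$. Combined with $T'\cap M=0$ this yields $eTe\cap(R)=M\oplus(T'\cap(R))$, hence $e\Lambda e=eTe/(eTe\cap(R))\cong T'/(T'\cap(R))$, and in particular $\phi$ is surjective.

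The main obstacle is to show that $T'\cap(R)=(R\cap eVeVe)$ as ideals of $T'$. The inclusion $\supseteq$ is formal. For the converse, given $\xi\in T'\cap(R)$, I would write $\xi=\sum_i a_ir_ib_i$ with $r_i\in R$ and, using $\xi=e\xi e$, arrange $a_i\in eT$ and $b_i\in Te$. The eightfold decomposition of $V\otimes_S V$ indexed by triples of idempotents splits each $r_i$ into components $r_i^{(x,y,z)}\in xVyVz$. The degree-two case of the hypothesis supplies $eV(1-e)Ve\subseteq(R)\cap(V\otimes_S V)=R$ (replacing $R$ by its $S$-bimodule span if necessary); combined with $er_ie\in R$, this gives $r_i^{(eee)}=er_ie-r_i^{(e(1-e)e)}\in R\cap eVeVe$. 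Projecting the identity $\xi=\sum_i a_ir_ib_i$ onto the $T'$-summand of $eTe$ and tracking pure tensors then yields $\xi=\sum_i a_i^{T'}\cdot r_i^{(eee)}\cdot b_i^{T'}\in(R\cap eVeVe)$, where $a_i^{T'}$ and $b_i^{T'}$ denote the pure-$e$ components of $a_i$ and $b_i$ and every other contribution lands in $M$. The ``in particular'' assertion is then immediate, since $R\cap eVeVe$ is concentrated in degree two and $eVe$ generates the image of $\phi$ in degree one.
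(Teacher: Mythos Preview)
Your argument is correct and arrives at the same place as the paper's, but by a different organisational choice. The paper treats $T'=\Tens_{eSe}(eVe)$ as a \emph{quotient} of $eTe$, with kernel $eT(1-e)Te$, and then runs a short diagram chase (invoking the Five Lemma) to identify the kernel $K$ of $T'\twoheadrightarrow e\Lambda e$ with the image of $eTRTe$; the final claim that this image is the ideal generated by $R\cap eVeVe$ is asserted without elaboration. You instead realise $T'$ as a \emph{subalgebra} of $eTe$ with explicit complement $M=eT(1-e)Te$, and compute $T'\cap(R)$ by hand. These are two sides of the same splitting $eTe=T'\oplus M$: your projection onto $T'$ is precisely the paper's quotient map. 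What your version adds is that it isolates the one nontrivial ingredient --- the degree-two consequence $eV(1-e)Ve\subseteq R$ of the hypothesis, which is exactly what is needed to upgrade ``projection of $eRe$ onto $eVeVe$'' to ``$R\cap eVeVe$''; the paper's proof uses this implicitly in its last sentence. Your route is more elementary (no Five Lemma) at the price of more bookkeeping with pure tensors. Both arguments tacitly use that $eS(1-e)=0=(1-e)Se$ so that $eVe\otimes_{eSe}eVe$ genuinely embeds in $V\otimes_S V$; this holds throughout the paper since $S$ is always a product of copies of $\F$ and $e$ a sum of vertex idempotents.
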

\begin{proof}
Write $T=\Tens_S(V)$, so $\Lambda=T/TRT$.  The proof is explained by the following diagram:
\[ \xymatrix{
& eT(1-e)Te\ar@{-->}[dl]\ar[d]\ar[dr]^0 &\\
eTRTe\ar[r]\ar@{-->}[d] & eTe\ar[r]\ar[d] &e\Lambda e\ar@{=}[d]\\
K\ar[r] & \Tens_{eSe}(eVe)\ar@{-->}[r] &e\Lambda e
} \]

First note that as $\Lambda$ is a quotient of $T$, $e\Lambda e$ is a quotient of $eTe$ with kernel $eTRTe$.  We have a surjective map $eTe\onto\Tens_{eSe}(eVe)$ of algebras induced by $V\onto eVe$, and the kernel of this map is $eT(1-e)Te$, so the kernel of $eTe\onto\Tens_{eSe}(eVe)$ factors through $eTRTe$.  So the map $eTe\onto e\Lambda e$ factors through the map $eTe\onto\Tens_{eSe}(eVe)$.  Let $K$ be the kernel of $\Tens_{eSe}(eVe)\onto e\Lambda e$.  Then, by the Five Lemma, the map $eTRTe\to K$ induced by $V\onto eVe$ is surjective.  So $K$ is the ideal in $\Tens_{eSe}(eVe)$ generated by $(R\cap eVeVe)$.
\end{proof}

\begin{proposition}\label{prop:nottower}
Fix $d,s\geq1$ and let $A=Z^d_s$.  Let $1\leq n\leq s$ and $0\leq m\leq d$.
Let $P$ be the direct sum of the indecomposable projective $A$-modules $Ae_y$ with $y_m\geq n$.  Then $E=\End_A(P)^\op\cong Z^d_{s-n}$.
\end{proposition}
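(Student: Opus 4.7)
The plan is to identify $E = \End_A(P)^{\op}$ with the corner algebra $eAe$ where $e = \sum_{y : y_m \geq n} e_y \in A = Z^d_s$, and then show $eAe \cong Z^d_{s-n}$. The first identification is standard Morita theory applied to the projective $P = Ae$, so the real work is in the second.

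Because $A = \STriv((\Lambda^d_s)^!)$, Lemma \ref{lem:idemptriv} yields
\[ eAe \;=\; e\,\STriv\bigl((\Lambda^d_s)^!\bigr)\,e \;=\; \STriv\bigl(e(\Lambda^d_s)^! e\bigr), \]
so it suffices to prove $e(\Lambda^d_s)^! e \cong (\Lambda^d_{s-n})^!$ as graded algebras. For this I would apply Lemma \ref{lem:quotidem} to the Koszul algebra $(\Lambda^d_s)^!$, whose Koszul dual is $\Lambda^d_s$ (Koszul by Proposition \ref{prop:anrf-kos}). Modulo the hypothesis of generation and quadraticity discussed below, this gives
\[ e(\Lambda^d_s)^! e \;\cong\; \bigl(\Lambda^d_s / \Lambda^d_s(1-e)\Lambda^d_s\bigr)^!. \]
The right-hand side is handled by iterating Lemma \ref{lem:drf-quotient}: each application of that lemma kills the idempotents supported on a fixed coordinate equal to $0$ and reduces $s$ by $1$, so killing the layers $y_m = 0, 1, \ldots, n-1$ in turn produces a tower of quotients $\Lambda^d_s \onto \Lambda^d_{s-1} \onto \cdots \onto \Lambda^d_{s-n}$. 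Thus $e(\Lambda^d_s)^! e \cong (\Lambda^d_{s-n})^!$, and $eAe \cong \STriv((\Lambda^d_{s-n})^!) = Z^d_{s-n}$ as desired.

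The main obstacle is verifying the hypothesis of Lemma \ref{lem:quotidem}: that $e(\Lambda^d_s)^! e$ is generated in degree $1$ and is quadratic. Lemma \ref{lem:noloops} does not apply directly, because $e(\Lambda^d_s)^!(1-e)(\Lambda^d_s)^! e$ is nonzero in general: for instance, at $n=1$, the length-$2$ path $e_y f_{m+1} f_m$ with $y_m = 1$ and $y_{m-1} \geq 1$ genuinely passes through $y + \ve_{m+1}$, which lies in $1-e$. However, by the commutativity relations of Proposition \ref{prop:lambdadualcomm}, this path equals $e_y f_m f_{m+1}$, whose midpoint $y + \ve_m$ lies in $e$. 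The key observation is that among the arrows $f_1, \ldots, f_d$ of $(\Lambda^d_s)^!$, only $f_m$ and $f_{m+1}$ alter the $m$-coordinate, and they commute whenever both compositions are defined. A careful induction on path length along these lines should show that any excursion of a path from $e$ to $e$ through $1-e$ can be rewritten via commutativity to stay inside $e$, establishing both generation in degree $1$ and quadraticity of $e(\Lambda^d_s)^! e$.

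If this direct verification proves delicate, one can instead argue by induction on $n$: setting $e' = \sum_{y : y_m \geq n-1} e_y$ so that $e \leq e'$, we have $eAe = e(e'Ae')e$, and using the inductive hypothesis $e'Ae' \cong Z^d_{s-n+1}$ the problem reduces to the $n=1$ case applied to the smaller zigzag algebra $e'Ae'$.
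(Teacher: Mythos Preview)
Your overall strategy matches the paper's exactly: identify $E$ with $eAe$, use Lemma \ref{lem:idemptriv} to reduce to $e(\Lambda^d_s)^!e \cong (\Lambda^d_{s-n})^!$, then apply Lemma \ref{lem:quotidem} together with Lemma \ref{lem:drf-quotient}. You also correctly isolate the one genuine difficulty, namely checking that $e(\Lambda^d_s)^!e$ is generated in degree $1$ and quadratic so that Lemma \ref{lem:quotidem} applies.

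Where you diverge from the paper is in how you handle that difficulty. The paper does not attempt your commutativity argument at all. Instead it first observes that $Z^d_s$ carries a cyclic automorphism acting on vertices by $(y_0,y_1,\ldots,y_d)\mapsto(y_d,y_0,\ldots,y_{d-1})$, and uses it to reduce to the case $m=0$ (together with the induction on $n$ you mention). For $m=0$ and $n=1$ the obstruction simply vanishes: the arrows $f_1,\ldots,f_d$ of $Q^+$ never increase the $0$th coordinate, so $(1-e)(\Lambda^d_s)^!e=0$ and Lemma \ref{lem:noloops} applies on the nose. No rewriting of paths is needed.

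Your proposed workaround via commutativity is plausible but is not complete as stated. The length-$2$ case is fine (as you note, existence of $f_m$ at $y+\ve_{m+1}$ forces $y_{m-1}\geq 1$, so the relation is available), but for longer excursions into $1-e$ you need to move $f_m$ past several intermediate arrows, and to apply Lemma \ref{lem:permutefi} you must check the reordered path exists at each step, which can cascade (e.g.\ if $y_{m-1}=0$ you must first bring $f_{m-1}$ forward, and so on). This can likely be made to work using Lemmas \ref{lem:permutefi} and \ref{lem:fi2lambda}, but it is a genuine argument, not a one-liner. Your induction on $n$ reduces to $n=1$ but does not address the $m$-dependence, so by itself it does not close the gap. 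The cleanest fix is to add the reduction to $m=0$ via the rotation automorphism, after which your argument goes through verbatim with Lemma \ref{lem:noloops}.
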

\begin{proof}
There is an obvious automorphism of $A$ which acts on vertices by $$(y_0, y_1,\ldots, y_d)\mapsto (y_d,y_0,\ldots,y_{d-1})$$ (this will appear again in Section \ref{ss:bimres}).  Using this, we only need to prove the statement for $m=0$.  We can also assume $n=1$ and the other cases will follow by induction.

Let $e=\sum_{y_m\geq1}e_y$, so $P=Ae$ and $E=eAe$.  As the quivers of $A$ and $(\Lambda^d_s)^!$ have the same vertex set, we can also consider $e$ to be an element of $(\Lambda^d_s)^!$.
By Lemma \ref{lem:idemptriv}, we only need to show that $e(\Lambda^d_s)^!e\cong(\Lambda^d_{s-1})^!$.

We have $(1-e)(\Lambda^d_s)^!e=0$ so, by Lemma \ref{lem:noloops}, $e(\Lambda^d_s)^!e$ is generated in degree $1$ and is quadratic.  Thus by Lemma \ref{lem:quotidem} we have
\[ e(\Lambda^d_s)^!e\cong  \left( \frac{\Lambda^d_s}{\Lambda^d_s(1-e)\Lambda^d_s}\right)^!. \] 
As $1-e=\sum_{y_m=0}e_y$, Lemma \ref{lem:drf-quotient} tells us that ${\Lambda^d_s}/{\Lambda^d_s(1-e)\Lambda^d_s}\cong \Lambda^d_{s-1}$, which finishes the proof.
\end{proof}

We can apply the proposition repeatedly with different choices of $m$.  In the extreme case we get the following:
\begin{corollary}\label{cor:sphericalprojs}
Let $P$ be an indecomposable projective $A$-module.  Then $\End_A(P)\cong \F[x]/(x^2)$ with $x$ in degree $d+1$.
\end{corollary}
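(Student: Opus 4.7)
The plan is to apply Proposition \ref{prop:nottower} iteratively, peeling off the coordinates of $y$ one at a time, until we land in $Z^d_1$ where the claim is immediate.

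Write $P = Ae_y$ for a vertex $y = (y_0, y_1, \ldots, y_d)$ with $\sum_i y_i = s-1$. First apply Proposition \ref{prop:nottower} with $m=0$ and $n=y_0$ (skipping this step if $y_0 = 0$): setting $e$ to be the sum of $e_{y'}$ over all vertices $y'$ with $y'_0 \geq y_0$, we obtain $E_1 := eAe \cong Z^d_{s-y_0}$. Tracing the identification in Lemma \ref{lem:drf-quotient} through the proof of Proposition \ref{prop:nottower}, the idempotent $e_y \in E_1$ corresponds to the vertex $(0, y_1, y_2, \ldots, y_d)$ of $Z^d_{s-y_0}$. Since $e_y = e_y e = e e_y$,
\[
\End_A(Ae_y)^\op = e_y A e_y = e_y (eAe) e_y = \End_{E_1}(E_1 e_y)^\op,
\]
so the endomorphism algebra of $P$ can be computed inside $E_1$ instead of $A$.

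Iterating with $m = 1, 2, \ldots, d$ (using $n = y_m$ at each step and simply skipping the steps where $y_m = 0$) brings us, after at most $d+1$ applications, into the algebra $Z^d_{s-\sum_i y_i} = Z^d_1$, with $P$ identified with the unique indecomposable projective. By the $s=1$ case of Theorem \ref{thm:azzpres}, $Z^d_1 \cong \F[x]/(x^2)$ with $x$ in degree $d+1$ (see also the first example of Section \ref{subsec:egs}). This algebra is local, so coincides with the endomorphism algebra of its unique indecomposable projective, and the corollary follows.

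The only point that needs checking is that the iterated reduction is valid: the endomorphism algebra $e_y A e_y$ should be independent of which intermediate zigzag algebra we sit inside. This is essentially automatic from the identity $e_y (eAe) e_y = e_y A e_y$ whenever $e_y \leq e$, so no serious obstacle arises; the remaining work is purely the bookkeeping of which vertex the surviving idempotent has been relabelled as after each step.
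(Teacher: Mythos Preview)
Your proof is correct and takes essentially the same approach as the paper: the paper simply remarks that one can apply Proposition~\ref{prop:nottower} repeatedly with different choices of $m$, and you have spelled out this iteration explicitly, including the bookkeeping of vertex relabelling and the terminal identification $Z^d_1\cong\F[x]/(x^2)$.
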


\subsection{Spherical twists, periodic twists, and the lifting theorem}

Let $A$ be an algebra.  Following Seidel and Thomas \cite{st},
we say that an $A$-module $M$ is \emph{$n$-spherical} if $\bigoplus_{i\geq0}\Ext^i_A(M,M)\cong \F[x]/(x^2)$, with $x$ concentrated in $\Ext^n_A(M,M)$, and $M$ is a Calabi-Yau object, so the composition $\Ext^j_A(M,N)\times \Ext_A^{n-j}(N,M)\to \Ext^n_A(M,M)$ is nondegenerate for all $0\leq j\leq n$.  

If $M=P$ is a projective module then $\Ext^i_A(P,-)=0$ for $i\neq0$,  so we just require $\End_A(P)\cong \F[x]/(x^2)$ and the Calabi-Yau condition.  If moreover $A$ is a symmetric algebra, then we have a functorial isomorphism $\Hom_A(P,-)\cong\Hom_A(-,P)^*$, so the Calabi-Yau condition is automatic.  As $P$ is projective, we can write $P=Ae$ for some idempotent $e\in A$.  Suppose $e=e_i$ is the idempotent associated to some vertex $i$ in the quiver of $A$.  Let $X_i$ denote the cone of the map of $A\da A$ bimodules 
$m:Ae_i\otimes_\F e_iA\to A$
defined by the multiplication $m(ae_i\otimes e_ib)= ab$.  Then the spherical twist $F_i:\Db(A)\to\Db(A)$ is defined as $X_i\otimes_A-$.  Note that, in this situation, we have an isomorphism of functors $e_iA\otimes_A-\cong\Hom_A(P_i,-)$ and the multiplication map $m$ corresponds to the evaluation map $P\otimes_\F\Hom_A(P,-)\arr\ev -$.

Periodic twists were introduced in \cite{gra1} as a generalization of the spherical twists for projective modules over symmetric algebras described above.
They were later used to study actions of longest elements in braid groups, using a lifting theorem, in \cite{g-lifts}.  The construction given there is as follows.  Suppose that $A$ is a symmetric algebra and $P$ is a projective $A$-module with endomorphism algebra $E=\End_A(P)^\op$.  If $E$ is a twisted periodic algebra, i.e., we have a short exact sequence $0\to E_\sigma[n-1]\to Y\arr{f} E\to0$ of $E\da E$-bimodules where $\sigma\in\Aut(E)$ and $Y$ is a bounded complex of projective bimodules, then let $X$ denote the cone of the composite map 
\[ P\otimes_E Y\otimes_E \Hom_A(P,A)\arr{1\otimes f\otimes 1}P\otimes_E \Hom_A(P,A)\arr{\ev}A. \]
Then the periodic twist is $\Psi_{P,f}=X\otimes_A-:\Db(A\mMod)\arr\sim\Db(A\mMod)$.
If $E\cong\F[x]/(x^2)$, then $Y=E\otimes_\F E$ and $X$ is just the cone of $P\otimes_E \Hom_A(P,A)\arr{\ev}A$, so we recover examples of spherical twists.

In fact, this construction gives equivalences in a greater generality than that stated in \cite{gra1}.  Let $A$ be any finite-dimensional $\F$-algebra and let $P$ be a projective $A$-module.  We have the Nakayama functor $\nu:A\add\to A^*\add$ which sends projectives to injectives, and $\Hom_A(P,-)$ is naturally dual to $\Hom_A(-,\nu(P))$.
If $\nu(P)\cong P$, so $P$ is a Calabi-Yau object, then we can construct periodic twists, which are autoequivalences, just as before.  
The assumption $\nu(P)=P$ 
ensures that $\{P\}\cup P^\perp$ is still a spanning class for $\Db(A\mMod)$, so \cite[Lemma 3.14]{gra1} still holds, and the only change necessary is to use the functor $(-)^\vee=\Hom_A(-,A)$ instead of $(-)^*=\Hom_\F(-,\F)$ in part (iii) of the proof of \cite[Theorem 3.9]{gra1}.

\begin{example}
Let $Q$ be the quiver
\[\xymatrix{
 &2\ar[rd]^{\beta} &\\
1\ar[ur]^{\alpha} &&3\ar[ll]^{\gamma}
}\]
and let $A=\F Q/(\alpha\beta,\gamma\alpha)$.  The algebra $A$ is certainly not symmetric: its global dimension is $3$.

Let $P_i=Ae_i$.  Then $\nu(P_1)\cong P_2$ and $\nu(P_2)\cong P_1$, but $\nu(P_3)$ is not projective.  Let $P=P_1\oplus P_2$, so $\nu(P)\cong P$.  Then $E=\End_A(P)^\op\cong\Pi^1_2$: it is the quotient of the $2$-cycle quiver by all paths of length at least $2$.  This is twisted periodic of period $1$, with algebra automorphism interchanging the two vertices of the quiver of $E$.

Note that $\End_A(P_i)^\op$ is $1$-dimensional for $i=1,2,3$, so none of the projectives are spherical.  But the periodic twist, which is given by tensoring with the bimodule complex
\[ Ae_1\otimes_\F e_1A\oplus Ae_2\otimes_\F e_2A\arr{(m,m)}A \]
is indeed an autoequivalence.  In fact, this autoequivalence is an example of a spherical functor \cite{r-braid,al} over a base category of modules over the algebra $\F\times \F$.
\end{example}

\begin{example}
Given any algebra automorphism $\sigma:A\arr\sim A$, the twisted regular left module ${_\sigma A}$ is isomorphic to the untwisted regular module $A$ via the map $a\mapsto \sigma^{-1}(a)$.  Therefore, if $\sigma$ fixes the vertices of the quiver of $A$ then ${_\sigma A}\otimes_A Ae_i\cong Ae_i$.

In particular, if $A$ is a higher zigzag algebra then $A$ is Frobenius with Nakayama automorphism which fixes the vertices.  Thus we can construct periodic twists for any projective module whose endomorphism algebra is twisted periodic.
\end{example}

The following result \cite[Theorem 3.3.6]{g-lifts} is quite useful for proving relations hold between periodic twists.  We will use it in the special case of spherical twists.
\begin{theorem}[Lifting theorem]\label{thm:lifting}
Let $A$ be an $\F$-algebra.  Let $P=P_1\oplus\cdots P_n$ be a direct sum of spherical projective $A$-modules such that $\nu(P)\cong P$ and let $F_i:\Db(A)\arr\sim\Db(A)$ denote the associated spherical twists.  Let $E=\End_A(P)^\op$ and let $F'_i:\Db(E)\arr\sim\Db(E)$ be the spherical twists associated to the corresponding projective $E$-modules $\Hom_A(P,P_i)$.  Then:
\vspace{-1em}\begin{enumerate}[(i)]
\item if $F'_{i_r}\cdots F'_{i_2}F'_{i_1} \cong F'_{j_s}\cdots F'_{j_2}F'_{j_1}$ then $F_{i_r}\cdots F_{i_2}F_{i_1} \cong F_{j_s}\cdots F_{j_2}F_{j_1}$;
\item if $F'_{i_r}\cdots F'_{i_2}F'_{i_1} \cong E_\sigma[d]$ for some $\sigma\in\Aut(E)$ and $d\in\Z$ then $F_{i_r}\cdots F_{i_2}F_{i_1} \cong \Psi_P$, the periodic twist associated to $P$.
\end{enumerate}
\end{theorem}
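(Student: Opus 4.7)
The plan is a spanning class argument. The hypothesis $\nu(P)\cong P$ ensures, as in \cite[Lemma 3.14]{gra1}, that $\{P\}\cup P^\perp$ is a spanning class for $\Db(A\mMod)$, so two triangulated autoequivalences are isomorphic as soon as they induce isomorphic functors on $P$ and on every object of $P^\perp$. I would therefore split both statements into a check on $P^\perp$, which turns out to be automatic, and a check on the thick subcategory $\gen{P}$ generated by $P$, which I would handle by transporting along a tilting equivalence.

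First I would show that every spherical twist $F_i$ acts as the identity on $P^\perp$. Since $P_i$ is projective, for $M\in P^\perp$ the defining triangle
\[ Ae_i\otimes_{\F}\Hom_A(P_i,M) \to M \to F_i(M)\to \]
collapses to $F_i(M)\cong M$ functorially, so any composition of $F_i$'s acts as identity on $P^\perp$, and the same holds for $\Psi_P$ directly from its construction. Next I would identify the restrictions to $\gen{P}$: the functor $\Hom_A(P,-)$ restricts to a triangle equivalence $\gen{P}\arr{\sim}\Db(E\mMod)$, since $P$ is a partial tilting object with endomorphism algebra $E$. Using $\Hom_A(P,Ae_i)\cong Ef_i$ for the idempotent $f_i\in E$ projecting onto $P_i$, the bimodule cone $X_i=\cone(Ae_i\otimes_{\F}e_iA\to A)$ defining $F_i=X_i\otimes^L_A -$ translates under this equivalence to the analogous cone $X'_i=\cone(Ef_i\otimes_{\F}f_iE\to E)$ defining $F'_i$, giving a natural intertwining $\Hom_A(P,F_i(-))\cong F'_i(\Hom_A(P,-))$ on $\gen{P}$. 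A parallel computation on the bimodule complex $Y$ resolving $E_\sigma[n-1]\to Y\to E$ identifies $\Psi_P$ restricted to $\gen{P}$ with tensoring by $E_\sigma[d]$ over $E$ after transport.

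Both parts then follow from the spanning class. For (i), an isomorphism $F'_{i_r}\cdots F'_{i_1}\cong F'_{j_s}\cdots F'_{j_1}$ on $\Db(E\mMod)$ transports across the equivalence to an isomorphism of the corresponding compositions on $\gen{P}$, and both sides agree on $P^\perp$ by the first step; for (ii), $F'_{i_r}\cdots F'_{i_1}\cong E_\sigma[d]$ similarly transports to $F_{i_r}\cdots F_{i_1}\cong\Psi_P$ on $\gen{P}$, with agreement on $P^\perp$ again automatic. The hard part will be the intertwining identity in the second step: one must handle the bimodule tensor-Hom adjunction carefully so that the cone constructions match naturally, and when $A$ has infinite global dimension the hypothesis $\nu(P)\cong P$ is exactly what guarantees both that the restriction $\Hom_A(P,-):\gen{P}\arr{\sim}\Db(E\mMod)$ is well-behaved and that the cone construction defining $\Psi_P$ produces an autoequivalence at all.
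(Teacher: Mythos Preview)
The paper does not prove this theorem here; it is quoted from \cite[Theorem 3.3.6]{g-lifts}, with the remark that the argument carries through even without knowing that the periodic twists are autoequivalences. That remark already signals that your plan cannot be the intended one: a spanning class argument of the type in \cite[Lemma 3.14]{gra1} is a tool for checking that a single functor is an equivalence, not for manufacturing a natural isomorphism between two given functors. Your assertion that ``two triangulated autoequivalences are isomorphic as soon as they induce isomorphic functors on $P$ and on every object of $P^\perp$'' does not follow from $\{P\}\cup P^\perp$ being a spanning class. Knowing that $F$ and $G$ send each object of a spanning class to isomorphic objects does not produce a natural transformation $F\Rightarrow G$, and even if you have separate natural isomorphisms on $\gen{P}$ and on $P^\perp$, there is no mechanism here for gluing them into a global one.

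The proof in \cite{g-lifts} avoids this by working entirely at the level of bimodule complexes. Each $F_i$ is $X_i\otimes_A-$ and each $F'_i$ is $X'_i\otimes_E-$; the key computation shows that the $A\da A$-bimodule complex $X_{i_r}\otimes_A\cdots\otimes_A X_{i_1}$ is obtained from the $E\da E$-bimodule complex $X'_{i_r}\otimes_E\cdots\otimes_E X'_{i_1}$ via the construction $Y\mapsto\cone(P\otimes_E Y\otimes_E P^\vee\to A)$, and that this lifting is compatible with composition. An isomorphism of $E\da E$-bimodule complexes then lifts directly to an isomorphism of $A\da A$-bimodule complexes, and the natural isomorphism of functors is automatic. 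Your intertwining step on $\gen{P}$ is morally the right calculation, but it must be upgraded from a statement about restricted functors to a statement about the kernels themselves; once that is done, the spanning class plays no role.
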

Note that the lifting theorem actually makes sense, and its proof carries though, without knowing that periodic twists are autoequivalences.  All that is used is that we have some module $M$, an endomorphism algebra $E=\End_A(M)^\op$, a perfect complex $Y$ of $E\da E$-bimdules, and a short exact sequence $F[-1]\into Y\onto E$ which is used to construct the associated endofunctor of $\Db(A\mMod)$.

We also note that the lifting theorem holds for graded modules over a graded algebra $A$: all that is important in \cite[Corollary 2.4.1]{g-lifts} is that our triangulated category has a DG-enhancement.

\subsection{Some higher analogues of braid groups}\label{ss:defgroups}

Let $Q$ be a quiver and let $n\geq1$.  We will define a group $G_n(Q)$ using $Q$.

For each vertex $v$ of $Q$, $G_n(Q)$ has a generator $s_v$.  Suppose we have an oriented $n$-cycle
\[ v_1 \to v_2 \to \cdots \to v_{n-1} \to v_{n} \to v_1 \]
in $Q$, where all the vertices $v_1,v_2,\ldots,v_n$ are distinct.  
Now let $1\leq\ell\leq n$ and let $w_1,w_2,\ldots,w_\ell$ be any ordered subsequence of $v_1,v_2,\ldots,v_n$.  Then we impose the following relation:
\[ s_{w_1}s_{w_2}\ldots s_{w_\ell}s_{w_1} = s_{w_2}s_{w_3}\ldots s_{w_\ell}s_{w_1}s_{w_2}. \]
Note that, as we can start our oriented cycle at any point, we also have the relation:
\[ s_{w_2}s_{w_3}\ldots s_{w_\ell}s_{w_1}s_{w_2} = s_{w_3}s_{w_4}\ldots s_{w_\ell}s_{w_1}s_{w_2}s_{w_3}. \]
Next, for any two vertices $y$ and $z$ which are not both vertices of a single $n$-cycle, we impose the commutativity relation:
\[ s_ys_z=s_zs_y. \]

Now let $Q=Q^d_s$ be the quiver of $Z^d_s$, as in Theorem \ref{thm:azzpres}.  $Q_0$ denotes its set of vertices $\{y=(y_0,\ldots,y_d)\}$.  We write $\Br^d_s=G_{d+1}(Q^d_s)$.  So, to summarize:
\begin{definition}
Let $Q=Q^d_s$.  Then
\[ \Br^d_{s} = \gen{s_y,y\in Q_0\st s_{w_1}s_{w_2}\ldots s_{w_\ell}s_{w_1} = s_{w_2}s_{w_3}\ldots s_{w_\ell}s_{w_1}s_{w_2},\;\; s_us_v=s_vs_w } \]
where $w_1,w_2,\ldots,w_\ell$ is a cyclic subsequence of a $(d+1)$-cycle in $Q$, and $u$ and $v$ do not belong to a common $(d+1)$-cycle in $Q$.
\end{definition}

\begin{example}[$d=1$]
The quiver $Q$ is the usual doubled type $A$ quiver.  For example, if $s=5$, the quiver $Q^1_5$ is:
\[
\xymatrix{
40 \ar@/^/[r]^{f_1} &31\ar@/^/[l]^{f_0}\ar@/^/[r]^{f_1} & 22\ar@/^/[l]^{f_0}\ar@/^/[r]^{f_1} & 13\ar@/^/[l]^{f_0}\ar@/^/[r]^{f_1} & 04\ar@/^/[l]^{f_0} }
\]
So our group $\Br^1_5$ has five generators: $s_{40}$, $s_{31}$, $s_{22}$, $s_{31}$, and $s_{04}$.  As before, we have written the vertex $(y_0,y_1)$ as $y_0y_1$.  We get a Reidemeister 3 relation for each neighbouring pair of vertices in $Q$, and a commutativity relation for each distant pair of vertices.  For example,
\[ s_{40}s_{31}s_{40}=s_{31}s_{40}s_{31}\;\;\; \text{ and } \;\;\; s_{40}s_{22}=s_{22}s_{40}. \]
Thus the group $\Br^1_5$ is the usual Artin braid group of type $A_5$, i.e., the usual braid group $\Braid_6$ on $6$ strands.  This clearly generalizes to $s\geq1$, so $\Br^1_{s}$ is the usual braid group on $s+1$ strands, which is sometimes denoted $\Braid_{s+1}$.
\end{example}

\begin{example}[$s=2$]
The quiver $Q^d_2$ is just an oriented $(d+1)$-cycle
\[\xymatrix{
100\ldots00\ar[r] &010\ldots00\ar[d]\\
000\ldots01\ar[u] & {}\iddots\ar[l] 
}\]
and the group $\Br^d_2$ is isomorphic to $\Braid_{d+2}$: see \cite{ser}.  Each of the $d+1$ generators of $\Br^d_2$ corresponds to a crossing of the $1$st and $k$th strands in $\Braid_{d+2}$, for $2\leq k\leq d+2$.

For example, if $d=2$ then the group $\Br^2_2$ has $3$ generators $s_{100}, s_{010}, s_{001}$ and relations
\[ s_{100}s_{010}s_{100}=s_{010}s_{100}s_{010}, \;\; s_{010}s_{001}s_{010}=s_{001}s_{010}s_{001}, \;\; s_{001}s_{100}s_{001}=s_{100}s_{001}s_{100}, \;\; \text{and}\]
\[ s_{100}s_{010}s_{001}s_{100}=s_{010}s_{001}s_{100}s_{010}=s_{001}s_{100}s_{010}s_{001}.\]
\end{example}

\begin{example}[$d=2$]
The quiver $Q$ is made of triangles.  For example, for $s=4$, the quiver is:
\[
\xymatrix @R=15pt @C=6pt {
&&&030 \ar[dr]^{f_2} \\
&&120 \ar[ur]^{f_1}\ar[dr]^{f_2} && 021\ar[ll]_{f_0}\ar[dr]^{f_2} \\
&210 \ar[ur]^{f_1}\ar[dr]^{f_2} && 111\ar[ll]_{f_0}\ar[ur]^{f_1}\ar[dr]^{f_2} && 012\ar[ll]_{f_0}\ar[dr]^{f_2}  \\
300 \ar[ur]^{f_1} && 201\ar[ll]_{f_0}\ar[ur]^{f_1} && 102\ar[ll]_{f_0}\ar[ur]^{f_1} && 003\ar[ll]_{f_0}
}
\]
So our group $\Br^2_4$ has ten generators $s_y$ indexed by the vertices of $Q$.

There are two $3$-cycles starting at the vertex $210$: they are $210\to120\to 111\to210$, from which we get the relation
\[ s_{210}s_{120}s_{111}s_{210}=s_{120}s_{111}s_{210}s_{120}, \]
and $210\to201\to111\to210$, from which we get the relation
\[ s_{210}s_{201}s_{111}s_{210}=s_{201}s_{111}s_{210}s_{201}. \]
Choosing the subsequence $210,120$ from the first cycle, we get the relation
\[ s_{210}s_{120}s_{210}=s_{120}s_{210}s_{120}. \]

As $300$ is only contained in the $3$-cycle $300\to210\to201\to300$, the generator $s_{300}$ commutes with all generators $s_y$ with $y\neq201,300$.

In this way, we get a commutativity relation for each pair of vertices not connected by an arrow in either direction, one Reidemeister 3 relation for each arrow of $Q$, and two length $4$ relations for each oriented $3$-cycle in $Q$.
\end{example}

\begin{remark}
For $s=2$ and $s=3$, the group $\Br_{s}^2$ appeared in \cite{gm}.  It was shown there, using quiver mutation, that these groups are isomorphic to classical braid groups: $\Br^2_{2}\cong\Braid_4$ and $\Br^2_{3}\cong\Braid_7$.  However, for $s\geq4$, the quiver $Q^2_s$ is not mutation equivalent to a type $A$ quiver, so we do not know of any isomorphism between a classical braid group and $\Br^2_{s}$ in these cases.
\end{remark}

\begin{example}[$d=3$]
The quivers $Q^3_s$ are more complicated.  We give an example with $s=3$.  The quiver is:
\[
\xymatrix @R=20pt @C=8pt {
&&0200 \ar[dr]
 \\
&1100 \ar[ur]
\ar[dr] && 0110\ar[dr]\ar[rrrrd]
 \\
2000 \ar[ur] && 1010\ar[ur]\ar[rrrrd] && 0020\ar[rrrrd]&&&0101\ar[dr]\ar[llllllu]&&&{\phantom{1100}} \\
&&&&&& 1001\ar[ur]\ar[llllllu]
 &&0011\ar[rrrrd]\ar[llllllu]&&{\phantom{1100}}\\
{\phantom{1100}}&&{\phantom{1100}}&&{\phantom{1100}}&&&&&&&&0002\ar[llllllu]
}
\]

Our group $\Br^3_3$ again has ten generators $s_y$ indexed by the vertices of $Q$.

Now we have four different types of relations.  Consider the oriented $4$-cycle 
\[1100\to0200\to0110\to0101\to1100.\] 
As this contains the vertices $1100$ and $0110$, the generators $s_{1100}$ and $s_{0110}$ do not commute, even though there is no arrow between them in the quiver.  However, $s_{1100}$ does commute with $s_{0002}$, as the only $4$-cycle containing $0002$ does not contain $1100$.

Next consider the length $2$ subsequences of our $4$-cycle.  These include $1100,0200$ and $1100,0110$, giving relations
\[ s_{1100}s_{0200}s_{1100}=s_{0200}s_{1100}s_{0200} \]
and
\[ s_{1100}s_{0110}s_{1100}=s_{0110}s_{1100}s_{0110}. \]
Note that the first of these corresponds to an arrow in $Q$ but the second does not.  

The length $3$ subsequence $1100,0200,0101$ of our $4$-cycle gives the relation
\[ s_{1100}s_{0200}s_{0101}s_{1100}=s_{0200}s_{0101}s_{1100}s_{0200}. \]

Finally, the full length $4$ subsequence gives the relation
\[ s_{1100}s_{0200}s_{0110}s_{0101}s_{1100}=s_{0200}s_{0110}s_{0101}s_{1100}s_{0200}. \]
\end{example}

\subsection{Group actions on type $A$ higher zigzag algebras}

We want to show that $\Br^d_{s}$ acts on the derived category of $Z^d_s$ by spherical twists.  We will do this by considering certain endomorphism algebras of $Z^d_s$.  Fix $d\geq1$ and $s\geq2$ and let $A=Z^d_s$.

First we consider certain symmetric Nakayama algebras.  For $n\geq2$, let $N_n$ be the path algebra over $\F$ of the cyclic quiver
\[\xymatrix{
1\ar[r]^{\alpha_1} &2\ar[d]^{\alpha_2}\\
n\ar[u]^{\alpha_n} & {}\iddots\ar[l]^{\alpha_{n-1}} 
}\]
modulo the two-sided ideal generated by all paths of length $n+1$.  Note that, for any choice of integers $k_1,\ldots,k_n$, if we assign the degree $k_i$ to the arrow $\alpha_i$ then $N_n$ becomes a graded algebra.  If $k_i=1$ for all $i$ then, by Theorem \ref{thm:azzpres}, $N_n\cong Z^{n-1}_2$ as graded algebras. 

Note that, in the ungraded case, $N_n$ is Morita equivalent to Brauer tree algebras of a star with $n$ edges and no exceptional vertex.

Let $\vec{A}_n$ denote the linearly oriented type $A$ quiver with arrows $\alpha_i:i\to i+1$ for $1\leq i<n$.
The following well-known lemma is useful.  We include a proof for completeness.
\begin{lemma}\label{lem:naktrivan}
We have an algebra isomorphism $N_n\cong\Triv(\F \vec{A}_n)$.
\end{lemma}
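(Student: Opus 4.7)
\emph{Proof plan.} The natural approach is to write down an explicit algebra homomorphism $\varphi: N_n \to \Triv(\F\vec{A}_n)$ and verify it is an isomorphism by a dimension count. Let $p_j^k = \alpha_j\alpha_{j+1}\cdots\alpha_{k}$ denote the path from $j$ to $k+1$ in $\vec{A}_n$ (with $p_j^{j-1}=e_j$), and for each path $p$ write $p^*$ for the corresponding dual basis vector of $(\F\vec{A}_n)^*$. Recall that for $p\in e_i(\F\vec{A}_n)e_j$ we have $p^*\in e_j(\F\vec{A}_n)^*e_i$. I would define $\varphi$ on generators by
\[\varphi(e_i)=(e_i,0)\; (1\le i\le n),\quad \varphi(\alpha_i)=(\alpha_i,0)\;(1\le i\le n-1),\quad \varphi(\alpha_n)=(0,(p_1^{n-1})^*).\]
Sources and targets match (note $(p_1^{n-1})^*\in e_n(\F\vec{A}_n)^*e_1$), so $\varphi$ extends to a homomorphism from the path algebra of the cyclic quiver.

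The first task is well-definedness: I need every length $n+1$ path in $N_n$ to map to zero. Since $(0,f)(0,g)=0$ in any trivial extension, any length $n+1$ cycle that contains $\alpha_n$ twice automatically maps to zero. For a length $n+1$ cycle $\alpha_i\cdots\alpha_{n-1}\alpha_n\alpha_1\cdots\alpha_{i-1}\alpha_i$ containing $\alpha_n$ exactly once (so $1\le i\le n-1$), a short calculation using the bimodule action $(a\cdot f)(x)=f(xa)$ and $(f\cdot b)(x)=f(bx)$ shows
\[\varphi(\alpha_i\cdots\alpha_{n-1})\cdot(0,(p_1^{n-1})^*)=(0,(p_1^{i-1})^*),\]
and then iterated right multiplication by $\alpha_1,\alpha_2,\ldots,\alpha_{i-1}$ walks this through $(p_2^{i-1})^*,\ldots,(p_{i-1}^{i-1})^*,e_i^*$, after which the final multiplication by $\alpha_i$ gives zero because $(e_i^*\cdot\alpha_i)(x)=e_i^*(\alpha_i x)=0$ (as $\alpha_i x$ has positive length).

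Next I would establish surjectivity by exhibiting preimages for a basis of $\Triv(\F\vec{A}_n)$. The summand $\F\vec{A}_n$ is hit by construction. For the dual summand, the same bookkeeping as above shows that the length $n+j-k-1$ path $\alpha_{k+1}\cdots\alpha_n\alpha_1\cdots\alpha_{j-1}$ in $N_n$ (going once around the cycle from $k+1$ to $j$) has image exactly $(0,(p_j^k)^*)$, so every element of the dual basis lies in the image. Finally, since both algebras have $\F$-dimension $n(n+1)$ --- for $N_n$ there are $n$ paths of each length $0,1,\ldots,n$, and for $\Triv(\F\vec{A}_n)$ we have $2\cdot\binom{n+1}{2}$ --- surjectivity forces $\varphi$ to be an isomorphism.

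The only step requiring any care is the bookkeeping of the left and right $\F\vec{A}_n$-actions on the dual bimodule and tracking how the iterated multiplications rotate dual basis elements around the cycle; once those conventions are pinned down, the verifications are routine. There is no deeper obstacle.
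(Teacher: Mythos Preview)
Your proof is correct and is essentially the same as the paper's: both identify the extra arrow $\alpha_n$ with the dual basis vector $(\alpha_1\cdots\alpha_{n-1})^*$ of the socle of $\F\vec{A}_n$, check that this defines a surjection from the path algebra of the cyclic quiver onto $\Triv(\F\vec{A}_n)$, and then observe that the kernel is exactly the ideal of length $n{+}1$ paths. The paper compresses the last step into a single sentence, while you spell out the well-definedness and surjectivity explicitly and finish with a dimension count---but the underlying argument is identical.
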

\begin{proof}
The identity bimodule for $\F\vec{A}_n$ has $1$-dimensional socle generated by the longest path, so the quiver of $\Triv(\F \vec{A}_n)$ is obtained by adding an arrow $\alpha_n:n\to 1$ to $\vec{A}_n$ which corresponds to the element $(\alpha_1\ldots\alpha_{n-1})^*$.
Using the definition of the trivial extension, it is easy to see that the kernel of the surjective map from the path algebra of this quiver to $\Triv(\F \vec{A}_n)$ is precisely the ideal generated by all paths of length $n+1$.
\end{proof}

The following self-similarity property will also be useful.
\begin{lemma}\label{lem:nakendom}
Let $M\subseteq\{1,2,\ldots,n\}$ be a subset of the vertices of the quiver of $N_n$, and let $m$ be the cardinality of $M$.  Let 
$e=\sum_{i\in M}e_i$.  Then $eN_ne\cong N_m$.
\end{lemma}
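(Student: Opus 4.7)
The plan is to combine Lemmas \ref{lem:idemptriv} and \ref{lem:naktrivan}. First I would identify $N_n$ with $\Triv(\F\vec{A}_n)$ using Lemma \ref{lem:naktrivan}; under this isomorphism, the vertex idempotents $e_i \in N_n$ correspond to the vertex idempotents $e_i \in \F\vec{A}_n \subseteq \Triv(\F\vec{A}_n)$. Hence $e = \sum_{i \in M} e_i$ can be viewed as an idempotent in $\F\vec{A}_n$, and Lemma \ref{lem:idemptriv} gives
\[ eN_ne \;\cong\; e\Triv(\F\vec{A}_n)e \;\cong\; \Triv\bigl(e\F\vec{A}_n e\bigr). \]
So the problem reduces to showing $e\F\vec{A}_n e \cong \F\vec{A}_m$, because a second application of Lemma \ref{lem:naktrivan} then identifies $\Triv(\F\vec{A}_m)$ with $N_m$.

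To carry out this reduction, write $M = \{i_1 < i_2 < \cdots < i_m\}$. The path algebra $\F\vec{A}_n$ has a basis consisting of a unique path $\gamma_{ij}$ from $i$ to $j$ for each pair $i \leq j$ in $\{1,\ldots,n\}$, so $e\F\vec{A}_n e$ has basis $\{\gamma_{i_k i_l} : 1 \leq k \leq l \leq m\}$, with $\gamma_{i_k i_l}\gamma_{i_l i_p} = \gamma_{i_k i_p}$ and all other products of basis elements zero. This algebra is thus generated by the idempotents $e_{i_1}, \ldots, e_{i_m}$ and the $m-1$ paths $\gamma_{i_k i_{k+1}}$ subject to the evident composition rules, and relabelling $i_k \leftrightarrow k$ produces an isomorphism with $\F\vec{A}_m$.

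I do not expect a substantive obstacle: the argument only uses standard behaviour of idempotent truncation for path algebras of linearly oriented type $A$ quivers, together with the two lemmas already in place. The one point that requires a little care is that the identification of $N_n$ with $\Triv(\F\vec{A}_n)$ in Lemma \ref{lem:naktrivan} fixes a ``starting vertex'' of the cycle and so breaks the cyclic symmetry of $N_n$, but this is harmless because for any subset $M$ the truncation $e\F\vec{A}_n e$ remains the path algebra of a linear type $A$ quiver on $|M|$ vertices, regardless of which vertex of $N_n$ is chosen as the source of $\vec{A}_n$. As a sanity check one can verify directly that $\dim_\F e N_n e = m(m+1) = \dim_\F N_m$ by counting paths by starting vertex and length in the cyclic quiver.
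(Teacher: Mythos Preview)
Your proposal is correct and follows exactly the same approach as the paper: reduce via Lemmas~\ref{lem:idemptriv} and~\ref{lem:naktrivan} to the statement $e\F\vec{A}_n e \cong \F\vec{A}_m$, which the paper simply declares ``clear'' and you spell out in detail.
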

\begin{proof}
By Lemmas \ref{lem:idemptriv} and \ref{lem:naktrivan}, we need to show that $e\F \vec{A}_ne\cong\F \vec{A}_m$, which is clear.
\end{proof}

Now, as in Section \ref{ss:defgroups}, take any oriented $d+1$-cycle
$ z_1 \to z_2 \to \cdots \to z_d \to z_{d+1} \to z_1$ in $Q$.
Consider the corresponding idempotent $e=e_{z_1}+e_{z_2}+\ldots+e_{z_{d+1}}$ in $A$, and let $P=Ae$ be the corresponding projective.

\begin{lemma}\label{lem:endomcycle}
$\End_A(P)^\op\cong N_{d+1}$.
\end{lemma}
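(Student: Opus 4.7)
The strategy is to transport the computation of $eAe = \End_A(P)^\op$, where $e = \sum_{k=1}^{d+1} e_{z_k}$, into the Koszul dual. Since the primitive idempotents of $A = \STriv((\Lambda^d_s)^!)$ coincide with those of $(\Lambda^d_s)^!$, Lemma \ref{lem:idemptriv} will reduce the statement to showing $\STriv(e(\Lambda^d_s)^! e) \cong N_{d+1}$.

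The main task will be to identify $e(\Lambda^d_s)^! e$ with the path algebra $\F \vec{A}_{d+1}$ of the linearly oriented type $A_{d+1}$ quiver. Our cycle uses each of the $d+1$ arrow directions exactly once, so by cyclically rotating the labels $z_1, \ldots, z_{d+1}$ I may assume that the unique direction-$0$ arrow is the closing arrow $z_{d+1} \to z_1$; the remaining arrows $z_k \to z_{k+1}$ for $1 \le k \le d$ then use distinct directions $i_k \in \{1, \ldots, d\}$ and lie in the quiver of $(\Lambda^d_s)^!$. Using Lemmas \ref{lem:permutefi} and \ref{lem:fi2lambda} together with the commutative presentation of Proposition \ref{prop:lambdadualcomm}, every nonzero path from $z_k$ to $z_\ell$ in $(\Lambda^d_s)^!$ is determined up to scalar by the subset $T \subseteq \{1, \ldots, d\}$ of directions it uses and must satisfy $z_\ell - z_k = \sum_{j \in T} \ve_j$. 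The linear independence of $\ve_1, \ldots, \ve_d$ in $\Z^{d+1}$ will force $T$ to be unique, and show that a solution exists only when $\ell \ge k$; in that case $T = \{i_k, \ldots, i_{\ell-1}\}$, realised by the cycle-path $z_k \to z_{k+1} \to \cdots \to z_\ell$, whose arrows involve distinct $f_j$'s and which is therefore nonzero in $(\Lambda^d_s)^!$. Matching bases will then yield $e(\Lambda^d_s)^! e \cong \F \vec{A}_{d+1}$.

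The remaining steps are formal. Because $\vec{A}_{d+1}$ is a tree and hence has bipartite underlying graph, Lemma \ref{lem:bip} applied to the Koszul algebra $(\F \vec{A}_{d+1})^!$ will give $\STriv(\F \vec{A}_{d+1}) \cong \Triv(\F \vec{A}_{d+1})$, and Lemma \ref{lem:naktrivan} identifies the latter with $N_{d+1}$.

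The hard part will be the middle step: ruling out paths in $(\Lambda^d_s)^!$ that connect two cycle vertices by leaving and returning to the cycle. The key technical input is the linear independence of $\ve_1, \ldots, \ve_d$, which combined with the commutation and annihilation relations from Proposition \ref{prop:lambdadualcomm} pins down every inter-cycle path in the Koszul dual to be the obvious one along the cycle itself.
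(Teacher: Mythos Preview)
Your plan is correct and arrives at the same intermediate goal $e(\Lambda^d_s)^! e \cong \F\vec{A}_{d+1}$ as the paper, but by a genuinely different route. The paper does not analyse paths in $(\Lambda^d_s)^!$ directly; instead it observes that the quotient $\Lambda^d_s/\Lambda^d_s(1-e)\Lambda^d_s$ is visibly $\F\vec{A}_{d+1}/\rad^2 \cong (\F\vec{A}_{d+1})^!$ (killing the non-cycle vertices in the quiver of $\Lambda^d_s$ leaves a linear $\vec{A}_{d+1}$, and each commutation relation becomes a zero relation since the alternate path passes through a killed vertex), and then applies Lemma~\ref{lem:quotidem} to pass to the idempotent subalgebra of the quadratic dual. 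Your argument instead works entirely inside $(\Lambda^d_s)^!$, using Lemmas~\ref{lem:permutefi} and~\ref{lem:fi2lambda} together with the linear independence of $\ve_1,\ldots,\ve_d$ to pin down $e_{z_k}(\Lambda^d_s)^! e_{z_\ell}$ for each pair.

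The paper's route is shorter but tacitly uses the hypothesis of Lemma~\ref{lem:quotidem} that $e(\Lambda^d_s)^! e$ is generated in degree~$1$ and quadratic; note that Lemma~\ref{lem:noloops} does not apply here, since paths between cycle vertices \emph{can} pass through non-cycle vertices (e.g.\ in $Z^2_4$ the cycle $210 \to 120 \to 111$ has the alternate realisation $210 \to 201 \to 111$). Your direct computation avoids this issue. One small point worth tightening: the assertion that the cycle path is nonzero because it ``involves distinct $f_j$'s'' deserves a line of justification. It follows because the relations of Proposition~\ref{prop:lambdadualcomm} either annihilate repeated letters or identify two existing paths, so paths with distinct directions and different (source, target) pairs---which are forced to be different by the linear independence of the $\ve_j$---are linearly independent in the quotient.
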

\begin{proof}
By Lemmas \ref{lem:idemptriv} and \ref{lem:naktrivan}, we need to show that $e(\Lambda^d_s)^!e\cong\F \vec{A}_{d+1}$.  It is easy to see that $\Lambda^d_s/(1-e)\cong  \F\vec{A}_{d+1}/\rad^2 \F\vec{A}_{d+1}\cong( \F\vec{A}_{d+1})^!$, so the result follows by Lemma \ref{lem:quotidem}.
\end{proof}

We will study the derived category of $N_n$-modules.  The following technical lemma will be useful.

Let $A$ be an algebra, $M$ be an $A$-module, and let $D_M$ denote the chain complex
\[ \cdots\to0\to M\arr{\id}M\to0\to\cdots \]
\begin{lemma}\label{lem:disc}
For any chain complex $C$ and any map $f:D_M\to C$ of chain complexes, we have $\cone(f)\cong D_M[1]\oplus C$.  Similarly, for any map $g:C\to D_M$, we have $\cone(g)\cong D_M\oplus C[1]$.
\end{lemma}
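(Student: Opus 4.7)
The plan is to observe that $D_M$ is contractible, hence any chain map into or out of it is null-homotopic, and then to use the fact that the mapping cone of a null-homotopic map splits strictly as a complex. This reduces the lemma to the trivial computation of $\cone(0)$, whose underlying graded object is just the direct sum of the target and the shifted source.

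First I would show $D_M$ is null-homotopic. Writing $D_M$ as a complex with $M$ in two adjacent degrees and $\id_M$ as the differential, the map $\id_M$ (going backwards, from the right-hand $M$ to the left-hand $M$) provides a contracting homotopy for $\id_{D_M}$. From this, given any chain map $f\colon D_M\to C$, the chain-map condition forces $f$ to be determined by a single component $M\to C^\bullet$, and this same component (placed in the appropriate degree) supplies an explicit null-homotopy of $f$. The dual statement for $g\colon C\to D_M$ is entirely symmetric.

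Next I would invoke the following routine general fact. If $f\colon X\to Y$ is any null-homotopic chain map with null-homotopy $h$, then the block matrix
\[
\begin{pmatrix} \id_Y & h \\ 0 & \id_{X[1]} \end{pmatrix}\colon \cone(f) \longrightarrow Y\oplus X[1] = \cone(0\colon X\to Y)
\]
is a strict isomorphism of chain complexes; checking that it commutes with the differentials is a direct calculation using the null-homotopy identity $f=d_Yh+hd_X$. Applying this with $X=D_M$ and $Y=C$ gives the first isomorphism $\cone(f)\cong D_M[1]\oplus C$, and applying the same fact to $g\colon C\to D_M$ yields $\cone(g)\cong D_M\oplus C[1]$.

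I do not anticipate a real obstacle; the only thing to pay attention to is the sign convention for the differentials on $X[1]$ and on $\cone$, but once these are fixed the matrix above (possibly with a sign on the $h$ entry) works as stated.
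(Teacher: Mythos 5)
Your proposal is correct and amounts to essentially the same argument as the paper: the explicit isomorphism the paper draws between $\cone(f)$ and $D_M[1]\oplus C$ is precisely your block matrix $\begin{pmatrix}\id_C & h\\0 & \id_{D_M[1]}\end{pmatrix}$ (with $h$ the null-homotopy of $f$ coming from contractibility of $D_M$), together with its inverse. You have simply factored the construction through the general principle that the cone of a null-homotopic map splits strictly, whereas the paper writes down the two mutually inverse chain maps directly and lets the reader check the diagram commutes.
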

\begin{proof}
We have the following morphism $\cone(f)\to D_M[1]\oplus C$ of chain complexes:
\[
\xymatrix @R=5pt {
\cdots\ar[r] & 0\ar[r] & M\ar[ddr]^{f_2}\ar[r]^{-\id}\ar@/_1pc/[ddddd] &M\ar@/_1pc/[ddddd]\ar[ddr]^{f_3}\ar[r]\ar@/^1pc/[ddddddd]^{f_2} &0\ar[r] &\cdots\\
&\oplus &\oplus &\oplus &\oplus &\\
\cdots\ar[r] & C_{0}\ar@/_1pc/[ddddd]\ar[r]^{d_{0}} & C_{1}\ar@/_1pc/[ddddd]\ar[r]^{d_{1}} &C_{2}\ar@/_1pc/[ddddd]\ar[r]^{d_{2}} &C_{3}\ar@/_1pc/[ddddd]\ar[r] &\cdots\\
&&&&\\
&&&&\\
\cdots\ar[r] & 0\ar[r] & M\ar[r]^{-\id} &M\ar[r] &0\ar[r] &\cdots\\
&\oplus &\oplus &\oplus &\oplus &\\
\cdots\ar[r] & C_{0}\ar[r]^{d_{0}} & C_{1}\ar[r]^{d_{1}} &C_{2}\ar[r]^{d_{2}} &C_{3}\ar[r] &\cdots\\
}
\]
where all the unlabelled maps from a module to itself are identity maps.  Its inverse has the same components, except we negate the map $M\to C_2$:
\[
\xymatrix @R=5pt {
\cdots\ar[r] & 0\ar[r] & M\ar[r]^{-\id} \ar@/_1pc/[ddddd] &M\ar[r]\ar@/_1pc/[ddddd]\ar@/^1pc/[ddddddd]^{-f_2} &0\ar[r] &\cdots\\
&\oplus &\oplus &\oplus &\oplus &\\
\cdots\ar[r] & C_{0}\ar@/_1pc/[ddddd]\ar[r]^{d_{0}} & C_{1}\ar@/_1pc/[ddddd]\ar[r]^{d_{1}} &C_{2}\ar@/_1pc/[ddddd]\ar[r]^{d_{2}} &C_{3}\ar@/_1pc/[ddddd]\ar[r] &\cdots\\
&&&&\\
&&&&\\
\cdots\ar[r] & 0\ar[r] & M\ar[ddr]^{f_2}\ar[r]^{-\id} &M\ar[ddr]^{f_3}\ar[r] &0\ar[r] &\cdots\\
&\oplus &\oplus &\oplus &\oplus &\\
\cdots\ar[r] & C_{0}\ar[r]^{d_{0}} & C_{1}\ar[r]^{d_{1}} &C_{2}\ar[r]^{d_{2}} &C_{3}\ar[r] &\cdots
}
\]
The second statement is proved similarly.
\end{proof}

We now return to studying $N_n$.

Each indecomposable projective $k\vec{A}_n$-module has endomorphism algebra isomorphic to $\F$.  Thus, by Lemma \ref{lem:naktrivan}, each indecomposable projective $N_n$-module has endomorphism algebra isomorphic to $\F[x]/(x^2)$.  So, as $N_n$ is a symmetric algebra, each indecomposable projective $N_n$-module is spherical.  Thus we have a spherical twist associated to each vertex of its quiver, given by tensoring with the complex $X_i=\cone(Ae_i\otimes_\F e_iA\arr\ev A)$ of graded $A\da A$-bimodules.  We will sometimes omit the tensor product over $A$, writing $X_i\otimes_AX_j$ as $X_iX_j$.

We now investigate relations between spherical twists for these algebras.  Note that, by \cite{vz}, the derived Picard groups for selfinjective Nakayama algebras are known, but we will give direct proofs of the results we need using bimodule complexes.

\begin{lemma}\label{lem:nak-x1x2xn}
Let $A=N_n$.  
The product $X_1X_2\cdots X_n$ is homotopy equivalent, as a complex of graded bimodules, to the following chain complex, where all tensor products are over $\F$:
\[\xymatrix @R=8pt @C=70pt{
&Ae_1\otimes e_1A\ar[rdddd]^m &\\
Ae_1\otimes e_1Ae_2\otimes e_2A\ar[ru]^{1\otimes m}\ar[rd]^{-m\otimes 1} &\oplus&\\
\oplus&Ae_2\otimes e_2A\ar[rdd]^m &\\
Ae_2\otimes e_2Ae_3\otimes e_3A\ar[ru]^{1\otimes m} &\oplus&\\
\vdots&\vdots &A\\
Ae_{n-2}\otimes e_{n-2}Ae_{n-1}\otimes e_{n-1}A\ar[rd]^{-m\otimes 1} &\oplus&\\
\oplus&Ae_{n-1}\otimes e_{n-1}A\ar[ruu]^m &\\
Ae_{n-1}\otimes e_{n-1}Ae_n\otimes e_nA\ar[ru]^{1\otimes m}\ar[rd]^{-m\otimes 1} &\oplus&\\
&Ae_n\otimes e_nA\ar[ruuuu]^m &
}\]
\end{lemma}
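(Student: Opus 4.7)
The plan is to argue by induction on $n$. For $n=2$ the product $X_1 \otimes_A X_2$ expands directly as the total complex of a $2$-cube of graded bimodules, giving the four-term complex in the statement verbatim.

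For the inductive step, I will write $X_1 \cdots X_n$ as the mapping cone of the map
\[ g\colon (X_1\cdots X_{n-1}) \otimes_A (Ae_n \otimes_\F e_nA) \to X_1\cdots X_{n-1} \]
induced by the multiplication $m$. Using the inductive hypothesis, the target is homotopy equivalent to the claimed complex $C_{n-1}$, so the source becomes $C_{n-1}e_n \otimes_\F e_nA$. The key observation is that in $N_n$ the space $e_p A e_q$ is one-dimensional for every $1 \leq p \leq q \leq n$ (since $q-p \leq n-1 < n$), spanned by the path $\alpha_p\alpha_{p+1}\cdots\alpha_{q-1}$. Hence any face map of the form $1 \otimes m \otimes 1$ contracting an adjacent tensor pair $e_pAe_q \otimes e_qAe_r \to e_pAe_r$ is an isomorphism of graded bimodules.

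Applied inside $C_{n-1}e_n \otimes_\F e_nA$, this observation produces isomorphism pairs
\[ Ae_i \otimes e_iAe_{i+1} \otimes e_{i+1}Ae_n \otimes e_nA \xrightarrow[\sim]{\,1\otimes m\otimes 1\otimes 1\,} Ae_i \otimes e_iAe_n \otimes e_nA \]
for each $1 \leq i \leq n-2$, obtained by right-multiplying the ``zig'' maps of $C_{n-1}$ by $e_n$ and then tensoring on the right with $e_nA$. I will invoke Lemma~\ref{lem:disc} iteratively, as a Gaussian elimination, to cancel these $n-2$ disc pairs, reducing the source of $g$ to the two-term complex $[Ae_{n-1} \otimes e_{n-1}Ae_n \otimes e_nA \to Ae_n \otimes e_nA]$. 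Forming the cone of the resulting reduced map into $C_{n-1}$ then assembles into exactly the complex $C_n$ displayed in the lemma.

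The main obstacle is verifying that the successive Gaussian eliminations do not introduce spurious new differentials, and that the alternating $\pm m$ sign pattern comes out correctly. The first point is handled by noting that $C_{n-1}$ is concentrated in degrees $[-2,0]$, so $C_{n-1}e_n \otimes_\F e_nA$ has nothing in degree $\leq -3$; since the Gaussian elimination formula modifies a surviving differential only by a composite involving a map \emph{into} the cancelled higher-degree term, and no such incoming map exists, no new differentials appear. The signs in the displayed diagram then follow by combining the standard $-d_X$ convention on the source of a cone with the $+1\otimes m$, $-m\otimes 1$ pattern inherited inductively from $C_{n-1}$.
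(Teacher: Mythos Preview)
Your approach is the same as the paper's: induct on the number of factors, tensor the known form of $X_1\cdots X_{i}$ with $Ae_{i+1}\otimes_\F e_{i+1}A$, cancel contractible two-term pieces using Lemma~\ref{lem:disc}, and then take the cone. Two details of your write-up need correcting, however.

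First, the induction variable. The lemma is stated for $A=N_n$ and the product $X_1\cdots X_n$, so ``induction on $n$'' changes the algebra at each step; your inductive hypothesis would then only tell you the shape of $X_1\cdots X_{n-1}$ as a complex of $N_{n-1}$-bimodules, not of $N_n$-bimodules, and these are different objects. What you actually need --- and what the paper does --- is to fix $A=N_n$ and prove by induction on $i$ that $X_1\cdots X_i$ is homotopy equivalent to the displayed complex $C_i$ for every $1\le i\le n$. The computation you describe is then exactly the inductive step $i\to i+1$.

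Second, your justification for ``no new differentials'' is not right. The Gaussian elimination correction when cancelling an isomorphism $\phi:X\to Y$ is $-\beta\phi^{-1}\alpha$, where $\alpha$ is the component of the differential from the \emph{other} degree~$-2$ summands into $Y$ and $\beta$ is the component from $X$ into the other degree~$-1$ summands. The vanishing of things in degree $\le -3$ plays no role here: both $\alpha$ and $\beta$ live between degrees $-2$ and $-1$. In fact $\beta\ne 0$ in your situation (there is always the map $-m\otimes 1\otimes 1$ from $X_j$ to $Ae_{j+1}\otimes e_{j+1}Ae_n\otimes e_nA$). The reason the correction vanishes is that $\alpha=0$: the only degree~$-2$ term mapping into $Y_j=Ae_j\otimes e_jAe_n\otimes e_nA$ is $X_j$ itself (via $1\otimes m\otimes 1$) together with $X_{j-1}$ (via $-m\otimes 1\otimes 1$), and if you cancel in increasing order $j=1,2,\ldots$ the latter has already been removed. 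Equivalently, at each stage the pair $(X_j,Y_j)$ is a subcomplex of what remains, so Lemma~\ref{lem:disc} applies cleanly. (A small related overstatement: $e_pAe_p$ is two-dimensional in $N_n$, not one-dimensional; your one-dimensionality claim is only needed, and only true, for $p<q$.)
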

\begin{proof}
We argue by induction that $X_1X_2\cdots X_i$ has the above form, for $1\leq i\leq n$.  The base case $i=1$ is clear, so suppose that the statement is true for $1\leq i \leq n-1$.  Consider $X_1X_2\cdots X_i\otimes_A(Ae_{i+1}\otimes e_{i+1}A)$.  This has a quotient complex
\[ \cdots \to Ae_{1}\otimes e_{1}Ae_2\otimes e_2Ae_{i+1}\otimes e_{i+1}A \arr{1\otimes m\otimes 1} Ae_1\otimes e_iAe_{i+1}\otimes e_{i+1}A\to0\to\cdots\]
where the nonzero map is an isomorphism, so by Lemma \ref{lem:disc} we can remove the acyclic quotient complex.  Repeating this argument, we see that $X_1X_2\cdots X_i\otimes_AAe_{i+1}\otimes_\F e_{i+1}A$ is homotopy equivalent to
\[ \cdots\to0\to Ae_i\otimes e_iAe_{i+1}\otimes e_{i+1}A\arr{m\otimes 1} Ae_{i+1}\otimes e_{i+1}A\to\cdots \]
Then taking the cone of the map $X_1X_2\cdots X_i\otimes_A(Ae_{i+1}\otimes e_{i+1}A)\to X_1X_2\cdots X_i\otimes_AA$ gives the result.
\end{proof}

\begin{remark}
In the case where $N_n$ is graded with all arrows of degree $1$, Lemma \ref{lem:nak-x1x2xn} also follows from Lemma \ref{lem:naktrivan} together with Proposition \ref{prop:acyclicbim} below.
\end{remark}

\begin{proposition}\label{prop:nak-x1x2xnx1}
Let $A=N_n$.  
The product $X_1X_2\cdots X_nX_1$ is homotopy equivalent, as a complex of graded bimodules, to the following chain complex, where all tensor products are over $\F$:
\[\xymatrix @R=8pt @C=70pt{
Ae_1\otimes e_1Ae_2\otimes e_2A\ar[r]^{1\otimes m}\ar[rdd]^{-m\otimes 1} &Ae_1\otimes e_1A\ar[rdddd]^m &\\
\oplus&\oplus &\\
Ae_2\otimes e_2Ae_3\otimes e_3A\ar[r]^{1\otimes m} &Ae_2\otimes e_2A\ar[rdd]^m &\\
\oplus&\oplus &\\
\vdots\ar[]!<7ex,-3ex>;[rdd]^(0.5){-m\otimes 1}&\vdots &A\\
\oplus&\oplus &\\
Ae_{n-1}\otimes e_{n-1}Ae_n\otimes e_nA\ar[r]^{1\otimes m}\ar[rdd]^{-m\otimes 1} &Ae_{n-1}\otimes e_{n-1}A\ar[ruu]^m &\\
\oplus&\oplus &\\
Ae_n\otimes e_nAe_1\otimes e_1A\ar[r]^{1\otimes m}\ar[]!<7ex,0ex>;[ruuuuuuuu]!<-3ex,0ex>^(0.6){-m\otimes 1}  &Ae_n\otimes e_nA\ar[ruuuu]^m &\\
}\]
\end{proposition}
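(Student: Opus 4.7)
The plan is to mirror the inductive tensor construction used in the proof of Lemma \ref{lem:nak-x1x2xn}. Setting $C = X_1X_2\cdots X_n$ and using $X_1 = (Ae_1 \otimes_\F e_1A \arr{m} A)$, we have
\[
X_1X_2\cdots X_n X_1 \;=\; \cone\bigl(C \otimes_A (Ae_1 \otimes_\F e_1A) \arr{1\otimes m} C\bigr),
\]
so I would first compute $C' := C \otimes_A (Ae_1 \otimes_\F e_1A)$ from the form of $C$ given in Lemma \ref{lem:nak-x1x2xn}, simplify $C'$ using Lemma \ref{lem:disc}, and then take the cone. Tensoring term by term gives $C'$ with $Ae_1 \otimes e_1A$ in degree $0$, summands $N^{(i)} := Ae_i \otimes e_iAe_1 \otimes e_1A$ ($i=1,\ldots,n$) in degree $-1$, and summands $Q^{(i)} := Ae_i \otimes e_iAe_{i+1} \otimes e_{i+1}Ae_1 \otimes e_1A$ ($i=1,\ldots,n-1$) in degree $-2$, with differentials inherited from those in $C$.

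The key observation is that for each $i \in \{2,\ldots,n-1\}$, the differential component $Q^{(i)} \to N^{(i)}$ obtained by multiplying the middle two factors is an isomorphism: in $N_n$ the spaces $e_iAe_{i+1}$ and $e_{i+1}Ae_1$ are both one-dimensional, spanned by $\alpha_i$ and $\alpha_{i+1}\cdots\alpha_n$, whose product is the length-$(n{-}i{+}1)$ path that generates the one-dimensional space $e_iAe_1$ (nonzero since $n-i+1\le n$). By Lemma \ref{lem:disc}, each such pair forms an acyclic quotient complex that can be removed. The case $i=1$ is exceptional because $e_1Ae_1$ is two-dimensional, spanned by $e_1$ and the socle element $z = \alpha_1\cdots\alpha_n$. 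I would split $N^{(1)} = N' \oplus N''$ accordingly, with $N' = Ae_1 \otimes \F e_1 \otimes e_1A$ and $N'' = Ae_1 \otimes \F z \otimes e_1A$, and observe that $Q^{(1)} \to N^{(1)}$ lands isomorphically in $N''$ (since $\alpha_1\cdot\alpha_2\cdots\alpha_n = z$), giving one further pair to cancel via Lemma \ref{lem:disc}.

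After these cancellations $C'$ is homotopy equivalent to a complex $\widetilde{C}'$ with $Ae_1 \otimes e_1A$ in degree $0$ and $N' \oplus N^{(n)}$ in degree $-1$, where the component $N' \to Ae_1 \otimes e_1A$ (sending $a \otimes e_1 \otimes b \mapsto a \otimes b$) is itself an isomorphism. A final application of Lemma \ref{lem:disc} removes this last pair inside $\widetilde{C}'$, leaving a complex concentrated in degree $-1$ consisting of a modified copy of $N^{(n)}$. The induced map into $C$ inherits an extra component from the removed $N'$-pair by the Gaussian-elimination transfer, so that the cone with $C$ has exactly the form displayed in the proposition: $A$ in degree $0$, the full sum $\bigoplus_{i=1}^{n}Ae_i\otimes e_iA$ in degree $-1$, and $\bigoplus_{i=1}^{n-1}Ae_i\otimes e_iAe_{i+1}\otimes e_{i+1}A$ together with $Ae_n\otimes e_nAe_1\otimes e_1A$ in degree $-2$, with the $1\otimes m$ and $-m\otimes 1$ differentials as shown -- in particular, the $-m\otimes 1$ on the final summand arising precisely from the transfer through the removed $N'$-pair.

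The main obstacle I anticipate is bookkeeping of signs through these successive applications of Lemma \ref{lem:disc}: the signs from the bicomplex structure of the tensor product, the Koszul sign in the cone construction, and the sign introduced by each Gaussian-elimination transfer must all combine consistently to produce the stated $1\otimes m$ and $-m\otimes 1$ differentials on the final complex. The geometric content of the calculation, however, is rigidly determined by the one-dimensionality of the off-diagonal hom spaces $e_iAe_j$ in $N_n$, which makes each of the isomorphisms used above canonical.
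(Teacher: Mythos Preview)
Your approach is essentially the same as the paper's: both compute $C' = X_1\cdots X_n \otimes_A (Ae_1\otimes e_1A)$ from Lemma~\ref{lem:nak-x1x2xn}, strip away contractible pieces using Lemma~\ref{lem:disc}, and then form the cone with $C$. The only real difference is the order of the cancellations: the paper removes the pair $N' \to C'_0$ \emph{first} (as a genuine subcomplex), and only then removes the pairs $Q^{(1)}\to N''$ and $Q^{(i)}\to N^{(i)}$ for $i\ge 2$, which in that order really are quotient complexes of the remaining complex; you do it in the opposite order. In your order the pairs $Q^{(i)}\to N^{(i)}$ are \emph{not} literally quotient complexes of $C'$ (since $Q^{(i-1)}$ still maps into $N^{(i)}$, and $N^{(i)}$ still maps to $C'_0$), so your appeal to Lemma~\ref{lem:disc} there is really an appeal to Gaussian elimination --- which is fine, and which you invoke explicitly anyway. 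Your observation that the final $-m\otimes 1$ component on $Ae_n\otimes e_nAe_1\otimes e_1A$ arises from the transfer through the removed $N'\leftrightarrow C'_0$ pair is correct and is exactly the point the paper's last sentence leaves implicit.
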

\begin{proof}
Consider the chain complex $X_1X_2\cdots X_n\otimes_A(Ae_{1}\otimes e_{1}A)$.  It has a subcomplex
\[ \cdots\to0\to Ae_1\otimes\gen{e_1}\otimes e_1A\arr{m\otimes 1}Ae_1\otimes e_1A\to\cdots\]
which we can remove by Lemma \ref{lem:disc}.  The resulting complex has a quotient complex
\[ \cdots\to Ae_1\otimes e_1Ae_2\otimes e_2Ae_1\otimes e_1A\arr{1\otimes m\otimes 1}Ae_1\otimes\gen{x}\otimes e_1A \to0\to\cdots\]
which we can again remove by Lemma \ref{lem:disc}.  Then, removing all quotient complexes of the form
\[ \cdots\to Ae_i\otimes e_iAe_{i+1}\otimes e_{i+1}Ae_1\otimes e_1A\arr{1\otimes m\otimes 1}Ae_i\otimes e_iAe_1\otimes e_1A \to0\to\cdots\]
we are left with just
\[ \cdots\to0\to Ae_n\otimes e_nAe_1\otimes e_1A\arr{m\otimes1} Ae_n\otimes e_nA\to\cdots.\]
Then taking the cone of the map $X_1X_2\cdots X_n\otimes_A(Ae_{1}\otimes e_{1}A)\to X_1X_2\cdots X_n\otimes_AA$ gives the result.
\end{proof}

\begin{remark}
As $N_n=\Tens_S(V)/(V^{n+1})$ is a truncated algebra, by \cite[Section 5]{bk}, this is a truncated (projective bimodule) resolution \cite[Definition 3.1]{gra1} of the identity bimodule.  Therefore, by the lifting theorem (Theorem \ref{thm:lifting}), 
tensoring with $X_1X_2\cdots X_nX_1$ is naturally isomorphic to a periodic twist.
\end{remark}

\begin{corollary}\label{cor:nakrelns}
Let $A=N_n$.  
We have homotopy equivalences of complexes of graded $A\da A$-bimodules
\[ X_1X_2\cdots X_nX_1\cong X_2\cdots X_nX_1X_2 \cong \cdots \cong X_nX_1X_2\cdots X_n \]
and thus natural isomorphisms of functors
\[ F_1F_2\cdots F_nF_1\cong F_2\cdots F_nF_1F_2 \cong \cdots \cong F_nF_1F_2\cdots F_n: \Db(A\grmod)\arr\sim\Db(A\grmod). \]
\end{corollary}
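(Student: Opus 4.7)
The key observation is that Proposition \ref{prop:nak-x1x2xnx1} exhibits $X_1X_2\cdots X_nX_1$ as a bimodule complex whose description is manifestly cyclically symmetric: its degree $-1$ term is $\bigoplus_{i=1}^{n}Ae_i\otimes_\F e_iA$, its degree $-2$ term is $\bigoplus_{i=1}^{n}Ae_i\otimes_\F e_iAe_{i+1}\otimes_\F e_{i+1}A$ (with $e_{n+1}=e_1$), and the differentials depend only on the neighbour relation $i\mapsto i+1$ modulo $n$. So the plan is to establish that each $X_kX_{k+1}\cdots X_{k-1}X_k$ (with indices taken mod $n$) is homotopy equivalent to this same cyclically symmetric complex.

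To do this, let $\sigma\in\Aut(N_n)$ be the cyclic automorphism defined by $\sigma(e_i)=e_{i-k+1}$ and $\sigma(\alpha_i)=\alpha_{i-k+1}$, so that the twist functor $M\mapsto {_\sigma}M_\sigma$ is a tensor autoequivalence of $N_n\da N_n$-bimodules sending $Ae_i\otimes e_iA$ to $Ae_{i+k-1}\otimes e_{i+k-1}A$. Under this twist, the bimodule complex $X_i$ is sent to $X_{i+k-1}$, hence
\[ {_\sigma}\bigl(X_1X_2\cdots X_nX_1\bigr)_\sigma \;\cong\; X_kX_{k+1}\cdots X_{k-1}X_k. \]
On the other hand, applying ${_\sigma}(-)_\sigma$ to the explicit description of $X_1X_2\cdots X_nX_1$ from Proposition \ref{prop:nak-x1x2xnx1} just cyclically permutes the summands within each degree and rotates the labelling of the differentials, so ${_\sigma}(X_1\cdots X_nX_1)_\sigma$ is isomorphic, as a bimodule complex, to $X_1\cdots X_nX_1$ itself. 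Combining these two observations gives the chain of homotopy equivalences
\[ X_1X_2\cdots X_nX_1 \;\cong\; X_2X_3\cdots X_nX_1X_2 \;\cong\; \cdots \;\cong\; X_nX_1\cdots X_{n-1}X_n. \]
Alternatively, one could simply repeat the proof of Proposition \ref{prop:nak-x1x2xnx1} verbatim, reading all indices modulo $n$ with $k$ as the starting vertex; the computation is identical because Lemma \ref{lem:nak-x1x2xn} and the killing-off of acyclic sub/quotient complexes via Lemma \ref{lem:disc} make no essential use of the choice $k=1$.

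The natural isomorphisms of functors then follow formally: tensoring over $A$ with homotopy equivalent complexes of bimodules yields naturally isomorphic endofunctors of $\Db(A\grmod)$. The only delicate point, and the main thing to verify carefully, is the bookkeeping for the twist action on the two-term summands $Ae_i\otimes e_iAe_{i+1}\otimes e_{i+1}A$ and for the signs in the differentials $1\otimes m$ and $-m\otimes 1$ of Proposition \ref{prop:nak-x1x2xnx1}, so that the cyclic relabelling genuinely produces an isomorphism of complexes rather than one that differs by a sign on some summand. Once this is checked the corollary is immediate.
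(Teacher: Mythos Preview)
Your two approaches are exactly the two approaches the paper itself uses, but you have not noticed that they apply to different situations. The cyclic automorphism $\sigma$ sending $\alpha_i\mapsto\alpha_{i-k+1}$ is in general \emph{not} a graded automorphism of $N_n$: recall from just above Lemma~\ref{lem:naktrivan} that $N_n$ is allowed to carry an arbitrary grading with $\deg\alpha_i=k_i$, and the corollary is later applied (see Remark~\ref{rmk:gradedgroupaction}) for gradings that are not cyclically symmetric. Consequently the twist functor ${}_\sigma(-)_\sigma$ does not act on graded bimodules, and your main argument only establishes the ungraded homotopy equivalences. The paper says exactly this: the automorphism argument gives the ``easy'' ungraded statement, and then for the graded statement one must do what you call the ``alternative'' --- rerun Lemma~\ref{lem:nak-x1x2xn} and Proposition~\ref{prop:nak-x1x2xnx1} with an arbitrary starting index $k$ to see that every $X_kX_{k+1}\cdots X_{k-1}X_k$ reduces to the same explicit complex of graded bimodules.

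So your proposal is correct once you promote the ``alternative'' to the actual proof of the graded statement and demote the automorphism trick to a remark about the ungraded case; this is precisely how the paper organises it.
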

\begin{proof}
The ungraded statement is easy: we use the algebra automorphism sending the vertex $i$ to $i+1$ and sending the arrow $\alpha_i$ to $\alpha_{i+1}$, where everything is mod $n$.  This interchanges the products of bimodule complexes, but leaves the bimodule complex in Proposition \ref{prop:nak-x1x2xnx1} unaffected.

The proof of the graded statement is as follows.
If we will write subscripts on our complexes $X_i$ modulo $n$, so $X_{n+1}:=X_1$, then there is an analogous version of Lemma \ref{lem:nak-x1x2xn} for the complexes $X_{k+1}X_{k+2}\cdots X_{k+n}$, for any $0\leq k<n$.  The proof is exactly the same; the notation is just more cumbersome.  For example, the product $X_2X_3\cdots X_1$ is homotopy equivalent to the following complex:
\[\xymatrix @R=8pt @C=70pt{
&Ae_2\otimes e_2A\ar[rdddd]^m &\\
Ae_2\otimes e_2Ae_3\otimes e_3A\ar[ru]^{1\otimes m}\ar[rd]^{-m\otimes 1} &\oplus&\\
\oplus&Ae_3\otimes e_3A\ar[rdd]^m &\\
Ae_3\otimes e_3Ae_4\otimes e_4A\ar[ru]^{1\otimes m} &\oplus&\\
\vdots&\vdots &A\\
Ae_{n-1}\otimes e_{n-1}Ae_{n}\otimes e_{n}A\ar[rd]^{-m\otimes 1} &\oplus&\\
\oplus&Ae_{n}\otimes e_{n}A\ar[ruu]^m &\\
Ae_{n}\otimes e_{n}Ae_1\otimes e_1A\ar[ru]^{1\otimes m}\ar[rd]^{-m\otimes 1} &\oplus&\\
&Ae_1\otimes e_1A\ar[ruuuu]^m &
}\]
Using this, one imitates the proof of Proposition \ref{prop:nak-x1x2xnx1} to see that the complex of graded bimodules $X_{k+1}X_{k+2}\cdots X_{k+n}X_{k+1}$ is always isomorphic to the complex in the statement of Proposition \ref{prop:nak-x1x2xnx1}, irrespective of the value of $k$.
\end{proof}

We want to lift our calculations in $N_n$ to $Z^d_s$.  We use the lifting theorem (Theorem \ref{thm:lifting}).

Recall that the derived Picard group $\dpic(A)$ of an algebra $A$ is the group of invertible bimodule complexes under the derived tensor product.  It is sometimes denoted $\trpic(A)$ (the triangulated Picard group).
\begin{theorem}\label{thm:groupaction}
We have a group homomorphism
\[ \psi^d_s:\Br^d_{s}\to \dpic(Z^d_s) \]
which induces a group action
\[ \Br^d_{s}\to \Aut\Db(Z^d_s\mMod) \]
sending $s_y$ to the spherical twist $F_y$ associated to the projective module at the vertex $y$.
\end{theorem}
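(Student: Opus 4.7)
The strategy is to verify each defining relation of $\Br^d_s$ by applying the lifting theorem (Theorem~\ref{thm:lifting}) to a suitably chosen direct sum of indecomposable projective $A$-modules, where $A=Z^d_s$, thereby reducing each relation to a computation in a smaller endomorphism algebra.  Since $A$ is symmetric by Proposition~\ref{prop:azzsym}, its Nakayama functor is the identity, so the condition $\nu(P)\cong P$ required by Theorem~\ref{thm:lifting} is automatic, and each indecomposable projective $Ae_y$ is spherical by Corollary~\ref{cor:sphericalprojs}.

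For the braid-type relations attached to a $(d+1)$-cycle $z_1\to\cdots\to z_{d+1}\to z_1$ in $Q^d_s$ and an ordered subsequence $w_1,\ldots,w_\ell$ of its vertices, I would set $P=\bigoplus_{i=1}^{\ell}Ae_{w_i}$ and first identify $E=\End_A(P)^\op\cong N_\ell$.  Lemma~\ref{lem:endomcycle} supplies this isomorphism for $\ell=d+1$, and the case $\ell<d+1$ follows by idempotent truncation using Lemma~\ref{lem:idemptriv} and then applying Lemma~\ref{lem:nakendom} to the corresponding subset of vertices of $N_{d+1}$.  Corollary~\ref{cor:nakrelns} then provides the cyclic relation $F'_1F'_2\cdots F'_\ell F'_1\cong F'_2\cdots F'_\ell F'_1F'_2$ among the spherical twists in $\Db(N_\ell)$, and Theorem~\ref{thm:lifting}(i) promotes this to the required relation $F_{w_1}F_{w_2}\cdots F_{w_\ell}F_{w_1}\cong F_{w_2}\cdots F_{w_\ell}F_{w_1}F_{w_2}$ in $\dpic(A)$.

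For the commutativity $s_us_v=s_vs_u$ when $u$ and $v$ do not belong to a common $(d+1)$-cycle, the plan is to prove $e_uAe_v=0=e_vAe_u$:  in that case $\End_A(Ae_u\oplus Ae_v)^\op\cong N_1\times N_1$, the two corresponding spherical twists commute trivially, and Theorem~\ref{thm:lifting}(i) gives $F_uF_v\cong F_vF_u$.  By the presentation of $A$ in Theorem~\ref{thm:azzpres}, the relations $f_i^2=0$ force every nonzero path between distinct vertices to use each of the directions $\ve_0,\ldots,\ve_d$ at most once, so such a path corresponds to a proper subset $S\subsetneq\{0,1,\ldots,d\}$ with $u-v=\sum_{i\in S}\ve_i$ and valid intermediate vertices.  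Using $\sum_{i=0}^{d}\ve_i=0$, the complementary directions $S^c$ provide a candidate return path from $u$ to $v$; a combinatorial check on its intermediate coordinates, exploiting that every vertex satisfies $y_i\ge0$ and $\sum y_i=s-1$, shows that at least one ordering of these directions is valid, exhibiting a common $(d+1)$-cycle through $u$ and $v$.  Contrapositively, no common cycle forces the required Hom vanishing, and combining both cases yields that $s_y\mapsto F_y$ respects every defining relation of $\Br^d_s$.

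The main obstacle is this combinatorial lemma underlying the commutativity step: namely, the assertion that whenever a nonzero path between two distinct vertices exists in $A$, the complementary set of directions admits at least one valid ordering furnishing a return path.  This requires a careful case analysis exploiting the lattice structure of $Q^d_s$ together with the non-negativity and weight constraints on its vertices; once it is established, the remaining ingredients all follow from results already in hand.
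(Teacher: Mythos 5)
Your proposal is correct and matches the paper's proof: for the cyclic relations, cut down to an idempotent truncation $N_\ell$ of the symmetric Nakayama algebra and apply Corollary~\ref{cor:nakrelns} via the lifting theorem; for commutativity, establish $e_uAe_v=0=e_vAe_u$ using the repeated-$f_i$ vanishing lemma. The combinatorial claim you flag as ``the main obstacle'' is asserted without proof in the paper as well (namely that if $y,z$ share no $(d+1)$-cycle then every path between them in $Q$ repeats some $f_i$), and it can be settled by a greedy argument: given an intermediate vertex $w$ and a remaining set of directions $T\subseteq S^c$ to apply, if no $i\in T$ has $w_{i-1}\geq1$, then nonnegativity of the coordinates of the target vertex $w+\sum_{j\in T}\ve_j$ at position $i-1$ forces $i\in T\Rightarrow i-1\in T$, hence $T=\{0,\ldots,d\}$, contradicting that $S$ is nonempty.
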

\begin{proof}
Let $Q$ denote the quiver of $Z^d_s$.
By Corollary \ref{cor:sphericalprojs}, each indecomposable projective $Z^d_s$-module is spherical, so we certainly have an action of the free group on generators $s_y$, $y\in Q_0$ by spherical twists.  We need to check that the two types of relations are satisfied.

First suppose that $w_0,w_1,\ldots,w_\ell$ is an ordered subsequence of an oriented $d+1$-cycle $z_0 \to z_1 \to z_2 \to \cdots \to z_d \to z_0$ in $Q$.  Let $P=P_{w_0}\oplus P_{w_1}\oplus\cdots\oplus P_{w_\ell}$ and $E=\End_A(P)^\op$.  
Then by Lemmas \ref{lem:naktrivan} and \ref{lem:nakendom}, $E\cong N_\ell$.  By Corollary \ref{cor:nakrelns}, the natural isomorphism of functors corresponding to the relation $s_{w_0}s_{w_1}\ldots s_{w_\ell}s_{w_0} = s_{w_1}s_{w_2}\ldots s_{w_\ell}s_{w_0}s_{w_1}$ holds for $E$, so by the lifting theorem it holds for $Z^d_s$.

Next suppose that $y$ and $z$ are vertices which do not belong to a single $(d+1)$-cycle in $Q$.  Then all paths from $y$ to $z$ or from $z$ to $y$ in $Q$ involve at least two arrows of the form $f_i$, for some $0\leq i\leq d$.
The same is true for the subquiver of $Q$ which generates $(\Lambda^d_s)^!$, so by Lemma \ref{lem:fi2lambda} $e_y{\Lambda^!}e_z=e_z{\Lambda^!}e_y=0$.  Then $e_yAe_z=e_y{\Lambda^!}e_z\oplus e_y{(\Lambda^!)^*}e_z=e_y{\Lambda^!}e_z\oplus (e_z{\Lambda^!}e_y)^*=0$, and similarly $e_zAe_y=0$.  So 
$E=\End_A(P_y\oplus P_z)^\op\cong\F[x]/(x^2)\times\F[x]/(x^2)$.  Thus the relation $F'_yF'_z\cong F'_zF'_y$ holds for $E$ and so by the lifting theorem $F_yF_z\cong F_zF_y$ holds for $Z^d_s$.
\end{proof}

\begin{remark}\label{rmk:gradedgroupaction}
As the lifting theorem holds in the graded setting, and the results on $N_n$ used in the above proof hold for any grading on that algebra, the previous theorem also holds in the graded setting: we have a group homomorphism from $G^d_s$ to the derived graded Picard group $\DZPic(Z^d_s)$ which induces a group action $\Br^d_{s}\to \Aut\Db(Z^d_s\grmod)$.
\end{remark}

\subsection{Acyclic Koszul algebras}\label{ss:quadduality}

Suppose that $\Lambda=\Tens_S(V)/I$ is quadratic.  Let $\Lambda\grmodgr\Lambda$ denote the category of finitely generated graded $\Lambda\da\Lambda$-bimodules and let $\lin(\Lambda\grprojgr\Lambda)$ denote the category of linear complexes of graded projective $\Lambda\da\Lambda$-bimodules: these are chain complexes where the $i$th module is generated in degree $i$ and the differentials are homogeneous of degree $0$.  Note that both of these categories are abelian.

There is a contravariant functor
\[Q_\Lambda:\Lambda^!\grmodgr \Lambda^!\to\lin(\Lambda\grprojgr\Lambda)\]
which sends $M=\bigoplus_{i\in\Z}M_i\in\Lambda^!\grmodgr \Lambda^!$ to the complex
\[\cdots \arr{} Q(M)_1\arr{d}Q(M)_0 \arr{d}Q(M)_{-1}\to\cdots \]
with
$ Q(M)_i= \Lambda\otimes_S (M_i)^*\otimes_S\Lambda\grsh{-i}$, where we consider $M_i$ as an $S\da S$-bimodule concentrated in degree $0$.  The
differential $d$ is constructed using the duals $(M_i)^*\to (M_{i-1})^*\otimes_S V$
and
$(M_i)^*\to V\otimes_S (M_{i-1})^*$ of the left and right actions of $\Lambda^!$ on $M$, together with the multiplication map for $\Lambda$.

Note that $Q_\Lambda(\Lambda^!)$ is the bimodule Koszul complex for $\Lambda$, so $\Lambda$ is Koszul precisely when $Q_\Lambda(\Lambda^!)$ is a projective bimodule resolution of $\Lambda$: see \cite[Theorem 9.2]{bk} or \cite[Proposition A.2]{bg}.

For more discussion on this functor, and its properties, see Section 4.4 and 4.5 of \cite{g-lifts}.

\begin{definition}\label{def:acyclic}
We say that an algebra $\Lambda$ is \emph{acyclic} if there exists a complete set 
$\{e_1,e_2,\ldots,e_r\}$ of primitive orthogonal idempotents of $\Lambda$ such that:
\vspace{-1em}
\begin{itemize}
\item $i<j$ implies $e_j\Lambda e_i=0$, and
\item for all $1\leq i\leq r$ we have $e_i\Lambda e_i=\F e_i$.
\end{itemize}
\end{definition}

\begin{lemma}
If $\Lambda=\Tens_S(V)/(R)$ is an acyclic quadratic algebra with ordering $e_1 < \cdots < e_r$ then $\Lambda^!$ is acyclic with ordering $e_r < \cdots < e_1$.
\end{lemma}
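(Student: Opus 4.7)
The plan is to reduce everything to a direct idempotent-bookkeeping computation inside the tensor algebra $\Tens_S(V^*)$, of which $\Lambda^!$ is a quadratic quotient. First I would pin down the $S$-bimodule structure on $V^!=V^*$: the natural dual pairing gives $(s\cdot\phi\cdot t)(v)=\phi(t\cdot v\cdot s)$, so the left and right $S$-actions on $V$ are swapped on $V^*$. This yields the identification $e_a V^* e_b \cong (e_b V e_a)^*$ of $\F$-vector spaces for each pair of primitive idempotents $e_a,e_b$.

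Next I would translate the acyclicity of $\Lambda$ into vanishing statements for $V$. Since $V$ embeds into $\Lambda$ as its degree-one component, the hypothesis $e_j\Lambda e_i=0$ for $i<j$ forces $e_j V e_i=0$ for $i<j$, while $e_i\Lambda e_i=\F e_i$ together with the fact that $V$ sits in positive degree forces $e_i V e_i=0$. Combined with Step 1, this shows $e_a V^* e_b\neq0$ only if $a>b$.

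The third step is to expand the tensor powers:
\[
e_a(V^*)^{\otimes_S n}e_b\cong\bigoplus_{c_1,\ldots,c_{n-1}}e_a V^*e_{c_1}\otimes_\F e_{c_1}V^*e_{c_2}\otimes_\F\cdots\otimes_\F e_{c_{n-1}}V^*e_b.
\]
By Step 2, any non-zero summand requires a strictly decreasing chain $a>c_1>\cdots>c_{n-1}>b$, and hence $a>b$ whenever $n\geq1$. Since $\Lambda^!$ is a quotient of $\Tens_S(V^*)$, the bimodule $e_a\Lambda^!e_b$ inherits this vanishing: it is zero whenever $a<b$, while for $a=b$ only the degree-zero piece $e_aSe_a=\F e_a$ survives. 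Rewriting these two conclusions with respect to the reversed ordering $e_r<\cdots<e_1$ gives exactly the two axioms required for $\Lambda^!$ to be acyclic.

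There is no substantial obstacle here: the whole argument is a careful unwinding of conventions. The one thing to watch out for is the direction in which the natural pairing swaps the left and right $S$-actions on $V^*$, because getting this wrong would reverse the conclusion and produce the original ordering rather than its opposite.
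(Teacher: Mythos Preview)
Your proposal is correct and follows essentially the same approach as the paper: both arguments observe that the acyclicity of $\Lambda$ forces $V=\bigoplus_{i<j}e_iVe_j$, that dualizing swaps the idempotents so $V^*=\bigoplus_{i<j}e_jV^*e_i$, and that therefore $\Tens_S(V^*)$ and its quotient $\Lambda^!$ are acyclic with the reversed ordering. The paper compresses this into a single line, whereas you spell out the idempotent bookkeeping and the tensor-power expansion explicitly, but the underlying idea is identical.
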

\begin{proof}
As $\Lambda$ is acyclic we have $V=\bigoplus_{i<j}e_iVe_j$, so $V^*=\bigoplus_{i<j}e_jV^*e_i$, so $\Tens_S(V^*)$ is acyclic, so $\Lambda^!$ is acyclic.
\end{proof}

We will need to use the reduced bar resolution.  Let $\Gamma=\Tens_S(V)/I$ be an algebra with $I\subset \Tens^+_S(V)$, where $\Tens^+_S(V):=\bigoplus_{i\geq1}V^{\otimes_S i}$.  Then we have an algebra map $\pi:\Gamma\onto S$.  The kernel $J$ is the image of $\Tens^+_S(V)$ in $\Tens_S(V)\onto \Gamma$, and is called the augmentation ideal.  Then the \emph{reduced bar resolution} of $\Gamma$ is the chain complex of projective $\Gamma\da\Gamma$ bimodules $\rb\Gamma$ with $i$th component
\[ \rb_i\Gamma=\Gamma\otimes_S J^{\otimes_Si} \otimes_S\Gamma \]
and differential $\sum_{j=0}^n(-1)^j 1^{\otimes j}\otimes_S m \otimes_S 1^{n-j-2}$
(see, for example, Section 4 of \cite{gin-nc}: this generalizes to a semisimple base ring without problem).  
Then we have a map of chain complexes of $\Gamma\da\Gamma$-bimodules $\rb\Gamma\onto\Gamma$ which is a quasi-isomorphism.

By Theorem \ref{thm:pidualtoz}, we have a map $\phi:\Pi^!\to Z(\Lambda)$.  Taking its quadratic dual gives a map $Z(\Lambda)^!\to\Pi$, and composing with the quotient map $\Pi\onto\Lambda$ gives a map of algebras $Z(\Lambda)^!\to\Lambda$.  This map gives any $\Lambda\da\Lambda$-bimodule the structure of an $Z(\Lambda)^!\da Z(\Lambda)^!$-bimodule.

Recall that $X_i$ denotes the cone of the multiplication map of graded $A\da A$ bimodules $Ae_i\otimes_\F e_iA\to A$.
The following result is a generalization of Lemma 4.7.1 in \cite{g-lifts}.
\begin{proposition}\label{prop:acyclicbim}
If $\Lambda$ is a weakly acyclic Koszul algebra with ordering $e_1 < \cdots < e_r$ and 
$A=Z(\Lambda)$
we have a homotopy equivalence of complexes of graded $A\da A$-bimodules:
\[ X_r\otimes_A\cdots\otimes_A X_2\otimes_A X_1\cong \cone(Q_A(\Lambda)\arr{m}A) \]
where $\Lambda$ is inflated to an $A\da A$-bimodule.
\end{proposition}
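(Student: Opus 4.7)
I would approach this by induction on the number of idempotents $r$, following the template established for the Nakayama case in Lemma \ref{lem:nak-x1x2xn} and Proposition \ref{prop:nak-x1x2xnx1}. The base case $r = 1$ is forced: acyclicity together with $e_1 \Lambda e_1 = \F e_1$ means $\Lambda = \F$, so $A = Z(\F) = \F[x]/(x^2)$, and both sides directly reduce to $\cone(A \otimes_\F A \to A)$, since $Q_A(\F)$ consists only of its degree $0$ term $A \otimes_S A$.

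For the inductive step, I would compute $X_r \otimes_A X_{r-1} \otimes_A \cdots \otimes_A X_1$ by expanding each $X_i = \cone(Ae_i \otimes_\F e_iA \xrightarrow{m_i} A)$ using distributivity of cones over tensor products. This produces an iterated-cone complex whose terms, indexed by subsets $S = \{i_1 < \cdots < i_k\} \subseteq \{1, \ldots, r\}$, have the form
\[
Ae_{i_k} \otimes_\F e_{i_k}Ae_{i_{k-1}} \otimes_\F \cdots \otimes_\F e_{i_2}Ae_{i_1} \otimes_\F e_{i_1}A,
\]
with differentials given by partial multiplications (summands of $m_{i_j}$ tensored with identities). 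Here the acyclic structure plays its key role: since $\Lambda^!$ is acyclic with ordering $e_r < \cdots < e_1$, we have $e_{i_1} \Lambda^! e_{i_2} = 0$ for $i_1 < i_2$, so the $\Lambda^{!*}$-part of $e_{i_2} A e_{i_1}$ vanishes and $e_{i_2} A e_{i_1} = e_{i_2} \Lambda^! e_{i_1}$. Then repeated applications of Lemma \ref{lem:disc}, as in the proof of Proposition \ref{prop:nak-x1x2xnx1}, let us strip off acyclic sub- and quotient-complexes created by the partial-multiplication differentials.

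The surviving complex must then be identified with $\cone(Q_A(\Lambda) \to A)$. The degree-$0$ piece is $A$ (coming from $S = \emptyset$), and the degree-$(-1)$ piece is $\bigoplus_i Ae_i \otimes_\F e_iA = A \otimes_S A = Q_A(\Lambda)_0$, matching via the multiplication $m$. In higher degrees, the surviving tensor products of the form $e_{i_k} \Lambda^! e_{i_{k-1}} \otimes_\F \cdots \otimes_\F e_{i_2} \Lambda^! e_{i_1}$, together with their induced differentials, assemble into $A \otimes_S \Lambda_\bullet^* \otimes_S A\{-\bullet\}$ by standard Koszul duality for bimodule complexes (as in \cite{bk,bg}, which is the Koszul version of the bar resolution $\rb A$). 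The map to $A$ in degree 0 comes from the original maps $\mu_i$.

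The main obstacle I expect is the bookkeeping in the inductive step: identifying exactly which sub- and quotient-complexes are acyclic at each stage, and then matching the resulting differentials with those of $Q_A(\Lambda)$. This is essentially a Koszul-duality computation, since the surviving complex is a piece of the bimodule Koszul complex viewed through $\Lambda^!$, and the identification requires tracking signs and compositions through the induction, in particular using that for Koszul $\Lambda$ the natural map $(\Lambda^!_1)^{\otimes_S i} \to \Lambda^!_i$ has kernel dual to $\Lambda_i$ (more precisely, to the inclusion $\Lambda_i \hookrightarrow V^{\otimes i}$).
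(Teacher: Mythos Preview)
Your setup is right: expanding the iterated cone gives terms indexed by decreasing sequences $j_0 > j_1 > \cdots > j_n$, with middle pieces $e_{j_{k-1}} A e_{j_k}$, and acyclicity of $\Lambda^!$ forces $e_{j_{k-1}} A e_{j_k} = e_{j_{k-1}} \Lambda^! e_{j_k}$. This is exactly how the paper begins. But from here the paper does \emph{not} proceed by induction on $r$ or by stripping off acyclic pieces via Lemma~\ref{lem:disc}. Instead, it observes that once the middle pieces are all in $\Lambda^!$, the degree-$n$ term is literally
\[
A \otimes_{\Lambda^!} \Bigl(\bigoplus_{j_0>\cdots>j_n} \Lambda^! e_{j_0}\otimes_\F e_{j_0}\Lambda^! e_{j_1}\otimes_\F\cdots\otimes_\F e_{j_{n-1}}\Lambda^! e_{j_n}\otimes_\F e_{j_n}\Lambda^!\Bigr)\otimes_{\Lambda^!} A,
\]
and acyclicity says the augmentation ideal of $\Lambda^!$ is $J=\bigoplus_{i<j} e_j\Lambda^! e_i$, so the bracketed term is exactly $\Lambda^!\otimes_S J^{\otimes_S n}\otimes_S\Lambda^! = \rb_n\Lambda^!$. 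Thus the whole complex is $A\otimes_{\Lambda^!}\rb\Lambda^!\otimes_{\Lambda^!}A$ on the nose, with no cancellations required. Koszulity of $\Lambda^!$ then gives $\rb\Lambda^!\simeq Q_{\Lambda^!}(\Lambda)$ as projective bimodule resolutions, and inducing to $A$ yields $Q_A(\Lambda)$.

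Your proposed route---induct on $r$, then use Lemma~\ref{lem:disc} to cancel---is modelled on the Nakayama computation, but that computation needed cancellations precisely because the Nakayama quiver is \emph{not} acyclic (the arrow $n\to 1$ closes the cycle). Here there is nothing to cancel: the expanded complex already is the bar resolution. Your final paragraph gestures at the bar resolution, but you write $\rb A$ where the paper uses $\rb\Lambda^!$; this distinction matters, since $A$ is not Koszul and the identification goes through $\Lambda^!$. The paper's argument buys you a one-line passage from the expanded complex to $Q_A(\Lambda)$ via uniqueness of projective bimodule resolutions, whereas your inductive scheme would require you to track, at each step, how tensoring with $X_r$ interacts with the already-identified $Q_A$ of a smaller algebra---and that smaller algebra is not naturally an $A$-bimodule, so the induction hypothesis is awkward to even state.
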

\begin{proof}
The only property of $A=Z(\Lambda)$ we will use is the fact that $A$ is a twisted trivial extension of $\Lambda^!$ by an automorphism which fixes the idempotents $e_i$.

First note that both $X_r\otimes_A\cdots\otimes_A X_2\otimes_A X_1$ and  $\cone(Q_A(\Lambda)\arr{m}A)$ have degree $0$ part $A$.  So it is enough to prove that $Q_A(\Lambda)$ is isomorphic to $Y:=\cone(A\to X_r\otimes_A\cdots\otimes_A X_2\otimes_A X_1)[-1]$.

Let $Y_n$ denote the degree $n$ component of $Y$. 
So, after using the identifications $A\otimes_A-\cong\id$ and $-\otimes_AA\cong\id$, we have
\[ Y_n = \bigoplus_{j_0>j_i>\cdots>j_n}Ae_{j_0}\otimes_\F e_{j_0}Ae_{j_1} \otimes_\F\cdots\otimes_\F e_{j_{n-1}}Ae_{j_n}\otimes_\F e_{j_n}A\]
with differential $d:Y_n\to Y_{n-1}$ given by $\sum_{i=0}^n (-1)^i \id^{\otimes i}\otimes m\otimes \id^{n-i}$. 

As $\Lambda$, and thus $\Lambda^!$, is acyclic, we have that for $j>i$,  
\[ e_j A e_i = e_j\Lambda^!e_i\oplus (e_i\Lambda^!e_j)^*=e_j\Lambda^!e_i. \]
Therefore,
\[ Y_n = \bigoplus_{j_0>j_i>\cdots>j_n}Ae_{j_0}\otimes_\F e_{j_0}\Lambda^!e_{j_1} \otimes_\F\cdots\otimes_\F e_{j_{n-1}}\Lambda^!e_{j_n}\otimes_\F e_{j_n}A\]
and so
\[ Y_n = A\otimes_{\Lambda^!}
\left(
\bigoplus_{j_0>j_i>\cdots>j_n}\Lambda^!e_{j_0}\otimes_\F e_{j_0}\Lambda^!e_{j_1} \otimes_\F\cdots\otimes_\F e_{j_{n-1}}\Lambda^!e_{j_n}\otimes_\F e_{j_n}\Lambda^!
\right)
\otimes_{\Lambda^!}A.
\]

As $\Lambda^!$ is acyclic we can describe its augmentation ideal as follows:
\[ J=\bigoplus_{i<j}e_j\Lambda^! e_i \]
and so we have 
\[ J^{\otimes_S n}=\bigoplus_{j_0>j_i>\cdots>j_n}e_{j_0}\Lambda^!e_{j_1}\Lambda^!\cdots\Lambda^!e_{j_n}.\]
Therefore we have isomorphisms $Y_n\cong A\otimes_{\Lambda^!}\rb_n\Lambda^! \otimes_{\Lambda^!}A$, and noticing that the differentials agree gives us that
\[ Y\cong A\otimes_{\Lambda^!}\rb\Lambda^! \otimes_{\Lambda^!}A. \]
By the Koszulity of $\Lambda^!$ we know that $Q_{\Lambda^!}(\Lambda)$ is a projective bimodule resolution of $\Lambda^!$, so by the uniqueness of bimodule resolutions up to homotopy equivalence, we have $\rb\Lambda^! \cong Q_{\Lambda^!}(\Lambda)$.  Thus
\[ Y\cong A\otimes_{\Lambda^!}Q_{\Lambda^!}(\Lambda) \otimes_{\Lambda^!}A. \]
Finally, the terms of $Q_{\Lambda^!}(\Lambda)$ are of the form
$\Lambda^!\otimes_S (\Lambda_i)^*\otimes_S\Lambda^!\grsh{-i}$
so the terms of $Q_A(\Lambda)$ are of the form
\[A\otimes_S (\Lambda_i)^*\otimes_SA\grsh{-i}\cong 
A\otimes_{\Lambda^!}\Lambda^!\otimes_S (\Lambda_i)^*\otimes_S\Lambda^!\otimes_{\Lambda^!}A\grsh{-i}.\] 
Thus
\[ Y\cong A\otimes_{\Lambda^!}Q_{\Lambda^!}(\Lambda) \otimes_{\Lambda^!}A\cong Q_A(\Lambda) \]
and we have our result.
\end{proof}

\subsection{A periodicity result}\label{ss:bimres}

Almost Koszul duality was introduced by Brenner, Butler, and King \cite{bbk}.  An algebra $A=\Tens_S(V)/I$ is Koszul if $S$ has a linear projective resolution; $A$ is \emph{almost Koszul}, or \emph{$(p,q)$-Koszul}, if $A$ is concentrated in degrees $0$ to $p$ and there is a linear complex
\[ \cdots\to 0\to P^p\to \cdots \to P^1\to P^0\to0\to\cdots \]
of projective modules which resolves $S$ up to an error given by the degree $p+q$ part of $P^p$.  If $A$ is a $(p,q)$-Koszul algebra and $q\geq2$ then $A$ is quadratic, and if we also have $p\geq2$ then $A^!$ is $(q,p)$-Koszul
\cite[Propositions 3.7 and 3.11]{bbk}.

The theory was introduced in order to study bimodule resolutions of trivial extensions of path algebras of bipartite $ADE$ Dynkin quivers.  
These are precisely the classical zigzag algebras of the corresponding Dynkin graphs.
The bimodule resolutions were deduced from resolutions for the classical $ADE$ preprojective algebras, which are quadratic dual to the trivial extensions.   The preprojective algebra is almost Koszul with respect to its total grading, which is constructed by summing over the length grading on the path algebra and the tensor grading on the preprojective algebra.

This theory was extended to the higher setting in \cite[Section 4]{gi}.  It was shown that if $\Lambda$ is a Koszul $d$-representation finite algebra then its preprojective algebra $\Pi$ is twisted periodic of period $d+2$ and is almost Koszul with respect to the total grading: if $\Pi$ is concentrated in degrees $0$ to $p$, then $\Pi$ is $(p,d+1)$-Koszul.  We are interested in preprojective algebras of higher type $A$ $d$-representation finite algebras $\Lambda_s^d$, where we have the following:
\begin{theorem}[{\cite[Proposition 5.13]{gi} or \cite[Theorem 5]{gl}}]
$\Pi^d_s$ is $(s-1,d+1)$-Koszul.
\end{theorem}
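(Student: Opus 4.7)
The plan is to reduce the problem to the general almost Koszul duality result for higher preprojective algebras stated just above, so that the task becomes a computation of the top degree of $\Pi^d_s$. By Proposition \ref{prop:anrf-kos} the algebra $\Lambda^d_s$ is Koszul, and it is $d$-representation finite by Iyama's construction \cite{iya-ct}; thus the cited theorem applies to give that $\Pi^d_s$ is $(p,d+1)$-Koszul, where $p$ is its top degree with respect to the total grading.

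First I would observe that in the presentation of $\Pi^d_s$ from Proposition \ref{prop:describepi}, all the generators $\alpha_i$ must have total degree $1$, since the homogeneous commutativity relations $\alpha_i\alpha_j = \alpha_j\alpha_i$ force them to share a common degree. Hence the total grading coincides with the path-length grading, and $p$ is simply the maximum length of a nonzero path.

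The main step is the following combinatorial claim: a monomial $\alpha^b := \alpha_1^{b_1}\cdots\alpha_{d+1}^{b_{d+1}}$ starting at $x = (x_1, \ldots, x_{d+1})$ is nonzero in $\Pi^d_s$ if and only if $b_i \leq x_i$ for every $i$. The ``if'' direction is straightforward: for any intermediate count $c$ with $0 \leq c_i \leq b_i$, the vertex $x + \delta c$, where $(\delta c)_i = c_{i-1} - c_i$ with cyclic indices, has $i$-th coordinate at least $x_i - b_i \geq 0$, so every ordering of the factors of $\alpha^b$ yields a valid path in the quiver. The ``only if'' direction is where care is needed: by the commutativity relations, any two orderings of $\alpha^b$ define the same element of $\Pi^d_s$, so if even one ordering fails to give a valid path in the underlying quiver then the monomial vanishes. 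Applying all factors $\alpha_i^{b_i}$ with $i \neq j$ first and $\alpha_j^{b_j}$ last, one sees that the $(j{+}1)$-th coordinate of the resulting intermediate vertex is $x_{j+1} - b_{j+1}$, which must therefore be non-negative.

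Granting the claim, the maximum of $|b| = \sum_i b_i$ over nonzero monomials equals $\sum_i x_i = s-1$, attained for instance by $\alpha_1^{s-1}$ starting at $(s-1, 0, \ldots, 0)$. Hence $p = s-1$, and $\Pi^d_s$ is $(s-1,d+1)$-Koszul. The main obstacle to making this rigorous is correctly handling the ``hidden'' zero relations in the ``only if'' direction that are created by the interplay between commutativity in $\Pi^d_s$ and the quiver structure; once that interplay is organised via the ``put $\alpha_j$ last'' orderings above, the bound on the top degree follows cleanly from the constraint $\sum b_i \leq \sum x_i$.
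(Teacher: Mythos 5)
The paper itself offers no proof of this statement --- it is cited from \cite{gi} and \cite{gl} --- so there is no internal argument to compare against. Your reduction to the general result of \cite[Section~4]{gi} (a Koszul $d$-representation finite $\Lambda$ has $(p,d{+}1)$-Koszul preprojective algebra, where $p$ is the top total degree) together with a combinatorial computation of $p$ is the natural route, and your ``only if'' argument is sound in outline: as in Lemma~\ref{lem:permutefi}, a nonzero monomial of $\Pi^d_s$ can be reordered arbitrarily without leaving the quiver, since a swap $\alpha_i\alpha_j\to\alpha_j\alpha_i$ that exits the quiver forces the original monomial to vanish by the vertex-local commutativity relation; choosing the ordering that defers $\alpha_j$ then reads off $b_{j+1}\leq x_{j+1}$.

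The genuine gap is in the ``if'' direction, and it matters for the lower bound $p\geq s-1$. From ``$b_i\leq x_i$'' you deduce that every ordering of $\alpha^b$ gives a \emph{valid path} in $\overline{Q_\Lambda}$, which is correct, but a valid path need not survive in the quotient $\Pi^d_s$: in $\Pi^1_2$, for example, $e_{(1,0)}\alpha_1\alpha_2$ is a valid $2$-cycle in the quiver yet is zero, because $e_{(1,0)}\alpha_2\alpha_1=0$ already in the path algebra and the relation at $(1,0)$ therefore kills $e_{(1,0)}\alpha_1\alpha_2$. So you must additionally argue nonvanishing. For the single witness $e_{(s-1,0,\ldots,0)}\alpha_1^{s-1}$ that you actually need, this is quick: grade $\F\overline{Q_\Lambda}$ by $\N^{d+1}$, recording how many times each of $\alpha_1,\ldots,\alpha_{d+1}$ occurs; the ideal of relations is homogeneous for this grading and every generator involves two distinct arrow types, so it has no component in multi-degree $(s-1,0,\ldots,0)$ and the witness survives. (Alternatively, $e_{(s-1,0,\ldots,0)}\alpha_1^{s-1}$ is a PBW basis element of $\Lambda^d_s$ as in Proposition~\ref{prop:anrf-kos}, and $\Lambda^d_s$ embeds in $\Pi^d_s$ as the tensor-degree-$0$ part.) The full biconditional you assert is in fact true --- push the same multi-degree argument one step further, using that all orderings of a fixed multi-degree are linked by adjacent transpositions --- but the proof as written stops short of it.
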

This generalizes the result that the classical preprojective algebras of simply laced Dynkin types are $(h-2,2)$-Koszul, where $h$ is the Coxeter number (so, in type $A^1_s$, $h=s+1$).

The fact that $\Pi$ is almost Koszul tells us a lot about its minimal bimodule resolution.  Theorem 3.15 of \cite{bbk} says that if $A$ is a $(p,q)$-Koszul ring with $p,q\geq2$ then the first $q+1$ terms of the bimodule resolution of $A$ are given by the classical Koszul complex, which is $Q(A^!)$ in the notation of Section \ref{ss:quadduality}.  Therefore we have a short exact sequence of complexes of $A\da A$-bimodules
\[ 0 \to \Sigma[q]\into Q(A^!)\onto A\to0 \]
where $\Sigma$ is just defined to be the kernel of the last term of the Koszul complex.  The theorem also tells us that $\Sigma$ is generated by its degree $p+q$ component.

If $\varphi$ is an automorphism of a quadratic algebra $\Tens_S(V)/I$, we have an automorphism $\varphi^!$ of the quadratic dual defined by using the same degree $0$ component, $\varphi^!_0=\varphi_0:S\arr\sim S$, and taking the transpose of the degree $1$ component: $\varphi^!_1=\varphi_1^*:V^*\arr\sim V^*$.  Note that we do not invert $\varphi_1^*$, so our formulae will be different to those in \cite{g-lifts}.  To translate, use the bimodule isomorphisms $M_\varphi\cong\tensor[_{\varphi^{-1}}]{M}{}$.

The theory of almost Koszul duality was developed further by Yu, who proved the following result.
\begin{theorem}[{\cite[Theorems 3.3 and 3.4]{yu}}]
If $A$ is a quadratic Frobenius $(p,q)$-Koszul algebra with $p,q\geq2$ and $A^!$ is also Frobenius then we have an isomorphism of graded $A\da A$-bimodules
\[ \Sigma \cong \tensor[_{\xi^{q+1}\beta^!\alpha}]{A}{}  \]
where $\alpha$ and $\beta$ are the Nakayama automorphisms of $A$ and $A^!$, respectively, and the algebra automorphism $\xi$ acts on odd degree elements by $-1$.
\end{theorem}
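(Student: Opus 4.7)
The plan is to identify $\Sigma$ by analysing its top internal degree, exploiting the fact that $(p,q)$-Koszulity forces $\Sigma$ to be a cyclic bimodule. Recall that $Q(A^!)$ has $i$th term $A \otimes_S (A^!_i)^* \otimes_S A\grsh{-i}$, and that $\Sigma$ identifies with the kernel of the differential at the highest position $Q_q$. By the Brenner-Butler-King structure theorem, $\Sigma$ is generated by its component in internal degree $p+q$, which lies in $A_p \otimes_S (A^!_q)^* \otimes_S A_0$. Since $A$ is Frobenius with Nakayama $\alpha$, the socle $A_p$ is one-dimensional over $S$ twisted by $\alpha$, and since $A^!$ is Frobenius with Nakayama $\beta$, the top component $(A^!_q)^*$ is one-dimensional over $S$ twisted by $\beta$. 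Hence the degree $p+q$ component of $\Sigma$ is spanned by a single element $\omega$.

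Because $\omega$ generates $\Sigma$ as an $A\da A$-bimodule, $\Sigma$ is cyclic and therefore isomorphic to $\tensor[_\gamma]{A}{}$ for some $\gamma \in \Aut(A)$; the task reduces to computing $\gamma$. The automorphism is determined by comparing the left and right actions on $\omega$: in $\tensor[_\gamma]{A}{}$ we have $a\cdot 1 = \gamma(a)$ on the left and $1\cdot a = a$ on the right, so the recipe is, for each degree-one generator $a \in A_1$, to rewrite $a\omega \in \Sigma_{p+q+1}$ as a right multiple $\omega a'$ and record the resulting assignment $a \mapsto a'$.

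Three sources of twisting enter this calculation. First, transferring the left $A^!$-action on $A^!_q$ across the Frobenius pairing of $A^!$ yields a right action on $(A^!_q)^*$; on the degree-one factor $(A^!_1)^* \cong A_1$ this is the transpose of $\beta_1$, namely $\beta^!_1$, which extends to the algebra automorphism $\beta^!$ of $A$ by quadraticity. Second, the Frobenius pairing of $A$ identifies the socle $A_p$ with the dual of $A_0$ twisted by $\alpha$, so moving a degree-one element from a left multiplier past the socle generator introduces $\alpha$. Third, the alternating Koszul signs in the $q$-fold bimodule differential of $Q(A^!)$, combined with the signs implicit in graded duality, contribute an overall sign of $(-1)^{q+1}$ on odd-degree elements, producing the factor $\xi^{q+1}$.

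The main obstacle is the careful bookkeeping of these signs and the order of composition: Koszul signs, graded dualities, and the two Nakayama pairings must be aligned so that the resulting composition is exactly $\xi^{q+1}\beta^!\alpha$ rather than a permutation of its factors. Since $\beta^!$ and $\alpha$ need not commute, and since a left twist becomes a right twist by its inverse, fixing conventions consistently from the outset is essential and constitutes the bulk of the technical work in Yu's argument.
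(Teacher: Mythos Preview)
The paper does not give its own proof of this theorem: it is quoted from Yu \cite[Theorems~3.3 and~3.4]{yu} and used as a black box in Section~\ref{ss:bimres}. So there is nothing in the paper to compare your argument against.

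That said, your outline follows the natural strategy and is broadly in the spirit of Yu's argument, but there is a genuine gap. You write that ``$\Sigma$ is cyclic and therefore isomorphic to $\tensor[_\gamma]{A}{}$ for some $\gamma\in\Aut(A)$''. Cyclicity only gives a surjection from a twist of $A$ onto $\Sigma$; to upgrade this to an isomorphism you must show that $\Sigma$ is free of rank one as a one-sided module, for instance by checking $\dim_\F\Sigma=\dim_\F A$. This requires the Frobenius hypothesis on \emph{both} $A$ and $A^!$ together with the $(p,q)$-Koszul structure (roughly: the Hilbert series identity forced by almost-Koszulity, or a direct count via the self-duality of the bimodule Koszul complex). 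Without this step the identification of the twisting automorphism is moot. A smaller imprecision: over a semisimple base $S$ with several idempotents, the top component $A_p\otimes_S(A^!_q)^*\otimes_S A_0$ is not literally ``spanned by a single element''; it is a twist of $S$ as an $S\da S$-bimodule, hence cyclic as a bimodule but typically of $\F$-dimension greater than one. Your later computation still makes sense once you phrase it idempotent-by-idempotent.
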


Higher preprojective algebras of $d$-representation finite algebras are self-injective \cite{io-stab,g-nak}, and the type $A$ higher preprojective algebras $\Pi^d_s$ are also basic, and therefore Frobenius.  The Nakayama automorphism $\omega$ of $\Pi^d_s$ was calculated in \cite[Theorem 3.5]{hi-frac}.  It acts on vertices by
\[ \omega(x_1,x_2,\ldots,x_{d+1})=(x_{d+1},x_1,\ldots,x_d) \]
and sends the arrow from $e_x$ to $e_{x'}$ to the unique arrow from $e_{\omega(x)}$ to $e_{\omega(x')}$.

In our setting, we let $A=Z^d_s$, which $(d+1,s-1)$-Koszul and is symmetric by Proposition \ref{prop:azzsym} so $\alpha=\id_A$.  So $A^!=\Pi^d_s$, and $\beta=\omega$.
\begin{definition}\label{def:tau}
Let $\tau=\tau^d_s=(\alpha^{-1}\beta^!\xi^{q+1})^{-1}=(\omega^!\xi^s)^{-1}$, so $\tau$ is the automorphism which acts on vertices as
\[ \tau: (y_0, y_1,\ldots, y_d)\mapsto (y_d,y_0,\ldots,y_{d-1}) \]
and sends each arrow from $y$ to $y'$ to $(-1)^s$ times the unique arrow from $\tau(y)$ to $\tau(y')$.
\end{definition}

Therefore we have the following periodicity result:
\begin{theorem}\label{thm:zigzagperiodic}
Let $A=Z^d_s$ and $\tau=\tau^d_s$.  Then $A$ is twisted periodic: there is a short exact sequence
$$A_{\tau}\grsh{-d-s}[s-1]\into Q(\Pi^d_s)\onto A$$
of chain complexes of $A\da A$-bimodules.
\end{theorem}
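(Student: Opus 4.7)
The plan is to combine Brenner--Butler--King's almost Koszul duality with Yu's identification of the kernel complex, using the explicit Nakayama automorphism of $\Pi^d_s$ computed by Herschend--Iyama.

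First I would set up the almost Koszul framework for $A = Z^d_s$. By Theorem \ref{thm:pidualtoz} together with Proposition \ref{prop:dimprojsequal} we have an isomorphism of quadratic algebras $A^! \cong \Pi^d_s$, and by the cited theorem of Guo--Iyama (or Guo--Luo), $\Pi^d_s$ is $(s-1,d+1)$-Koszul. Almost Koszul duality \cite[Proposition 3.11]{bbk} then gives that $A$ itself is $(d+1, s-1)$-Koszul. Applying \cite[Theorem 3.15]{bbk} to $A$ produces a short exact sequence of chain complexes of graded $A\da A$-bimodules
$$0 \to \Sigma[s-1] \to Q_A(\Pi^d_s) \to A \to 0,$$
where $\Sigma$ is a graded bimodule generated in degree $p+q = d+s$ and the middle term is the bimodule Koszul complex $Q(\Pi^d_s)$ in the notation of Section \ref{ss:quadduality}.

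Next I would identify $\Sigma$ explicitly using Yu's theorem. By Proposition \ref{prop:azzsym} the algebra $A$ is symmetric, so its Nakayama automorphism is $\alpha = \id_A$. The Nakayama automorphism $\beta$ of $\Pi^d_s$ is the vertex-permuting automorphism $\omega$ described in \cite[Theorem 3.5]{hi-frac}. With $q + 1 = s$, Yu's formula then yields
$$\Sigma \cong \tensor[_{\xi^s \omega^!}]{A}{}\grsh{-d-s},$$
the grading shift accounting for the fact that $\Sigma$ is generated in degree $d+s$. Since $\omega^!$ is a graded automorphism it commutes with $\xi$, and the conversion $\tensor[_\varphi]{M}{} \cong M_{\varphi^{-1}}$ (recorded in the discussion following Definition \ref{def:zigzag}) turns the twisted bimodule into $A_\tau \grsh{-d-s}$, matching Definition \ref{def:tau}. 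Substituting into the Brenner--Butler--King sequence gives the claimed short exact sequence.

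The main obstacle should be purely bookkeeping: verifying that $\omega^!$ acts on the vertices of $A$ in the way prescribed for $\tau$, tracking the conventions for left versus right twisted modules, and confirming the overall grading shift $\grsh{-d-s}$. The conceptual framework is fully in place via \cite{bbk, yu, hi-frac}, so no essentially new homological input is required; one just has to reconcile conventions carefully enough that the automorphism emerging from Yu's formula genuinely coincides with the $\tau$ of Definition \ref{def:tau}.
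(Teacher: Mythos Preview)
Your approach is essentially the same as the paper's for $s\geq3$: assemble the almost Koszul machinery of \cite{bbk}, feed in the Nakayama automorphism of $\Pi^d_s$ from \cite{hi-frac}, and read off the twist via Yu's theorem \cite{yu}. The bookkeeping you flag (matching $\omega^!$ with $\tau$, the grading shift $p+q=d+s$) is exactly what the paper does in the discussion preceding the theorem and in Definition~\ref{def:tau}.

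There is, however, a genuine gap: your argument does not cover $s=1$ or $s=2$. The statement that $\Pi^d_s$ is $(s-1,d+1)$-Koszul gives $p=s-1$, and both \cite[Theorem~3.15]{bbk} and Yu's theorem are stated under the hypothesis $p,q\geq2$; likewise the almost Koszul duality \cite[Proposition~3.11]{bbk} you invoke to pass from $\Pi^d_s$ to $A$ needs $p\geq2$. (Relatedly, the isomorphism $A^!\cong\Pi^d_s$ you cite via Proposition~\ref{prop:dimprojsequal} is itself only established there for $s\geq3$.) So for $s\leq2$ the entire framework collapses and you need a separate argument. The paper handles these cases by hand: $s=1$ is a direct check for $\F[x]/(x^2)$, and for $s=2$ one verifies directly that $A_\tau$ sits in the kernel of the leftmost differential of the explicit complex from Lemma~\ref{lem:nak-x1x2xn}, then matches dimensions. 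You should add these two small cases to complete the proof.
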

\begin{proof}
If $s\geq3$ then this follows from the facts above: the grading shift comes from $p+q=s-1+d+1$.  If $s=1$ this is easy, and if $s=2$ one can check directly that $A_\tau$ is in the kernel of the leftmost differential of the chain complex in Proposition \ref{lem:nak-x1x2xn}.  Then counting dimensions tells us that this inclusion is in fact an equality.
\end{proof}

\subsection{Actions of longest elements}

Let $Q^+$ denote the quiver of $\Lambda^!$, as in Section \ref{ss:commant}.  Note that $Q^+$ has the same vertices as the quiver $Q$ of $Z^d_s$.  Also note that $Q^+$ is 
acyclic, i.e., there is a total ordering of the vertices $I$ of $Q^+$ such that $e_j\Lambda^! e_i=0$ whenever $i< j$, and $e_i\Lambda^! e_i=\F e_i$ for all $i\in I$.  

Fix an ordering as above.  Let $y^1,y^2,\ldots, y^n$ be a list of all vertices in $Q^+$ ordered from smallest to largest.  Then we define
\[ c^d_s = s_{y^1}s_{y^2}\cdots s_{y^n}\in\Br^d_{s}. \]
Note that the commutativity relations of $\Br^d_{s}$ ensure that this element does not depend on the particular ordering we have chosen.  We think of $c^d_s$ as a Coxeter element of $\Br^d_{s}$: if $d=1$ then $c^1_s$ corresponds to a positive lift to $\Br_{s+1}$ of a particular choice of the Coxeter element of the symmetric group on $s+1$ letters.

For each $s\geq1$, we have injective group homomorphisms
\[ \iota^\ell:\Br^d_{s}\into \Br^d_{s+1}, \;\; s_y\mapsto s_{y+(1,0,\ldots,0)} \]
and
\[ \iota^r:\Br^d_{s}\into \Br^d_{s+1}, \;\; s_y\mapsto s_{y+(0,\ldots,0,1)}. \]
Note that these injections commute: $\iota^\ell\iota^r=\iota^r\iota^\ell:\Br^d_{s}\into \Br^d_{s+2}$.  

Fix $d\geq1$ and write $c_s=c_s^d$.  For any element $g\in\Br^d_{s}$, we write $g^\ell$ and $g^r$ for the image of $g$ in $\iota^\ell$ and $\iota^r$.  
Then we define $w_1=c_1$ and
\[ w_s  = w_{s-1}^rc_s. \]
Note that, as $Q^+$ has $\binom{d+s-1}{d}$ vertices, $c^d_s$ is a product of $\binom{d+s-1}{d}$ generators.  Thus $w_s$ is a product of $\binom{d+s}{d+1}$ generators.

We think of $w_s$ as a ``longest element'' of $\Br^d_{s}$: if $d=1$ then $w_s$ corresponds to a positive lift to $\Br^1_{s}=\Braid_{s+1}$ of a particular choice of the longest element of the symmetric group on $s+1$ letters.  We remark that in general we do not know of a length function for which this element is longest, even if we quotient $\Br^d_{s}$ by the relations $s_y^2=1$.

\begin{example}
Let $d=2$.  Then the Coxeter elements for $s\leq3$ are
\[ c_1=s_{000}; \;\;\; c_2=s_{100}s_{010}s_{001}; \;\;\; c_3=s_{200}s_{110}s_{020}s_{101}s_{011}s_{002}.\]
The longest element for $s=3$ is
\[ w_3=c_1^{rr}c_2^rc_3=s_{002}s_{101}s_{011}s_{002}s_{200}s_{110}s_{020}s_{101}s_{011}s_{002}.\]
\end{example}

For the $d=1$ case, 
 the longest element of the braid group acts as a composition of a shift and a twist by an algebra automorphism induced by the Nakayama automorphism of the corresponding preprojective algebra \cite{rz,g-lifts}. 
We want to show that, for all $d\geq1$, the group action of Theorem \ref{thm:groupaction} sends the longest element $w^d_s$ to the functor $-_{\tau}[s]$, where $\tau$ is 
as in Definition \ref{def:tau}.

Our proof closely follows Section 4 of \cite{g-lifts}

Let $I^\ell$ be the two-sided ideal of $\Pi^d_s$ generated by idempotents $e_x$, where $x_{1}=0$.  Then we have a surjective algebra morphism
\[ \pi^\ell:\Pi^d_s\onto \Pi^d_s/I^\ell\arr\sim \Pi^d_{s-1} \]
where the final map sends the vertex $x$ to $x-(1,0,\ldots,0)$.  Similarly, we have an ideal $I^r$ generated by idempotents at vertices $x$ with $x_{d+1}=0$, and a surjective algebra morphism
\[ \pi^r:\Pi^d_s\onto \Pi^d_s/I^r\arr\sim \Pi^d_{s-1} \]
where the final map sends the vertex $x$ to $x-(0,\ldots,0,1)$.  Using these maps, we can turn $\Pi^d_{s-1}$-modules into $\Pi^d_{s}$-modules.

We will need the following lemma.
\begin{lemma}\label{lem:ses-lambda-pi}
There is a short exact sequence of graded $\Pi^d_{s}\da\Pi^d_{s}$-bimodules
\[ 0\to \tensor[_{\pi^r}]{(\Pi^d_{s-1})}{_{\pi^\ell}}   \grsh{-1}  \into  \Pi^d_{s} \onto \Lambda^d_s \to 0. \]
\end{lemma}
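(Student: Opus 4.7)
The plan is to identify the kernel of the natural quotient $\Pi^d_s \onto \Lambda^d_s$ (which kills the arrows $\alpha_{d+1}$; cf.\ Proposition \ref{prop:describepi} and Theorem \ref{thm:drf-quiv}) with the twisted shifted bimodule $\tensor[_{\pi^r}]{(\Pi^d_{s-1})}{_{\pi^\ell}}\grsh{-1}$ via an explicit bimodule isomorphism. Write $K$ for this kernel, the two-sided ideal generated by $\alpha_{d+1} = \sum_x \alpha_{d+1,x}$; the map $\Pi^d_s \onto \Lambda^d_s$ is manifestly well-defined since the commutativity relations involving $\alpha_{d+1}$ become trivial under the quotient.

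The driving observation is that the commutativity relations $\alpha_i \alpha_{d+1} = \alpha_{d+1} \alpha_i$ for $1 \leq i \leq d$ make $\alpha_{d+1}$ \emph{central up to a vertex shift} in $\Pi^d_s$: for any monomial $c$ in the arrows one has $c \alpha_{d+1} = \alpha_{d+1} c$, while on idempotents $e_z \alpha_{d+1} = \alpha_{d+1} e_{z+\varepsilon}$ where $\varepsilon = (1, 0, \ldots, 0, -1)$, both sides equalling $\alpha_{d+1,z}$ or vanishing when the relevant vertex is out of range. Two consequences follow: (i) the ideal $K$ coincides with both the left ideal $\alpha_{d+1}\Pi^d_s$ and the right ideal $\Pi^d_s \alpha_{d+1}$; (ii) for $a \in \ker \pi^r$ one has $a \alpha_{d+1} = 0$, since moving $\alpha_{d+1}$ past $a$ produces an arrow $\alpha_{d+1,z}$ at some $z$ with $z_{d+1} = 0$; symmetrically, $b \in \ker \pi^\ell$ forces $\alpha_{d+1} b = 0$.

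Using these I would define $\Phi : \tensor[_{\pi^r}]{(\Pi^d_{s-1})}{_{\pi^\ell}}\grsh{-1} \to \Pi^d_s$ by $\Phi(m) = \alpha_{d+1}\tilde{m}$, where $\tilde{m} \in \Pi^d_s$ is any lift of $m$ along $\pi^\ell$; the right-hand half of (ii) ensures independence of the lift, and $\Phi(m) \in K$ is clear. The shift $\grsh{-1}$ matches the inserted degree-$1$ element $\alpha_{d+1}$. The right action is easy ($\tilde{m}b$ lifts $m \cdot b$), while for the left action the shifted-commutativity identity $a \alpha_{d+1} = \alpha_{d+1} a'$ (with $a'$ obtained from $a$ by applying $z \mapsto z + \varepsilon$ to idempotents) does the job: one checks directly that $a'$ is itself a $\pi^\ell$-lift of $\pi^r(a)$, since $\pi^\ell(e_{z+\varepsilon}) = e_{(z_1, \ldots, z_d, z_{d+1}-1)} = \pi^r(e_z)$ whenever either side is nonzero. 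Then $\Phi(a \cdot m) = \alpha_{d+1} a' \tilde{m} = a \alpha_{d+1} \tilde{m} = a \Phi(m)$. Surjectivity onto $K$ is immediate from (i), since every element of $K$ can be written as $\alpha_{d+1} c = \Phi(\pi^\ell(c))$.

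The main obstacle is injectivity, equivalent to the containment $\mathrm{RAnn}(\alpha_{d+1}) \subseteq \ker \pi^\ell$ in $\Pi^d_s$ (the reverse inclusion being (ii)). This is subtle because $\mathrm{RAnn}(\alpha_{d+1})$ strictly contains the obvious submodule $(1 - e^\ell)\Pi^d_s$: paths that start at vertices with $x_1 \geq 1$ can still annihilate $\alpha_{d+1}$ on the right when commutativity forces a representative passing through $x_1 = 0$, as illustrated by $e_{(2,0)} \alpha_1 \alpha_2 = e_{(2,0)} \alpha_2 \alpha_1 = 0$ in $\Pi^1_3$. I would handle this by induction on $s$: the base case $s = 1$ is trivial, and for the inductive step the surjectivity of $\Phi$ combined with the dimension identity $\dim_\F \Pi^d_{s-1} = \dim_\F K = \dim_\F \Pi^d_s - \dim_\F \Lambda^d_s$ (where $\dim_\F \Pi^d_{s-1}$ is determined by the inductive hypothesis and $\dim_\F \Pi^d_s$, $\dim_\F \Lambda^d_s$ can be counted directly from the PBW basis established in the proof of Proposition \ref{prop:anrf-kos}, suitably extended to include $\alpha_{d+1}$) forces injectivity by dimensions, completing the proof.
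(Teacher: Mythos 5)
Your map $\Phi$ is exactly the paper's map (the paper writes it on idempotents as $e_x \mapsto e_{x+(0,\ldots,0,1)}\alpha_{d+1}e_{x+(1,0,\ldots,0)}$ and extends by the bimodule structure, which unwinds to your $\alpha_{d+1}\tilde m$), the well-definedness check via your observation~(ii) is correct, the bimodule-compatibility check is correct, and the surjectivity argument (move $\alpha_{d+1}$ to the front of any monomial using the commutativity relations) matches the paper's use of the $\Pi$-analogue of Lemma~\ref{lem:permutefi}. Where you diverge is injectivity: the paper instead constructs an explicit inverse, sending $\alpha_{d+1,x}\mapsto e_{x-(0,\ldots,0,1)}$ and extending as a left $\Pi$-module map, then checks the two compositions are identities. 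You instead reduce injectivity to the annihilator containment $\{b:\alpha_{d+1}b=0\}\subseteq\ker\pi^\ell$ and propose to get it by induction plus a dimension count.

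That dimension count is where your argument has a genuine gap. To conclude injectivity by dimensions you need to know $\dim_\F\Pi^d_s$ independently, and you propose to get it from the PBW basis of Proposition~\ref{prop:anrf-kos} ``suitably extended to include $\alpha_{d+1}$.'' But $\Pi^d_s$ does \emph{not} admit a PBW basis for $s\geq 2$: by Theorem~\ref{thm:pbwkos} an algebra with a PBW basis is Koszul, whereas $\Pi^d_s$ is only $(s-1,d+1)$-almost-Koszul (Section~\ref{ss:bimres}), not Koszul. Concretely, already for $\Pi^1_3$ the monomials obtained by the PBW recipe in degree~$3$ (e.g.\ $e_{(1,1)}\alpha_2\alpha_1\alpha_2$) all vanish in the algebra, violating the linear-independence criterion of Proposition~\ref{prop:pbw-equiv}(iii), so no ordering of the arrows produces a PBW basis. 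Thus the proposed route to $\dim_\F\Pi^d_s$ does not go through, and without it the inequality $\dim_\F\Pi^d_{s-1}\geq\dim_\F K$ coming from surjectivity cannot be upgraded to an equality. (A smaller point: your illustrating identity $e_{(2,0)}\alpha_1\alpha_2=0$ exhibits an element of the \emph{left} annihilator $\{a:a\alpha_{d+1}=0\}$, whereas injectivity of $\Phi$ concerns the \emph{right} annihilator; the phenomenon you flag is real, but that particular example addresses the mirror statement needed for the paper's inverse map.) You should replace the dimension count by the paper's construction of the inverse map, or give an independent proof of $\dim_\F\Pi^d_s=\dim_\F\Pi^d_{s-1}+\dim_\F\Lambda^d_s$ by some method not relying on PBW.
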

\begin{proof}
Let $\Pi=\Pi^d_s$ and let $I$ be the two-sided ideal in $\Pi$ generated by all arrows $\alpha_{d+1,x}$.  Then we certainly have a short exact sequence of bimodules
\[ 0\to I\into \Pi^d_s\onto \Pi^d_s/I\to0\]
so we just need to show that $I\cong \tensor[_{\pi^r}]{(\Pi^d_{s-1})}{_{\pi^\ell}}   \grsh{-1}  $ and $\Pi^d_s/I\cong \Lambda$.  The second of these isomorphisms is clear, as we make $\Lambda$ into a $\Pi\da\Pi$-bimodule precisely by quotienting by arrows $\alpha_{d+1,x}$.

We first construct a function from the primitive idempotents of $\Pi^d_{s-1}$ to $I$ as follows:
\[ e_x\mapsto e_{x+(0,\ldots,0,1)}\alpha_{d+1} e_{x+(1,0,\ldots,0)}.\]
As these idempotents generate the bimodule $\tensor[_{\pi^r}]{(\Pi^d_{s-1})}{_{\pi^\ell}}   \grsh{-1} $, we can extend our function to a map $\tensor[_{\pi^r}]{(\Pi^d_{s-1})}{_{\pi^\ell}}   \grsh{-1} \to I$ by imposing the bimodule map formula: this is well-defined because $e_x$ and $e_xe_xe_x$ have the same image
by the definition of $\pi^\ell$ and $\pi^r$.  The grading shift is clear.

We now show this map has an inverse.  
We have $I=\sum_{x_{d+1}}\Pi \alpha_{d+1,x}\Pi$, and by Lemma \ref{lem:permutefi} we have $I=\sum_{x_{d+1}}\Pi \alpha_{d+1,x}$.  
Define a function which sends $\alpha_{d+1,x}$ to $e_{x-(0,\ldots,0,1)}\in\tensor[_{\pi^r}]{(\Pi^d_{s-1})}{}$ and extend this to a map of left $\Pi$-modules.   
Then one checks that composing our maps in either order gives the identity map.  
\end{proof}

Now we can prove our theorem.
\begin{theorem}\label{thm:shiftandtwist}
For $A=Z^d_s$, we have an isomorphism 
\[ \psi^d_s(w^d_s)\cong A_{\tau_s}[s]\grsh{-d-s} \]
of graded bimodule complexes.
\end{theorem}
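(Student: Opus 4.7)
The plan is to identify $\psi^d_s(w^d_s)$, as a graded $A$-bimodule complex, with $\cone(Q_A(\Pi^d_s) \to A)$; Theorem \ref{thm:zigzagperiodic} then supplies the isomorphism $\cone(Q_A(\Pi^d_s) \to A) \cong A_{\tau_s}[s]\grsh{-d-s}$ via the short exact sequence $A_{\tau_s}\grsh{-d-s}[s-1] \into Q_A(\Pi^d_s) \onto A$. I would proceed by induction on $s$, with $d$ fixed throughout. For $s=1$, this reduces, using Proposition \ref{prop:acyclicbim} applied to $\Lambda^d_1 = \F = \Pi^d_1$, to identifying the unique spherical twist on $Z^d_1 \cong \F[x]/(x^2)$ with $\cone(A \otimes_\F A \arr{m} A)$, which is immediate.

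For the inductive step, decompose $w_s = w_{s-1}^r c_s$, so that $\psi^d_s(w_s) \cong \psi^d_s(w_{s-1}^r) \otimes_A \psi^d_s(c_s)$. Proposition \ref{prop:acyclicbim}, applied to $\Lambda^d_s$ with the acyclic ordering used to define $c_s$, gives $\psi^d_s(c_s) \cong C := \cone(Q_A(\Lambda^d_s) \arr{m} A)$. For the first factor, set $e^r = \sum_{y_d \geq 1} e_y$ and $P^r = Ae^r$; Proposition \ref{prop:nottower} gives $\End_A(P^r)^\op \cong Z^d_{s-1}$. By the inductive hypothesis, $\psi^d_{s-1}(w_{s-1}) \cong (Z^d_{s-1})_{\tau_{s-1}}[s-1]\grsh{-d-s+1}$ on $\Db(Z^d_{s-1})$, so part (ii) of the lifting theorem (Theorem \ref{thm:lifting}), combined with Theorem \ref{thm:zigzagperiodic} at level $s-1$, represents $\psi^d_s(w_{s-1}^r)$ as the periodic-twist bimodule complex built from $P^r$, its $A$-dual $\Hom_A(P^r, A)$, and the truncated twisted-periodic resolution $Q_{Z^d_{s-1}}(\Pi^d_{s-1})$.

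The gluing uses Lemma \ref{lem:ses-lambda-pi}: applying the contravariant functor $Q_A$ of Section \ref{ss:quadduality} to its short exact sequence of $\Pi^d_s$-bimodules yields a distinguished triangle of $A$-bimodule complexes connecting $Q_A(\Lambda^d_s)$, $Q_A(\Pi^d_s)$, and $Q_A(\tensor[_{\pi^r}]{(\Pi^d_{s-1})}{_{\pi^\ell}}\grsh{-1})$. Combining this with the cone triangles $Q_A(\Lambda^d_s) \to A \to C$ and $Q_A(\Pi^d_s) \to A \to A_{\tau_s}[s]\grsh{-d-s}$ via the octahedral axiom produces a triangle
\[ Q_A\bigl(\tensor[_{\pi^r}]{(\Pi^d_{s-1})}{_{\pi^\ell}}\grsh{-1}\bigr) \to C \to A_{\tau_s}[s]\grsh{-d-s}. \]
It then suffices to identify this triangle with the one
\[ \psi^d_s(w_{s-1}^r)\otimes_A Q_A(\Lambda^d_s) \to \psi^d_s(w_{s-1}^r) \to \psi^d_s(w_{s-1}^r)\otimes_A C \]
obtained by tensoring $\psi^d_s(w_{s-1}^r)$ with the triangle $Q_A(\Lambda^d_s) \to A \to C$, which would yield $\psi^d_s(w_s) \cong \psi^d_s(w_{s-1}^r) \otimes_A C \cong A_{\tau_s}[s]\grsh{-d-s}$.

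The main obstacle is exactly this identification: one must match the periodic-twist bimodule complex representing $\psi^d_s(w_{s-1}^r)$, supplied abstractly by the lifting theorem, with the explicit $A$-bimodule complex $Q_A(\tensor[_{\pi^r}]{(\Pi^d_{s-1})}{_{\pi^\ell}}\grsh{-1})$. This requires a careful comparison of the Nakayama automorphism $\omega$ of $\Pi^d_{s-1}$ with the automorphism $\tau_{s-1}$, and an analysis of how $Q_A$ interacts with restriction of scalars along $\pi^\ell$ and $\pi^r$, together with the idempotent $e^r \in A$. Once these conventions are reconciled, the cone computations and final appeal to Theorem \ref{thm:zigzagperiodic} finish the argument.
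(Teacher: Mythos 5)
Your architecture matches the paper's closely: same induction on $s$ with $d$ fixed, same base case, same decomposition $w_s = w_{s-1}^r c_s$, same deployment of Proposition~\ref{prop:acyclicbim} to identify $\psi^d_s(c_s)$ with $\cone(Q_A(\Lambda^d_s)\to A)$, same use of Lemma~\ref{lem:ses-lambda-pi} and the functor $Q_A$, same use of Theorem~\ref{thm:zigzagperiodic}, Proposition~\ref{prop:nottower}, and the lifting theorem. However, the gluing step as you state it has a genuine gap. You propose to identify the octahedron triangle
\[ Q_A\bigl(\tensor[_{\pi^r}]{(\Pi^d_{s-1})}{_{\pi^\ell}}\grsh{-1}\bigr) \to C \to A_{\tau_s}[s]\grsh{-d-s} \]
with the triangle
\[ \psi^d_s(w_{s-1}^r)\otimes_A Q_A(\Lambda^d_s) \to \psi^d_s(w_{s-1}^r) \to \psi^d_s(w_{s-1}^r)\otimes_A C. \]
An isomorphism of triangles would require the middle terms to agree, i.e.\ $C \cong \psi^d_s(w_{s-1}^r)$. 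This is false: already for $d=1$, $s=2$ one has $\psi^d_s(w^r_1) = F_{01}$ while $C = F_{10}F_{01}$, and in general $C$ is a product of $\binom{d+s-1}{d}$ twists whereas $W^r_{s-1}$ is a product of $\binom{d+s-1}{d+1}$. So these two triangles cannot be identified, and the final ``which would yield $\psi^d_s(w_s)\cong A_{\tau_s}[s]\grsh{-d-s}$'' does not follow.

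The paper sidesteps the tensor-product entirely by a dual/inverse reformulation. Since $A$ is symmetric of Gorenstein parameter $d+1$ (Propositions~\ref{prop:zzgp} and~\ref{prop:azzsym}), the inverse of $W^r_{s-1}$ in $\DZPic(A)$ is $(W^r_{s-1})^*\grsh{-d-1}$ by Proposition~\ref{prop:grsymduals}. So $W^r_{s-1}\otimes_A C_s \cong A_{\tau_s}\grsh{-d-s}[s]$ is equivalent to the statement about a \emph{single} object
\[ C_s \cong \bigl((W^r_{s-1})^*\bigr)_{\tau_s}\grsh{-2d-s-1}[s], \]
and the proof then explicitly computes $(W^r_{s-1})^*$ by dualizing the lifted periodic-twist complex, using the Frobenius structure of $\Pi^d_{s-1}$ (with Nakayama automorphism $\omega_{s-1}$ and Gorenstein parameter $s-1$), \cite[Propositions~4.4.5 and~4.5.2]{g-lifts}, and the identity $\omega_{s-1}\circ\pi^r = \pi^\ell\circ\omega_s$. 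Only then does it build the diagram with exact columns from Lemma~\ref{lem:ses-lambda-pi} and $Q_A$, with $C_s$ and the dual twist $(W^r_{s-1})^*_{\tau_s}$ appearing as the horizontal cones, so that the acyclicity of the middle column (Theorem~\ref{thm:zigzagperiodic}) forces the required isomorphism. You correctly diagnose that the remaining work lies in matching the abstract periodic-twist complex to $Q_A(\tensor[_{\pi^r}]{(\Pi^d_{s-1})}{_{\pi^\ell}})$, but the missing idea is to pass to the $\F$-linear dual of $W^r_{s-1}$ first, rather than trying to compare $W^r_{s-1}$ directly against $C_s$.
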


\begin{proof}
We fix $d\geq1$ throughout, and so drop $d$ from our notation.  We proceed by induction on $s\geq1$.  Let $A=Z^d_s$.

For our base case $s=1$ we have $A=k[x]/(x^2)$ with $x$ in degree $d+1$, and $w_1=s_y$ where $y$ is the unique vertex of our quiver.  Then, by direct calculation, we have a quasi-isomorphism $X_y\cong 
A_\sigma\grsh{-d-1}[1]$, where $\sigma(x)=-x$.

Now we do the inductive step.  Let $W_s=\psi_s(w_s)$ and $C_s=\psi_s(c_s)$.  We also extend our superscript $\ell$ and $r$ notation from above, writing $W_{s-1}^r=\psi_s(w_{s-1}^r).$  We want to show that $W_s\cong A_{\tau_s}\grsh{-s-d}[s]$, i.e.,
\[ W_{s-1}^r\otimes_AC_s\cong A_{\tau_s}\grsh{-d-s}[s]. \]
The inverse of $W_{s-1}^r$ in $\DZPic(A)$ is $(W_{s-1}^r)^\vee$, so as
$A$ is symmetric of Gorenstein parameter $d+1$ (Propositions \ref{prop:zzgp} and \ref{prop:azzsym}), by Proposition \ref{prop:grsymduals} the inverse of $W_{s-1}^r$ in $\DZPic(A)$ is $(W_{s-1}^r)^*\grsh{-d-1}$. 
So it suffices to prove
\[ C_s\cong {(W_{s-1}^r)^*}_{\tau_s}\grsh{-2d-s-1}[s]. \]

By definition of $c_s$ and by Proposition \ref{prop:acyclicbim}, we have
\[ C_s\cong\cone(Q_A(\Lambda_s)\arr\ev A). \]

Let $P$ be the direct sum of the indecomposable projective $A$-modules $Ae_y$ with $y_d\geq1$: this is the vertices corresponding to the generators of $\Br^d_{s}$ under the injection $\iota^r$.  Then we have $E=\End_A(P)^\op\cong Z^d_{s-1}$ by Proposition \ref{prop:nottower}.
By our inductive hypothesis, we have an isomorphism 
$W_{s-1}\cong E_{\tau_{s-1}}\grsh{-s-d+1}[s-1]$
in $\Db(E\grmodgr E)$.
So by Theorem \ref{thm:zigzagperiodic}, we have $W_{s-1}\cong\cone(Q_E(\Pi_{s-1})\arr\ev E)$.
Then, using the lifting theorem (Theorem \ref{thm:lifting}), we have
\[ W^r_{s-1}\cong\cone(P\otimes_EQ_E(\Pi_{s-1})\otimes_EP^\vee\arr\ev A) \]
in $\Db(A\grmodgr A)$.
Therefore, using that $P^\vee\cong P^*\grsh{-d-1}$, and $P^{**}\cong P$ as $A$ is finite-dimensional,
\[ {(W^r_{s-1})^*}_{\tau_s}\cong\cone(A_{\tau_s}\grsh{d+1}\arr{\ev^*}P\otimes_EQ_E(\Pi_{s-1})^*\otimes_E{P^\vee}_{\tau_s})[-1]. \]

By the results in Section \ref{ss:bimres}, $\Pi^d_s$ is Frobenius with Nakayama automorphism $\omega^d_s$ and Gorenstein parameter $s-1$, so 
\[ \Pi_{s-1}^*\cong {\Pi_{s-1}}_{\omega_{s-1}}\grsh{s-2}. \]
Therefore, as $E$ has Gorenstein parameter $d+1$ (Proposition \ref{prop:zzgp}), we can use \cite[Proposition 4.4.5]{g-lifts} to get
\[ Q_E(\Pi_{s-1})^*\cong Q_E(\Pi_{s-1}^*)\grsh{2d+2} \cong Q_E({\Pi_{s-1}}_{\omega_{s-1}}\grsh{s-2})\grsh{2d+2} \cong Q_E({\Pi_{s-1}}_{\omega_{s-1}})\grsh{2d+s}[2-s].  \]

Also, by \cite[Proposition 4.5.2]{g-lifts},
\[ P\otimes_EQ_E({\Pi_{s-1}}_{\omega_{s-1}})\otimes_E{P^\vee}\cong Q_A( {}\tensor[_{\pi^r}]{ ({\Pi_{s-1}}_{\omega_{s-1}})  }{_{\pi^r}} ) .\]

Next we note that, for $x=(x_1,\ldots,x_{d+1})$,
\[ \omega_{s-1}\circ\pi^r (x) = \omega_{s-1} (x_1,\ldots,x_d,x_{d+1}-1) =  (x_{d+1}-1,x_1,\ldots,x_d) = \pi^\ell\circ\omega_{s}(x)\]
and, as no coefficients are introduced on arrows, we have $\omega_{s-1}\circ\pi^r= \pi^\ell\circ\omega_{s}$ in general.

So
\[ P\otimes_EQ({\Pi_{s-1}}_{\omega_{s-1}})\otimes_E{P^\vee}\cong Q( ({_{\pi^r}{ \Pi_{s-1}}}_{\pi^\ell})_{\omega_{s}} )\cong 
{}Q({_{\pi^r}{ \Pi_{s-1}}}_{\pi^\ell})_{{\omega_{s}}^!} .\]
Then, as the image of $Q$ is a complex of projective bimodules, and $\omega_s^!$ agrees with $\tau_s^{-1}$ on the vertices of the quiver,
\[ P\otimes_EQ({\Pi_{s-1}}_{\omega_{s-1}})\otimes_E{P^\vee}\cong {_{\tau_s}Q({_{\pi^r}{ \Pi_{s-1}}}_{\pi^\ell})} .\]
So, in summary, we have
\[ {(W^r_{s-1})^*}_{\tau_s}\cong
\cone(A_{\tau_s}\grsh{d+1}\arr{m^*}
Q({_{\pi^r}{ \Pi_{s-1}}}_{\pi^\ell})\grsh{2d+s}[2-s])[-1]
 \]
and so
\[ {(W_{s-1})^*}_{\tau_s}\grsh{-2d-s-1}[s]\cong
\cone(A_{\tau_s}\grsh{-d-s}[s-1]\stackrel{(-1)^{s-1}m^*}{\longrightarrow}
Q({_{\pi^r}{ \Pi_{s-1}}}_{\pi^\ell})\grsh{-1}[1])
 .\]

Using \cite[Proposition 4.4.5(i)]{g-lifts}, we have
\[ Q( \tensor[_{\pi^r}]{(\Pi_{s-1})}{_{\pi^\ell}} \grsh{-1}) \cong  
Q( \tensor[_{\pi^r}]{(\Pi_{s-1})}{_{\pi^\ell}} )\grsh{-1}[1]
 \]
so, applying the functor $Q$ to the short exact sequence of Lemma \ref{lem:ses-lambda-pi}, we obtain the short exact sequence
\[ 0\to Q(\Lambda_s)\into Q(\Pi_s) \onto Q( \tensor[_{\pi^r}]{(\Pi_{s-1})}{_{\pi^\ell}} )\grsh{-1}[1]\to0.\]

We now construct a diagram with exact columns:
\[\xymatrix{
 &Q(\Lambda_s)\ar[r]^m\ar[d] &A\ar@{=}[d]\\
A_{\tau_s}\grsh{-d-s}[s-1]\ar[r]\ar@{=}[d] &Q(\Pi_s)\ar[r]^m\ar[d] &A\\
A_{\tau_s}\grsh{-d-s}[s-1]\ar[r] &Q({_{\pi^r}{ \Pi_{s-1}}}_{\pi^\ell})\grsh{-1}[1] &
}\]
As the degree $0$ parts of $\Lambda_s$ and $\Pi_s$ are the same, $Q(\Lambda_s)$ and $Q(\Pi_s)$ have the same degree $0$ terms, so the top right square commutes.  As in \cite[Section 4.7]{g-lifts}, the commutativity of the bottom left square follows from \cite[Lemma 4.3]{gra1}.  So we have a short exact sequence
\[ 0\to C_s\into U\onto {(W^r_{s-1})^*}_{\tau_s}\grsh{-2d-s-1}[s+1]\to0 \]
where $U$ is acyclic, by Theorem \ref{thm:zigzagperiodic}, so we have our isomorphism $C_s\cong {(W^r_{s-1})^*}_{\tau_s}\grsh{-2d-s-1}[s]$.  
Thus $W_s\cong A_{\tau_s}\grsh{-s-d}[s]$ in $\Db(A\grmodgr A)$.
\end{proof}


\section{Examples with equivariant sheaves and McKay quivers}

Perhaps the most well-known occurrence of braid group actions in algebraic geometry comes from the McKay correspondence for finite subgroups of $SL(\C^2)$: this is explained in \cite[Section 3.2]{st}, where the authors describe it as ``probably the simplest example of a braid group action on a category in the present paper''.  Suppose $G$ is a cyclic group of order $n+1$ which acts on complex affine $2$-space $\A^2$ 
via a diagonal embedding in $SL(\C^2)$.  Let $V_1,\ldots,V_{n}$ be the nontrivial simple representations of $G$ and let $\mathcal{O}_0$ denote the skyscraper sheaf at the fixed point 
of $\A^2$.  Then the objects $\mathcal{E}_i=\mathcal{O}_0\otimes V_i$ are coherent $G$-equivariant sheaves on $\A^2$.  They are spherical objects, and the associated spherical twists satisfy the type $A_{n}$ braid relations, so we have an action of the braid group $\Braid_{n+1}$ on the derived category of $G$-equivariant coherent sheaves $\DbG(\A^2)$.  
One explanation for the existence of this action is the isomorphism
\[ \Hom_{\DbG(\A^2)}(\bigoplus_{i=1}^{n}\mathcal{E}_i,\bigoplus_{i=1}^{n}\mathcal{E}_i)\cong Z^1_{n} \]
between the derived endomorphism algebra of these objects and the type $A$ zigzag algebra.

Our aim in this section is to show that a similar phenomenon occurs in higher dimensions: relations between spherical twists for the type $A$ higher zigzag algebras show up when diagonal subgroups of the special linear group act on $\A^{d+1}$.  Throughout this section, we work over the field $\F=\C$ unless we specify otherwise.

We warn the reader that both Lambda $(\Lambda)$ and the exterior product $(\bigwedge)$ will be used, but hopefully it will be clear from the context which is which.

\subsection{Equivariant sheaves, skew group algebras, and spherical objects}\label{ss:eqsh}

Let $G$ be a finite group which acts on affine $n$-space $\A^{d+1}$.
We wish to consider the derived category $\DbG(\A^{d+1})=\Db(\Coh_G\A^{d+1})$ of $G$-equivariant coherent sheaves on $\A^{d+1}$.  Let $U$ be the vector space with basis the co-ordinate functions on $\A^{d+1}$.  Then the ring of regular functions on $\A^{d+1}$ is the symmetric algebra $\Sym(U)$ of $U$: this is a commutative polynomial algebra in $n$ generators.  
It is well-known that the category $\Coh_G\A^{d+1}$ of $G$-equivariant sheaves on $\A^{d+1}$ is equivalent to the category $\Sym(U)\# G\mMod$ of finitely generated modules over the skew group algebra: see, for example, \cite[Section 4.3.6]{dbranes}.  Therefore we have an equivalence of triangulated categories 
\[ \DbG(\A^{d+1})\cong \Db(\Sym(U)\# G\mMod). \]
We therefore work with the skew group algebra, whose definition we now recall.

Let $A$ be an $\F$-algebra.  Suppose $G$ acts on the left of $A$ by automorphisms.  Then the \emph{skew group algebra} (or \emph{smash product}) is the vector space $A\otimes_\F \F G$ with multiplication
\[ (a\otimes g)(b\otimes h)=a(gb)\otimes gh.\]

If the action of $G$ on $U$ factors through $SL(U)$, then it is known that the algebra $\Sym(U)\# G$ is $(d+1)$-Calabi-Yau: see \cite[Example 24]{far}.

Let $C_{n+1}$ denote a cyclic group of order ${n+1}$, and let $g$ be a generator of $C_{n+1}$.  Let $\omega_k$ denote a fixed $k$th root of unity.  Then $C_{n+1}$ has simple representations $V_0,V_1,\ldots,V_{n}$, where $g$ acts as $\omega_{n+1}^i$ on $V_i$.  
Now let $G$ be a nontrivial finite abelian group, so there exists a $d\geq1$ such that $G\cong C_{n_1+1}\times C_{n_2+1}\times\cdots\times C_{n_d+1}$ is a product of finite cyclic groups of order $n_i+1$.  We choose a generator $g_j$ of the cyclic subgroup $1\times\cdots\times 1\times C_{n_j+1}\times 1\times\cdots\times 1$ of $G$.  
As $\C(G_n\times G_m)\cong \C G_n\otimes_\C \C G_m$, the group $G$ has simple representations $V_{i_1,i_2,\ldots,i_d}=V_{i_1}\otimes V_{i_2}\otimes\cdots\otimes V_{i_n}$ with $i_k\in \Z/(n_k+1)\Z$, so $g_j$ acts as $\omega_{n_j+1}^{i_j}$ on $V_{i_1,i_2,\ldots,i_d}$.

We fix $d\geq1$, positive integers $n_1,n_2,\ldots,n_d$, 
and the finite abelian group $G=C_{n_1+1}\times C_{n_2+1}\times\cdots\times C_{n_d+1}$.  Let $U$ be a $\C$-vector space with basis $\{x_1,\ldots,x_{d+1}\}$.  We define a faithful action of $G$ on $U$ by
\[ g_ix_j=
\begin{cases}
    \omega_{n_i+1} x_i & \text{if } j=i; \\
     x_j & \text{if } j\neq i \text{ and } j\neq d+1; \\
    \omega_{n_i+1}^{-1} x_{d+1} & \text{if } j=d+1.
\end{cases}
\]
By construction, this action factors through $SL(U)$.  As a representation,
\[U\cong V_{1,0,\ldots,0}\oplus V_{0,1,0,\ldots,0}\oplus \cdots \oplus V_{0,\ldots,0,1}\oplus V_{-1,\ldots,-1}.\]

Let $T$ be the $1$-dimensional $\Sym(U)$-module where $U$ acts as $0$.  Write $B=\Sym(U)\# G$.
Then, if $W$ is any simple $G$-module, the $B$-module $T\otimes W$ is also $1$-dimensional and thus simple.  Following a construction of Auslander \cite{aus-rat}, its projective resolution is constructed using the Koszul complex for $\Sym(V)$:
\[ 0\to \Sym(U)\otimes_\C\bigwedge^{d+1}U\otimes_\C W \to\cdots\to \Sym(U)\otimes_\C U\otimes_\C W\to \Sym(U)\otimes_\C W\to0.\]
The trivial $G$-module is only a summand of $\bigwedge^kU$ for $k=0$ and $k=d+1$.  Thus the self-extension algebra of $T\otimes W$ is exactly
\[\bigoplus_{i=0}^\infty\Ext_B(T\otimes W,T\otimes W)=\C[x]/(x^2)\]
with $x$ in degree $d+1$.  So, as we know that $B$ is $(d+1)$-Calabi-Yau, we conclude that $T\otimes W$ is a $(d+1)$-spherical object \cite{st}.

We construct the spherical twist associated to $T\otimes W$ in the following way.  Let $P_W$ denote the projective resolution of $T\otimes W$.  Then we have an evaluation map $P_W\otimes_\C\Hom_B(P_W,B)\to B$ whose cone $X_W$ is a bounded chain complex of $B\da B$-bimodules which are projective on both sides.  The spherical twist is given by tensoring with $X_W$.  So for each simple representation $W$ of $G$, we have a derived autoequivalence
\[ F_W=X_W\otimes_B-:\Db(B\mMod)\arr\sim\Db(B\mMod). \]

\subsection{Graded skew group algebras and McKay quivers}\label{ss:mckayq}

The algebra $\Sym(U)$ has a natural grading with $U$ concentrated in degree $1$.  As $G$ acts by graded automorphisms,  this extends to a grading on $B=\Sym(U)\# G$, with $\C G$ in degree $0$.  We work in the category $B\grmod$ of graded modules.  If $M$ is a graded $B$-module, then we can of course forget the grading.  This gives a functor $B\grmod\to B\mMod$, which extends to a functor $\Db(B\grmod)\to\Db(B\mMod)$.

With this grading, $\Sym(U)$ is a Koszul algebra.  Therefore, by a result of Martinez-Villa \cite{mv-skew}, $B=\Sym(U)\# G$ is also a Koszul algebra, and its Koszul dual is isomorphic to $A=E(U)\# G$.  Here, $E(V)=\Sym(V)^!=\bigoplus_{k\geq0}\bigwedge^kU^*$ denotes the exterior algebra on $U$ and we use the natural action of $G$ on $E(U)$, which is given by $gf(u)=f(g^{-1}u)$ for $f\in U^*$ and $u\in U$.

Let $G$ be any finite group with a complete set of simple $\C$-representations $V_0,V_1,\ldots,V_k$ up to isomorphism.  Let $U$ be any finite-dimensional representation of $G$.  Then the \emph{McKay quiver} of $(G,U)$ has a vertices $\{0,1,\ldots,k\}$ and, if $V_i\otimes_C U\cong \bigoplus_{j=0}^k V_j^{\oplus m_j}$, it has $m_j$ arrows $i\to j$.  Here, the tensor product of representations has the diagonal action of $G$, as usual.  Auslander showed that if $U$ is $2$-dimensional then the skew group algebra $\Sym(U)\# G$ is a quotient of the path algebra of the McKay quiver of $(G,V)$ by an admissible ideal \cite[Section 1]{aus-rat}, though his proof generalizes immediately to any finite-dimensional representation.  This is also written down explicitly in \cite[Section 3]{bsw}.

Bocklandt, Schedler, and Wemyss showed that, if $G$ is abelian, then the admissible ideal is generated by commutativity relations for the quiver: if we have arrows $a:1\to2$, $b:2\to4$, $c:1\to 3$, and $d:3\to 4$, then $ab=cd$ \cite[Corollary 4.1]{bsw}.  So in the abelian case we do not need to assume the action of $G$ factors through the special linear group to describe the quiver and relations of the skew group algebra.  We will use this result for two related classes of representations.

As in the previous section, let $G=C_{n_1+1}\times C_{n_2+1}\times\cdots\times C_{n_d+1}$.

First, let 
\[ V= V_{1,0,\ldots,0}\oplus V_{0,1,0,\ldots,0}\oplus \cdots \oplus V_{0,\ldots,0,1}\]
so $V$ is isomorphic to the subrepresentation of $U$ with basis $\{x_1,\ldots,x_{d}\}$.  Then the McKay quiver $Q_{G,V}$ has vertex set $\{{(i_1,i_2,\ldots,i_d)}\st 0\leq i_k\leq n_k\}$.  At each vertex there are $d$ outgoing arrows of the form
\[ (i_1,i_2,\ldots,i_d)\to (i_1+1,i_2,\ldots,i_d), \;\;\;\;\; 
\ldots, \;\;\;\;\; (i_1,i_2,\ldots,i_d)\to (i_1,i_2,\ldots,i_d+1) \]
with co-ordinates taken mod $n_i+1$.  The relations are the commutativity relations.

\begin{example}
Let $d=2$ and let $n_1=n_2=2$.  Then the McKay quiver $Q_{G,V}$ is:
\[
\xymatrix @=50pt {
02\ar[rr]\ar@/_1pc/[ddrr] && 12\ar[rr]\ar@/_1pc/[ddrr] && 22\ar@/^1pc/[llll]\ar@/_1pc/[ddrr] && \\
& 01\ar[rr]\ar[ul] && 11\ar[rr]\ar[ul] && 21\ar@/^1pc/[llll]\ar[ul] & \\
&& 00\ar[rr]\ar[ul] && 10\ar[rr]\ar[ul] && 20\ar@/^1pc/[llll]\ar[ul]
}
\]
\vspace{1ex}\\
and $\Sym(V)\# G$ is isomorphic to the path algebra of this quiver with relations imposed which ensure that each square is commutative.
\end{example}

Next, let $\overline{V}=V\oplus (\bigwedge^dV)^*$.  We see that $(\bigwedge^dV)^*\cong V_{-1,\ldots,-1}$, so $\overline{V}$ is isomorphic to the representation $U$ from the previous section.

\begin{example}
Again, let $d=2$ and let $n_1=n_2=2$.  Then the McKay quiver $Q_{G,\overline{V}}$ is:
\[
\xymatrix @=50pt {
02\ar[rr]\ar@/_1pc/[ddrr]\ar@/_1pc/[rrrrrd] && 12\ar[rr]\ar@/_1pc/[ddrr]\ar@/_/[dl] && 22\ar@/^1pc/[llll]\ar@/_1pc/[ddrr]\ar@/_/[dl] && \\
& 01\ar[rr]\ar[ul]\ar@/_1pc/[rrrrrd] && 11\ar[rr]\ar[ul]\ar@/_/[dl] && 21\ar@/^1pc/[llll]\ar[ul]\ar@/_/[dl] & \\
&& 00\ar[rr]\ar[ul]\ar@/_1pc/[uurr] && 10\ar[rr]\ar[ul]\ar@/^/[uullll] && 20\ar@/^1pc/[llll]\ar[ul]\ar@/^/[uullll]
}
\]
\vspace{1ex}\\
and $\Sym(\overline{V})\# G$ is isomorphic to the path algebra of this quiver modulo the commutativity relations.  It is easier to see the arrows more clearly on the following diagram of $Q$, where the vertices with the same labels should be identified:
\[
\xymatrix  {
00 && 10\ar[dl] && 20\ar[dl] && 00\ar[dl] \\
&02\ar[rr]\ar[ul] && 12\ar[rr]\ar[ul]\ar[dl] && 22\ar[ul]\ar[dl]\ar[rr] && 02\ar[dl] \\
&& 01\ar[rr]\ar[ul] && 11\ar[rr]\ar[ul]\ar[dl] && 21\ar[dl]\ar[ul]\ar[rr] && 01\ar[dl] \\
&&& 00\ar[rr]\ar[ul] && 10\ar[rr]\ar[ul] && 20\ar[ul]\ar[rr] && 00
}
\]
\end{example}

Note that, using Auslander's resolutions of simple modules,
$$\gldim(\Sym(V)\# G)=\gldim\Sym(V)=\dim_\C V=d$$
 and 
$$\gldim(\Sym(\overline{V})\# G)=\gldim\Sym(\overline{V})=\dim_\C \overline{V}=d+1.$$

\begin{remark}
As was noted in \cite[Section 5]{hio}, $\Sym(\overline{V})\# G$ is isomorphic to the $(d+1)$-preprojective algebra of a $d$-representation infinite algebra obtained by taking a bounding periodic cut of an infinite quiver constructed from a type $A$ root system.  The algebra $\Sym(V)\# G$ is not finite-dimensional, so cannot be $(d+1)$-representation infinite, but it can be constructed by taking a non-bounding periodic cut of the infinite quiver.  And it plays the same role in the following sense: one can check, using the result on presentations of higher preprojective algebras in \cite[Section 3]{gi}, that $\Sym(\overline{V})\# G$ is the $(d+1)$-preprojective algebra of $\Sym(V)\# G$.
\end{remark}

\subsection{Zigzag algebras of skew group algebras}
We want to show that the skew group algebra $\Sym(\overline{V})\# G$ is Koszul dual to a higher zigzag algebra.  By \cite{mv-skew} we know that the Koszul dual of the skew group algebra is $E(\overline{V})\# G$, and by Example \ref{eg:exterior} we know that the exterior algebra is a higher zigzag algebra.  So if we can show some commutativity between taking higher zigzag algebras and taking skew group algebras, we will be able to show that $E(\overline{V})\# G$ is a higher zigzag algebra.

Suppose that $G$ acts on the left of a finite-dimensional algebra $\Lambda$ by automorphisms.  Then we can define a right action of $G$ on $\Lambda$ by $ag=g^{-1}a$, for $a\in\Lambda$ and $g\in G$.  This gives a left action of $g$ on $\Lambda^*$, by $(gf)(a)=f(ag)=f(g^{-1}a)$ for $f\in\Lambda^*$.  So we can extend the action of $G$ to $\Triv(\Lambda)$.  If $G$ acts by graded automorphisms, then we can extend the action to $\STriv(\Lambda)$. 

The following result is true over any field $\F$.  It was first proved in \cite[Lemma 2.2]{zhe}, but we include a full proof for the convenience of the reader as the original is in Chinese.
\begin{proposition}\label{prop:zigskew}
Let $G$ be a finite group which acts on the left of a finite-dimensional $\F$-algebra $\Gamma$.  Then $\Triv(\Gamma)\# G\cong \Triv(\Gamma\# G)$.  Moreover, if $\Gamma$ is graded and $G$ acts by graded automorphisms, then $\STriv(\Gamma)\# G\cong \STriv(\Gamma\# G)$.  In particular, if $\Lambda$ is a Koszul algebra with finite global dimension $d$ then
\[
Z_{d+1}(\Lambda)\# G\cong Z_{d+1}(\Lambda\# G).
\]
\end{proposition}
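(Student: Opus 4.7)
The plan is to exhibit an explicit $\F$-algebra isomorphism $\psi : \Triv(\Gamma) \# G \to \Triv(\Gamma \# G)$, promote it to the super/graded setting by carrying the same sign factors on both sides, and then combine with Martinez-Villa's Koszul duality $(\Lambda \# G)^! \cong \Lambda^! \# G$ to conclude. Both sides have $\F$-dimension $2|G|\dim_\F \Gamma$, so bijectivity will reduce to injectivity.

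Using the induced left $G$-action on $\Gamma^*$ given by $(gf)(a) = f(g^{-1}a)$, I would define $\psi$ to be the identity on the $\Gamma \otimes_\F \F G$ summand (landing inside $\Gamma \# G \hookrightarrow \Triv(\Gamma \# G)$), and on the $\Gamma^* \otimes_\F \F G$ summand I would send $f \otimes g$ to the functional
\[ \phi_{f,g} \in (\Gamma \# G)^*, \qquad \phi_{f,g}(b \otimes h) = f(gb)\,\delta_{gh,e}. \]
Evaluating $\phi_{f,g}$ on $b \otimes g^{-1}$ returns $f(gb)$, so $\psi$ is injective on each $G$-graded slice and hence an $\F$-linear isomorphism. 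Multiplicativity is then verified by separating out the four cases built from the $\Gamma$ and $\Gamma^*$ summands: the $\Gamma$-$\Gamma$ case is the skew-group formula $(a \otimes g_1)(b \otimes g_2) = a(g_1 b) \otimes g_1 g_2$, the $\Gamma^*$-$\Gamma^*$ case is zero on both sides by the trivial-extension axiom, and in each of the two mixed cases one unravels the bimodule actions $(a \cdot f)(c) = f(ca)$, $(f \cdot b)(c) = f(bc)$ and uses that the delta factor forces the inner $G$-element to equal $(g_1 g_2)^{-1}$, after which the $G$-twists on the two sides collapse consistently thanks to the $g^{-1}$ in the definition of the $G$-action on $\Gamma^*$.

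For the super case, the grading on $\Gamma \# G$ is inherited from $\Gamma$ with $\F G$ in degree $0$, and $G$ acts by graded automorphisms, so the sign $(-1)^{di}$ appearing in the $\STriv$ multiplication depends only on the $\Gamma$-degree of the element being acted upon, which is preserved by $\psi$; a line-by-line rerun of the previous calculation with these signs inserted identically on both sides establishes $\STriv(\Gamma) \# G \cong \STriv(\Gamma \# G)$. For the final higher zigzag assertion we chain
\[ Z_{d+1}(\Lambda \# G) = \STriv\bigl((\Lambda \# G)^!\bigr) \cong \STriv(\Lambda^! \# G) \cong \STriv(\Lambda^!) \# G = Z_{d+1}(\Lambda) \# G, \]
where the first isomorphism uses Martinez-Villa's result and the last is the super statement just proved. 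The main obstacle, and the step most worth pausing over, is the bookkeeping in the mixed-case multiplicativity check: reconciling the two distinct routes by which a group element escapes from the $\F G$-factor, namely through the $G$-action on $\Gamma^*$ inside $\Triv(\Gamma) \# G$ on one hand and through the skew-product twist inside $\Triv(\Gamma \# G)$ on the other, and verifying that they agree precisely once the constraint imposed by the delta factor is applied.
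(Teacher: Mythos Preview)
Your approach is correct and essentially the same as the paper's. Your functional $\phi_{f,g}(b\otimes h)=f(gb)\,\delta_{gh,e}$ is exactly the image of $f\otimes g$ under the paper's map $(a,f)\otimes g\mapsto(a\otimes g,\,fg\otimes\varphi(g))$ once one identifies $\Gamma^*\otimes(\F G)^*$ with $(\Gamma\#G)^*$ via $\varphi(g)=(g^{-1})^*$; the paper just builds this map as a composite of vector-space isomorphisms rather than writing the functional directly, and then, like you, verifies multiplicativity by unpacking the left and right $(\Gamma\#G)$-actions on $(\Gamma\#G)^*$. Your treatment of the ``in particular'' clause via Martinez-Villa's isomorphism $(\Lambda\#G)^!\cong\Lambda^!\#G$ is in fact more explicit than the paper's, which leaves that deduction to the reader.
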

\begin{proof}
As $G$ is a finite group, its group algebra is symmetric, with isomorphism $\varphi:\F G\to (\F G)^*$ given by $\varphi(g)=(g^{-1})^*$ with respect to the natural basis of group elements.

We have the following chain of vector space isomorphisms:
\begin{align*}
\Triv(\Gamma)\# G = (\Gamma\oplus\Gamma^*)\otimes \F G &\arr\sim (\Gamma\otimes \F G)\oplus(\Gamma^*\otimes \F G)
\arr\sim (\Gamma\otimes \F G)\oplus(\Gamma^*\otimes (\F G)^*) \\
&\arr\sim (\Gamma\otimes \F G)\oplus(\F G\otimes \Gamma)^*
 \arr\sim (\Gamma\otimes \F G)\oplus( \Gamma\otimes \F G)^*=\Triv(\Gamma\# G)
\end{align*}
We have to be careful with the isomorphism between $\Gamma\otimes \F G$ and $\F G\otimes \Gamma$, as $G$ should act on $\Gamma$.  Our isomorphism is as follows:
\begin{align*}
\Triv(\Gamma)\# G &\arr\sim \Triv(\Gamma\# G) \\
(a,f)\otimes g &\mapsto  (a\otimes g, fg\otimes \varphi(g) )
\end{align*}

When checking that our vector space isomorphism respects multiplication, the most difficult thing is understanding the left and right action of $\Gamma\# G$ on $(\Gamma\# G)^*$, which we now describe.  Let $a,c\in \Gamma$, $f\in\Gamma^*$, and $g,h,i\in G$.  Then
\begin{align*}
\left( (a\otimes g)(f\otimes\varphi(h))\right) (c\otimes i)  & = (f\otimes\varphi(h)) \left( (c\otimes i) (a\otimes g)\right)  \\
& = \left( f\otimes\varphi(h) \right) (c(ia)\otimes ig) \\
& =f(c(ia))\otimes \varphi(h)(ig) \\
\end{align*}
which is nonzero if and only if $ig=h^{-1}$, i.e., $i=h^{-1}g^{-1}$.  So
\[ \left( (a\otimes g)(f\otimes\varphi(h))\right) (c\otimes i) = f(c(h^{-1}g^{-1}a))\otimes g\varphi(h)(i) =  \left( (h^{-1}g^{-1}a)f\otimes\varphi(gh)\right) (c\otimes i).\]
Therefore, our left action is given by
\[  (a\otimes g)(f\otimes\varphi(h)) = (h^{-1}g^{-1}a)f\otimes\varphi(gh).\]
Similarly, the right action is given by
\[  (f\otimes\varphi(h))(a\otimes g) = (fa)g\otimes\varphi(gh).\]
Armed with these formulae, the verification that our map respects multiplication is straightforward.

If $\Gamma$ and the $G$-action are graded, the isomorphism $\STriv(\Gamma)\# G \arr\sim \STriv(\Gamma\# G)$ is exactly the same: the minus sign appears in the same place in both multiplications, and the signs in the right action of $\Gamma\# G$ on $(\Gamma\# G)^*$ cancel out, so the verification is no more difficult.
\end{proof}

If we let $G$ be abelian and let $\Lambda=\Sym(V)\# G$ be the graded symmetric algebra of $V$, then the degree $1$ part of $Z_{d+1}(\Lambda)$ is just $\overline{V}$.  Therefore we have the following:
\begin{corollary}\label{cor:zigskew}
Let $\overline{V}=V\oplus (\bigwedge^dV)^*$ denote the representation of the abelian group $G$ described above.  Then the graded skew group algebra $\Sym(\overline{V})\# G$ is Koszul dual to the higher zigzag algebra $Z_{d+1}(\Sym(V)\# G)$.
\end{corollary}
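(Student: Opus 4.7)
The plan is to combine three earlier results in a straightforward chain.  First, since $\Sym(V)$ is a Koszul algebra of global dimension $d$ on which $G$ acts by graded automorphisms, Proposition~\ref{prop:zigskew} yields an isomorphism of graded algebras
\[ Z_{d+1}(\Sym(V) \# G) \cong Z_{d+1}(\Sym(V)) \# G, \]
reducing the problem to identifying the Koszul dual of the right-hand side.

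Second, I would invoke Example~\ref{eg:exterior} to identify $Z_{d+1}(\Sym(V))$ as an exterior algebra on $d+1$ generators.  The original $d$ generators lie in $\Sym(V)^!$ and span $V^*$ as a $G$-representation, while the extra generator is $(0,(x_1\cdots x_d)^*)$, which lives in the linear dual of the one-dimensional socle of $\Sym(V)^!$.  Since that socle is $\bigwedge^d V^*$ as a $G$-representation, the new generator transforms as $(\bigwedge^d V^*)^* \cong \bigwedge^d V$; so the degree-$1$ part of $Z_{d+1}(\Sym(V))$ is $V^* \oplus \bigwedge^d V \cong \overline{V}^*$, giving an isomorphism of graded $G$-algebras $Z_{d+1}(\Sym(V)) \cong E(\overline{V})$.

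Third, I would apply Martinez-Villa's theorem \cite{mv-skew}, which says Koszul duality commutes with taking skew group algebras.  Chaining everything together,
\[ \bigl(\Sym(\overline{V}) \# G\bigr)^! \cong E(\overline{V}) \# G \cong Z_{d+1}(\Sym(V)) \# G \cong Z_{d+1}(\Sym(V) \# G), \]
establishing the claimed Koszul duality.

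The only genuine work beyond citing earlier results is the $G$-equivariance check in the second step: one must verify that the isomorphism of Example~\ref{eg:exterior} respects the $G$-action, and in particular that the character by which $G$ acts on the new degree-$1$ generator is the one corresponding to $\bigwedge^d V$.  This follows from the compatibility of the $G$-action with the twisted trivial extension construction, together with the standard fact that $G$ acts on the socle of $\Sym(V)^! \cong \bigwedge V^*$ as $\bigwedge^d V^*$.  I would expect this small bookkeeping of characters to be the main obstacle; everything else is formal.
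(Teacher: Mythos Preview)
Your proposal is correct and follows essentially the same approach as the paper. The paper's argument is the single sentence preceding the corollary (``the degree $1$ part of $Z_{d+1}(\Lambda)$ is just $\overline{V}$''), which implicitly chains Proposition~\ref{prop:zigskew}, Example~\ref{eg:exterior}, and the Martinez-Villa result already quoted in Section~\ref{ss:mckayq}; you have simply unpacked this chain and made the $G$-equivariance check on the extra generator explicit, which is precisely the bookkeeping the paper leaves to the reader.
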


\subsection{Examples of group actions}

Let $\Lambda=\Sym(V)\# G$, $A=Z_{d+1}(\Lambda)$, and $B=\Sym(\overline{V})\# G$.  By the discussion in Section \ref{ss:mckayq}, we know the quiver 
of $\Lambda$, which is labelled by $d$-tuples of integers $(i_1,i_2,\ldots,i_d)$ with $0\leq i_k\leq n_k$.  Note that $B$ has the same vertex set as $\Lambda$ and also, as a twisted trivial extension of $\Lambda$, so does $A$.
Given a vertex $v=(i_1,i_2,\ldots,i_d)$, let $e_v=e_{i_1,i_2,\ldots,i_d}$ denote the corresponding idempotent in any of $\Lambda$, $A$, or $B$.

Recall the notation $\Lambda^d_s$ and $Z^d_s$ from Section \ref{ss:typea}.

\begin{lemma}\label{lem:typeaend}
Suppose that $\min\{n_1,n_2,\ldots,n_d\}\geq s$.  Let $e=\sum e_v$ be the sum over all vertices $v=(i_1,i_2,\ldots,i_d)$ with $1\leq i_1\leq i_2\leq \cdots\leq i_d\leq s$.  Then 
\[ \Lambda /\Lambda(1-e)\Lambda\cong \Lambda^d_s \;\;\;\;\;\; \text{ and } \;\;\;\;\;\; eAe \cong Z^d_s \]
as graded algebras.  
Here, the idempotent $e_{i_1,i_2,\ldots,i_d}$ of $\Lambda$ is sent to the idempotent $e_y$ of $\Lambda^d_s$, where
$y=(s-1,0,\ldots,0)+(i_1-1)\ve_1+\ldots+(i_d-1)\ve_d$. 
\end{lemma}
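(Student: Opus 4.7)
The proof naturally splits into the two claimed isomorphisms.

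For $\Lambda/\Lambda(1-e)\Lambda \cong \Lambda^d_s$, the plan is to use the McKay quiver description of $\Lambda = \Sym(V)\#G$ recalled in Section \ref{ss:mckayq}: $\Lambda$ is the path algebra of $Q_{G,V}$ modulo commutativity relations (by \cite[Corollary 4.1]{bsw}). Quotienting by $\Lambda(1-e)\Lambda$ deletes every vertex outside $E = \{(i_1,\ldots,i_d) : 1 \leq i_1 \leq \cdots \leq i_d \leq s\}$ together with every arrow touching such a vertex. The hypothesis $\min\{n_j\} \geq s$ is precisely what prevents any coordinate of a vertex in $E$ from wrapping around modulo $n_j+1$: each outgoing McKay arrow $\alpha_j$ from a vertex of $E$ either lands in $E$ (and survives in the quotient) or leaves $E$ (and is killed). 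The induced subquiver on $E$ with inherited commutativity relations then matches Iyama's quiver presentation in Theorem \ref{thm:drf-quiv} of $\Lambda^d_s$ under the stated bijection, vertex by vertex and arrow by arrow.

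For $eAe \cong Z^d_s$, the plan is as follows. Since $A = \STriv(\Lambda^!)$ and $\Lambda^!$ shares its primitive idempotents with $\Lambda$, Lemma \ref{lem:idemptriv} gives
\[ eAe \;=\; e\,\STriv(\Lambda^!)\,e \;\cong\; \STriv(e\Lambda^!e). \]
It therefore suffices to identify $e\Lambda^!e$ with $(\Lambda^d_s)^!$, for then
\[ eAe \;\cong\; \STriv((\Lambda^d_s)^!) \;=\; Z_{d+1}(\Lambda^d_s) \;=\; Z^d_s. \]
To obtain this identification I would apply Lemma \ref{lem:quotidem} with $\Lambda^!$ playing the role of its $\Lambda$: provided $e\Lambda^!e$ is generated in degree $1$ and quadratic, the lemma combined with the first part of the proof yields
\[ e\Lambda^!e \;\cong\; \left(\Lambda/\Lambda(1-e)\Lambda\right)^{!} \;\cong\; (\Lambda^d_s)^!. \]

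The main obstacle is the quadraticity check. Unlike the situation in the proof of Proposition \ref{prop:nottower}, where the stronger hypothesis of Lemma \ref{lem:noloops} holds on the nose, here $\Lambda^! \cong E(\overline{V})\#G$ does admit nonzero paths between vertices of $E$ that pass through $E^c$, most notably via the ``extra'' arrow $\alpha_{d+1}$ of the McKay quiver of $\overline{V}$, which decreases all coordinates simultaneously. The key observation is that any such detour sits inside a square of the McKay quiver, and the anticommutativity relation of $E(\overline{V})\#G$ rewrites it, up to sign, as a path through the opposite corner of the square. An induction on path length then reduces every composition of arrows from $E$ to $E$ through $E^c$ to a linear combination of paths lying entirely in $E$, thereby showing that $e\Lambda^!e$ is generated in degree $1$ and quadratic. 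One may alternatively make the combinatorics more symmetric by first invoking Proposition \ref{prop:zigskew} and Example \ref{eg:exterior} to identify $\Lambda^!$ with $E_{d+1}\#G$ and arguing directly inside this graded exterior-type skew group algebra.
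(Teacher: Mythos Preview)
Your overall architecture coincides with the paper's: the first isomorphism via the McKay quiver presentation versus Theorem~\ref{thm:drf-quiv}, the second reduced by Lemma~\ref{lem:idemptriv} to $e\Lambda^!e\cong(\Lambda^d_s)^!$ and then handled with Lemma~\ref{lem:quotidem}.

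There is, however, a concrete error in your identification of $\Lambda^!$. Here $\Lambda=\Sym(V)\#G$ with $V$ the $d$-dimensional representation, so by Mart\'{\i}nez--Villa $\Lambda^!\cong E(V)\#G$, whose quiver is the \emph{opposite} McKay quiver of $(G,V)$ with only the $d$ arrow types $\beta_j:v\to v-e_j$. The algebra $E(\overline V)\#G$ with its extra arrow is, after the identifications of Proposition~\ref{prop:zigskew} and Example~\ref{eg:exterior}, the zigzag algebra $A$ itself, not $\Lambda^!$. So the ``extra arrow $\alpha_{d+1}$'' you blame for the detours is simply not present in $\Lambda^!$.

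That said, your instinct that the hypothesis of Lemma~\ref{lem:noloops} fails is actually correct, and the paper's one-line claim that $e\Lambda^!(1-e)\Lambda^!e=0$ appears to be in error. Already for $d=s=2$ the path $(2,2)\xrightarrow{\beta_2}(2,1)\xrightarrow{\beta_1}(1,1)$ is a nonzero element of $e\Lambda^!(1-e)\Lambda^!e$, since in $\Lambda^!$ it equals $-\beta_1\beta_2$ through $(1,2)\in E$. Your proposed remedy---use the exterior relations to reorder any path between vertices of $E$ so that it never leaves $E$---is exactly the right patch, just applied to $E(V)\#G$ rather than $E(\overline V)\#G$. Concretely: a nonzero path uses each $\beta_j$ at most once, and if $v,w\in E$ with $w=v-\sum_{j\in J}e_j$, then applying the $\beta_j$ in \emph{increasing} order of $j$ stays in $E$ at every step (the only dangerous case $m\notin J_k$, $m+1\in J_k$ forces $m\notin J$, and then $w\in E$ gives $v_m\le v_{m+1}-1$). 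This shows $e\Lambda^!e$ is generated in degree~$1$; quadraticity follows because the only relations among these reordered paths are the inherited square-zero and anticommutativity relations.
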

\begin{proof}
The first result follows from the quivers and relations described in Section \ref{ss:mckayq} and in Theorem \ref{thm:drf-quiv}.  
For the second result, we argue as in Section \ref{subsec:endom}.  By Lemma \ref{lem:idemptriv}, we need to show that $e\Lambda^!e\cong (\Lambda^d_s)^!$.  By taking the quadratic dual of the presentation for $\Lambda$ in Section \ref{ss:mckayq}, we get that $e\Lambda^!(1-e)\Lambda^!e=0$, so we can apply Lemma \ref{lem:noloops} to get that $e\Lambda^!e$ is generated in degree 1 and quadratic, then use Lemma \ref{lem:quotidem} to finish.
\end{proof}

If $v=(i_1,i_2,\ldots,i_d)$ is any vertex with $1\leq i_1\leq i_2\leq \cdots\leq i_d\leq s$ then $\End_A(Ae_v)^\op\cong e_vAe_v=e_veAee_v\cong Z^1_d$, by Corollary \ref{cor:sphericalprojs}.  So we have spherical twists at each of these vertices.  

Using Lemma \ref{lem:typeaend}, Theorem \ref{thm:groupaction} and Remark \ref{rmk:gradedgroupaction}, and Theorem \ref{thm:lifting}, we obtain:
\begin{proposition}
If $\min\{n_1,n_2,\ldots,n_d\}\geq s$ then there is an action of the group $G^d_s$ on $\Db(A\grmod)$ by spherical twists.
\end{proposition}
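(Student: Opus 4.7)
The strategy is to reduce to the already-established action of $G^d_s$ on $\Db(Z^d_s\grmod)$ via the graded version of the lifting theorem (Theorem \ref{thm:lifting}), using the fact that $Z^d_s$ sits inside $A$ as the opposite endomorphism ring of a projective summand.

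First, I would fix the idempotent $e\in A$ of Lemma \ref{lem:typeaend}, namely the sum of vertex idempotents $e_{i_1,\ldots,i_d}$ with $1\leq i_1\leq \cdots\leq i_d\leq s$ (these vertices all exist because the hypothesis is $n_i\geq s$), and set $P=Ae$. Lemma \ref{lem:typeaend} identifies $E:=\End_A(P)^\op=eAe$ with $Z^d_s$ as graded algebras and matches the indecomposable summands $Ae_v$ of $P$ with the indecomposable projective $Z^d_s$-modules, so vertices of the quiver of $eAe$ correspond to the vertices of $Q^d_s$ indexing the generators of $G^d_s$.

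Next, I would verify that $P$ satisfies the hypotheses of the lifting theorem: each $Ae_v$ should be a graded spherical object in $\Db(A\grmod)$, and $\nu(P)\cong P$. The endomorphism algebra is handled by Lemma \ref{lem:idemptriv} together with Corollary \ref{cor:sphericalprojs}: $\End_A(Ae_v)\cong e_v(eAe)e_v\cong e_v Z^d_s e_v\cong \F[x]/(x^2)$. For the Nakayama condition, Proposition \ref{prop:twisttrivnak} computes the Nakayama automorphism of $A=\STriv(\Lambda^!)$ as a twist by $\zeta^d$, which acts as the identity on the degree-zero part and therefore fixes every vertex idempotent; consequently $\nu(Ae_v)\cong Ae_v$ for every $v$, so $\nu(P)\cong P$, and this also supplies the Calabi--Yau condition needed for sphericity, as discussed in the text preceding Theorem \ref{thm:lifting}.

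Finally, Theorem \ref{thm:groupaction} together with the graded enhancement recorded in Remark \ref{rmk:gradedgroupaction} produces an action $G^d_s\to\Aut\Db(E\grmod)$ in which each generator $s_y$ acts as the graded spherical twist at vertex $y$. The graded lifting theorem then transports each defining relation of $G^d_s$ to the corresponding relation among the spherical twists $F_v$ on $\Db(A\grmod)$, yielding the desired group action. The main obstacle I anticipate is purely bookkeeping: one must confirm that the identification of vertices in Lemma \ref{lem:typeaend} matches the vertex labels of $Q^d_s$ in a way preserving the oriented $(d+1)$-cycles and the commutativity data used in the presentation of $G^d_s$ from Section \ref{ss:defgroups}. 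Beyond this, the argument is a direct chain of citations.
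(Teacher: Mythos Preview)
Your proposal is correct and follows essentially the same route as the paper: the paper's proof is the one-line citation of Lemma \ref{lem:typeaend}, Theorem \ref{thm:groupaction} with Remark \ref{rmk:gradedgroupaction}, and the lifting theorem (Theorem \ref{thm:lifting}), and your write-up simply unpacks these citations. Your added verification of the Nakayama hypothesis $\nu(P)\cong P$ via Proposition \ref{prop:twisttrivnak} is exactly what the paper leaves implicit, and your bookkeeping worry about matching $(d+1)$-cycles is dissolved by the graded algebra isomorphism $eAe\cong Z^d_s$ itself, since $G^d_s$ is defined intrinsically from the quiver of $Z^d_s$.
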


Now we recall that, as $A$ is finite-dimensional, we have an equivalence
\[ K: \Db(A\grmod)\arr\sim \Db(B\grmod) \]
which, up to isomorphism, sends simple $A$-modules to projective $B$-modules and sends injective $A$-modules to simple $B$-modules \cite[Theorem 2.12.6]{bgs}.  As $A$ is a Frobenius algebra, its projective modules and injective modules coincide.  
So the indecomposable projective $A$-module $P_{i_1,i_2,\ldots,i_d}$ is sent to a minimal projective resolution of a simple $B$-modules which, after forgetting the grading, is isomorphic to the module $U\otimes_\C V_{i_1,i_2,\ldots,i_d}$ of Section \ref{ss:eqsh}.  Thus our spherical objects match up under the Koszul duality functor.

\begin{theorem}\label{thm:actiononskew}
If $\min\{n_1,n_2,\ldots,n_d\}\geq s$ then we have a group action
\[ G^d_s\to \Aut\DbG(\Sym(\overline{V})\# G\mMod) \]
sending the generators $s_y$ of $G^d_s$ to spherical twists at the simple $B$-modules $T\otimes W$, where $W$ is a simple $G$-module.
\end{theorem}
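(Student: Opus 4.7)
The plan is to transport the group action on $\Db(A\grmod)$ established in the preceding proposition across Koszul duality to an action on $\Db(B\grmod)$, and then forget the grading to obtain an action on $\Db(B\mMod)\cong\DbG(\Sym(\overline{V})\# G\mMod)$. Here $A=Z_{d+1}(\Sym(V)\# G)$ and $B=\Sym(\overline{V})\# G$.

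By Corollary \ref{cor:zigskew}, $B$ is Koszul dual to $A$, and $A$ is a finite-dimensional Koszul algebra (it is Frobenius by Proposition \ref{prop:zzgp}). I would therefore apply the Koszul duality equivalence $K\colon\Db(A\grmod)\arr{\sim}\Db(B\grmod)$ of \cite[Theorem 2.12.6]{bgs}. As recalled just before the theorem, $K$ sends the indecomposable graded projective $A$-module $Ae_v$ for $v=(i_1,\ldots,i_d)$ with $1\leq i_1\leq\cdots\leq i_d\leq s$ to (a grading shift of) a minimal graded projective resolution of the simple $B$-module $S_v$, whose underlying ungraded module is $T\otimes V_{i_1,\ldots,i_d}$. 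Since any equivalence of triangulated categories intertwines the spherical twists at corresponding spherical objects, conjugation by $K$ turns the action on $\Db(A\grmod)$ into a group homomorphism $G^d_s\to\Aut\Db(B\grmod)$ in which each generator $s_y$ acts as the graded spherical twist at $S_v$, where $v$ is the vertex associated to $y$ via Lemma \ref{lem:typeaend}.

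To finish, I would apply the forgetful functor $\Db(B\grmod)\to\Db(B\mMod)$. The graded spherical twist at $S_v$ is given by tensoring with the cone of the evaluation map $P\otimes_\C\Hom_B(P,B)\to B$ for a graded projective resolution $P$ of $S_v$; forgetting the grading of this bimodule complex yields exactly the cone defining the ungraded spherical twist at $T\otimes V_{i_1,\ldots,i_d}$ from Section \ref{ss:eqsh}. Hence every relation of $G^d_s$ satisfied by graded twists on $\Db(B\grmod)$ descends to the corresponding relation for ungraded twists on $\Db(B\mMod)$, producing the desired action. The main obstacle is verifying cleanly that Koszul duality really intertwines the spherical twists at corresponding projective and simple objects; this is most transparent at the level of bimodule complexes, using that $K$ can be realised as derived tensor with a Koszul-type bimodule complex compatible with the evaluation maps that define both twists. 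Once that identification is in place, the descent to the ungraded category is a routine check.
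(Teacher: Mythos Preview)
Your proposal is correct and follows essentially the same approach as the paper: both transport the action from $\Db(A\grmod)$ to $\Db(B\grmod)$ by conjugating with the Koszul duality equivalence $K$ (which sends the projective $Ae_v$ to a resolution of the simple $T\otimes V_{i_1,\ldots,i_d}$), obtain isomorphisms of graded bimodule complexes $X_W$, and then apply the forgetful functor to descend to $\Db(B\mMod)$. The paper records this as the commutativity of the square with $K$ horizontal and the twists vertical, which is precisely your intertwining statement.
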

\begin{proof}
We use the following commutative diagram, where the vertical arrows are the corresponding spherical twists:
\[ \xymatrix{
\Db(A\grmod)\ar[r]^K\ar[d]^{F_{y}} &\Db(B\grmod)\ar[d]^{F_W}\\
\Db(A\grmod)\ar[r]^K &\Db(B\grmod)
} \]
so our relations for $\Db(A\grmod)$ carry over to $\Db(B\grmod)$.
Note that $K\circ F_y\circ K^{-1}\cong X_W\otimes_B-$, so this gives us isomorphisms of tensor products of the complexes $X_W$ of graded $B\da B$-bimodules.  Applying the forgetful functor, we get isomorphisms of ungraded complexes of $B\da B$-bimodules, and so the relations hold in $\Db(B\mMod)$.
\end{proof}

Thus we get an action of $G^d_s$ on $\DbG(\C^{d+1})$.

\emph{Acknowledgements:} Thanks to Osamu Iyama for many helpful discussions and for allowing me to include the content of Section \ref{ss:pbw}, which was originally written for inclusion in \cite{gi}.  Thanks to the anonymous referee for their careful reading of the manuscript and for suggesting various improvements to the text.

\end{document}